\newlength{\fixboxwidth}
\newcommand{\re}{\mathbb{R}}\newcommand{\N}{\mathbb{N}}
\newcommand{\zz}{\mathbb{Z}}
\newcommand{\R}{{\re}^d}
\newcommand{\cs}{{\mathcal S}}
\newcommand{\cf}{{\mathcal F}}
\newcommand{\cfi}{{\cf}^{-1}}
\newcommand{\supp}{{\rm supp \, }}
\newcommand{\dist}{{\rm dist \, }}
\newcommand{\be}{\begin{equation}}
\newcommand{\ee}{\end{equation}}
\newcommand{\beq}{\begin{eqnarray}}
\newcommand{\beqq}{\begin{eqnarray*}}
\newcommand{\eeq}{\end{eqnarray}}
\newcommand{\eeqq}{\end{eqnarray*}}
\newtheorem{satz}{Theorem}
\newtheorem{rem}{Remark}
\newtheorem{defi}{Definition}
\newtheorem{lem}{Lemma}
\newtheorem{prop}{Proposition}
\begin{document}

%%%%%%%%%%%%%%%%%%%%%%%%%%%%%%%%%%%%%%%%%%%%%%%%%%%%%%%%%%%%%%%%%%%%%%%%%%%%%%%%%%%%%
%%%%%%%%%%%%%%%%%%%%%%%%%%%%%%%%%%%%%%%%%%%%%%%%%%%%%%%%%%%%%%%%%%%%%%%%%%%%%%%%%%%%%

\title{Truncation in Besov-Morrey and Triebel-Lizorkin-Morrey spaces}
\author{Marc Hovemann}
\date{\today}
\maketitle

{\scriptsize  Institute of Mathematics, Friedrich-Schiller-University Jena, Ernst-Abbe-Platz 2, 07743 Jena, Germany }

\vspace{0.3 cm }

\textbf{Key words.} Besov-Morrey space; Triebel-Lizorkin-Morrey space;  composition operator; truncation operator; Fubini property

\vspace{0.3 cm }

\textbf{Mathematics Subject Classification (2010) } 46E35

\vspace{0.3 cm }

\textbf{Abstract.} We will prove that under certain conditions on the parameters the operators $T^{+}f = \max(f,0)$ and $ Tf = |f|  $ are bounded mappings on the Triebel-Lizorkin-Morrey and Besov-Morrey spaces. Moreover we will show that some of the conditions we mentioned before are also necessary. Furthermore we prove that for $p < u  $ in many cases the Triebel-Lizorkin-Morrey spaces do not have the Fubini property.

%&&&&&&&&&&&&&&&&&&&&&&&&&&&&&&&&&&&&&&&&&&&&&&
%&&&&&&&&&&&&&&&&&&&&&&&&&&&&&&&&&&&&&&&&&&&&&&

\section{Introduction and main results}\label{sec_1}

%&&&&&&&&&&&&&&&&&&&&&&&&&&&&&&&&&&&&&&&&&&&&&&&&&
%&&&&&&&&&&&&&&&&&&&&&&&&&&&&&&&&&&&&&&&&&&&&&&&&&

Nowadays the Besov spaces $ B^s_{p,q}(\R)  $ and the Triebel-Lizorkin spaces $F^s_{p,q} (\R)$ are well-established tools to describe the regularity of functions and distributions. These function spaces have been investigated in detail in the famous books of Triebel, see \cite{Tr83}, \cite{Tr92} and \cite{Tr06}. In the recent years a growing number of authors worked with a generalization of Besov and Triebel-Lizorkin spaces where the $ L_{p}(\R) $-quasinorm was replaced by a Morrey-quasinorm. So the Besov-Morrey spaces $ \mathcal{N}^{s}_{u,p,q}(\R) $ and the Triebel-Lizorkin-Morrey spaces $ \mathcal{E}^{s}_{u,p,q}(\R) $ attracted some attention. Besov-Morrey spaces have been introduced by Kozono and Yamazaki in 1994, see \cite{KoYa}, whereas the spaces $ \mathcal{E}^{s}_{u,p,q}(\R) $ appeared the first time in a paper written by Tang and Xu in 2005, see \cite{TangXu}. Later, with a different notation, the Triebel-Lizorkin-Morrey spaces also showed up in \cite{Tr14} and in \cite{yy1} as well as in \cite{yy2}. In this paper we will study the mapping properties of the truncation operator $ T^{+}  $ given by 
\begin{align*}
(T^{+}f)(x) = \max(f(x),0) , \qquad x \in \R,  
\end{align*} 
in which $ f $ is a real-valued function from a Besov-Morrey space or a Triebel-Lizorkin-Morrey space. The operator $ T^{+} $ is one member of a bigger class of operators that is called composition operators, see chapter 5.3 in \cite{RS}. Those operators play an important role in the theory of nonlinear partial differential equations, see for example chapter 8 in \cite{GiT}. Throughout this paper we will answer several questions concerning the operator $T^{+}$. Let $ \mathbb{N}^{s}_{u,p,q}(\mathbb{R}^{d})  $ be the real part of $ \mathcal{N}^{s}_{u,p,q}(\mathbb{R}^{d})  $ and $ \mathbb{E}^{s}_{u,p,q}(\mathbb{R}^{d})  $ be the real part of $ \mathcal{E}^{s}_{u,p,q}(\mathbb{R}^{d})  $. We write $ \mathbb{A}^{s}_{u,p,q}(\mathbb{R}^{d})  $ when we mean either $ \mathbb{N}^{s}_{u,p,q}(\mathbb{R}^{d})  $ or $ \mathbb{E}^{s}_{u,p,q}(\mathbb{R}^{d})  $. Then we want to know under which conditions on the parameters $ s,p,u,q $ and $ d$ we have $ T^{+}(\mathbb{A}^{s}_{u,p,q}(\R)) \subset \mathbb{A}^{s}_{u,p,q}(\R)   $. This is the so-called acting property. Moreover we will investigate under which conditions on the parameters the operator $ T^{+} : \mathbb{A}^{s}_{u,p,q}(\R) \rightarrow \mathbb{A}^{s}_{u,p,q}(\R)   $ is bounded on $  \mathbb{A}^{s}_{u,p,q}(\R)  $. $ T^{+} $ is strongly connected with the operator $  T $ given by
\begin{align*}
(Tf)(x) = |f(x)| , \qquad x \in \R.  
\end{align*}
The operators $ T^{+} $ and $ T  $ have many properties in common. Therefore in this paper we also will study the behaviour of $ T $. For the original Besov spaces as well as for the original Triebel-Lizorkin spaces results concerning truncation operators are already known. Let $ \mathbb{B}^{s}_{p,q}(\R)  $ be the real part of $ B^{s}_{p,q}(\R) $ and $ \mathbb{F}^{s}_{p,q}(\R)  $ be the real part of $ F^{s}_{p,q}(\R) $. Then the following result is known since many years. 

\begin{satz}\label{Hist_Res}
Let $ \mathbb{A} \in \{ \mathbb{B}, \mathbb{F} \}  $. Let $ 1 \leq p < \infty $, $ 1 \leq q \leq \infty $ and $ 0 < s < 1 + 1/p $. For $ \mathbb{A} = \mathbb{F} $ in the case $ p = 1  $ we assume $ s \not = 1   $. Then there is a constant $ C > 0  $ independent of $ f \in \mathbb{A}^{s}_{p,q}(\R)  $ such that
\begin{align*}
\Vert T^{+}f  \vert A^{s}_{p,q}(\mathbb{R}^{d})  \Vert \leq C \Vert f \vert A^{s}_{p,q}(\mathbb{R}^{d})  \Vert
\end{align*}
holds for all $ f \in \mathbb{A}^{s}_{p,q}(\R)  $. Moreover in the formulation of theorem \ref{Hist_Res} one can replace the operator $ T^{+} $ by $ T $.
\end{satz}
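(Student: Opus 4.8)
The plan is to reduce the truncation estimate to a characterization of $\mathbb{A}^{s}_{p,q}(\R)$ by first-order differences, which is the natural tool since $0<s<1+1/p$ keeps us in the range where such a description is available. Specifically, for $0<s<1$ one has the equivalent norm $\Vert f \mid L_p \Vert + \big(\int_{|h|\le 1} |h|^{-sq}\Vert \Delta_h f \mid L_p\Vert^q \, dh/|h|^d\big)^{1/q}$ (with the obvious modification for the $F$-scale, using an $L_p(\ell_q)$ norm of the differences, and for $q=\infty$), and the pointwise inequality $|\Delta_h(T^+f)(x)| \le |\Delta_h f(x)|$ together with $|T^+f(x)| \le |f(x)|$ immediately gives boundedness of $T^+$ (and of $T$, since $||f(x)|-|f(y)||\le|f(x)-f(y)|$). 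So the only real content is the range $1 \le s < 1+1/p$, where one must also control a first-order difference of the gradient or, equivalently, work with the higher-order difference characterization and exploit cancellation that the mere pointwise contraction no longer provides.

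For that range I would use the description of $\mathbb{F}^{s}_{p,q}$ and $\mathbb{B}^{s}_{p,q}$ via second-order (or more generally higher-order) differences: $\Vert f \mid A^{s}_{p,q}\Vert \sim \Vert f \mid L_p\Vert + (\text{$q$-integral in $h$ of }|h|^{-sq}\Vert \Delta_h^2 f\Vert^q)^{1/q}$, valid for $0<s<2$. The key step is then to bound $\Vert \Delta_h^2(T^+f) \Vert$ by something controlled by $\Vert f \mid A^{s}_{p,q}\Vert$. The elementary pointwise estimate here is $|\Delta_h^2(T^+f)(x)| \le |\Delta_h^2 f(x)|$ on the set where $f$ does not change sign across $\{x, x+h, x+2h\}$, while on the exceptional set one only gets $|\Delta_h^2(T^+f)(x)| \le 2\min(|\Delta_h f(x)|, |\Delta_h f(x+h)|)$ or a bound like $|\Delta_h^2(T^+f)(x)| \lesssim |\Delta_h f(x)| + |\Delta_h f(x+h)|$ — these are the standard pointwise lemmas used by Oswald, Bourdaud, Runst–Sickel, etc. The strategy is to split the $h$-integral accordingly and to estimate the contribution of the ``sign-change'' region by a first-order difference at scale $|h|$, which is acceptable precisely because $s-1<1/p$ makes that first-order difference summable after integration, by an embedding/Hardy-type argument. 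For the $F$-scale one carries the $\ell_q$-sum inside the $L_p$-norm throughout; the maximal-function estimates needed to handle the $\ell_q(L_p)$ versus $L_p(\ell_q)$ interchange are the ones already built into the difference characterization of $F$-spaces, so no extra machinery is required beyond citing it.

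The excluded endpoint case $\mathbb{A}=\mathbb{F}$, $p=1$, $s=1$ is exactly where this argument breaks: there $s-1=0$ is not $<1/p=1$ in the strict sense that makes the Hardy-type bound work, or rather the relevant limiting embedding fails, so the sign-change contribution cannot be reabsorbed; this is a genuine obstruction and is the reason for the hypothesis. For $\mathbb{A}=\mathbb{B}$ with $p=1$, $s=1$, or for $1<p<\infty$ with $s$ up to but not including $1+1/p$, the argument goes through. Finally, the statement for $T$ in place of $T^+$ follows either from the same pointwise inequalities (which hold verbatim for $|f|$, since $|\,|a|-2|b|+|c|\,| \le |a-2b+c|$ when $b$ lies between $a$ and $c$ in sign in the appropriate sense, and otherwise one has the same first-order bound) or, more cheaply, from the identity $|f| = 2T^+f - f = T^+f + T^+(-f)$ together with the boundedness of $T^+$ and of $f\mapsto -f$ on $\mathbb{A}^{s}_{p,q}$. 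I expect the main obstacle to be the careful bookkeeping of the sign-change region in the second-difference estimate and verifying that the resulting first-order difference term is controlled by the full norm for all admissible $s$ — everything else is either a pointwise inequality or a direct appeal to the difference characterization of the spaces. Since this is a quoted historical result, in the paper I would simply reference \cite{RS} and the original sources rather than reproduce the argument.
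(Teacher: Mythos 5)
The paper's ``proof'' of Theorem \ref{Hist_Res} is a citation: it refers to Triebel \cite{Tr01}, Theorem~25.8, and for earlier contributions to \cite{RS} (Chapter~5.4.1), \cite{Bou93}, \cite{BouMey}, \cite{Os1}. Your last sentence reaches the same decision, which is correct. Your accompanying sketch of how the cited proofs run captures the right ingredients --- difference characterizations, pointwise contraction away from sign changes, a Hardy-type inequality to absorb the sign-change contribution, and the roles of $s<1+1/p$ and of the excluded $(\mathbb{F},p=1,s=1)$ endpoint --- but it is packaged differently from \cite{BouMey}, \cite{RS} and, indeed, from the paper's own Morrey analogues in Propositions \ref{pro_d1s>1} and \ref{d>1_E_MR}. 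Those arguments reduce $\Vert\,|f|\,\mid A^{s}_{p,q}\Vert$ to $\Vert\,|f|\,\mid A^{s-1}_{p,q}\Vert+\Vert\partial^1|f|\mid A^{s-1}_{p,q}\Vert$ and estimate \emph{first}-order differences of $\partial^1|f|$, using $\partial^1|f|=\pm\partial^1 f$ off the zero set and invoking the Hardy inequality (Lemma~\ref{Hardy}/Lemma~\ref{Hardy_Mor} here) exactly on the sign-change region; you instead work directly with \emph{second}-order differences of $T^+f$. These are morally equivalent, but the derivative route isolates the term $\Vert\partial^1 f\cdot\dist(\cdot,\Omega_f^c)^{-(s-1)}\mid L_p\Vert$, which is precisely what the Hardy inequality controls, whereas in your formulation that weighted-distance integral has to be extracted from the second-difference integral by hand. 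One caution: the pointwise inequality $\big|\,|a|-2|b|+|c|\,\big|\le|a-2b+c|$ you mention genuinely fails across a sign change (take $a=-1$, $b=0$, $c=1$: $2\not\le 0$), so the sign-change/no-sign-change dichotomy is not bookkeeping but the entire mathematical content; your hedging ``in the appropriate sense'' is doing real work and would need to be made precise. Also note that Triebel's proof in \cite{Tr01} --- the primary reference the paper actually cites --- proceeds by atomic decompositions rather than differences, so your sketch tracks \cite{BouMey} and \cite{RS} more closely than the source named first.
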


This result can be found in \cite{Tr01}, see theorem 25.8 in chapter 25. For earlier contributions we refer to chapter 5.4.1. in \cite{RS} as well as to \cite{Bou93}, \cite{BouMey} and \cite{Os1}. Now let us look at the case $ p < u  $. Then in this paper we will prove the following result for the Besov-Morrey spaces. 

\begin{satz}\label{MR_N_d>1}
Let $ 1 \leq p < u < \infty  $, $ 1 \leq q \leq \infty   $ and $ s > 0  $. We assume
\[
\left\{ \begin{array}{lll}
\frac{1}{p} -  \frac{1}{u} > 1 - \frac{1}{d}  & \quad & \mbox{in the case} \qquad 1 \leq s < \min  ( 1 + \frac{1}{p} , 1 + \frac{d}{u} ) \ \mbox{and} \ d > 1 ;
\\  
q \not = \infty & \quad & \mbox{in the case}\qquad s = \min  ( 1 + \frac{1}{p} , 1 + \frac{d}{u} ) \ \mbox{and} \ d > 1  .
\end{array}
\right.
\]
Then $ T^{+}  $ acts on $  \mathbb{N}^{s}_{u,p,q}(\mathbb{R}^{d}) $ and there is a constant $ C > 0 $ independent of $ f \in \mathbb{N}^{s}_{u,p,q}(\mathbb{R}^{d})  $ such that we have 
\begin{equation}\label{MR_N_eq2}
\Vert T^{+}f  \vert \mathcal{N}^{s}_{u,p,q}(\mathbb{R}^{d})  \Vert \leq C \Vert f \vert \mathcal{N}^{s}_{u,p,q}(\mathbb{R}^{d})  \Vert
\end{equation}
if and only if 
\begin{equation}\label{MR_N_eq1}
s < \min \Big ( 1 + \frac{1}{p} , 1 + \frac{d}{u} \Big ). 
\end{equation}
Moreover in the formulation of theorem \ref{MR_N_d>1} one can replace the operator $ T^{+} $ by $ T $.
\end{satz}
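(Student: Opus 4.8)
The plan is to prove the two implications in Theorem~\ref{MR_N_d>1} separately: sufficiency of \eqref{MR_N_eq1} for the boundedness \eqref{MR_N_eq2}, and its necessity. The statement for $T$ will follow from the one for $T^{+}$ via the pointwise identities $|f|=T^{+}f+T^{+}(-f)$ and $T^{+}f=\tfrac12(f+|f|)$ together with $\Vert -f\,\vert\,\cn^{s}_{u,p,q}(\R)\Vert=\Vert f\,\vert\,\cn^{s}_{u,p,q}(\R)\Vert$, so throughout it suffices to treat $T^{+}$. For sufficiency in the range $0<s<1$ I would use the characterization of $\mathbb{N}^{s}_{u,p,q}(\R)$ by first order differences, i.e.\ the equivalence of $\Vert f\,\vert\,\cn^{s}_{u,p,q}\Vert$ with $\Vert f\,\vert\,\cm^{u}_{p}\Vert$ plus the $q$-averaged expression built from $t^{-s}\sup_{0<|h|\le t}\Vert \Delta^{1}_{h}f\,\vert\,\cm^{u}_{p}\Vert$. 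Since $|T^{+}f(x)-T^{+}f(y)|\le|f(x)-f(y)|$ and $|T^{+}f|\le|f|$ hold pointwise and the Morrey quasinorm is monotone with respect to $|\cdot|$, one gets $\Vert \Delta^{1}_{h}T^{+}f\,\vert\,\cm^{u}_{p}\Vert\le\Vert \Delta^{1}_{h}f\,\vert\,\cm^{u}_{p}\Vert$ and $\Vert T^{+}f\,\vert\,\cm^{u}_{p}\Vert\le\Vert f\,\vert\,\cm^{u}_{p}\Vert$, hence \eqref{MR_N_eq2} with $C=1$ and no further restriction on the parameters.

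For sufficiency in the range $1<s<\min(1+1/p,1+d/u)$ the contraction argument breaks down (already $f(x)=x_{1}$ has $\Delta^{2}_{h}f=0$ but $\Delta^{2}_{h}T^{+}f\ne0$, so no difference characterization of order $\ge2$ is monotone under $T^{+}$), and I would instead pass to first derivatives. For such $s$ every $f\in\mathbb{N}^{s}_{u,p,q}(\R)$ has $\partial_{j}f\in\mathbb{N}^{s-1}_{u,p,q}(\R)\hookrightarrow\cm^{u}_{p}$, in particular $f\in W^{1,1}_{\mathrm{loc}}$, and $\Vert f\,\vert\,\cn^{s}_{u,p,q}\Vert\sim\Vert f\,\vert\,\cm^{u}_{p}\Vert+\sum_{j=1}^{d}\Vert \partial_{j}f\,\vert\,\cn^{s-1}_{u,p,q}\Vert$ by the lift property; moreover Stampacchia's chain rule gives $\partial_{j}(T^{+}f)=\chi_{\{f>0\}}\,\partial_{j}f$ almost everywhere, and $|T^{+}f|\le|f|$ gives $T^{+}f\in\cm^{u}_{p}$. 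Hence the whole problem reduces to the pointwise multiplier estimate
\[
\Big\Vert\,\chi_{\{f>0\}}\,\partial_{j}f\,\Big\vert\,\cn^{s-1}_{u,p,q}(\R)\,\Big\Vert\ \ls\ \Vert f\,\vert\,\cn^{s}_{u,p,q}(\R)\Vert
\]
uniformly over $f$ with unit norm, after which the lift property reassembles $T^{+}f\in\mathbb{N}^{s}_{u,p,q}(\R)$ with norm control. Here the assumption $s<\min(1+1/p,1+d/u)$ enters as $s-1<\min(1/p,d/u)$, which places $\mathbb{N}^{s-1}_{u,p,q}$ strictly below the ``characteristic function threshold'' isolated in the necessity part; the geometric hypothesis $\tfrac1p-\tfrac1u>1-\tfrac1d$ (for $d>1$) is exactly what is needed to control the Morrey norm of $\chi_{\{f>0\}}$ against that of $f$ near the level set $\{f=0\}$; and at the endpoint $s=\min(1+1/p,1+d/u)$, $d>1$, the dyadic summation in this estimate converges only for $q<\infty$, which explains that hypothesis. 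The remaining value $s=1$ is then obtained by real interpolation between an exponent $s_{0}\in(0,1)$ and an exponent $s_{1}\in(1,\min(1+1/p,1+d/u))$, using that $T^{+}$ is sublinear and positively homogeneous and that the Besov--Morrey scale interpolates in the smoothness parameter. I expect the multiplier estimate above --- the Besov--Morrey counterpart of the classical fact that $\chi_{\{x_{1}>0\}}$ is a bounded multiplier on $B^{\sigma}_{p,q}(\R)$ for $0<\sigma<1/p$ --- to be the main obstacle.

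For the necessity, suppose $s\ge\min(1+1/p,1+d/u)$ and, seeking a contradiction, that \eqref{MR_N_eq2} holds. Fix $\varphi\in C_{0}^{\infty}(\R)$ equal to $1$ near the origin and $g\in C_{0}^{\infty}(\R)$ with $g(x)=x_{1}$ on $\supp\varphi$; then $g\in\mathbb{N}^{s}_{u,p,q}(\R)$ for every $s>0$, while $T^{+}g$ coincides with $\max(x_{1},0)$ on $\{\varphi=1\}$. A direct modulus of smoothness computation for $\Delta^{1}_{h}\big(\chi_{\{x_{1}>0\}}\varphi\big)$ --- separating the contribution of cubes of side length $\lesssim|h|$ from that of cubes of side length $\sim1$ --- shows that $\chi_{\{x_{1}>0\}}\varphi$ lies in $\cn^{\sigma}_{u,p,\infty}(\R)$ with the sharp index $\sigma=\min(1/p,d/u)$ and in no $\cn^{\sigma}_{u,p,q}(\R)$ with $q<\infty$; consequently $\max(x_{1},0)\varphi\in\cn^{1+\sigma}_{u,p,\infty}(\R)$ sharply, $\max(x_{1},0)\varphi\notin\cn^{t}_{u,p,q}(\R)$ for every $t>1+\sigma$, and $1+\sigma=\min(1+1/p,1+d/u)$. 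By the locality of the spaces $\cn^{s}_{u,p,q}$ this forces $T^{+}g\notin\mathbb{N}^{s}_{u,p,q}(\R)$ whenever $s>\min(1+1/p,1+d/u)$, and also when $s=\min(1+1/p,1+d/u)$ and $q<\infty$; under the standing hypotheses these cases cover everything for $d>1$ and everything for $d=1$ apart from $s=1+1/u$, $q=\infty$. In that last case $\max(x_{1},0)\varphi$ itself belongs to $\cn^{1+1/u}_{u,p,\infty}(\R)$, so I would instead test with a lacunary superposition $\sum_{k}2^{-k(1+1/u)}\psi\big(2^{k}(\cdot-x_{k})\big)$ of rescaled copies of a fixed corner-shaped profile $\psi$ with widely separated centres $x_{k}$: this lies in $\mathbb{N}^{1+1/u}_{u,p,\infty}(\R)$ while its image under $T^{+}$ does not, by the scaling bookkeeping familiar from \cite{RS} and \cite{Tr01}. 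In all cases \eqref{MR_N_eq2} is contradicted, which proves that boundedness forces \eqref{MR_N_eq1}; the corresponding statement for $T$ follows from the reduction above.
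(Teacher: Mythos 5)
Your sufficiency argument for $0<s<1$ is correct and is essentially the paper's Proposition~\ref{pro_0<s<1}: use the first-order difference characterization and the pointwise contraction $|\Delta^1_h T^+f|\le|\Delta^1_h f|$. Your necessity argument is also in the right spirit (test with smooth localisations of $x_1$ and, for $d=1$, $s=1+1/u$, $q=\infty$, with a lacunary superposition), though the paper uses the specific odd profile $g(x)=\sum_{j\ge0}2^{-j}\varphi(2^jx)$ from \cite{RS}, all peaks centred at the origin rather than widely separated; with separated centres and a nonnegative corner profile $\psi$ you risk producing a function fixed by $T^+$, so this detail needs care.

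The genuine gap is in the sufficiency range $s>1$. You reduce the problem to the ``multiplier'' estimate $\Vert\chi_{\{f>0\}}\partial_jf\,\vert\,\mathcal{N}^{s-1}_{u,p,q}\Vert\lesssim\Vert f\,\vert\,\mathcal{N}^{s}_{u,p,q}\Vert$, and you yourself flag this as the main obstacle; it is, and you do not overcome it. The difficulty is that $\chi_{\{f>0\}}$ depends on $f$ and its level set can be arbitrarily bad, so there is no pointwise-multiplier theorem to invoke, and the classical treatments of $f\mapsto|f|$ (Bourdaud--Meyer, Oswald, Triebel) all avoid this reduction precisely because it does not close. The paper's Proposition~\ref{d>1_N_MR} takes a different and complete route: it first proves the Triebel--Lizorkin--Morrey case (Proposition~\ref{d>1_E_MR}), using the Morrey characterization $\Vert\cdot\,\vert\,\mathcal{E}^{s-1}_{u,p,q}\Vert\sim\sup_{j,m}2^{j(d/p-d/u)}\Vert\cdot\,\vert\,F^{s-1}_{p,q}(2Q_{j,m})\Vert$ together with a covering of the zero set of a band-limited real-analytic approximant by $O(2^{(d-1)(r-k)})$ small cubes (this is where the hypothesis $1/p-1/u>1-1/d$ really enters, not as a level-set multiplier bound), and then obtains the Besov--Morrey case in one stroke by Peetre's interpolation of Lipschitz operators applied to the couple $(\mathbb{M}^u_p,\mathbb{E}^{s_1}_{u,p,1})$, using that $T$ is Lipschitz on $\mathbb{M}^u_p$ (triangle inequality) and bounded on $\mathbb{E}^{s_1}_{u,p,1}$, and that $\mathbb{N}^{\theta s_1}_{u,p,q}=(\mathbb{M}^u_p,\mathbb{E}^{s_1}_{u,p,1})_{\theta,q}$ (Lemma~\ref{re_inter}). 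Your proposed interpolation for $s=1$ between two Besov--Morrey spaces of different smoothness would face the same two issues: you would need $T^+$ Lipschitz, not merely bounded, on one endpoint, and you would need an interpolation identity for that couple, which in the Morrey scale is not automatic and is not the one the paper proves. So the missing idea is exactly this: derive the Besov--Morrey boundedness from the Triebel--Lizorkin--Morrey boundedness via real interpolation with the Morrey space as the zero-smoothness endpoint, rather than attacking the Besov--Morrey case directly through a chain-rule multiplier estimate.
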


It turns out that the critical border $ s = 1 + 1/p $ we know for the spaces $ \mathbb{B}^{s}_{p,q}(\mathbb{R}^{d})  $ is replaced by $ s = \min  ( 1 + 1/p , 1 + d/u  )  $ in the case of the spaces $  \mathbb{N}^{s}_{u,p,q}(\mathbb{R}^{d})  $. There is the surprising new phenomenon that for $ p < u  $ the critical border also depends on the dimension $d$. For $ p = u  $ this is not the case. Here we always have $ \min  ( 1 + 1/p , 1 + d/u ) =  1 + 1/p  $. So we recover the original result. Moreover for $ d = 1  $ because of $ p \leq u   $ we obtain $ \min  ( 1 + 1/p , 1 + 1/u  ) = 1 + 1/u  $. Hence the condition concerning the parameter $ s $ becomes much more easy in this case. The additional condition $  1/p -  1/u > 1 - 1/d  $ we need in the case $ d > 1  $ seems to be of technical nature. Maybe it can be left away using another method for the proof. Now let us look at the Triebel-Lizorkin-Morrey spaces. There is the following result.  

\begin{satz}\label{MR_E_d>1}
Let $ 1 \leq p < u < \infty  $, $ 1 \leq q \leq \infty   $ and $ s > 0  $. We assume
\[
\left\{ \begin{array}{lll}
p \not = 1,  q \not = \infty \ \mbox{and} \ \frac{1}{p} -  \frac{1}{u} > 1 - \frac{1}{d}  & \! & \mbox{in the case}\quad s=1 ;
\\
\frac{1}{p} -  \frac{1}{u} > 1 - \frac{1}{d}  & \! & \mbox{in the case} \quad 1 < s < \min  ( 1 + \frac{1}{p} , 1 + \frac{d}{u} ) \ \mbox{and} \ d > 1 ;
\\  
\frac{u}{p} \leq d & \! & \mbox{in the case}\quad s = \min  ( 1 + \frac{1}{p} , 1 + \frac{d}{u} ).
\end{array}
\right.
\]
Then $ T^{+}  $ acts on $  \mathbb{E}^{s}_{u,p,q}(\mathbb{R}^{d}) $ and there is a constant $ C > 0 $ independent of $ f \in \mathbb{E}^{s}_{u,p,q}(\mathbb{R}^{d})  $ such that we have 
\begin{equation}\label{MR_E_eq2}
\Vert T^{+}f  \vert \mathcal{E}^{s}_{u,p,q}(\mathbb{R}^{d})  \Vert \leq C \Vert f \vert \mathcal{E}^{s}_{u,p,q}(\mathbb{R}^{d})  \Vert
\end{equation}
if and only if
\begin{align*}
s < \min \Big ( 1 + \frac{1}{p} , 1 + \frac{d}{u} \Big ). 
\end{align*}
Moreover in the formulation of theorem \ref{MR_E_d>1} one can replace the operator $ T^{+} $ by $ T $.
\end{satz}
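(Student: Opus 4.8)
The plan is to follow the same architecture that was used for the Besov-Morrey case in Theorem~\ref{MR_N_d>1}, but to replace the pieces that genuinely use the structure of $\mathcal{N}^s_{u,p,q}$ by their Triebel-Lizorkin-Morrey counterparts. First I would settle the necessity of \eqref{MR_E_eq2}'s displayed condition $s<\min(1+1/p,1+d/u)$: this should come from a scaling/testing argument with a family of dilated bump functions (or atoms) together with the known failure of $T^+$ on the classical $\mathbb{F}^s_{p,q}$ scale at $s=1+1/p$ (Theorem~\ref{Hist_Res}) and a Morrey-specific test exploiting the exponent $d/u$; essentially one builds a function whose Morrey norm is finite but whose image under $T^+$ develops a corner that is too singular at the $1+d/u$ threshold. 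For the sufficiency I would split into the subcritical range $0<s<\min(1+1/p,1+d/u)$ and the critical value $s=\min(1+1/p,1+d/u)$, since the side conditions differ there.

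For the subcritical acting/boundedness statement the natural route is to represent $f\in\mathbb{E}^s_{u,p,q}(\R)$ through an atomic or wavelet decomposition adapted to the Triebel-Lizorkin-Morrey scale, write $T^+f$ via the pointwise identity $T^+f=\tfrac12(f+|f|)$ so it suffices to handle $T=|\cdot|$, and then control $\||f|\,|\mathcal{E}^s_{u,p,q}\|$ by a paradifferential/difference-characterization argument. Concretely, for $0<s<1$ one uses that $\mathcal{E}^s_{u,p,q}$ for such $s$ has an equivalent norm in terms of first differences $\Delta_h f$, and the pointwise Lipschitz estimate $||f(x+h)|-|f(x)||\le|f(x+h)-f(x)|$ transfers boundedness verbatim. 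For $1\le s<1+1/p$ (the classical obstruction regime) one would mimic Triebel's argument from \cite{Tr01}: decompose $f=\sum_j f_j$ into frequency-localized pieces, estimate $|f|$ by splitting according to whether the ``low-frequency'' partial sum $f^J=\sum_{j\le J}f_j$ is comparable to $f$ or not, and use the local-means / maximal-function characterization of $\mathcal{E}^s_{u,p,q}$ together with the Fefferman--Stein vector-valued maximal inequality in the Morrey setting (available since $p>1$ in the relevant subrange). The extra hypothesis $1/p-1/u>1-1/d$ and, at $s=1$, $p\ne1,q\ne\infty$, enters exactly here: it is what lets one absorb the error terms coming from the non-smoothness of $t\mapsto|t|$ at the origin when measuring in a Morrey quasinorm rather than an $L_p$ quasinorm, and it is borrowed (as in Theorem~\ref{MR_N_d>1}) from an auxiliary embedding or from a Morrey-adapted version of the ``$s<1+1/p$'' estimate.

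For the critical case $s=\min(1+1/p,1+d/u)$ with side condition $u/p\le d$ I would argue that this condition forces $1+d/u\le 1+1/p$, i.e. the binding threshold is the Morrey one $1+d/u$, and then run a borderline estimate: either a direct $q$-summation argument that works because one is allowed the full range $1\le q\le\infty$ here (unlike in the Besov case where $q\ne\infty$ was needed), or a reduction to the subcritical case via a suitable embedding $\mathcal{E}^{s}_{u,p,q}\hookrightarrow\mathcal{E}^{s'}_{u',p',q}$ with $s'<\min(1+1/p',1+d/u')$ followed by the already-established subcritical result — the inequality $u/p\le d$ being precisely what makes such an embedding land in the admissible region. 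Finally I would note that every step for $T^+$ goes through for $T=|\cdot|$ with the same pointwise inequalities, and that the necessity direction for $T$ uses the same test functions, giving the ``moreover'' clause.

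The main obstacle I expect is the critical-line sufficiency under the single hypothesis $u/p\le d$: unlike the Besov-Morrey theorem, here no restriction on $q$ is imposed, so one cannot rely on summing a geometric series in $j$ and must instead exploit a genuinely sharp embedding or a refined Fefferman--Stein-type estimate in Triebel-Lizorkin-Morrey spaces, and verifying that $u/p\le d$ is exactly the sharp condition for that embedding to be available will require some care with the Morrey scaling exponents. The secondary difficulty is making the paradifferential decomposition of $|f|$ work uniformly in the Morrey norm in the range $1\le s<1+1/p$, where the classical proof uses $L_p$-duality and density arguments that are more delicate in the non-separable Morrey setting; here the technical hypothesis $1/p-1/u>1-1/d$ does the bookkeeping, just as in Theorem~\ref{MR_N_d>1}, but its role in the $\mathcal{E}$-scale needs to be checked against the $F$-type square-function norm rather than the $\ell_q(L_u)$ norm.
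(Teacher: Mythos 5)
Your outline goes off the rails at the critical line, and in the range $s\ge 1$, $d>1$ it proposes a route that the paper argues cannot work as stated.

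First, the critical case. You read the side hypothesis $u/p\le d$ at $s=\min(1+\tfrac1p,1+\tfrac du)$ as something to be exploited to \emph{prove} the bound there, and you even claim $u/p\le d$ forces $1+\tfrac du\le 1+\tfrac1p$. Both are wrong. Algebraically $u/p\le d$ gives $\tfrac du\ge\tfrac1p$, hence $\min(1+\tfrac1p,1+\tfrac du)=1+\tfrac1p$ (the classical threshold, not the Morrey one). And since the theorem's conclusion is an ``if and only if $s<\min$'', the inclusion of $s=\min$ among the admissible parameters means the theorem is asserting the \emph{failure} of \eqref{MR_E_eq2} there; the hypothesis $u/p\le d$ is exactly what makes the paper's unboundedness construction (Proposition \ref{d>1_Nec_MR}, which only covers $s\ge 1+\tfrac1p$ or $s>1+\tfrac du$) reach the endpoint. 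There is no ``borderline estimate'' to run; you would be trying to prove something the theorem denies.

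Second, the sufficiency for $d>1$, $1\le s<\min(1+\tfrac1p,1+\tfrac du)$. You propose to port Triebel's paradifferential/local-means proof of Theorem \ref{Hist_Res} directly to the $\mathcal{E}$-scale, with a Morrey Fefferman--Stein inequality and the technical condition $1/p-1/u>1-1/d$ ``doing the bookkeeping'' for error terms from the kink of $t\mapsto|t|$. But the paper's whole point in this regime is that this direct transfer fails: Lemma \ref{res_no_Fub} shows $\mathbb{E}^s_{u,p,q}(\R)$ lacks the Fubini property for $p<u$, which is the device the classical $\mathbb{F}^s_{p,q}$ proof uses to pass from $d=1$ to $d>1$. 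The paper's actual route is entirely different: it reduces via Lemma \ref{l_deriv} to $\partial^1_i|f|$ at smoothness $s-1\in(0,1)$, invokes the Morrey characterization $\|f\mid\mathcal{E}^{s-1}_{u,p,q}\|\sim\sup_{j,m}2^{j(d/p-d/u)}\|f\mid F^{s-1}_{p,q}(2Q_{j,m})\|$ (Proposition \ref{E_MorChar}), applies the classical Theorem \ref{Hist_Res} cube by cube, and controls the cubes where $f$ changes sign by covering the zero set of the (analytic, after a Fatou reduction) $f$. The hypothesis $1/p-1/u>1-1/d$, i.e.\ $d/p-d/u-d+1>0$, is used there to make the geometric series over the $(d-1)$-dimensional covering of $Z(f)$ summable with the Morrey weight $2^{j(d/p-d/u)}$ — it has nothing to do with absorbing error terms from a paraproduct. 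Your proposal does not contain the cube localization, the zero-set covering, or the Fatou/band-limitation reduction, and it does not explain how to avoid the Fubini obstruction, so it has a genuine gap where the core of the paper's argument lives.
\par
The parts of your sketch that do match the paper are peripheral: the $0<s<1$ case via first differences and the triangle inequality, the reduction of $T^+$ to $T$ via $2\max(f,0)=f+|f|$, and the general idea of a test-function necessity argument.
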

The special case $ p > 1  $, $ s = 1  $ and $ q = 2  $ in theorem \ref{MR_E_d>1} refers to the so-called Sobolev-Morrey spaces, see proposition \ref{s=1_E_MR} below. It turns out that also in the case of the Triebel-Lizorkin-Morrey spaces the critical border $ s = 1 + 1/p  $ we know for the spaces $ \mathbb{F}^{s}_{p,q}(\mathbb{R}^{d})  $ is replaced by $ s = \min  ( 1 + 1/p , 1 + d/u  )  $ for $ p < u  $. When you want to prove theorem \ref{MR_E_d>1} there is a big difference between the cases $ p = u  $ and $ p < u  $. So for $ d > 1  $ in most of the cases the Triebel-Lizorkin-Morrey spaces do not have the so-called Fubini property for $ p \not = u  $, see lemma \ref{res_no_Fub}. 

This paper is organized in the following way. In section \ref{sec_1} that you read at the moment the main results are formulated. In section \ref{sec_2} the spaces $  \mathbb{N}^{s}_{u,p,q}(\mathbb{R}^{d})  $ and $  \mathbb{E}^{s}_{u,p,q}(\mathbb{R}^{d})  $ are defined. Moreover some important properties of them are collected here. In section \ref{sec_3} we will prove the main results. So in subsection \ref{sec_3.1} we will deal with the simple case $ 0 < s < 1  $. In subsection \ref{sec_3.2} we deal with the Triebel-Lizorkin-Morrey spaces and look at the case $ s > 1 $ and $ d = 1  $. Here our main tool will be a Hardy-type inequality. In subsection \ref{sec_3.3} we will prove the results for $ s \geq 1   $ and $ d > 1 $. For that purpose we will apply the so-called Morrey characterization for the Triebel-Lizorkin-Morrey spaces. In subsection \ref{sec_T_BMS} we will investigate the boundedness properties of the operator $ T $ in the context of Besov-Morrey spaces. In subsection \ref{subsec_Nec} we prove that some of the conditions concerning the parameter $s$ that appear in the main results are also necessary. At the end of this paper in section \ref{sec_5} we will discuss some further properties of the operator $ T^{+} $ like continuity or Lipschitz continuity. But at first we will fix some notation.       

\section*{Notation}

As usual $\N$ denotes the natural numbers, $\N_0$ the natural numbers including $0$, $\zz$ the integers and $\re$ the real numbers. $\R$ denotes the $d$-dimensional  Euclidean space. We put
\[
 B(x,t) := \{y\in \R: \quad |x-y|< t\}\, , \qquad x \in \R\, , \quad t>0.
\]
Let $\mathcal{S}(\R)$ be the collection of all Schwartz functions on $\R$ endowed with the usual topology and denote by $\mathcal{S}'(\R)$ its topological dual, namely the space of all bounded linear functionals on $\mathcal{S}(\R)$ endowed with the weak $\ast$-topology. With $ \mathbb{S}(\R)  $ we denote the set of all real-valued Schwartz functions. The symbol $\cf$ refers to  the Fourier transform,
$\cfi$ to its inverse transform, both defined on $\cs'(\R)$.
By $C^\infty_0(\R)$ we mean the set of all infinitely often differentiable real-valued functions on $\R$ with compact support. For $ 1 \leq p \leq \infty  $ with $ L_{p}(\R)  $ we denote the Lebesgue spaces and $ L_{p}^{loc}(\R)   $ are the local Lebesgue spaces. By $ \mathbb{L}_{p}(\R)  $ and $ \mathbb{L}_{p}^{loc}(\R)  $ we mean the real parts of $ L_{p}(\R)   $ and $ L_{p}^{loc}(\R)   $. For two quasi-Banach spaces $ X $ and $ Y $ we write $ X \hookrightarrow Y $ if $ X \subset Y $ and the natural embedding of $ X $ into $ Y $ is continuous. The symbols  $C, C_1, c, c_{1} \ldots $ denote  positive constants that depend only on the fixed parameters $d,s,u,p,q$ and probably on auxiliary functions. When we write $ A \sim B   $ we mean that there exist two constants $ C_{1}, C_{2} > 0   $ such that $ A \leq C_{1} B \leq C_{2} A  $. 

\section{Definition and basic properties of Besov-Morrey spaces and Triebel-Lizorkin-Morrey spaces}\label{sec_2}

The Besov-Morrey spaces and also the Triebel-Lizorkin-Morrey spaces are function spaces that are built upon Morrey spaces. Because of this at first we want to recall the definition of the Morrey spaces. 

\begin{defi}\label{def_mor}

Let $ 1 \leq p \leq u < \infty$. Then the real Morrey space $ \mathbb{M}^{u}_{p}(\R)  $ is defined to be the set of all real-valued functions $ f \in \mathbb{L}_{p}^{loc}(\R) $ such that 
\begin{align*}
\Vert f \vert \mathcal{M}^{u}_{p}(\R) \Vert := \sup_{y \in \R, r > 0} \vert B(y,r) \vert^{\frac{1}{u}-\frac{1}{p}} \Big ( \int_{B(y,r)} \vert f(x) \vert^{p} dx      \Big )^{\frac{1}{p}} < \infty.
\end{align*} 

\end{defi}

The Morrey spaces are Banach spaces. They have many connections to the Lebesgue spaces. So for $ p \in [1,\infty) $ we have $ \mathbb{M}^{p}_{p}(\R) = \mathbb{L}_{p}(\R)$. Moreover for $ 1 \leq p_{2} \leq p_{1} \leq u < \infty $ we have
\begin{equation}\label{Lu_Mo_Em}
\mathbb{L}_{u}(\R) = \mathbb{M}^{u}_{u}(\R) \hookrightarrow   \mathbb{M}^{u}_{p_{1}}(\R)  \hookrightarrow  \mathbb{M}^{u}_{p_{2}}(\R).
\end{equation} 
In what follows we will need a so-called smooth  dyadic decomposition of the unity. Let $\varphi_0 \in C_0^{\infty}({\R})$ be a non-negative function such that $\varphi_0(x) = 1$ if $|x|\leq 1$ and $ \varphi_0 (x) = 0$ if $|x|\geq 3/2$. For $k\in \N$ we define $  \varphi_k(x) := \varphi_0(2^{-k}x)-\varphi_0(2^{-k+1}x) $. Then because of $ \sum_{k=0}^\infty \varphi_k(x) = 1 $ for all $ x\in \R $ and $ \supp \varphi_k \subset \{ x\in \R: \: 2^{k-1}\le |x|\le 3 \cdot 2^{k-1} \} $ with $ k \in \N $ we call the system $(\varphi_k)_{k\in \N_0 }$ a smooth dyadic decomposition of the unity on $\R$. Because of the Paley-Wiener-Schwarz theorem $\cfi[\varphi_{k}\, \cf f]$ with $k \in \N_0$ is a smooth function for all $f\in \cs'(\R)$. Now we are able to define the Besov-Morrey spaces and the Triebel-Lizorkin-Morrey spaces. 

\begin{defi}\label{def_tlm}

Let $ 1 \leq p \leq u < \infty $, $ 1 \leq q \leq \infty $ and $ s > 0 $. Let $ (\varphi_{k})_{k\in \N_0 }$ be a smooth dyadic decomposition of the unity.

\begin{itemize}
\item[(i)]

The real Besov-Morrey space $  \mathbb{N}^{s}_{u,p,q}(\mathbb{R}^{d}) $ is defined to be the set of all real-valued functions $ f \in \mathbb{L}_{1}^{loc}(\R) \cap \mathcal{S}'(\mathbb{R}^{d})  $ such that
\begin{align*} 
\Vert f \vert \mathcal{N}^{s}_{u,p,q}(\mathbb{R}^{d})  \Vert :=  \Big ( \sum_{k = 0}^{\infty} 2^{ksq}   \Vert \mathcal{F}^{-1}[\varphi_{k} \mathcal{F}f]  \vert \mathcal{M}^{u}_{p}(\R)   \Vert  ^{q} \Big   )^{\frac{1}{q}} < \infty .
\end{align*}

In the case $ q = \infty $ the usual modifications are made.

\item[(ii)]

The real Triebel-Lizorkin-Morrey space $  \mathbb{E}^{s}_{u,p,q}(\mathbb{R}^{d}) $ is defined to be the set of all real-valued functions $ f \in\mathbb{L}_{1}^{loc}(\R) \cap  \mathcal{S}'(\mathbb{R}^{d})  $ such that
\begin{align*} 
\Vert f \vert \mathcal{E}^{s}_{u,p,q}(\mathbb{R}^{d})  \Vert := \Big  \Vert \Big ( \sum_{k = 0}^{\infty} 2^{ksq} \vert \mathcal{F}^{-1}[\varphi_{k} \mathcal{F}f](x) \vert ^{q} \Big   )^{\frac{1}{q}} \Big \vert \mathcal{M}^{u}_{p}(\R)   \Big \Vert < \infty .
\end{align*}

In the case $ q = \infty $ the usual modifications are made.

\end{itemize}

\end{defi}

\begin{rem}\label{rem_BM_TLM_rf}
In the literature often the Besov-Morrey spaces and the Triebel-Lizorkin-Morrey spaces are defined in a more general way, see for example \cite{TangXu} or section 1.3.3 in \cite{ysy}. So usually for the parameters we allow $ s \in \mathbb{R}  $, $ 0 < p \leq u < \infty $ and $ 0 < q \leq \infty $. Moreover instead of $ f \in \mathbb{L}_{1}^{loc}(\R) \cap \mathcal{S}'(\mathbb{R}^{d})   $ in the original definition we have $ f \in \mathcal{S}'(\mathbb{R}^{d})   $. These original Besov-Morrey spaces $ \mathcal{N}^{s}_{u,p,q}(\mathbb{R}^{d})  $ and Triebel-Lizorkin-Morrey spaces $ \mathcal{E}^{s}_{u,p,q}(\mathbb{R}^{d})  $ contain complex-valued functions and sometimes also singular distributions. In this paper we will investigate the properties of the operator $(T^{+}f)(x) = \max(f(x),0)$. $ T^{+} $ neither makes sense for complex-valued functions nor for singular distributions. Therefore we work with the real function spaces $ \mathbb{N}^{s}_{u,p,q}(\mathbb{R}^{d})  $ and $ \mathbb{E}^{s}_{u,p,q}(\mathbb{R}^{d})  $. The restrictions $ s > 0  $ and $ p \geq 1  $ make sure that our function spaces do not contain singular distributions, see theorem 3.3 in \cite{HaMoSk} and theorem 3.4 in \cite{HaMoSk2}.  
\end{rem}

In what follows sometimes we write $ \mathbb{A}^{s}_{u,p,q}(\mathbb{R}^{d})  $. Then we mean either $ \mathbb{N}^{s}_{u,p,q}(\mathbb{R}^{d})  $ or $ \mathbb{E}^{s}_{u,p,q}(\mathbb{R}^{d})  $. By $ \mathcal{A}^{s}_{u,p,q}(\mathbb{R}^{d})   $ we mean either $ \mathcal{N}^{s}_{u,p,q}(\mathbb{R}^{d})   $ or $ \mathcal{E}^{s}_{u,p,q}(\mathbb{R}^{d})   $. Of course we always have $ \mathbb{A}^{s}_{u,p,q}(\R) \subset  \mathcal{A}^{s}_{u,p,q}(\R)  $. Because of this many results that are known for the usual Besov-Morrey and Triebel-Lizorkin-Morrey spaces have obvious counterparts for the spaces $  \mathbb{N}^{s}_{u,p,q}(\mathbb{R}^{d}) $ and $ \mathbb{E}^{s}_{u,p,q}(\mathbb{R}^{d})  $. Hereinafter we want to collect some basic properties of the Besov-Morrey spaces and the Triebel-Lizorkin-Morrey spaces. Most of them will be used later.

\begin{lem}\label{l_bp1}
Let $ 1 \leq p \leq u < \infty $, $ 1 \leq q \leq \infty $ and $ s > 0 $. Then we know the following.

\begin{itemize}
\item[(i)]

The spaces $ \mathbb{A}^{s}_{u,p,q}(\mathbb{R}^{d})  $ are independent of the chosen smooth dyadic decomposition of the unity in the sense of equivalent norms. 

\item[(ii)]

The spaces $ \mathbb{A}^{s}_{u,p,q}(\mathbb{R}^{d})  $ are Banach spaces. 

\item[(iii)]

It holds $ \mathbb{S}(\R)   \hookrightarrow  \mathbb{A}^{s}_{u,p,q}(\mathbb{R}^{d})   \hookrightarrow   \mathcal{S}'(\mathbb{R}^{d})$.

\item[(iv)]

We have $ \mathbb{N}^{s}_{p,p,q}(\mathbb{R}^{d}) = \mathbb{B}^{s}_{p,q}(\R)  $ and $ \mathbb{E}^{s}_{p,p,q}(\mathbb{R}^{d}) = \mathbb{F}^{s}_{p,q}(\R)  $.

%\item[(v)]

%For $ 1 < p \leq u < \infty $ it holds $ \mathcal{E}^{0}_{u,p,2}(\mathbb{R}^{d}) = \mathcal{M}^{u}_{p}(\R)   $. 

\end{itemize}

\end{lem}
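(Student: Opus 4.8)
The plan is to derive all four statements from their well-known complex-valued analogues for $\mathcal{N}^{s}_{u,p,q}(\R)$ and $\mathcal{E}^{s}_{u,p,q}(\R)$ --- see \cite{TangXu}, \cite{yy1}, \cite{yy2} and section 1.3.3 of \cite{ysy} --- using throughout that $\mathbb{A}^{s}_{u,p,q}(\R)$ is exactly the set of real-valued members of $\mathcal{A}^{s}_{u,p,q}(\R)$, equipped with the same norm. So for each item it will be enough to quote the complex-valued fact and to check that it survives the restriction to real-valued functions.

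For (i) I would start from the observation that for two smooth dyadic decompositions $(\varphi_{k})_{k\in\N_0}$ and $(\psi_{k})_{k\in\N_0}$ the support conditions give $\psi_{k}=\psi_{k}(\varphi_{k-1}+\varphi_{k}+\varphi_{k+1})$ (with $\varphi_{-1}:=0$), hence $\mathcal{F}^{-1}[\psi_{k}\mathcal{F}f]=\sum_{|j-k|\le 1}\mathcal{F}^{-1}[\psi_{k}\mathcal{F}\mathcal{F}^{-1}[\varphi_{j}\mathcal{F}f]]$; the two-sided norm estimate then follows by applying a Fourier multiplier (or Peetre maximal function) bound on the Morrey spaces $\mathbb{M}^{u}_{p}(\R)$ to each frequency-localised piece, exactly as in the complex case, and passing to real-valued $f$ changes nothing. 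Item (iv) I would do directly: since $\mathbb{M}^{p}_{p}(\R)=\mathbb{L}_{p}(\R)$ with identical norms, setting $u=p$ in Definition \ref{def_tlm} reproduces word for word the Fourier-analytic descriptions of $\mathbb{B}^{s}_{p,q}(\R)$ and $\mathbb{F}^{s}_{p,q}(\R)$.

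For (ii) I would first invoke (iii) to get the embedding $\mathcal{A}^{s}_{u,p,q}(\R)\hookrightarrow\mathcal{S}'(\R)$, so that norm convergence forces convergence in $\mathcal{S}'(\R)$; since complex conjugation is continuous on $\mathcal{S}'(\R)$, the real-valued distributions form a closed subspace, so $\mathbb{A}^{s}_{u,p,q}(\R)$ is a closed linear subspace of the Banach space $\mathcal{A}^{s}_{u,p,q}(\R)$, hence itself a Banach space. The triangle inequality for $\Vert\,\cdot\,\vert\mathcal{A}^{s}_{u,p,q}(\R)\Vert$ needed for the metric structure is genuine because $p,q\ge 1$.

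The part that needs real work is (iii). For $\mathbb{S}(\R)\hookrightarrow\mathbb{A}^{s}_{u,p,q}(\R)$ I would use that $\mathcal{F}f$ is again a Schwartz function and that $\supp(\varphi_{k}\mathcal{F}f)\subset\{2^{k-1}\le|\xi|\le 3\cdot 2^{k-1}\}$, which makes $\mathcal{F}^{-1}[\varphi_{k}\mathcal{F}f]$ and all its derivatives rapidly decaying in both $x$ and $k$; a crude estimate of the Morrey norm of such a function gives $\Vert\mathcal{F}^{-1}[\varphi_{k}\mathcal{F}f]\,\vert\mathcal{M}^{u}_{p}(\R)\Vert\lesssim 2^{-kM}$ for every $M$, so the weighted $\ell_q$-sum in Definition \ref{def_tlm} converges and is dominated by a Schwartz seminorm of $f$. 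For the reverse embedding $\mathbb{A}^{s}_{u,p,q}(\R)\hookrightarrow\mathcal{S}'(\R)$ I would write $f=\sum_{k\ge 0}\mathcal{F}^{-1}[\varphi_{k}\mathcal{F}f]$ in $\mathcal{S}'(\R)$, use $\mathbb{M}^{u}_{p}(\R)\hookrightarrow\mathbb{L}_{1}^{loc}(\R)$ together with a Nikol'skij-type inequality for band-limited functions to pass from the Morrey norm of each piece to a pointwise or $\mathbb{L}_{1}$-bound, pair against a test function $\psi\in\mathcal{S}(\R)$, and sum the resulting geometric series in $k$ (here $s>0$ enters) to reach $|\langle f,\psi\rangle|\le C\Vert f\,\vert\mathcal{A}^{s}_{u,p,q}(\R)\Vert\,q_N(\psi)$ with $q_N$ a suitable Schwartz seminorm. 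I expect this last chain of estimates --- producing the continuous embedding into $\mathcal{S}'(\R)$ uniformly over all test functions --- to be the only genuine obstacle; the rest is bookkeeping on top of the already established complex-valued theory.
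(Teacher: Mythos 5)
Your proposal is correct and follows essentially the same path as the paper, which merely reduces to the established complex-valued theory (Tang--Xu theorem 2.8 for (i), Kozono--Yamazaki corollary 2.6 and Yuan--Sickel--Yang lemma 2.1 for (ii), Sawano--Tanaka theorem 3.2 for (iii)) and treats (iv) as immediate from $\mathbb{M}^{p}_{p}(\R)=\mathbb{L}_{p}(\R)$, noting that the passage to real-valued functions is harmless. The detailed sketches you supply for (i), (iii) and (iv) mirror what those references do, and your closed-subspace argument for (ii) (real distributions are closed in $\mathcal{S}'(\R)$, hence $\mathbb{A}^{s}_{u,p,q}(\R)$ is closed in the Banach space $\mathcal{A}^{s}_{u,p,q}(\R)$ via (iii)) is a legitimate alternative to directly citing a completeness proof.
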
 

\begin{proof}

(i) was proved in \cite{TangXu}, see theorem 2.8. The proof of (ii) is standard, see corollary 2.6. in \cite{KoYa} and lemma 2.1 in \cite{ysy}. (iii) can be found in \cite{SawTan}, see theorem 3.2. and with slightly different formulation in \cite{ysy}, see proposition 2.3. (iv) is obvious. 
% (v) was proved in \cite{maz}, see proposition 4.1.  
\end{proof}

In many cases the Besov-Morrey spaces and the Triebel-Lizorkin-Morrey spaces are embedded into the Morrey spaces. For us the following result will be important, see theorem 3.2 in \cite{HaMoSk2}. 

\begin{lem}\label{E_Mor_em}
Let $ 1 \leq p \leq u < \infty  $, $ s > 0 $ and $ 1 \leq q \leq \infty  $. Then there is the embedding $ \mathbb{E}^{s}_{u,p,q}(\mathbb{R}^{d}) \hookrightarrow \mathbb{M}^{u}_{p}(\R) $.
\end{lem}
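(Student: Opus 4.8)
The plan is to transplant the classical argument for $\mathbb{F}^{s}_{p,q}(\R)\hookrightarrow\mathbb{L}_{p}(\R)$, valid for $s>0$, to the Morrey setting. The point is that this argument uses only the triangle inequality in the target quasi-Banach space together with the trivial embedding into a Besov-type space of smoothness zero, and both ingredients survive when the $\mathbb{L}_{p}$-quasinorm is replaced by the $\mathbb{M}^{u}_{p}$-quasinorm, since the latter is built from a supremum of functionals that are monotone with respect to the modulus and is a genuine norm for $p\ge 1$. Fix a smooth dyadic decomposition of the unity $(\varphi_{k})_{k\in\N_{0}}$ and abbreviate $f_{k}:=\mathcal{F}^{-1}[\varphi_{k}\mathcal{F}f]$.

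The first step is the elementary pointwise estimate: for each $k\in\N_{0}$ and almost every $x\in\R$,
\[
2^{ks}\,|f_{k}(x)|\;\le\;\Big(\sum_{j=0}^{\infty}2^{jsq}\,|f_{j}(x)|^{q}\Big)^{1/q}
\]
(with the obvious change for $q=\infty$). Taking the $\mathbb{M}^{u}_{p}$-quasinorm of both sides and using its monotonicity one obtains
\[
2^{ks}\,\Vert f_{k}\,\vert\mathcal{M}^{u}_{p}(\R)\Vert\;\le\;\Vert f\,\vert\mathcal{E}^{s}_{u,p,q}(\R)\Vert\qquad\text{for all }k\in\N_{0},
\]
which is just the Morrey analogue of $\mathbb{F}^{s}_{p,q}(\R)\hookrightarrow\mathbb{B}^{s}_{p,\infty}(\R)$.

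Since $s>0$, summing a geometric series now gives
\[
\sum_{k=0}^{\infty}\Vert f_{k}\,\vert\mathcal{M}^{u}_{p}(\R)\Vert\;\le\;\Big(\sum_{k=0}^{\infty}2^{-ks}\Big)\,\sup_{k\in\N_{0}}2^{ks}\,\Vert f_{k}\,\vert\mathcal{M}^{u}_{p}(\R)\Vert\;\le\;\frac{1}{1-2^{-s}}\,\Vert f\,\vert\mathcal{E}^{s}_{u,p,q}(\R)\Vert .
\]
Hence the series $\sum_{k}f_{k}$ converges absolutely in the Banach space $\mathbb{M}^{u}_{p}(\R)$; denote its sum by $g$, so that $\Vert g\,\vert\mathcal{M}^{u}_{p}(\R)\Vert\le(1-2^{-s})^{-1}\Vert f\,\vert\mathcal{E}^{s}_{u,p,q}(\R)\Vert$. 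It remains to identify $g$ with $f$. Because $\sum_{k=0}^{N}\varphi_{k}=\varphi_{0}(2^{-N}\,\cdot\,)$, the partial sums satisfy $\sum_{k=0}^{N}f_{k}=\mathcal{F}^{-1}[\varphi_{0}(2^{-N}\,\cdot\,)\mathcal{F}f]\to f$ in $\mathcal{S}'(\R)$ by the standard argument, while convergence in $\mathbb{M}^{u}_{p}(\R)$ implies, via H\"older's inequality on every ball, convergence in $\mathbb{L}_{1}^{loc}(\R)$ and hence in $\mathcal{S}'(\R)$. By uniqueness of limits in $\mathcal{S}'(\R)$ we get $g=f$, so the a priori given $f\in\mathbb{L}_{1}^{loc}(\R)\cap\mathcal{S}'(\R)$ coincides almost everywhere with $g\in\mathbb{M}^{u}_{p}(\R)$; the displayed bound is then precisely the asserted estimate, and continuity of the embedding follows at once.

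I expect the only point requiring genuine care to be this last identification, namely making precise that the Littlewood--Paley series of the originally given distribution $f$ resums, inside $\mathbb{M}^{u}_{p}(\R)$, to $f$ itself. The hypothesis $p\ge 1$ enters here in an essential way, since it is what turns $\mathbb{M}^{u}_{p}(\R)$ into a Banach space so that the absolute convergence established above actually produces a limit; every other ingredient is elementary, and in particular the proof needs neither maximal-function estimates nor any deeper Littlewood--Paley characterization of Morrey spaces.
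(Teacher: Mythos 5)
The paper does not contain its own proof of this lemma; it simply cites it as Theorem~3.2 in \cite{HaMoSk2}, so there is no proof to compare with directly. Your argument is a correct and pleasantly self-contained one: the pointwise domination $2^{ks}\vert f_{k}(x)\vert\le\big(\sum_{j}2^{jsq}\vert f_{j}(x)\vert^{q}\big)^{1/q}$ gives the Morrey analogue of $\mathcal{E}^{s}_{u,p,q}\hookrightarrow\mathcal{N}^{s}_{u,p,\infty}$ by monotonicity of the $\mathcal{M}^{u}_{p}$-norm; summing a geometric series (here $s>0$ is used) yields $\sum_{k}\Vert f_{k}\vert\mathcal{M}^{u}_{p}\Vert\lesssim\Vert f\vert\mathcal{E}^{s}_{u,p,q}\Vert$; and completeness of $\mathcal{M}^{u}_{p}$ for $p\ge1$ produces the sum $g$ of the Littlewood--Paley pieces inside $\mathcal{M}^{u}_{p}$. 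These steps are all fine and indeed use nothing beyond elementary facts.

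One wording in the identification step should be tightened: convergence in $\mathbb{L}^{loc}_{1}$ does \emph{not}, in general, imply convergence in $\mathcal{S}'$ (a sequence can escape to infinity with superpolynomial mass and still converge locally). What you actually need, and what is true here, is the stronger statement that $\mathcal{M}^{u}_{p}(\R)\hookrightarrow\mathcal{S}'(\R)$ continuously: for $\phi\in\mathcal{S}$ split $\R$ into dyadic annuli, use H\"older on each annulus together with the Morrey estimate $\int_{B(0,R)}\vert h\vert\,dx\le C\,R^{d(1-1/u)}\Vert h\vert\mathcal{M}^{u}_{p}\Vert$, and absorb the factor $R^{d(1-1/u)}$ by the rapid decay of $\phi$. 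With that, $S_{N}\to g$ in $\mathcal{S}'$ directly, and uniqueness of limits in $\mathcal{S}'$ gives $g=f$, as you wanted. With this small replacement the proof is complete and correct.
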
 

The spaces $ \mathbb{A}^{s}_{u,p,q}(\mathbb{R}^{d})  $ have the so-called Fatou property, see lemma 3.5 in \cite{SawTan}.

\begin{lem}\label{l_fatou}
Let $ 1 \leq p \leq u < \infty $, $ 1 \leq q \leq \infty $ and $ s > 0 $. Suppose that $ (f_{k})_{k \in \mathbb{N}_{0} }  $ is a bounded sequence in $ \mathbb{A}^{s}_{u,p,q}(\mathbb{R}^{d})  $. The limit $ f = \lim_{k \rightarrow \infty} f_{k}  $ exists in $ \mathcal{S}'(\mathbb{R}^{d}) $. Then we have $ f \in  \mathbb{A}^{s}_{u,p,q}(\mathbb{R}^{d}) $ and 
\begin{align*}
\Vert f \vert \mathcal{A}^{s}_{u,p,q}(\mathbb{R}^{d})  \Vert \leq C \sup_{k \in \mathbb{N}_{0}} \Vert f_{k} \vert \mathcal{A}^{s}_{u,p,q}(\mathbb{R}^{d})  \Vert.
\end{align*}

\end{lem}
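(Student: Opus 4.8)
The plan is to exploit the compatibility of convergence in $\mathcal{S}'(\R)$ with the Littlewood--Paley decomposition and then to apply Fatou's lemma for integrals. Fix a smooth dyadic decomposition $(\varphi_k)_{k\in\N_0}$. Since $f_k \to f$ in $\mathcal{S}'(\R)$, for every fixed $j \in \N_0$ we have $\mathcal{F}^{-1}[\varphi_j \mathcal{F}f_k] \to \mathcal{F}^{-1}[\varphi_j \mathcal{F}f]$ in $\mathcal{S}'(\R)$, and all these distributions have Fourier support in the single compact set $\{\xi : |\xi| \le 3\cdot 2^{j-1}\}$. Choosing $\psi_j \in \mathcal{S}(\R)$ with $\mathcal{F}\psi_j \equiv 1$ on that set we obtain $\mathcal{F}^{-1}[\varphi_j \mathcal{F}f_k] = \mathcal{F}^{-1}[\varphi_j \mathcal{F}f_k] * \psi_j$, whence for every $x \in \R$
\begin{align*}
\mathcal{F}^{-1}[\varphi_j \mathcal{F}f_k](x) = \big\langle \mathcal{F}^{-1}[\varphi_j \mathcal{F}f_k],\, \psi_j(x - \cdot)\big\rangle \longrightarrow \big\langle \mathcal{F}^{-1}[\varphi_j \mathcal{F}f],\, \psi_j(x-\cdot)\big\rangle = \mathcal{F}^{-1}[\varphi_j \mathcal{F}f](x)
\end{align*}
as $k \to \infty$. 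Thus each dyadic block of $f_k$ converges pointwise (indeed locally uniformly) to the corresponding dyadic block of $f$; in particular $f$ is real-valued.

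For the Triebel--Lizorkin--Morrey case, fix $N \in \N_0$ and a ball $B = B(y,r)$, and set $g_k := \big(\sum_{j=0}^{\infty} 2^{jsq}|\mathcal{F}^{-1}[\varphi_j \mathcal{F}f_k]|^q\big)^{1/q}$. By the pointwise convergence above, for every $x$,
\begin{align*}
\Big(\sum_{j=0}^{N} 2^{jsq}|\mathcal{F}^{-1}[\varphi_j \mathcal{F}f](x)|^q\Big)^{1/q} = \lim_{k\to\infty}\Big(\sum_{j=0}^{N} 2^{jsq}|\mathcal{F}^{-1}[\varphi_j \mathcal{F}f_k](x)|^q\Big)^{1/q} \le \liminf_{k\to\infty} g_k(x).
\end{align*}
Applying Fatou's lemma on $B$ and then the definition of the Morrey quasi-norm gives
\begin{align*}
|B|^{\frac1u-\frac1p}\Big(\int_B\Big(\sum_{j=0}^{N} 2^{jsq}|\mathcal{F}^{-1}[\varphi_j \mathcal{F}f](x)|^q\Big)^{\frac pq}dx\Big)^{\frac1p} \le \liminf_{k\to\infty}\Vert g_k \vert \mathcal{M}^u_p(\R)\Vert = \liminf_{k\to\infty}\Vert f_k\vert\mathcal{E}^s_{u,p,q}(\R)\Vert.
\end{align*}
Taking the supremum over all balls $B$ and then letting $N \to \infty$ (monotone convergence in the $j$-sum) yields $\Vert f\vert\mathcal{E}^s_{u,p,q}(\R)\Vert \le \liminf_k\Vert f_k\vert\mathcal{E}^s_{u,p,q}(\R)\Vert$, i.e.\ the estimate with $C=1$. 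The Besov--Morrey case is analogous: Fatou on a ball first yields $\Vert\mathcal{F}^{-1}[\varphi_j\mathcal{F}f]\vert\mathcal{M}^u_p\Vert \le \liminf_k\Vert\mathcal{F}^{-1}[\varphi_j\mathcal{F}f_k]\vert\mathcal{M}^u_p\Vert$ for each $j$ (using $\sup_B\liminf_k \le \liminf_k\sup_B$); then, since the truncated $\ell_q$-sum over $j\le N$ has finitely many terms and $\liminf$ is superadditive, one pulls the $\liminf$ outside the sum; finally $N\to\infty$ gives $\Vert f\vert\mathcal{N}^s_{u,p,q}(\R)\Vert \le \liminf_k\Vert f_k\vert\mathcal{N}^s_{u,p,q}(\R)\Vert$. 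The modifications for $q=\infty$ (suprema in place of $\ell_q$-sums) are routine. Finally, to conclude $f \in \mathbb{A}^s_{u,p,q}(\R)$ and not merely that its quasi-norm is finite, note $f \in \mathcal{S}'(\R)$ by hypothesis and $f \in \mathbb{L}_1^{loc}(\R)$ by the embeddings of $\mathbb{A}^s_{u,p,q}(\R)$ into the Morrey space (Lemma \ref{E_Mor_em}, and for the Besov--Morrey scale the analogous embedding into $\mathbb{M}^u_p(\R)\hookrightarrow\mathbb{L}_1^{loc}(\R)$ valid for $s>0$, $p\ge1$, cf.\ Remark \ref{rem_BM_TLM_rf}), which apply once finiteness of the quasi-norm is known.

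The main obstacle is the first step: passing from $\mathcal{S}'$-convergence of $f_k \to f$ to pointwise (locally uniform) convergence of the dyadic building blocks. This rests on the Paley--Wiener--Schwarz theorem and on the fact that the spectra $\supp(\varphi_j\mathcal{F}f_k)$ all lie in one fixed compact set; once this is in hand, the remainder is a standard combination of Fatou's lemma with monotone convergence. An alternative to reproving the statement is to cite \cite{SawTan}.
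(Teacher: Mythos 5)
Your argument is correct and in fact yields the sharper statement with $C=1$, via the standard route: pointwise (locally uniform) convergence of each Littlewood--Paley block obtained from the Paley--Wiener--Schwartz representation $\mathcal{F}^{-1}[\varphi_j\mathcal{F}f_k]=\mathcal{F}^{-1}[\varphi_j\mathcal{F}f_k]*\psi_j$, followed by Fatou's lemma on each ball and monotone convergence in $j$. The paper itself offers no proof and simply cites lemma~3.5 of \cite{SawTan}; your write-up reconstructs precisely that standard Fatou-property argument, so the two are essentially the same approach, with your version spelled out in full.
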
                      

Let $ j \in \{ 1, 2, \ldots , d \}  $ and $ m \in \mathbb{N} $. Then with $\partial^{m}_{j}f   $ we denote the derivative of order $ m $ in direction $ e_{j} $ of a distribution $f \in \mathcal{S}'(\mathbb{R}^{d}) $. Using this we can state the next result. 

\begin{lem}\label{l_deriv}
Let $ 1 \leq p \leq u < \infty $, $ 1 \leq q \leq \infty $, $ s > 0 $ and $ m \in \mathbb{N} $. Then there are constants $ C_{1}, C_{2} > 0  $ independent of $ f \in \mathbb{A}^{s+m}_{u,p,q}(\mathbb{R}^{d}) $ such that
\begin{align*}
C_{1} \Vert f \vert \mathcal{A}^{s + m}_{u,p,q}(\mathbb{R}^{d})  \Vert \leq \Vert f \vert \mathcal{A}^{s}_{u,p,q}(\mathbb{R}^{d})  \Vert + \sum_{j = 1}^{d} \Vert \partial^{m}_{j}f \vert \mathcal{A}^{s}_{u,p,q}(\mathbb{R}^{d})  \Vert \leq C_{2} \Vert f \vert \mathcal{A}^{s + m}_{u,p,q}(\mathbb{R}^{d})  \Vert.
\end{align*}
\end{lem}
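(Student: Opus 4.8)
The plan is to imitate the classical lifting argument for Besov and Triebel--Lizorkin spaces, transferring it to the Morrey setting via two elementary facts about $\mathbb{M}^{u}_{p}(\R)$: it is translation invariant, and it satisfies a Young-type convolution inequality $\| \phi \ast g \, | \, \mathcal{M}^{u}_{p}(\R)\| \le \|\phi \, | \, L_{1}(\R)\| \, \|g \, | \, \mathcal{M}^{u}_{p}(\R)\|$, which follows at once from Minkowski's integral inequality and the translation invariance of the volume $|B(y,r)|$. For the Triebel--Lizorkin--Morrey case the $\ell_{q}$-valued replacement for this is the Peetre maximal function estimate for frequency localised sequences on $\mathbb{M}^{u}_{p}(\R)$, valid for the full range $1 \le p \le u < \infty$, $1 \le q \le \infty$; this is part of the standard machinery of these spaces (see \cite{SawTan} and \cite{ysy}). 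Throughout I fix a smooth dyadic decomposition $(\varphi_{k})_{k \in \N_{0}}$ and abbreviate $f_{k} := \mathcal{F}^{-1}[\varphi_{k}\mathcal{F}f]$, recalling $\supp \mathcal{F}f_{k} \subset \{2^{k-1} \le |\xi| \le 3\cdot 2^{k-1}\}$ for $k \ge 1$.

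For the right inequality, the bound $\|f \, | \, \mathcal{A}^{s}_{u,p,q}(\R)\| \le \|f \, | \, \mathcal{A}^{s+m}_{u,p,q}(\R)\|$ is trivial since $2^{ks} \le 2^{k(s+m)}$ for $k \ge 0$, $m \ge 1$. For the derivative terms I use $\mathcal{F}^{-1}[\varphi_{k}\mathcal{F}(\partial_{j}^{m}f)] = \mathcal{F}^{-1}[(i\xi_{j})^{m}\varphi_{k}\mathcal{F}f]$; picking $\psi_{0} \in C_{0}^{\infty}(\R)$ equal to $1$ on $\{1/2 \le |\xi| \le 3/2\}$ and supported in an annulus, one has for $k \ge 1$ that $(i\xi_{j})^{m}\varphi_{k}(\xi) = 2^{km}\, g(2^{-k}\xi)\,\varphi_{k}(\xi)$ with the fixed function $g(\eta) := (i\eta_{j})^{m}\psi_{0}(\eta) \in C_{0}^{\infty}(\R)$, hence $\mathcal{F}^{-1}[\varphi_{k}\mathcal{F}(\partial_{j}^{m}f)] = 2^{km}\, [\, 2^{kd}(\mathcal{F}^{-1}g)(2^{k}\cdot)\,] \ast f_{k}$, a convolution with an $L_{1}$-normalised dilate of a fixed Schwartz function (the case $k=0$ being handled in the same manner). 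The Young-type inequality in the $\mathbb{N}$-case, respectively the Peetre maximal estimate applied to the frequency localised sequence in the $\mathbb{E}$-case, gives $\|\mathcal{F}^{-1}[\varphi_{k}\mathcal{F}(\partial_{j}^{m}f)] \, | \, \mathcal{M}^{u}_{p}(\R)\| \lesssim 2^{km}\|f_{k} \, | \, \mathcal{M}^{u}_{p}(\R)\|$ uniformly in $k$; multiplying by $2^{ks}$, taking $\ell_{q}$-sums in $k$ and using the triangle inequality in $\ell_{q}$ (here $q \ge 1$ is used) yields $\sum_{j=1}^{d}\|\partial_{j}^{m}f \, | \, \mathcal{A}^{s}_{u,p,q}(\R)\| \lesssim \|f \, | \, \mathcal{A}^{s+m}_{u,p,q}(\R)\|$.

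The substantive part is the left inequality, where one must invert the passage $f_{k} \mapsto (\partial_{j}^{m}f)_{k}$. Since the unit sphere is covered by the sets $\{|\xi_{j}| > c\}$, $j=1,\dots,d$, for a suitable $c > 0$, one chooses $\chi_{1},\dots,\chi_{d} \in C^{\infty}(\R \setminus \{0\})$, homogeneous of degree $0$, with $\sum_{j=1}^{d}\chi_{j} \equiv 1$ on $\R \setminus \{0\}$ and $\supp \chi_{j} \subset \{|\xi_{j}| \ge c|\xi|\}$. On $\supp \varphi_{k}$ with $k \ge 1$ one then has
\[
\varphi_{k}(\xi) = \sum_{j=1}^{d} \mu_{k}^{(j)}(\xi)\,(i\xi_{j})^{m}\varphi_{k}(\xi), \qquad \mu_{k}^{(j)}(\xi) := \frac{\chi_{j}(\xi)\,\psi_{0}(2^{-k}\xi)}{(i\xi_{j})^{m}} = 2^{-km}\,h(2^{-k}\xi),
\]
with the fixed function $h(\eta) := \chi_{j}(\eta)\psi_{0}(\eta)(i\eta_{j})^{-m} \in C_{0}^{\infty}(\R)$ (using that $\xi_{j} \ne 0$ on $\supp \chi_{j}$ and that $\chi_{j}$ is homogeneous of degree $0$). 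This gives $f_{k} = \sum_{j=1}^{d} 2^{-km}\,[\,2^{kd}(\mathcal{F}^{-1}h)(2^{k}\cdot)\,] \ast \mathcal{F}^{-1}[\varphi_{k}\mathcal{F}(\partial_{j}^{m}f)]$, and the same convolution estimate as above — the factors $2^{-km}$ now cancelling the $2^{km}$ coming from the weight $2^{k(s+m)}$ — leads after summation in $k \ge 1$ to $\big(\sum_{k \ge 1} 2^{k(s+m)q}\|f_{k} \, | \, \mathcal{M}^{u}_{p}(\R)\|^{q}\big)^{1/q} \lesssim \sum_{j=1}^{d}\|\partial_{j}^{m}f \, | \, \mathcal{A}^{s}_{u,p,q}(\R)\|$. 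Since the $k=0$ term of $\|f \, | \, \mathcal{A}^{s+m}_{u,p,q}(\R)\|$ is just $\|f_{0} \, | \, \mathcal{M}^{u}_{p}(\R)\| \le \|f \, | \, \mathcal{A}^{s}_{u,p,q}(\R)\|$, combining the two contributions gives the left inequality. In the $\mathbb{E}$-case all identities are read pointwise in $x$ before applying the $\ell_{q}$-sum and the Morrey quasinorm, and one invokes the $\ell_{q}$-valued Peetre maximal inequality.

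The main obstacle is precisely this inversion. The naive choice of inverting symbol, proportional to $(\sum_{j}\xi_{j}^{m})^{-1}$, fails for odd $m$ because the denominator vanishes on a hypersurface through the origin; the homogeneous-degree-$0$ partition of unity $\{\chi_{j}\}$ localising to the regions where $|\xi_{j}|$ is comparable to $|\xi|$ circumvents this and works for every $m \in \N$. A second point requiring care is that, in the Triebel--Lizorkin--Morrey case, the convolution estimates must be carried out in the $\ell_{q}$-valued sense over the full parameter range $p,q \ge 1$, including the endpoints, which is why one passes through the Peetre maximal function of the frequency localised sequences $(\mathcal{F}^{-1}[\varphi_{k}\mathcal{F}(\partial_{j}^{m}f)])_{k}$ and the corresponding maximal inequality on $\mathbb{M}^{u}_{p}(\R)$ rather than through a plain Fourier multiplier theorem. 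Alternatively, the whole lemma can be deduced from the known lift property of $\mathbb{A}^{s}_{u,p,q}(\R)$ combined with a Michlin-type Fourier multiplier theorem for these spaces.
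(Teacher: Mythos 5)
Your argument is correct and follows the standard Fourier-multiplier route to the lift/derivative characterization. Note that the paper itself does not prove Lemma \ref{l_deriv} — it merely cites \cite{SawTan} (Corollary 3.4) and \cite{TangXu} (Theorem 2.15). What you have written is essentially the argument of \cite{TangXu}, Theorem 2.15: the trivial monotonicity for $\|f\,|\,\mathcal{A}^s\|$, the dilated-kernel convolution estimate for $\partial_j^m f$, and the inversion via a homogeneous degree-zero conic partition of unity $\{\chi_j\}$ subordinate to the cover $\{|\xi_j|\gtrsim|\xi|\}$ of the sphere, which is exactly the right way to avoid the zero set of the naive inverting symbol.

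Two small points worth tightening. First, your auxiliary function $h$ in the inversion step depends on $j$; it should be written $h_j(\eta)=\chi_j(\eta)\psi_0(\eta)(i\eta_j)^{-m}$, and one should remark that $h_j \in C_0^\infty(\R)$ because $\psi_0$ has compact support in an annulus and $|\eta_j|\ge c|\eta|>0$ on $\supp\chi_j\cap\supp\psi_0$. Second, in the $\mathbb{E}$-case your scalar-looking display $\|\mathcal{F}^{-1}[\varphi_k\mathcal{F}(\partial_j^m f)]\,|\,\mathcal{M}^u_p\|\lesssim 2^{km}\|f_k\,|\,\mathcal{M}^u_p\|$ is not by itself enough, since the Morrey quasinorm sits outside the $\ell_q$-sum; you correctly identify that one must instead dominate $|K_k\ast g_k(x)|$ pointwise by the Peetre maximal function $(M^*_k g_k)(x)$ and invoke the vector-valued Fefferman--Stein maximal inequality on $\mathbb{M}^u_p(\R)$, which holds in the stated range $1\le p\le u<\infty$, $1\le q\le\infty$. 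That is the one step that genuinely requires the Morrey-adapted machinery rather than plain Young convolution, and it is the part a full write-up would need to spell out. Your final remark — that the lemma also follows from the lift property plus a Mikhlin-type multiplier theorem for these spaces — is a valid alternative and is closer to how Lemma \ref{Four_Mult} in the paper is itself obtained.
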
 

\begin{proof}
This result can be found in \cite{SawTan}, see corollary 3.4. One may also consult theorem 2.15 in \cite{TangXu}.
\end{proof}

For the spaces $ \mathbb{A}^{s}_{u,p,q}(\mathbb{R}^{d}) $ there exist some useful multiplier theorems. So on the one hand there is the following pointwise multiplier theorem. Let $ C(\R) $ be the space of all real-valued bounded uniformly continuous functions on $ \R $. For $  m \in \mathbb{N} $ we put 
\begin{align*}
C^{m}(\R) = \lbrace f : D^{\alpha}f \in C(\R) \ \forall \ \vert \alpha \vert \leq m \rbrace  \quad \mbox{and} \quad \Vert f \vert  C^{m}(\R)  \Vert = \sum_{\vert \alpha \vert \leq m}  \Vert D^{\alpha} f \vert L_{\infty}(\R) \Vert .
\end{align*}

\begin{lem}\label{l_bmD2}

Let $ 1 \leq p \leq u < \infty $, $ 1 \leq q \leq \infty $ and $ s > 0  $. Let $ m \in \mathbb{N} $ be sufficiently large. Then there exists a positive constant $ C(m) $ such that for all $ g \in C^{m}(\R) $ and for all $ f \in \mathbb{A}^{s}_{u,p,q}(\R) $ we have

\begin{center}
$\Vert f \cdot g \vert  \mathcal{A}^{s}_{u,p,q}(\R)   \Vert \leq C(m) \Vert g \vert  C^{m}(\R)  \Vert \; \Vert f  \vert  \mathcal{A}^{s}_{u,p,q}(\R)   \Vert.$
\end{center}

\end{lem}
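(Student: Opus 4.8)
\medskip\noindent\textbf{Plan of proof.} The plan is to use Bony's paraproduct decomposition of the product $fg$ and to reduce the asserted estimate to three elementary sub-estimates for Fourier-localized pieces, the only genuinely space-specific input being the standard ``building block'' lemmas for $\mathbb{A}^{s}_{u,p,q}(\R)$, which I would take from the literature (cf.\ \cite{ysy}) or re-derive from the Fatou property of Lemma \ref{l_fatou} together with the Nikol'skii inequalities in Morrey spaces. A quicker but less self-contained alternative is to quote the corresponding pointwise multiplier assertion for the full scales $\mathcal{A}^{s}_{u,p,q}(\R)$ and observe that $fg$ is real-valued whenever $f,g$ are, so that the estimate descends to the real subspaces; below I outline the direct argument.

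Write $f_{k} := \cfi[\varphi_{k}\cf f]$ and $S^{k}f := \sum_{l=0}^{k} f_{l}$, so that $f = \sum_{k\geq 0} f_{k}$ in $\cs'(\R)$. Since $f \in \mathbb{L}_{1}^{loc}(\R)$ and $g \in L_{\infty}(\R)$ the product $fg$ is a well-defined tempered distribution, and regrouping $\sum_{j,k} f_{j}\,g_{k}$ according to $j\le k-2$, $k\le j-2$ and $|j-k|\le 1$ yields the identity
\[
f\cdot g \;=\; \sum_{k\geq 2} (S^{k-2}f)\, g_{k} \;+\; \sum_{j\geq 2} f_{j}\,(S^{j-2}g) \;+\; \sum_{j\geq 0} (Q_{j}f)\, g_{j}\,, \qquad Q_{j}f := f_{j-1}+f_{j}+f_{j+1}\ \ (f_{-1}:=0).
\]
By the support properties of the $\varphi_{k}$, the $k$-th summand of the first sum and the $j$-th summand of the second are Fourier-localized in a dyadic annulus $\{c_{1}2^{k}\le|\xi|\le c_{2}2^{k}\}$, while the $j$-th summand of the last sum is localized in a ball $\{|\xi|\le c_{3}2^{j}\}$. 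The building block lemmas then bound the $\mathcal{A}^{s}_{u,p,q}$-norm of each of the three series by $\big(\sum_{k}2^{ksq}\Vert h_{k}\,|\,\cm^{u}_{p}(\R)\Vert^{q}\big)^{1/q}$ when $\mathbb{A}=\mathbb{N}$, resp.\ by $\big\Vert\big(\sum_{k}2^{ksq}|h_{k}(\cdot)|^{q}\big)^{1/q}\,\big|\,\cm^{u}_{p}(\R)\big\Vert$ when $\mathbb{A}=\mathbb{E}$, the ball-localized case being admissible precisely because $s>0$.

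The remaining work is to estimate the blocks $h_{k}$, and for this I would use: (a) $\Vert h_{1}h_{2}\,|\,\cm^{u}_{p}(\R)\Vert\le\Vert h_{1}\,|\,\cm^{u}_{p}(\R)\Vert\,\Vert h_{2}\,|\,L_{\infty}(\R)\Vert$ and the pointwise bound $|h_{1}h_{2}|\le|h_{1}|\,\Vert h_{2}\,|\,L_{\infty}(\R)\Vert$; (b) since $\varphi_{k}$ vanishes near the origin for $k\ge1$, the kernel $\cfi\varphi_{k}$ has vanishing moments, so Taylor's formula gives $\Vert g_{k}\,|\,L_{\infty}(\R)\Vert\ls 2^{-km}\Vert g\,|\,C^{m}(\R)\Vert$ for $k\ge1$, while $\Vert g_{0}\,|\,L_{\infty}(\R)\Vert\ls\Vert g\,|\,C^{m}(\R)\Vert$ and $\Vert S^{k}g\,|\,L_{\infty}(\R)\Vert\ls\Vert g\,|\,L_{\infty}(\R)\Vert$; (c) because $s>0$ one has $\sum_{l\le k}2^{-ls}\ls1$, whence by Hölder's inequality in $l$ both $\Vert S^{k}f\,|\,\cm^{u}_{p}(\R)\Vert\ls\Vert f\,|\,\mathcal{N}^{s}_{u,p,q}(\R)\Vert$ and, pointwise, $|S^{k}f(x)|\ls\big(\sum_{l}(2^{ls}|f_{l}(x)|)^{q}\big)^{1/q}$. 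Putting these together: for the term $\sum_{j}f_{j}(S^{j-2}g)$, (a)+(b) bound the $j$-th block by $|f_{j}|\,\Vert g\,|\,C^{m}(\R)\Vert$, so the building block lemma gives $\ls\Vert g\,|\,C^{m}(\R)\Vert\,\Vert f\,|\,\mathcal{A}^{s}_{u,p,q}(\R)\Vert$ with no restriction on $m$; for $\sum_{k}(S^{k-2}f)g_{k}$, (a)+(b)+(c) bound the $k$-th block by $2^{-km}\Vert g\,|\,C^{m}(\R)\Vert$ times $\Vert f\,|\,\mathcal{N}^{s}_{u,p,q}(\R)\Vert$, resp.\ times $\big(\sum_{l}(2^{ls}|f_{l}(x)|)^{q}\big)^{1/q}$, so after the factor $2^{ks}$ one needs $\sum_{k}2^{(s-m)kq}<\infty$, i.e.\ $m>s$; for the diagonal term, (a)+(b) bound the $j$-th block by $2^{-jm}\Vert g\,|\,C^{m}(\R)\Vert\,|Q_{j}f|$ and, since $2^{-jm}\le1$, the ($s>0$) building block lemma gives $\ls\Vert g\,|\,C^{m}(\R)\Vert\,\Vert f\,|\,\mathcal{A}^{s}_{u,p,q}(\R)\Vert$. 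Summing the three contributions proves the lemma for every integer $m>s$, in particular for $m$ sufficiently large.

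I expect the main obstacle to be bookkeeping rather than a conceptual difficulty: in the Triebel–Lizorkin–Morrey case the blocks have to be estimated pointwise in $x$ inside the $\cm^{u}_{p}(\ell^{q})$-structure, and the underlying building block lemma (for $p>1$) rests on the vector-valued Fefferman–Stein maximal inequality in Morrey spaces, with the borderline cases $p=1$ and $q\in\{1,\infty\}$ needing a little extra care. The single point where the hypothesis $s>0$ is genuinely used is the diagonal paraproduct $\sum_{j}(Q_{j}f)g_{j}$, whose frequency support is a ball rather than an annulus. Checking that the paraproduct series converge in $\cs'(\R)$ and that the regrouping is legitimate is routine given $f\in\mathbb{L}_{1}^{loc}(\R)$ and $g\in L_{\infty}(\R)$. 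As an alternative that avoids the building block machinery, one could first settle the range $0<s<2$ through the difference (modulus of smoothness) characterization of $\mathbb{A}^{s}_{u,p,q}(\R)$ combined with a discrete Leibniz formula, and then reach all $s>0$ by the lifting Lemma \ref{l_deriv} and the ordinary Leibniz rule.
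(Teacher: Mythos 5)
The paper does not actually prove Lemma \ref{l_bmD2}: its ``proof'' is a citation of theorem 2.6 in \cite{HaSkSM} (with further details in \cite{Saw} and a related statement in theorem 6.1 of \cite{ysy}). So your proposal is supplying a proof the paper elides rather than an alternative to one it gives.

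Your route---Bony paraproduct decomposition, kernel moment estimates giving $\Vert g_k\,|\,L_\infty\Vert\lesssim 2^{-km}\Vert g\,|\,C^m\Vert$, and the standard annulus/ball ``building block'' lemmas for $\mathcal{A}^{s}_{u,p,q}$---is the expected one and is essentially what the cited references do, so the plan is sound and the bookkeeping (the three regrouped sums, the use of $s>0$ only in the ball-supported diagonal term, the requirement $m>s$ from the low-high term) is all correct. The one place you wave your hands is exactly the place that carries the real content: the building block lemmas for $\mathbb{E}^{s}_{u,p,q}$ rest on a vector-valued Peetre/Fefferman--Stein maximal inequality in $\mathcal{M}^{u}_{p}(\ell^{q})$, and the endpoint range $p=1$, $q\in\{1,\infty\}$ that the lemma claims is precisely where that inequality in its naive form fails and one has to replace the maximal-function step by a direct convolution or molecular-decomposition argument (as \cite{Saw} and \cite{ysy} do). You flag this yourself, which is fair, but as written your sketch does not discharge it. One small inaccuracy: the Fatou property (Lemma \ref{l_fatou}) by itself will not ``re-derive'' the building block lemmas---it only lets you pass from smooth compactly Fourier-supported approximants to general $f$; the actual block estimate still needs a Nikol'skii/convolution inequality in $\mathcal{M}^{u}_{p}$. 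Your alternative route via the difference characterization for $0<s<2$ plus lifting by Lemma \ref{l_deriv} is also viable and sidesteps the maximal-function issue at the cost of a more involved Leibniz bookkeeping for higher-order differences.
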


\begin{proof}
This result can be found in \cite{HaSkSM}, see theorem 2.6. More details can be found in \cite{Saw}. A related result can be found in \cite{ysy}, see theorem 6.1.      
\end{proof}

On the other hand there is the following Fourier multiplier theorem.

\begin{lem}\label{Four_Mult}
Let $ s > 0  $, $ 1 \leq p \leq u < \infty   $ and $ 1 \leq q \leq \infty  $. If $ m \in \mathbb{N}  $ is sufficiently large, then there exists a constant $ C > 0  $ such that for all $ g \in C^{\infty}(\R)   $ and $ f \in \mathbb{A}^{s}_{u,p,q}(\R)  $ we have
\begin{align*}
\Vert  \mathcal{F}^{-1}[g \mathcal{F}f]   \vert  \mathcal{A}^{s}_{u,p,q}(\R)   \Vert \leq C  \sup_{\vert \gamma  \vert \leq m } \sup_{x \in \R} ( 1 + \vert x \vert^{2})^{\frac{ \vert \gamma \vert}{2}} \vert D^{\gamma} g(x)   \vert   \   \    \Vert  f  \vert  \mathcal{A}^{s}_{u,p,q}(\R)   \Vert .
\end{align*}
\end{lem}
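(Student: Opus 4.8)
The statement to prove is: for $s>0$, $1\le p\le u<\infty$, $1\le q\le\infty$, there is $m\in\mathbb{N}$ and $C>0$ so that for all $g\in C^\infty(\mathbb{R}^d)$ and $f\in\mathbb{A}^s_{u,p,q}(\mathbb{R}^d)$,
$$\|\mathcal{F}^{-1}[g\mathcal{F}f]\mid\mathcal{A}^s_{u,p,q}(\mathbb{R}^d)\|\le C\sup_{|\gamma|\le m}\sup_{x\in\mathbb{R}^d}(1+|x|^2)^{|\gamma|/2}|D^\gamma g(x)|\;\|f\mid\mathcal{A}^s_{u,p,q}(\mathbb{R}^d)\|.$$

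Let me sketch how I'd prove this. Let me call the quantity $M_m(g):=\sup_{|\gamma|\le m}\sup_x (1+|x|^2)^{|\gamma|/2}|D^\gamma g(x)|$. (Note I'll use $\gamma$ for multi-indices as in the statement.)

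First, I would set up the problem using the smooth dyadic decomposition $(\varphi_k)_{k\in\mathbb{N}_0}$. Let $h=\mathcal{F}^{-1}[g\mathcal{F}f]$. The idea is to compute $\mathcal{F}^{-1}[\varphi_k\mathcal{F}h]$ and compare it with $\mathcal{F}^{-1}[\varphi_k\mathcal{F}f]$. Introduce a "fattened" dyadic piece $\Phi_k$ (e.g. $\Phi_k=\varphi_{k-1}+\varphi_k+\varphi_{k+1}$, with the obvious modification at $k=0$) which equals $1$ on $\operatorname{supp}\varphi_k$ and is supported in an annulus of radius $\sim 2^k$. Then, since $\varphi_k=\varphi_k\Phi_k$ on the Fourier side,
$$\mathcal{F}^{-1}[\varphi_k\mathcal{F}h]=\mathcal{F}^{-1}[\varphi_k g\,\mathcal{F}f]=\mathcal{F}^{-1}[\varphi_k g\,\Phi_k\mathcal{F}f]=\mathcal{F}^{-1}[\varphi_k g]\ast\mathcal{F}^{-1}[\Phi_k\mathcal{F}f].$$
This reduces everything to a convolution estimate in the Morrey space: I need that convolution with $\mathcal{F}^{-1}[\varphi_k g]$ is bounded on $\mathbb{M}^u_p(\mathbb{R}^d)$ with a norm controlled by $\|\mathcal{F}^{-1}[\varphi_k g]\mid L_1(\mathbb{R}^d)\|$ (Young-type inequality for Morrey spaces, which is classical since $\|f\ast\psi\mid\mathcal{M}^u_p\|\le\|\psi\mid L_1\|\,\|f\mid\mathcal{M}^u_p\|$ follows by Minkowski's integral inequality using translation invariance of the Morrey norm). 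Thus the whole theorem follows once I show
$$\sup_{k\in\mathbb{N}_0}\|\mathcal{F}^{-1}[\varphi_k g]\mid L_1(\mathbb{R}^d)\|\lesssim M_m(g),$$
because then, for the $\mathbb{N}$-case, summing $2^{ksq}\|\mathcal{F}^{-1}[\varphi_k h]\mid\mathcal{M}^u_p\|^q\lesssim M_m(g)^q\sum 2^{ksq}\|\mathcal{F}^{-1}[\Phi_k\mathcal{F}f]\mid\mathcal{M}^u_p\|^q\lesssim M_m(g)^q\|f\mid\mathcal{N}^s_{u,p,q}\|^q$ finishes it; for the $\mathbb{E}$-case one argues pointwise, bounding $|\mathcal{F}^{-1}[\varphi_k g]\ast(\mathcal{F}^{-1}[\Phi_k\mathcal{F}f])(x)|$ and then applying a vector-valued convolution inequality in $\mathbb{M}^u_p(\ell_q)$ (again reducible to Young's inequality via Minkowski), which is available in the literature (cf. \cite{ysy}, \cite{SawTan}).

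The main obstacle — and the technical heart — is the uniform $L_1$-bound on $\mathcal{F}^{-1}[\varphi_k g]$. The standard device: by the support and scaling structure, write $\varphi_k(\xi)g(\xi)=\varphi_1(2^{-k+1}\xi)g(\xi)$ for $k\ge 1$ (up to harmless relabeling), substitute $\xi=2^{k-1}\eta$, so that $\mathcal{F}^{-1}[\varphi_k g](x)=2^{d(k-1)}\mathcal{F}^{-1}[\varphi_1(2\cdot)g(2^{k-1}\cdot)](2^{k-1}x)$, and hence $\|\mathcal{F}^{-1}[\varphi_k g]\mid L_1\|=\|\mathcal{F}^{-1}[\varphi_1(2\cdot)g(2^{k-1}\cdot)]\mid L_1\|$. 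Now the function $g_k(\eta):=\varphi_1(2\eta)g(2^{k-1}\eta)$ is supported in a fixed annulus $\{1/2\le|\eta|\le 3/2\}$ (independent of $k$), and on that annulus, by the chain rule, $|D^\gamma g_k(\eta)|\lesssim\sum_{|\gamma'|\le|\gamma|}2^{(k-1)|\gamma'|}|D^{\gamma'}g(2^{k-1}\eta)|\lesssim M_{|\gamma|}(g)$, using that $2^{k-1}|\eta|\sim 2^{k-1}\gtrsim 1$ so $(1+|2^{k-1}\eta|^2)^{|\gamma'|/2}\gtrsim 2^{(k-1)|\gamma'|}$ — this is exactly where the weight $(1+|x|^2)^{|\gamma|/2}$ in $M_m(g)$ is needed and where the condition "$m$ sufficiently large" enters. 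Then I estimate $\|\mathcal{F}^{-1}g_k\mid L_1\|$ by splitting $\mathbb{R}^d$ into $|x|\le 1$ and $|x|>1$: on the first region use $\|\mathcal{F}^{-1}g_k\mid L_\infty\|\lesssim\|g_k\mid L_1\|\lesssim M_0(g)$; on the second, integrate by parts $m$ times to get $|x^\gamma\mathcal{F}^{-1}g_k(x)|\lesssim\|D^\gamma g_k\mid L_1\|\lesssim M_m(g)$ for all $|\gamma|= m$, hence $|\mathcal{F}^{-1}g_k(x)|\lesssim M_m(g)|x|^{-m}$, which is $L_1$ over $|x|>1$ as soon as $m>d$. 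Choosing $m>d$ (say $m=d+1$, or larger if the vector-valued convolution step demands it) gives the uniform bound, and the case $k=0$ is handled identically with $\varphi_0$ in place of $\varphi_1(2\cdot)$. This completes the proof; all constants depend only on $d,s,u,p,q$ and the fixed decomposition $(\varphi_k)$.
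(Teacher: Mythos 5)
Your treatment of the $\mathcal{N}$-case is sound: reduce to Young's inequality for $\mathcal{M}^u_p(\R)$ together with a uniform $L_1$-bound on $\mathcal{F}^{-1}[\varphi_k g]$, and obtain the latter by the rescaling $\xi\mapsto 2^{k-1}\xi$ and integration by parts; the weight $(1+|\xi|^2)^{|\gamma|/2}$ is needed precisely because $|D^\gamma g(\xi)|\lesssim 2^{-k|\gamma|} M_m(g)$ on $\supp\varphi_k$.

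There is, however, a genuine gap in the $\mathcal{E}$-case. The parenthetical claim that the needed vector-valued convolution inequality in $\mathcal{M}^u_p(\ell_q)$ is ``reducible to Young's inequality via Minkowski'' fails once the kernels $K_k=\mathcal{F}^{-1}[\varphi_k g]$ depend on $k$. A Minkowski/Young argument would require a single integrable majorant $G(y)\geq\sup_k|K_k(y)|$; but your own integration-by-parts computation, kept with the scale information, gives $|K_k(y)|\lesssim M_m(g)\,2^{kd}(1+2^k|y|)^{-m}$, and the supremum over $k$ of the right-hand side behaves like $M_m(g)\,|y|^{-d}$ near the origin, which is not locally integrable. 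So Young plus Minkowski cannot close the estimate — this is exactly the step where something beyond $L_1$-control of the kernels is required. The correct repair is to use the pointwise decay $|K_k(y)|\lesssim M_m(g)\,2^{kd}(1+2^k|y|)^{-m}$ to dominate $|K_k*f_k|(x)\lesssim M_m(g)$ times the Hardy–Littlewood maximal function of $f_k$ at $x$ (or by the Peetre maximal function, if one wants the full range $1\le p,q\le\infty$ including $q=1$ and $p=1$ without caveats), and then invoke the vector-valued maximal inequality in $\mathcal{M}^u_p(\ell_q)$; that inequality is the place the Morrey structure genuinely enters, and is exactly proposition 2.12 of Tang--Xu \cite{TangXu}. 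This, in fact, is precisely what the paper's one-paragraph proof does: it runs Triebel's Theorem~2.3.7 argument for $F^s_{p,q}$ and $B^s_{p,q}$ (based on the Peetre maximal function, not on Young's inequality) essentially verbatim, and substitutes Tang--Xu's proposition 2.12 for the corresponding Lebesgue-space maximal estimate (2.3.6.20) in \cite{Tr83}. So your overall plan is close in spirit to the standard route, but the stated justification of the $\mathcal{E}$-case step would not survive a careful write-up and needs to be replaced by a maximal-function argument.
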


\begin{proof}
To prove this result we follow the proof of theorem 2.3.7. in \cite{Tr83}. Here the assertion was proved for the special case $ p = u  $. Fortunately almost everything that is done there also can be used for $p < u  $. Instead of formula (2.3.6.20) from \cite{Tr83} we apply proposition 2.12 from \cite{TangXu}. Then the desired result follows in the same way as is \cite{Tr83}. 
\end{proof}

\begin{rem}\label{rem_Fo_mul}
Notice that a result similar to lemma \ref{Four_Mult} also can be found in \cite{TangXu}, see proposition 2.14. For the case $ p > 1 $ an alternative proof of lemma \ref{Four_Mult} also can be found in \cite{Tr14}, see formula (3.259) from chapter 3.5.2. and theorem 3.50. One may also consult theorem 4.1 in \cite{yyMF} or theorem 3 in \cite{Mah et al}.  
\end{rem}

With the help of real interpolation we can find the following connection between Morrey spaces, Besov-Morrey spaces and Triebel-Lizorkin-Morrey spaces.

\begin{lem}\label{re_inter}
Let $ 0 < \theta < 1 $, $ s_{1} > 0  $, $ 1 \leq p \leq u < \infty   $ and $ 1 \leq q, q_{1} \leq \infty   $. Then we have
\begin{align*}
\mathbb{N}^{\theta s_{1}}_{u,p,q}(\mathbb{R}^{d}) = \Big ( \mathbb{M}^{u}_{p}(\R) ,  \mathbb{E}^{ s_{1}}_{u,p,q_{1}}(\mathbb{R}^{d})  \Big )_{\theta , q}
\end{align*}
in the sense of equivalent norms.
\end{lem}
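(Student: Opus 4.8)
The plan is to deduce the interpolation identity from the corresponding scalar-valued statement for Morrey-type sequence spaces, which is the standard strategy for this kind of result (as in the classical Besov/Triebel--Lizorkin theory). First I would recall that, via a smooth dyadic decomposition $(\varphi_k)_{k\in\N_0}$, the space $\mathbb{E}^{s_1}_{u,p,q_1}(\R)$ is retract of the vector-valued Morrey space $\mathbb{M}^{u}_{p}(\R;\ell^{s_1}_{q_1})$ built on the sequences $(\cfi[\varphi_k\cf f])_{k}$, with coretraction $f\mapsto(\cfi[\varphi_k\cf f])_k$ and retraction given by a suitable reconstruction operator using an overlapping family $(\widetilde\varphi_k)_k$; likewise $\mathbb{M}^{u}_{p}(\R)$ itself is a retract of the $k=0$ slice, or more conveniently of the same target with the weight degenerated. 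One then invokes the general principle that real interpolation commutes with retracts, so it suffices to compute $\big(\mathbb{M}^{u}_{p}(\R;\ell^{0}_{\infty}),\,\mathbb{M}^{u}_{p}(\R;\ell^{s_1}_{q_1})\big)_{\theta,q}$ and to identify it with $\mathbb{M}^{u}_{p}(\R;\ell^{\theta s_1}_{q})$, from which $\mathbb{N}^{\theta s_1}_{u,p,q}(\R)$ emerges after applying the retraction back.

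Next I would carry out the interpolation of the weighted sequence Morrey spaces. The key is a Morrey-valued analogue of the classical identity $(\ell_\infty,\ell^{s_1}_{q_1})_{\theta,q}=\ell^{\theta s_1}_{q}$ for weighted $\ell_q$ spaces. The natural route is to reduce to the Lebesgue case $p=u$, where this is Theorem~25.? of the classical theory, and then lift it to Morrey spaces. Because the Morrey quasinorm is a supremum over balls $B(y,r)$ of rescaled $L_p$-averages, and the $K$-functional of real interpolation behaves well under taking suprema of a compatible family of couples, one can hope to write the $K$-functional for the Morrey couple as essentially the supremum over $(y,r)$ of the $K$-functionals of the localized $L_p$ couples; together with the sequence-space result this yields the claim. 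I would therefore (i) fix $f$, split it as $f=g+h$ optimally for each dyadic block, (ii) estimate $K(t,f;\mathbb{M}^u_p(\ell^0_\infty),\mathbb{M}^u_p(\ell^{s_1}_{q_1}))$ from above by the Morrey norm of the block-wise optimal splitting, and (iii) bound it from below by testing on a single ball. Summation in $t$ against $t^{-\theta}$ then produces the $\mathbb{M}^u_p(\ell^{\theta s_1}_q)$ norm.

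Alternatively, and perhaps more cleanly, I would invoke Lemma~\ref{l_deriv} and a lifting argument to reduce to $0<\theta s_1<1<s_1$ small, and then use that for $0<\sigma<1$ the space $\mathbb{N}^{\sigma}_{u,p,q}(\R)$ has an intrinsic characterization by differences (a Morrey analogue of the classical $B^\sigma_{p,q}$-difference norm), while the $K$-functional of the couple $(\mathbb{M}^u_p,\mathbb{E}^{s_1}_{u,p,q_1})$ is comparable to a modulus-of-smoothness expression; this is exactly how the Besov-via-interpolation result is proved classically, and the Morrey machinery (Lemma~\ref{E_Mor_em} for the embedding into $\mathbb{M}^u_p$, the multiplier results Lemmas~\ref{l_bmD2}--\ref{Four_Mult}) carries the argument over with the same structure.

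The main obstacle I anticipate is the interpolation of the Morrey scale itself: real interpolation does \emph{not} in general commute with the supremum over balls defining $\mathbb{M}^u_p$, and Morrey spaces are notoriously badly behaved under interpolation (e.g. $\mathbb{M}^u_p$ is not an interpolation space between Morrey spaces in the naive way). Thus the step ``$K$-functional of the Morrey couple $\approx$ sup of $K$-functionals of the local $L_p$ couples'' must be justified carefully, exploiting that here one endpoint is $\mathbb{M}^u_p$ itself (so the fine parameter is unchanged across the couple) and that the second endpoint differs only through the smoothness weight $2^{ks_1q_1}$ in the sequence variable, not through the Morrey structure in $x$. That special structure — interpolating only ``in the $k$-variable'' while keeping $(u,p)$ fixed — is what makes the identity true and is where the real work lies; the rest is the routine retract/coretract bookkeeping.
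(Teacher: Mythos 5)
The paper does not contain a proof of this lemma at all; the stated \emph{Proof} is a one-line citation to Corollary~2.3 of Sickel's survey \cite{Si_sur2} (with \cite{Tr78} and \cite{BL} offered for general background). So the comparison is not between your argument and one in the paper, but between your argument and the claim itself.

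Your sketch contains a genuine gap, and you yourself flag it at the end as the ``main obstacle'' without closing it. The pivotal step in your first route is the claim that
\[
K\bigl(t,f;\,\mathbb{M}^u_p(\ell^0_\infty),\mathbb{M}^u_p(\ell^{s_1}_{q_1})\bigr)\ \approx\ \sup_{y,r}\,K\bigl(t,f;\,L_p(B(y,r);\ell^0_\infty),L_p(B(y,r);\ell^{s_1}_{q_1})\bigr),
\]
i.e.\ that the $K$-functional of the Morrey couple is the supremum over balls of the $K$-functionals of the localized $L_p$ couples. Only the trivial inequality $K_{\mathrm{Morrey}}\geq\sup_B K_{B}$ holds in general, because a global splitting $f=g+h$ restricts to a splitting on each ball; the converse fails since the optimal local splitting depends on the ball, and there is no reason a single global splitting realizes all the local infima simultaneously. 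This is precisely the mechanism behind the well-known failure of Morrey spaces to be real-interpolation spaces between Morrey endpoints, so the step cannot be taken for granted and the naive version is false. Your alternative route via difference characterizations is even vaguer and is not carried out; ``the Morrey machinery carries the argument over with the same structure'' is an assertion, not an argument.

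What actually rescues the result -- and what any correct proof must exploit, whether or not it is phrased in the retract language -- is that for this particular couple the near-optimal decomposition can be chosen \emph{independently of the spatial variable}: one truncates the Littlewood--Paley expansion at a frequency level $N=N(t)$ determined by $t$ alone, $g=\sum_{k>N}\cfi[\varphi_k\cf f]$ and $h=\sum_{k\le N}\cfi[\varphi_k\cf f]$. Because the truncation operator is a single Fourier multiplier acting uniformly on all balls (Lemma~\ref{Four_Mult} and its Morrey-bounded ingredients), the $K$-functional of the couple $(\mathbb{M}^u_p,\mathbb{E}^{s_1}_{u,p,q_1})$ is then computed block by block -- essentially reducing to the scalar identity $(\ell_\infty,\ell^{s_1}_{q_1})_{\theta,q}=\ell^{\theta s_1}_q$ weighted by the fixed scalars $\|\cfi[\varphi_k\cf f]\|_{\mathbb{M}^u_p}$ -- so one never interpolates in the Morrey variable at all. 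You correctly identify that ``interpolating only in the $k$-variable while keeping $(u,p)$ fixed'' is what makes the identity true, but your proposal never makes this structural observation do the work; it instead leaves a false intermediate statement in its place. As written the proposal therefore does not constitute a proof of Lemma~\ref{re_inter}.
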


\begin{proof}
This result was proved in \cite{Si_sur2}, see corollary 2.3. The general background concerning interpolation theory can be found in \cite{Tr78} and \cite{BL}.
\end{proof}

In many cases it is possible to describe the Besov-Morrey spaces and the Triebel-Lizorkin-Morrey spaces in terms of differences. Let $ f : \mathbb{R}^d \rightarrow \mathbb{R}$ be a function. Then for $ x, h \in \mathbb{R}^d $ we define the difference of the first order by $ \Delta_{h}^{1}f (x) := f ( x + h ) - f (x) $. Let $ N \in \mathbb{N}$ with $ N > 1 $. Then we define the difference of the order $ N $ by $\Delta_{h}^{N}f (x) :=   (\Delta_{h} ^1  ( \Delta_{h} ^{N-1}f   ) ) (x) $. Using this notation we can formulate the following very useful result for the spaces $  \mathbb{N}^{s}_{u,p,q}(\R)    $.

\begin{prop}\label{pro_N_diff}
Let $ 1 \leq p \leq u < \infty $ and $ 1 \leq q \leq \infty $. Let $ 1 \leq v \leq \infty $ and $ s ~ > ~ d \max  ( 0, 1/p - 1/v   ) $. Let $ 1 \leq a \leq \infty  $ and $ N \in \mathbb{N} $ with $ N > s $. Then a function $ f~ \in ~ \mathbb{L}_{p}^{loc}(\R)$ belongs to $ \mathbb{N}^{s}_{u,p,q}(\R)   $ if and only if $ f \in \mathbb{L}_{v}^{loc}(\R)$ and 
\begin{align*}
& \underbrace{\Vert f \vert \mathcal{M}^{u}_{p}( \mathbb{R}^d)   \Vert + \Big ( \int_{0}^{a} t^{-sq-d \frac{q}{v}} \Big \Vert  \Big ( \int_{B(0,t)}\vert \Delta^{N}_{h}f(x) \vert^{v} dh \Big )^{\frac{1}{v}} \Big \vert  \mathcal{M}^{u}_{p}( \mathbb{R}^d) \Big \Vert^{q}  \frac{dt}{t} \Big )^{\frac{1}{q}}  } < \infty  \\
& \hspace{4,5 cm} \Vert f \vert \mathcal{N}^{s}_{u,p,q}(\R) \Vert^{(v,a)} :=  
\end{align*}
with modifications if $ q = \infty $ and/or $ v = \infty$. The norms $ \Vert f \vert \mathcal{N}^{s}_{u,p,q}(\R) \Vert  $ and $ \Vert f \vert \mathcal{N}^{s}_{u,p,q}(\R) \Vert^{(v,a)} $  are equivalent for $ f \in \mathbb{L}_{p}^{loc}(\R)$.

\end{prop}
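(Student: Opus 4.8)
The plan is to establish, separately, the two norm estimates
$\|f\,|\,\mathcal{N}^{s}_{u,p,q}(\R)\|^{(v,a)}\lesssim\|f\,|\,\mathcal{N}^{s}_{u,p,q}(\R)\|$ and
$\|f\,|\,\mathcal{N}^{s}_{u,p,q}(\R)\|\lesssim\|f\,|\,\mathcal{N}^{s}_{u,p,q}(\R)\|^{(v,a)}$; the coincidence of the two descriptions of $\mathbb{N}^{s}_{u,p,q}(\R)$ then follows, using the Fatou property (Lemma \ref{l_fatou}) to justify convergence of the Littlewood--Paley decomposition where needed. Two preliminary reductions help. First, each of the two quantities majorises $\|f\,|\,\mathcal{M}^{u}_{p}(\R)\|$; since a Morrey function is locally integrable with at most polynomially growing local $L_{1}$-mass, $f$ is then automatically a tempered distribution, so $f_{k}:=\mathcal{F}^{-1}[\varphi_{k}\mathcal{F}f]$ is a well defined smooth function. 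Second, for $t\ge a$ the triangle inequality for $\Delta^{N}_{h}$, translation invariance of the Morrey norm, Minkowski's integral inequality and $s>0$ give $\big\|\big(\int_{B(0,t)}|\Delta^{N}_{h}f|^{v}\,dh\big)^{1/v}\,|\,\mathcal{M}^{u}_{p}(\R)\big\|\lesssim t^{d/v}\|f\,|\,\mathcal{M}^{u}_{p}(\R)\|$, so the tail $t\ge a$ of the difference integral is always dominated by $\|f\,|\,\mathcal{M}^{u}_{p}(\R)\|$; hence it suffices to treat $a=\infty$, and by scaling one may place the cut-off wherever convenient. The usual modifications cover $q=\infty$ and $v=\infty$.

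For the direction $\|\cdot\|^{(v,a)}\lesssim\|\cdot\|$, one first checks $f\in\mathbb{L}^{loc}_{v}(\R)$: this is trivial when $v\le p$, and when $v>p$ it follows from a Sobolev-type embedding of $\mathbb{N}^{s}_{u,p,q}(\R)$ into $\mathbb{L}^{loc}_{v}(\R)$, valid precisely because $s>d\max(0,1/p-1/v)$. For the difference quantity, write $\Delta^{N}_{h}f=\sum_{k\ge 0}\Delta^{N}_{h}f_{k}$ and, for fixed $t>0$, split the sum at $2^{k}t\sim 1$. When $2^{k}t\le 1$ the iterated mean value theorem gives $|\Delta^{N}_{h}f_{k}(x)|\lesssim|h|^{N}\sup_{|y|\le N|h|}|D^{N}f_{k}(x+y)|$, and since $f_{k}$ is band-limited to a ball of radius $\sim 2^{k}$ this is $\lesssim(2^{k}|h|)^{N}f_{k}^{\ast}(x)$ with $f_{k}^{\ast}$ a Peetre maximal function; integration over $h\in B(0,t)$ produces the decisive factor $(2^{k}t)^{N}t^{d/v}$. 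When $2^{k}t\ge 1$ one uses $|\Delta^{N}_{h}f_{k}(x)|\lesssim\sum_{j=0}^{N}|f_{k}(x+jh)|$ together with a Nikolskii-type inequality for band-limited functions, which trades the $L_{v}$-average over the $h$-ball for an $L_{p}$-average over a slightly larger ball at the price of $2^{kd\max(0,1/p-1/v)}$ (for $v\le p$ this step is plain Hölder in $h$). Multiplying by $t^{-s-d/v}$, taking the $\mathcal{M}^{u}_{p}(\R)$-norm and then the $L_{q}(dt/t)$-norm, and passing to $t\sim 2^{-\ell}$, the two pieces become convolutions in $\ell$ of the sequence $\big(2^{ks}\|f_{k}\,|\,\mathcal{M}^{u}_{p}(\R)\|\big)_{k}$ with geometrically decaying sequences --- of rate $N-s>0$ on the small-$h$ side and of rate $s-d\max(0,1/p-1/v)>0$ on the large-$h$ side --- after invoking the boundedness on $\mathcal{M}^{u}_{p}(\R)$ of the Peetre maximal operator, resp.\ of the Hardy--Littlewood maximal operator, which is available for $p\ge 1$, resp.\ $p>1$. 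Young's inequality for sequences then bounds everything by $\|f\,|\,\mathcal{N}^{s}_{u,p,q}(\R)\|$.

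For the converse direction one uses a reproducing identity $f_{k}=\int_{\R}K_{k}(h)\,\Delta^{N}_{h}f(\cdot)\,dh$ for $k\ge 1$, where $K_{k}$ is chosen so that $\int_{\R}K_{k}(h)(e^{ih\xi}-1)^{N}\,dh=\varphi_{k}(\xi)$; a standard construction yields such $K_{k}$ essentially concentrated on $|h|\lesssim 2^{-k}$ with $\|K_{k}\|_{L_{v'}(|h|\lesssim 2^{-k})}\lesssim 2^{kd/v}$. Hölder's inequality in $h$, Minkowski's integral inequality in the Morrey space, and the discretisation $t\sim 2^{-k}$ give $\big(\sum_{k\ge 1}2^{ksq}\|f_{k}\,|\,\mathcal{M}^{u}_{p}(\R)\|^{q}\big)^{1/q}\lesssim\|f\,|\,\mathcal{N}^{s}_{u,p,q}(\R)\|^{(v,a)}$; the low-frequency term $\|f_{0}\,|\,\mathcal{M}^{u}_{p}(\R)\|$ is handled by a second reproducing formula expressing $f_{0}$ through $f$ itself together with finitely many low-order differences, hence is dominated by $\|f\,|\,\mathcal{M}^{u}_{p}(\R)\|$. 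This direction uses only $N>s$.

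The main obstacle is the large-$h$ part of the forward estimate: obtaining the result under the sharp assumption $s>d\max(0,1/p-1/v)$ rather than the cruder $s>d/p$ forces one to exploit genuinely the $L_{v}$-average over the $h$-ball --- via the Nikolskii inequality (or a suitably adapted maximal operator) instead of a pointwise Peetre-maximal bound --- and to secure the relevant scalar and vector-valued maximal inequalities in Morrey spaces down to $p=1$, where the Hardy--Littlewood maximal operator itself is unbounded and one must stay on the Nikolskii route. A variant that shortens the write-up is to invoke the known difference characterisation for Besov--Morrey spaces in the case $v=p$, $a=\infty$, reduce a general $a$ to $a=\infty$ by the tail estimate above, and reduce a general $v$ to $v=p$ by the two-sided Nikolskii-type comparison of ball means just described; the crux is unchanged.
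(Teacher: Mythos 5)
The paper does not prove Proposition~\ref{pro_N_diff}: its proof is the single line ``This result was proved in \cite{HoBM}, see theorem 3,'' a reference to the author's own submitted manuscript. There is therefore no in-paper argument to compare your sketch against. Your outline follows the classical Littlewood--Paley route for difference characterisations --- decompose $f=\sum_{k}f_{k}$, split at $2^{k}t\sim1$, use the iterated mean-value estimate with Peetre maximal functions below that scale, a local Plancherel--P\'olya--Nikolskii inequality (with cost $(2^{k}t)^{d(1/p-1/v)}$, which yields exactly your factor $2^{kd(1/p-1/v)}$ after normalising the ball average) above it, Young's inequality for the discrete convolution, and a reproducing-kernel identity for the converse --- and this is almost certainly the same architecture as \cite{HoBM} and its companion \cite{HoTLM} used for Proposition~\ref{pro_E_diff}. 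The sharp threshold $s>d\max(0,1/p-1/v)$ appears in the step you identify.

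A few points to tighten. The low-frequency piece $f_{0}$ requires no ``second reproducing formula'' involving differences: $f_{0}=f*\mathcal{F}^{-1}\varphi_{0}$ and Young's inequality in Morrey spaces already give $\|f_{0}\,|\,\mathcal{M}^{u}_{p}(\R)\|\lesssim\|f\,|\,\mathcal{M}^{u}_{p}(\R)\|$. Your dichotomy ``Peetre maximal works for $p\ge1$, Hardy--Littlewood fails at $p=1$'' is largely a bookkeeping artefact: the Morrey bound for the Peetre maximal function $f_{k}^{*,r}$ is obtained by dominating it pointwise by $(M(|f_{k}|^{r}))^{1/r}$ with $0<r<p$ and then using boundedness of $M$ on $\mathcal{M}^{u/r}_{p/r}$ with $p/r>1$; this route is available at $p=1$ as long as $r<1$, so one is not really sidestepping the Hardy--Littlewood operator. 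Finally, the two steps carrying the real weight --- the local Nikolskii inequality for band-limited functions trading $L_{v}$ ball averages for $L_{p}$ ball averages, and the kernel $K_{k}$ with $\|K_{k}\,|\,L_{v'}(\{|h|\lesssim2^{-k}\})\|\lesssim2^{kd/v}$ realising the reproducing identity --- are standard but are asserted here rather than established; a genuine proof must construct them or cite a precise source. Your closing suggestion (reduce to $(v,a)=(p,\infty)$ by tail and ball-means comparisons) is sensible, but note that the $v=p$ difference characterisation for $\mathbb{N}^{s}_{u,p,q}(\R)$ is itself only stated in \cite{HoBM}, so this trades one external black box for another rather than removing the dependence on the unpublished reference.
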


\begin{proof}
This result was proved in \cite{HoBM}, see theorem 3.
\end{proof}

For the Triebel-Lizorkin-Morrey spaces there is the following characterisation in terms of differences. 

\begin{prop}\label{pro_E_diff}

Let $ 1 \leq p \leq u < \infty $, $ 1 \leq q \leq \infty $, $ 1 \leq v \leq \infty $, $ N \in \mathbb{N} $ and $   1 \leq a \leq \infty $. Let $ d \max  ( 0, 1/p - 1/v, 1/q - 1/v  ) < s < N $. Then a function $ f \in \mathbb{L}_{\min(p,q)}^{loc}(\R)$ belongs to $ \mathbb{E}^{s}_{u,p,q}(\R)   $ if and only if $ f \in \mathbb{L}_{v}^{loc}(\R)$ and (modifications if $ q = \infty $ and/or $ v = \infty $)  
\begin{align*}
& \underbrace{\Vert f \vert \mathcal{M}^{u}_{p}(\R)  \Vert  +   \Big \Vert \Big (  \int_{0}^{a}  t^{-sq} \Big ( t^{-d} \int_{B(0,t)}\vert \Delta^{N}_{h}f(x) \vert^{v} dh \Big )^{\frac{q}{v}}  \frac{dt}{t}  \Big )^{\frac{1}{q}} \Big \vert \mathcal{M}^{u}_{p}( \mathbb{R}^d)  \Big \Vert } < \infty. \\
& \hspace{4cm} \Vert f \vert \mathcal{E}^{s}_{u,p,q}(\R) \Vert^{(v,a)} := 
\end{align*}
The norms $ \Vert f \vert \mathcal{E}^{s}_{u,p,q}(\R) \Vert  $ and $ \Vert f \vert \mathcal{E}^{s}_{u,p,q}(\R) \Vert^{(v,a)}    $  are equivalent for $ f ~ \in ~  \mathbb{L}_{\min(p,q)}^{loc}(\R)$.

\end{prop}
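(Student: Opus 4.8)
The plan is to prove the two inequalities
\[
\Vert f \vert \mathcal{E}^{s}_{u,p,q}(\R) \Vert^{(v,a)} \lesssim \Vert f \vert \mathcal{E}^{s}_{u,p,q}(\R) \Vert \qquad \text{and} \qquad \Vert f \vert \mathcal{E}^{s}_{u,p,q}(\R) \Vert \lesssim \Vert f \vert \mathcal{E}^{s}_{u,p,q}(\R) \Vert^{(v,a)}
\]
separately, after two preliminary reductions. First, by Lemma \ref{l_bp1}(iv) the case $p=u$ is Triebel's ball-means-of-differences characterisation of $\mathbb{F}^{s}_{p,q}(\R)$, so I would organise the argument so that the only ingredients specific to $p<u$ are a Morrey version of the Fefferman--Stein vector-valued maximal inequality and of the Peetre-maximal estimates for band-limited functions, both standard in the Morrey scale (cf.\ the references behind Lemma \ref{Four_Mult}). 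Second, using only $s>0$, the embedding $\mathbb{E}^{s}_{u,p,q}(\R)\hookrightarrow\mathbb{M}^{u}_{p}(\R)$ of Lemma \ref{E_Mor_em}, and boundedness of averaging operators on $\mathbb{M}^{u}_{p}(\R)$, one checks that the piece $\int_{1}^{a}$ of the $t$-integral is controlled by $C\Vert f\vert\mathcal{M}^{u}_{p}(\R)\Vert$; hence $\Vert\cdot\Vert^{(v,a)}$ does not depend on $a\in[1,\infty]$ up to equivalence and we may fix a convenient value of $a$. Throughout write $f_{j}=\mathcal{F}^{-1}[\varphi_{j}\mathcal{F}f]$ and denote by $f_{j}^{*,r}$ the corresponding Peetre maximal functions.

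For the first inequality I would, for fixed $t\in(0,a]$, split $\Delta_{h}^{N}f=\sum_{2^{j}t\le1}\Delta_{h}^{N}f_{j}+\sum_{2^{j}t>1}\Delta_{h}^{N}f_{j}$. On the low-frequency block the elementary estimate $|\Delta_{h}^{N}g(x)|\lesssim|h|^{N}\max_{|\alpha|=N}\sup_{|y|\le N|h|}|D^{\alpha}g(x+y)|$ together with the Bernstein/Peetre inequality $\sup_{|y|\le N|h|}|D^{\alpha}f_{j}(x+y)|\lesssim2^{j|\alpha|}f_{j}^{*,r}(x)$ (valid since $2^{j}|h|\lesssim1$) produces a bound $\lesssim t^{N}\sum_{2^{j}t\le1}2^{jN}f_{j}^{*,r}(x)$, which after the $t$-integration is absorbed because $N>s$. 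On the high-frequency block one keeps the inner $L_{v}(B(0,t),dh)$-average and estimates it, via mapping properties of averaging/maximal operators acting on band-limited functions, in terms of $f_{j}^{*,r}(x)$; the subsequent $t$-integration and the interchange of the $L_{v}(dh)$-average with the $\ell_{q}$-summation over $j$ are the points where the hypotheses $s>d\max(0,1/p-1/v)$ and $s>d\max(0,1/q-1/v)$ are consumed. Taking $\Vert\cdot\vert\mathbb{M}^{u}_{p}(\R)\Vert$ and applying the Morrey analogue of the vector-valued maximal inequality (with $r$ small in dependence on $p,q$) turns $\Vert(\sum_{j}2^{jsq}(f_{j}^{*,r})^{q})^{1/q}\vert\mathcal{M}^{u}_{p}(\R)\Vert$ into $\Vert f\vert\mathcal{E}^{s}_{u,p,q}(\R)\Vert$; along the way the full lower bound on $s$ also yields $f\in\mathbb{L}^{loc}_{v}(\R)$ whenever $f\in\mathbb{E}^{s}_{u,p,q}(\R)$, so the statement is meaningful.

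For the converse I would use a Calder\'on-type reproducing identity adapted to finite differences: since the Fourier symbol of $h\mapsto\Delta_{h}^{N}$ is $(e^{ih\cdot\xi}-1)^{N}$, which does not degenerate on the annulus $\supp\varphi_{j}$ after averaging over directions of $h$, one constructs kernels $k_{j}$ with $\supp k_{j}\subset B(0,c\,2^{-j})$ and $|k_{j}(h)|\lesssim2^{jd}$ such that $f_{j}(x)=\int_{\R}k_{j}(h)\,\Delta_{h}^{N}f(x)\,dh$ for $j\ge1$, with a separate mollification handling $f_{0}$ via $\Vert f\vert\mathbb{M}^{u}_{p}(\R)\Vert$. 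By H\"older in $h$ this gives $|f_{j}(x)|\lesssim(c^{-d}2^{jd}\int_{B(0,c2^{-j})}|\Delta_{h}^{N}f(x)|^{v}dh)^{1/v}$, i.e.\ a dyadic sample at $t\sim2^{-j}$ of the inner quantity of $\Vert\cdot\Vert^{(v,a)}$; since that quantity is quasi-increasing in $t$ (its $v$-th power decreases by at most a bounded factor when $t$ is halved), the sum $\sum_{j}2^{jsq}|f_{j}(x)|^{q}$ is dominated by $\int_{0}^{a}t^{-sq}(t^{-d}\int_{B(0,t)}|\Delta_{h}^{N}f(x)|^{v}dh)^{q/v}\frac{dt}{t}$, and taking $\Vert\cdot\vert\mathbb{M}^{u}_{p}(\R)\Vert$ finishes the proof; one must still check that $\sum_{j}f_{j}$ represents $f$ in $\mathcal{S}'(\R)$, which follows from $f\in\mathbb{L}^{loc}_{\min(p,q)}(\R)$.

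The step I expect to be the main obstacle is the high-frequency analysis in the first inequality: the finite-difference operator has only limited smoothing (its kernel is a combination of Dirac masses), so controlling $\Delta_{h}^{N}f_{j}$ for $2^{j}t>1$ uniformly enough to survive the $\ell_{q}$-summation, and pinning down the thresholds $s>d\max(0,1/p-1/v)$ and $s>d\max(0,1/q-1/v)$, is delicate. A perhaps cleaner route is to deduce both inequalities from the known local-means characterisation of $\mathbb{E}^{s}_{u,p,q}(\R)$ by verifying that the difference kernels are admissible, the admissibility conditions again being exactly the stated restrictions on $s$. Note that deriving the result from the Besov-Morrey case, Proposition \ref{pro_N_diff}, by real interpolation, Lemma \ref{re_inter}, only reproduces the $\mathbb{N}$-scale and does not suffice.
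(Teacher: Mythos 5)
The paper does not actually prove this proposition itself: the in-text proof is the single line ``This result was proved in \cite{HoTLM}, see theorem 7,'' i.e.\ the statement is quoted from the author's companion paper on Triebel--Lizorkin--Morrey spaces and differences. There is therefore no internal argument in the present paper against which to compare your sketch. What you outline is the standard route for ball-means-of-differences characterisations of Triebel--Lizorkin-type spaces, transplanted to the Morrey scale: a Littlewood--Paley splitting into low- and high-frequency blocks at the threshold $2^{j}t\le 1$ controlled by Peetre maximal functions and a Morrey vector-valued Fefferman--Stein inequality for one inequality; a Calder\'on-type reproducing identity built from the symbol $(e^{ih\cdot\xi}-1)^{N}$, plus dyadic sampling in $t$, for the converse. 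This is in the same family of techniques one finds in Triebel's treatment of $F^{s}_{p,q}$ and in the cited reference. Your closing remark that one cannot obtain the $\mathbb{E}$-scale result from Proposition \ref{pro_N_diff} via Lemma \ref{re_inter} is correct and worth stating, since real interpolation of Morrey-smoothness spaces only returns Besov--Morrey spaces. The outline is sound; the part that genuinely needs to be carried out in full, as you already flag, is the high-frequency block, where the finite-difference operator has no smoothing and the specific thresholds $s>d\max(0,1/p-1/v,1/q-1/v)$ must emerge from tracking the $L_{v}$-average against the Peetre maximal function and from the vector-valued maximal inequality in the Morrey setting. A review of the present paper cannot certify those details since it delegates them entirely to \cite{HoTLM}.
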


\begin{proof}
This result was proved in \cite{HoTLM}, see theorem 7.
\end{proof}

\section{On the boundedness of the operators $ T^{+} $ and $ T $ }\label{sec_3}

Now for real-valued $ f $ we are ready to turn our attention to the operators $ T^{+}  $ and $ T $ given by 
\begin{equation}\label{op_T+}
(T^{+}f)(x) = \max(f(x),0) \qquad \mbox{and} \qquad  (Tf)(x) = |f(x)|
\end{equation}
in the context of Besov-Morrey and Triebel-Lizorkin-Morrey spaces. Both operators are representatives of composition operators $ T_{g} : f \mapsto g \circ f   $. Theory concerning composition operators or even more general Nemytzkij operators can be found in chapter 5 in \cite{RS} and in \cite{AZ}. One may also consult \cite{BouLan}, \cite{BouMou}, \cite{BouMouSi} and \cite{BouSi}. In the theory of nonlinear partial differential equations the operator $ T^{+} $ often is called a truncation operator. It turns out that the operators $ T^{+}  $ and $ T $ have many properties in common. The reason for this is that $ 2 \max(f(x),0) =  f(x) +   |f(x)|  $ for real-valued functions $ f $. Because of this formula we learn that whenever $ T $ is bounded on $ \mathbb{A}^{s}_{u,p,q}(\R)   $ also $ T^{+}  $ is bounded on $ \mathbb{A}^{s}_{u,p,q}(\R)   $. Moreover it is not difficult to see that when we know whether the operator $ T $ is bounded on a space $ \mathbb{A}^{s}_{u,p,q}(\R)  $ or not we also know whether we have $ T(\mathbb{A}^{s}_{u,p,q}(\R)) \subset \mathbb{A}^{s}_{u,p,q}(\R)    $ or $ T(\mathbb{A}^{s}_{u,p,q}(\R)) \not \subset \mathbb{A}^{s}_{u,p,q}(\R)    $. So in what follows it will be enough to investigate the boundedness properties of $ T $. Results concerning the boundedness of the mapping $ f \mapsto |f|  $ in the setting of the original Besov and Triebel-Lizorkin spaces can be found in \cite{BouMey} as well as in \cite{Os1}, in chapter 5.4.1 in \cite{RS} and in chapter 25 in \cite{Tr01}. Early results for Sobolev spaces can be found in \cite{MaMiz}.

\subsection{The boundedness of the operator $ T $ in the case $ 0 < s < 1  $}\label{sec_3.1}

For a start we will look at the case $ 0 < s < 1   $. Here it is very easy to prove that the operator $T$ is bounded.

\begin{prop}\label{pro_0<s<1}
Let $ 1 \leq p \leq u < \infty $, $ 1 \leq q \leq \infty $ and $ 0 < s < 1  $. Then there is a constant $ C > 0 $ independent of $ f \in \mathbb{A}^{s}_{u,p,q}(\R)  $ such that 
\begin{align*}
\Vert Tf   \vert \mathcal{A}^{s}_{u,p,q}(\mathbb{R}^{d})  \Vert \leq C \Vert f \vert \mathcal{A}^{s}_{u,p,q}(\mathbb{R}^{d})  \Vert
\end{align*}
holds for all $ f \in  \mathbb{A}^{s}_{u,p,q}(\R)  $.
\end{prop}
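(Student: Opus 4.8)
The natural approach is to use the difference characterizations from Propositions \ref{pro_N_diff} and \ref{pro_E_diff} with the simplest choice of parameters, namely $N=1$ and $v=p$ (so that the additional $\mathbb{L}_v^{loc}$ requirement is automatic and the conditions $s>d\max(0,1/p-1/v)$ and $s>d\max(0,1/p-1/v,1/q-1/v)$ reduce to $s>0$, which holds). For $0<s<1$ the condition $s<N=1$ is exactly what is assumed. The key elementary fact is the pointwise Lipschitz estimate
\[
\big| \, |f(x+h)| - |f(x)| \, \big| \le |f(x+h)-f(x)|,
\]
that is, $|\Delta_h^1(Tf)(x)| \le |\Delta_h^1 f(x)|$ for every $x,h\in\R$. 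First I would verify that $Tf$ lies in the relevant local Lebesgue class and in $\mathcal{S}'(\R)$: since $f\in\mathbb{A}^s_{u,p,q}(\R)\subset\mathbb{L}_1^{loc}(\R)$ (and, for the $\mathbb{E}$-case, $\mathbb{E}^s_{u,p,q}(\R)\hookrightarrow\mathbb{M}^u_p(\R)\subset\mathbb{L}_p^{loc}(\R)$ by Lemma \ref{E_Mor_em}), and $|Tf(x)|=|f(x)|$ pointwise, $Tf$ is in the same local Lebesgue space as $f$ and defines a regular tempered distribution, so it is an admissible element to test against the difference norm.

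Then I would simply insert the pointwise bound into the difference norms. For $\mathbb{N}^s_{u,p,q}(\R)$: the Morrey term satisfies $\|Tf\,|\,\mathcal{M}^u_p\| = \|\,|f|\,\,|\,\mathcal{M}^u_p\| = \|f\,|\,\mathcal{M}^u_p\|$ since the Morrey quasi-norm only sees $|f|$; and since $\big(\int_{B(0,t)}|\Delta_h^1(Tf)(x)|^v\,dh\big)^{1/v} \le \big(\int_{B(0,t)}|\Delta_h^1 f(x)|^v\,dh\big)^{1/v}$ pointwise in $x$, monotonicity of the Morrey quasi-norm and of the outer $L_q(dt/t)$-type integral gives $\|Tf\,|\,\mathcal{N}^s_{u,p,q}\|^{(v,a)} \le \|f\,|\,\mathcal{N}^s_{u,p,q}\|^{(v,a)}$. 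The equivalence of $\|\cdot\|^{(v,a)}$ with the Fourier-analytic norm (Proposition \ref{pro_N_diff}) then yields $\|Tf\,|\,\mathcal{N}^s_{u,p,q}\| \le C\|f\,|\,\mathcal{N}^s_{u,p,q}\|$. The argument for $\mathbb{E}^s_{u,p,q}(\R)$ is identical, using Proposition \ref{pro_E_diff} and the fact that the map $g\mapsto\|g\,|\,\mathcal{M}^u_p\|$ is monotone with respect to pointwise domination of nonnegative functions.

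There is essentially no obstacle here — this is the "simple case" the paper announces. The only points requiring a modicum of care are: (i) checking that $Tf$ satisfies the membership hypothesis ($f\in\mathbb{L}_p^{loc}$ resp. $f\in\mathbb{L}_{\min(p,q)}^{loc}$) needed to invoke the difference characterization, which follows because $|f|$ has the same local integrability as $f$; (ii) noting that $f\in\mathbb{A}^s_{u,p,q}(\R)$ with $s>0$ does imply $f$ lies in the required local space (e.g. via the embedding into $\mathbb{M}^u_p$ in the $\mathbb{E}$-case, and analogously for $\mathbb{N}$); and (iii) confirming that the monotonicity of the Morrey quasi-norm under $0\le g_1\le g_2$ pointwise is immediate from Definition \ref{def_mor}. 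No appeal to the bound $s<1+1/p$ or to dimension is needed — monotonicity of differences breaks down only when one passes to higher-order differences $N\ge 2$, which is precisely why the harder ranges $s\ge 1$ require the different techniques described in the later subsections.
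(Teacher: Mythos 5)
Your overall strategy is the same as the paper's: invoke the difference characterizations with $N=1$ and use the pointwise bound $|\Delta_h^1|f|(x)| \le |\Delta_h^1 f(x)|$ plus monotonicity of the Morrey quasi-norm. However, there is a real error in the parameter choice. You take $v=p$ and assert that the hypothesis of Proposition~\ref{pro_E_diff}, namely $d\max(0,\,1/p-1/v,\,1/q-1/v) < s$, then reduces to $s>0$. With $v=p$ that quantity is $d\max(0,\,1/q-1/p)$, which is strictly positive whenever $q<p$; for instance with $p=4$, $q=1$, $d=3$ one would need $s>9/4$, contradicting $s<1$. So for the Triebel--Lizorkin--Morrey case your argument is not applicable across the full parameter range allowed by the Proposition. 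The paper avoids this by taking $v=1$, for which $1/p-1\le 0$ and $1/q-1\le 0$ (since $p,q\ge 1$), so the max is genuinely $0$; choosing $v=\min(p,q)$ would also work. The rest of your argument (pointwise domination of first differences, monotonicity of the Morrey norm, and the membership checks) is correct and matches the paper's proof, so only the choice of $v$ needs to be fixed.
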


\begin{proof}
\textit{Step 1.} At first we will deal with the case $ \mathcal{A} = \mathcal{E}  $. Because of $ 0 < s < 1 $ and $ p, q \geq 1  $ we can use proposition \ref{pro_E_diff} with $ v = 1  $, $ a = \infty  $ and $ N = 1  $. So we have to work with 
\begin{align*}
 \Vert \ |f| \ \vert \mathcal{M}^{u}_{p}(\R)  \Vert  +   \Big \Vert \Big (  \int_{0}^{\infty}  t^{-sq} \Big ( t^{-d} \int_{B(0,t)}\vert \Delta^{1}_{h}|f|(x) \vert dh \Big )^{q}  \frac{dt}{t}  \Big )^{\frac{1}{q}} \Big \vert \mathcal{M}^{u}_{p}( \mathbb{R}^d)  \Big \Vert. 
\end{align*}
Now because of the triangle inequality we have 
\begin{align*}
\vert \Delta^{1}_{h}|f|(x) \vert  = \vert |f(x+h)| - |f(x)| \vert  \leq \vert f(x+h) - f(x)   \vert  =  \vert \Delta^{1}_{h}f(x) \vert. 
\end{align*}
When we use proposition \ref{pro_E_diff} again the proof for the case $ \mathcal{A} = \mathcal{E}   $ is complete.

\textit{Step 2.} In the case $ \mathcal{A} = \mathcal{N}   $ the proof can be done in the same way. Here instead of proposition \ref{pro_E_diff} we have to use proposition \ref{pro_N_diff}. We omit the details. 
\end{proof}

\subsection{On the boundedness of the operator $ T $ in the case $ s > 1  $ and $ d = 1  $ }\label{sec_3.2}

In this section we will study the properties of the mapping $ T : f \rightarrow |f|  $ for functions from Triebel-Lizorkin-Morrey spaces in the case $ s > 1   $ and dimension $  d = 1 $. To this end the following Hardy-type inequality will be an important tool. For $ x \in \mathbb{R}^{d}   $ and a set $ A \subset \mathbb{R}^{d}  $ we will write $ \dist(x,A) = \inf_{y \in A} \vert x - y  \vert  $. By $ A^{c} $ we mean $ \mathbb{R}^{d} \setminus A   $.

\begin{lem}\label{Hardy}
Let $ 1 \leq p < \infty  $, $ 1 \leq q \leq \infty  $ and $ 0 < s < 1/p  $. Let $ d = 1$. Then there exists a constant $ C > 0 $ such that
\begin{align*}
\int_{I} |f(x)|^{p} \dist(x,I^{c})^{-sp} dx \leq C \int_{I} \Big ( \int_{0}^{\infty} r^{-sq} \Big ( \int_{\substack{-1 < h < 1 \\ x-rh \in I}} \vert f(x) - f(x-rh)\vert dh \Big )^{q} \frac{dr}{r} \Big )^{\frac{p}{q}} dx
\end{align*}
holds for all intervals $ I $ and all $ f \in  \mathbb{S}(\mathbb{R}) $ satisfying $ \int_{I} f(x) dx = 0  $ if $ I $ is bounded.

\end{lem}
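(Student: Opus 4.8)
The plan is to reduce this one-dimensional Hardy-type inequality to a classical Hardy inequality on a half-line. First I would observe that it suffices to treat a single interval $I$, and by translation and dilation it is enough to handle, say, $I = (0,L)$ or even to split $I$ at its midpoint and deal with each half separately; on the left half the relevant distance is $\dist(x,I^c) = x$ (measured to the nearer endpoint), so the whole matter becomes a weighted estimate near one endpoint of $I$, say the origin. Thus I want to bound $\int_0^{L/2} |f(x)|^p x^{-sp}\,dx$ by the right-hand side. The normalisation $\int_I f\,dx = 0$ (when $I$ is bounded) is what lets us control $f$ near the endpoint by its oscillation: it guarantees $f$ cannot be a large constant, so $|f(x)|$ can be estimated through differences $f(x) - f(x-rh)$ with the comparison point $x - rh$ ranging over a substantial portion of $I$.

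The key steps, in order, would be: (1) For fixed $x$ close to the left endpoint, write $f(x)$ as an average over a comparison region: since $\int_I f = 0$, for any measurable $E \subset I$ with $|E| \gtrsim |I|$ we have $f(x) = |E|^{-1}\int_E (f(x) - f(y))\,dy$ plus a harmless term, so $|f(x)| \lesssim |I|^{-1}\int_I |f(x) - f(y)|\,dy$. (2) Change variables $y = x - rh$ with $-1 < h < 1$, so that $r \sim |x-y|$ runs over $(0, 2|I|)$ and for each such $r$ the admissible $h$ form an interval of length comparable to $1$ (intersected with $\{x - rh \in I\}$); this rewrites $\int_I |f(x)-f(y)|\,dy$ as $\int_0^{\infty} r \big(\int_{-1<h<1,\ x-rh\in I} |f(x)-f(x-rh)|\,dh\big)\,dr$ up to constants. (3) Insert the weight $r^{-s}$ and its reciprocal and apply Hölder in $r$ with exponents $q$ and $q'$ against $\frac{dr}{r}$ over $r \in (0, c|I|)$: the factor $r^{1+s}$ paired with $\frac{dr}{r}$ integrates to a constant times $|I|^{1+s}$ since $1 + s > 0$, giving a pointwise bound
\[
|f(x)| \;\lesssim\; |I|^{s + 1/q'}\, |I|^{-1}\Big( \int_0^{c|I|} r^{-sq}\Big(\int_{\substack{-1<h<1\\ x-rh\in I}} |f(x)-f(x-rh)|\,dh\Big)^q \frac{dr}{r}\Big)^{1/q},
\]
i.e. $|f(x)| \lesssim |I|^{s - 1/q}\, G(x)$ where $G$ is the inner $q$-expression. (4) This estimate, however, does not yet carry the sharp weight $x^{-sp}$; to get that one should localise the comparison region to $y \in I$ with $|x - y| \sim x$ (a "dyadic-annulus around $x$" argument): replacing $I$ by $(0, 2x)$ in steps (1)–(3) — which is legitimate once $x < L/2$ because one can subtract the average over $(0,2x)$ at the cost of a telescoping sum of scales, each handled by the mean-zero property over $I$ — yields $|f(x)| \lesssim x^{s - 1/q} \tilde G(x)$ with $\tilde G(x)$ the same $q$-expression but with $r$ integrated only over $(0, c x)$. (5) Raise to the $p$-th power, multiply by $x^{-sp}$, integrate over $x \in (0, L/2)$: the factors $x^{-sp} \cdot x^{(s-1/q)p} = x^{-p/q}$ appear, and since $G(x)^p$ dominates $\tilde G(x)^p$ pointwise, one is left with $\int_0^{L/2} x^{-p/q} \tilde G(x)^p\,dx$, which is bounded by $\int_I \tilde G(x)^p\,dx$ after absorbing $x^{-p/q}$ — here the constraint $s < 1/p$, equivalently the summability of a geometric series of scales weighted by $2^{j(sp - 1)}$ coming from the dyadic decomposition $x \sim 2^{-j}L$, is exactly what makes the series converge.

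The main obstacle I expect is step (4): getting the weight $\dist(x,I^c)^{-sp}$ rather than the crude $|I|^{-sp+\text{stuff}}$, i.e. making the comparison region shrink to scale $\sim x$ while still exploiting $\int_I f = 0$. The honest way to do this is a Whitney/dyadic decomposition of $I$ near the endpoint: write $f(x) = \sum_j \big(\text{avg of } f \text{ on scale } 2^{-j}|I| \text{ around the endpoint}\big) - \big(\text{avg on scale } 2^{-j-1}|I|\big)$, each summand being an average of differences over a set of diameter $\sim 2^{-j}|I|$ hence controlled by $\int_{r \sim 2^{-j}|I|} \cdots$; the outermost term is the full average over $I$, which vanishes. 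Summing these contributions against the weight and using $0 < s < 1/p$ to sum the resulting geometric series is the crux — this is the standard mechanism behind Hardy inequalities of this type (cf. the arguments in \cite{Tr01}, chapter 25, and \cite{RS}, chapter 5.4.1), and once it is in place the rest is Hölder and Fubini. The restriction to $d = 1$ and to $f \in \mathbb{S}(\mathbb{R})$ is used only to make all these manipulations — change of variables, differentiation under the integral, finiteness of every quantity — rigorous without extra care.
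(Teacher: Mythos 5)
Your overall strategy — reduce to a single endpoint, use $\int_I f = 0$ to replace $f$ by averages of differences, pass to the $(r,h)$-parametrization, apply H\"older in $r$, and then localise from the global scale $|I|$ down to scale $x$ by a telescoping argument — is the right one and closely mirrors the argument the paper takes over from Bourdaud--Meyer and Runst--Sickel (visible in detail in the paper's proof of Lemma~\ref{Hardy_Mor}). The concrete decomposition differs in flavour: you propose a discrete Whitney-type telescoping of dyadic averages, whereas the paper works with the continuous identity $f(x) = g(x)-\int_x^\infty g(y)\,\frac{dy}{y}$, $g(x)=\frac{1}{x}\int_0^x(f(x)-f(y))\,dy$, handling the ``spanning'' part in one stroke via Minkowski's integral inequality and the ``local'' part via a single H\"older application. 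These are interchangeable and morally the same.

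There is, however, a genuine error in your exponent bookkeeping in steps (3)--(5), and it hides where $s<1/p$ is actually spent. First, the identity in step (2), $\int_I|f(x)-f(y)|\,dy \sim \int_0^\infty r\,u(r,x)\,dr$, is not right: with the full range $|h|<1$ the $h$-integral does not decouple, and the quantity that naturally appears is $\int_{x-I}\frac{|f(x)-f(y)|}{|x-y|}\,dy \sim \int_0^\infty u(r,x)\,\frac{dr}{r}$, not $\int_I|f(x)-f(y)|\,dy$. The clean fix — the paper's — is to parametrize $g(x)$ directly by writing $g(x)=\frac{y}{x}\int_0^{x/y}(f(x)-f(x-ry))\,dr$ and averaging over $y\in(\frac12,1)$. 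Second, once done correctly the H\"older step yields the pointwise bound $|g(x)|\lesssim x^{s}\,\tilde G(x)$, not $x^{s-1/q}\tilde G(x)$: the $L^{q'}(\frac{dr}{r})$-norm of $r^{1+s}$ over $(0,2x)$ is comparable to $x^{1+s}$, with no extra $x^{1/q'}$. Consequently $|g(x)|^p x^{-sp}\lesssim\tilde G(x)^p$ with no leftover weight, and the factor $x^{-p/q}$ you arrive at in step (5) should not appear at all; had it appeared it would be fatal, since $\int_0^{L/2} x^{-p/q}\tilde G(x)^p\,dx$ is not bounded by $\int_I\tilde G(x)^p\,dx$ and the ``absorbing'' you invoke has no basis. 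The condition $s<1/p$ is spent exactly once, on the telescoping part — controlling the correction $\mathrm{avg}_{(0,2x)}f - \mathrm{avg}_I f$ via a geometric sum of scale-$2^{-j}|I|$ contributions (or, in the paper, via the classical Hardy estimate for $\int_x^\infty g\,\frac{dy}{y}$) — and not again in the local H\"older step. Separate the two pieces and correct the exponents and your plan goes through.
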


\begin{proof}
This result can be found in \cite{BouMey}, see lemma 1. One may also consult \cite{Bou93} or chapter 3.1 in \cite{mir}. A detailed proof can be found in \cite{RS}, see lemma 1 in chapter 5.4.1.
\end{proof}

It is also possible to prove a version of the Hardy-type inequality for Morrey spaces. 

\begin{lem}\label{Hardy_Mor}
Let $ 1 \leq p \leq u < \infty  $, $ 1 \leq q \leq \infty  $ and $ 0 < s < 1/u  $. Let $ d = 1$. Then there exists a constant $ C > 0 $ such that
\begin{align*}
& \sup_{\substack{a,b \in \mathbb{R} \\ a<b }} \vert a - b \vert^{\frac{1}{u}-\frac{1}{p}} \Big (  \int_{I \cap (a,b)} |f(x)|^{p} \dist(x,I^{c})^{-sp} dx \Big )^{\frac{1}{p}} \\
& \qquad \leq C \sup_{\substack{a,b \in \mathbb{R} \\ a<b }} \vert a - b \vert^{\frac{1}{u}-\frac{1}{p}} \Big ( \int_{I \cap (a,b)} \Big ( \int_{0}^{\infty} r^{-sq} \Big ( \int_{\substack{-1 < h < 1 \\ x-rh \in I}} \vert f(x) - f(x-rh)\vert dh \Big )^{q} \frac{dr}{r} \Big )^{\frac{p}{q}} dx \Big )^{\frac{1}{p}}
\end{align*}
holds for all intervals $ I $ and all $ f \in  \mathbb{S}(\mathbb{R}) $ satisfying $ \int_{I} f(x) dx = 0  $ if $ I $ is bounded.

\end{lem}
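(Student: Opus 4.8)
Here is how I would prove Lemma \ref{Hardy_Mor}.

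Throughout write $\mathcal{D}_r(x):=\int_{-1<h<1,\ x-rh\in I}|f(x)-f(x-rh)|\,dh$ and $\mathcal{G}(x):=\big(\int_0^\infty r^{-sq}\mathcal{D}_r(x)^q\,\tfrac{dr}{r}\big)^{1/q}$ (with the usual modification for $q=\infty$), so that the right–hand side of the asserted inequality is $M:=\sup_{a<b}|a-b|^{\frac1u-\frac1p}\,\|\mathcal{G}\,\chi_{I\cap(a,b)}\mid L_p(\mathbb{R})\|$. The plan is to deduce the statement from Lemma \ref{Hardy} by a localisation argument. \emph{Step 1 (reduction of the supremum).} All intervals $(a,b)$ here are bounded and $\frac1u-\frac1p\le0$, so for $J:=I\cap(a,b)$ we have $|J|\le|a-b|$, hence $|a-b|^{\frac1u-\frac1p}\le|J|^{\frac1u-\frac1p}$, while $I\cap(a,b)=I\cap J$ with $J\subset I$ an interval; taking $(a,b)=J$ gives the reverse inequality. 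Thus on both sides the supremum may be taken only over bounded intervals $J\subset I$, and it suffices to show, with $C$ independent of $J$,
\[
\int_J|f(x)|^p\dist(x,I^c)^{-sp}\,dx\ \le\ C\,|J|^{1-p/u}\,M^p\qquad\text{for every bounded interval }J\subset I .
\]
(For $p=u$ this is already Lemma \ref{Hardy} after letting $J\uparrow I$, so the content is the case $p<u$.)

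\emph{Step 2 (splitting off the mean).} Fix such a $J$, put $f_J:=\frac1{|J|}\int_J f$, and use $|f|^p\le2^{p-1}(|f-f_J|^p+|f_J|^p)$. Since $J\subset I$ gives $\dist(x,I^c)\ge\dist(x,J^c)$ on $J$, and $\int_J(f-f_J)=0$, Lemma \ref{Hardy} applied to $f-f_J$ on the bounded interval $J$ (admissible because $0<s<\frac1u\le\frac1p$), together with $(f-f_J)(x)-(f-f_J)(x-rh)=f(x)-f(x-rh)$ and $\{h:x-rh\in J\}\subset\{h:x-rh\in I\}$, yields
\[
\int_J|f-f_J|^p\dist(x,I^c)^{-sp}\,dx\ \le\ \int_J|f-f_J|^p\dist(x,J^c)^{-sp}\,dx\ \ls\ \int_J\mathcal{G}^p\,dx\ \le\ |J|^{1-p/u}M^p .
\]
Since moreover $\int_J\dist(x,I^c)^{-sp}\,dx\le\int_J\dist(x,J^c)^{-sp}\,dx=c_{s,p}\,|J|^{1-sp}<\infty$ (because $sp<1$), the claim of Step 1 reduces to the mean estimate $|f_J|\le C\,|J|^{\,s-1/u}\,M$, for then multiplying by $c_{s,p}|J|^{1-sp}$ produces exactly the power $|J|^{1-p/u}$.

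\emph{Step 3 (the mean estimate).} Here the normalisation is used: $\int_I f=0$ if $I$ is bounded, and $\frac1{|K|}\int_K f\to0$ as $|K|\to\infty$ (since $f\in\mathbb{S}(\mathbb{R})\subset L_1(\mathbb{R})$) if $I$ is unbounded — a constant cannot be seen by differences. One picks bounded intervals $J=J_0\subset J_1\subset\cdots\subset J_N$ with $J_k\subset I$ and $|J_k|\sim2^k|J|$, reaching $J_N=I$ if $I$ is bounded ($N=\infty$ otherwise), and telescopes $f_J=\sum_{k\ge0}(f_{J_k}-f_{J_{k+1}})+f_{J_N}$, the last term being $0$ or tending to $0$. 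Writing $f_{J_k}-f_{J_{k+1}}=\frac1{|J_k|}\int_{J_k}\frac1{|J_{k+1}|}\int_{J_{k+1}}(f(x)-f(y))\,dy\,dx$ and substituting $y=x-rh$ gives $|f_{J_k}-f_{J_{k+1}}|\le\frac1{|J_k|}\int_{J_k}\mathcal{D}_{|J_{k+1}|}(x)\,dx$; then Jensen in $x$, the single–scale control $\mathcal{D}_L\ls L^{s}\mathcal{G}$ in the scale–averaged sense, and $\int_{J_k}\mathcal{G}^p\le|J_k|^{1-p/u}M^p$ give
\[
|f_{J_k}-f_{J_{k+1}}|\ \ls\ |J_{k+1}|^{s}\,|J_k|^{-1/u}\,M\ \sim\ \big(2^k|J|\big)^{\,s-1/u}M .
\]
Since $s<\frac1u$ the geometric series in $k$ converges, summing to $|f_J|\ls|J|^{\,s-1/u}M$, which closes Step 2.

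\emph{Main obstacle.} The one genuinely non-routine point is the single–scale estimate in Step 3: $\mathcal{D}_r$ at a fixed scale is \emph{not} pointwise dominated by $L^{s}$ times the full $g$-function $\mathcal{G}$, so one must pass to the scale–averaged difference $\widetilde{\mathcal{D}}_L(x):=\big(\int_L^{2L}\mathcal{D}_r(x)^q\,\tfrac{dr}{r}\big)^{1/q}$ (which does satisfy $\widetilde{\mathcal{D}}_L\ls L^{s}\mathcal{G}$ pointwise) and arrange the telescoping continuously in the scale parameter so that the increments are controlled by $\widetilde{\mathcal{D}}$, arguing along the lines of \cite{BouMey} and \cite{RS} but keeping everything inside the Morrey quasinorm and all constants uniform in $J$ and $I$. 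This is also precisely the place where the stronger hypothesis $s<\frac1u$ (rather than merely $s<\frac1p$ as in Lemma \ref{Hardy}) is needed.
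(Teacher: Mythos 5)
Your argument is correct, and it is a genuinely different route from the paper's. The paper re-derives the Hardy inequality directly in the Morrey setting: for $I=(0,\infty)$ it introduces $g(x)=\frac1x\int_0^x(f(x)-f(y))\,dy$ and $h(x)=g(x)-\int_x^\infty g(y)\,\frac{dy}{y}$, shows $f=h$, and estimates the two pieces inside the Morrey quasinorm (the condition $s<1/u$ enters through the convergence of $\int_1^\infty \xi^{-1-1/u+s}\,d\xi$ after a change of variables); the bounded case $I=(0,1)$ is then handled by the cut-off technique of \cite{RS}, and dilation/translation takes care of general $I$. You instead take the Lebesgue–space version (Lemma~\ref{Hardy}) as a black box and derive the Morrey version from it by localising to a window $J\subset I$, splitting off the mean $f_J$, and then controlling $f_J$ itself by a dyadic telescoping at growing scales, where $s<1/u$ appears as a geometric–series condition. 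This is a clean modularisation of the argument and avoids redoing the cut-off analysis inside the Morrey norm.

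Two remarks. First, the ``main obstacle'' you flag at the end is not actually present: with $\mathcal{D}_r(x)=\int_{-1<h<1,\,x-rh\in I}|f(x)-f(x-rh)|\,dh$ the function $r\mapsto r\,\mathcal{D}_r(x)=\int_{|u|<r,\,x-u\in I}|f(x)-f(x-u)|\,du$ is nondecreasing, so $\mathcal{D}_L(x)\le 2\,\mathcal{D}_r(x)$ for $L\le r\le 2L$, which gives $\mathcal{D}_L(x)\le c\big(\int_L^{2L}\mathcal{D}_r(x)^q\,\frac{dr}{r}\big)^{1/q}\le c'\,L^{s}\,\mathcal{G}(x)$ pointwise (and trivially for $q=\infty$). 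So the discrete, dyadic telescoping already closes; there is no need to pass to a continuous scale parameter or to a scale–averaged $\widetilde{\mathcal D}$. Second, a small technicality: $f-f_J$ is not a Schwartz function, so Lemma~\ref{Hardy} cannot be invoked verbatim; you should either note that Lemma~\ref{Hardy} for a \emph{bounded} interval $J$ only uses the values of the function near $\overline{J}$ and a mean-zero condition (as is apparent from the proof in \cite{RS}), or replace $f-f_J$ by $f-f_J\phi$ with $\phi\in C_0^\infty(\mathbb{R})$ equal to $1$ on a neighbourhood of $\overline{J}$, which lies in $\mathbb{S}(\mathbb{R})$, agrees with $f-f_J$ on $J$, and still has zero mean on $J$. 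With these two touches the proof is complete.
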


\begin{proof}
To prove this result we can use the methods that are described in the proof of lemma 1 from chapter 5.4.1. in \cite{RS}, see also lemma 1 in \cite{BouMey} and \cite{Bou93}. Only a few modifications have to be made.

\textit{Step 1.} At first we look at the case $ I = (0,\infty) $. For $ x > 0 $ we put 
\begin{align*}
g(x) = \frac{1}{x} \int_{0}^{x} \Big ( f(x) - f(y)   \Big ) dy \qquad  \mbox{and}  \qquad h(x) = g(x) - \int_{x}^{\infty} g(y) \frac{dy}{y}.
\end{align*}
For these functions because of $ f  $ is smooth we observe
\begin{align*}
g'(x) = f'(x) - \Big ( - \frac{1}{x^2} \int_{0}^{x} f(y) dy + \frac{f(x)}{x}   \Big ) \qquad \mbox{and} \qquad h'(x) = g'(x) + \frac{g(x)}{x} = f'(x).
\end{align*}
Since $ f \in  \mathbb{S}(\mathbb{R})  $ we find $ \lim_{x \rightarrow \infty} f(x) = \lim_{x \rightarrow \infty} g(x) =  \lim_{x \rightarrow \infty} h(x) = 0   $ and therefore can conclude $ f = h $. When we use this identity we obtain
\begin{align*}
& \sup_{\substack{a,b \in \mathbb{R} \\ a<b }} \vert a - b \vert^{\frac{1}{u}-\frac{1}{p}} \Big (  \int_{(0,\infty) \cap (a,b)} |f(x)|^{p} x^{-sp} dx \Big )^{\frac{1}{p}} \\
& \qquad \leq \sup_{\substack{a,b \in \mathbb{R} \\ a<b }} \vert a - b \vert^{\frac{1}{u}-\frac{1}{p}} \Big (  \int_{(0,\infty) \cap (a,b)} |g(x)|^{p} x^{-sp} dx \Big )^{\frac{1}{p}}  \\
& \qquad \qquad + \sup_{\substack{a,b \in \mathbb{R} \\ a<b }} \vert a - b \vert^{\frac{1}{u}-\frac{1}{p}} \Big (  \int_{(0,\infty) \cap (a,b)} \Big \vert \int_{x}^{\infty} g(y) \frac{dy}{y}  \Big \vert^{p} x^{-sp} dx \Big )^{\frac{1}{p}}. 
\end{align*}
Now at first we look at the second term. We put $ y = x \xi  $ and get
\begin{align*}
& \sup_{\substack{a,b \in \mathbb{R} \\ a<b }} \vert a - b \vert^{\frac{1}{u}-\frac{1}{p}} \Big (  \int_{(0,\infty) \cap (a,b)} \Big \vert \int_{x}^{\infty} g(y) \frac{dy}{y}  \Big \vert^{p} x^{-sp} dx \Big )^{\frac{1}{p}} \\
& \qquad \leq \sup_{\substack{a,b \in \mathbb{R} \\ a<b }} \vert a - b \vert^{\frac{1}{u}-\frac{1}{p}} \int_{1}^{\infty} \xi^{-1} \Big ( \int_{(0,\infty) \cap (a,b)}  \vert g(x \xi) x^{-s} \vert^{p}  dx \Big )^{\frac{1}{p}} d \xi.
\end{align*}
We put $ x \xi = z  $ and obtain
\begin{align*}
& \sup_{\substack{a,b \in \mathbb{R} \\ a<b }} \vert a - b \vert^{\frac{1}{u}-\frac{1}{p}} \Big (  \int_{(0,\infty) \cap (a,b)} \Big \vert \int_{x}^{\infty} g(y) \frac{dy}{y}  \Big \vert^{p} x^{-sp} dx \Big )^{\frac{1}{p}} \\
& \qquad \leq \sup_{\substack{a,b \in \mathbb{R} \\ a<b }} \int_{1}^{\infty} \xi^{-1-\frac{1}{u}+s} \vert \xi a - \xi b \vert^{\frac{1}{u}-\frac{1}{p}}  \Big ( \int_{(0,\infty) \cap (\xi a, \xi b)}  \vert g(z) z^{-s}    \vert^{p}  dz \Big )^{\frac{1}{p}} d \xi .
\end{align*}
Next we have to use $  s < 1/u $. Then we find
\begin{align*}
& \sup_{\substack{a,b \in \mathbb{R} \\ a<b }} \vert a - b \vert^{\frac{1}{u}-\frac{1}{p}} \Big (  \int_{(0,\infty) \cap (a,b)} \Big \vert \int_{x}^{\infty} g(y) \frac{dy}{y}  \Big \vert^{p} x^{-sp} dx \Big )^{\frac{1}{p}} \\
& \qquad \leq \sup_{\substack{a,b \in \mathbb{R} \\ a<b }}  \sup_{1 \leq \rho \leq \infty } \vert \rho a - \rho b \vert^{\frac{1}{u}-\frac{1}{p}}  \Big ( \int_{(0,\infty) \cap (\rho a, \rho b)}  \vert g(z) z^{-s}    \vert^{p}  dz \Big )^{\frac{1}{p}}  \int_{1}^{\infty} \xi^{-1-\frac{1}{u}+s} d \xi   \\
& \qquad \leq C_{1}    \sup_{\substack{a,b \in \mathbb{R} \\ a<b }} \vert  a -  b \vert^{\frac{1}{u}-\frac{1}{p}}  \Big ( \int_{(0,\infty) \cap ( a,  b)}  \vert g(z) z^{-s}  \vert^{p}  dz \Big )^{\frac{1}{p}}.
\end{align*}
In what follows we will use the abbreviation
\begin{align*}
u(r,x) = \int_{\substack{-1 < y < 1 \\ x-ry > 0}} \vert f(x) - f(x-ry)\vert dy .
\end{align*}
Then because of
\begin{align*}
g(x) = \frac{y}{x} \int_{0}^{ \frac{x}{y}  } \Big ( - f(x) + f (x - ry)    \Big ) \  dr \ , \qquad \qquad y \not = 0 \ ,
\end{align*}
we can apply H\"older's inequality to get
\begin{align*}
|g(x)| & = 2 \Big | \int_{\frac{1}{2}}^{1} \frac{y}{x} \int_{0}^{ \frac{x}{y}  } \Big ( - f(x) + f (x - ry)    \Big ) \  dr \   dy   \Big | \leq C_{2} \Big ( \int_{0}^{2x} |u(r,x)|^{q}  x^{-1} dr \Big )^{\frac{1}{q}}.
\end{align*}
Using this estimate we obtain
\begin{align*}
& \sup_{\substack{a,b \in \mathbb{R} \\ a<b }} \vert a - b \vert^{\frac{1}{u}-\frac{1}{p}} \Big (  \int_{(0,\infty) \cap (a,b)} |g(x)|^{p} x^{-sp} dx \Big )^{\frac{1}{p}}  \\
& \qquad \leq C_{3} \sup_{\substack{a,b \in \mathbb{R} \\ a<b }} \vert a - b \vert^{\frac{1}{u}-\frac{1}{p}} \Big (  \int_{(0,\infty) \cap (a,b)}  \Big ( \int_{0}^{2x} |u(r,x)|^{q}  x^{-1-sq} dr \Big )^{\frac{p}{q}}   dx \Big )^{\frac{1}{p}}  \\
& \qquad \leq C_{4} \sup_{\substack{a,b \in \mathbb{R} \\ a<b }} \vert a - b \vert^{\frac{1}{u}-\frac{1}{p}} \Big (  \int_{(0,\infty) \cap (a,b)}  \Big ( \int_{0}^{\infty} |u(r,x)|^{q}  r^{-1-sq} dr \Big )^{\frac{p}{q}}   dx \Big )^{\frac{1}{p}}.
\end{align*}
In the last step we used $ r \leq 2x $. So step 1 of the proof is complete.

\textit{Step 2.}
Next we look at the case $ I = ( 0 , 1 ) $. At first we observe
\begin{align*}
& \sup_{\substack{a,b \in \mathbb{R} \\ a<b }} \vert a - b \vert^{\frac{1}{u}-\frac{1}{p}} \Big (  \int_{(0,1) \cap (a,b)} |f(x)|^{p} \dist(x,(0,1)^{c})^{-sp} dx \Big )^{\frac{1}{p}} \\
& \qquad \leq \sup_{\substack{a,b \in \mathbb{R} \\ a<b }} \vert a - b \vert^{\frac{1}{u}-\frac{1}{p}} \Big (  \int_{(0,1) \cap (a,b)} |f(x)|^{p} x^{-sp} dx \Big )^{\frac{1}{p}} \\
& \qquad \qquad +  \sup_{\substack{a,b \in \mathbb{R} \\ a<b }} \vert a - b \vert^{\frac{1}{u}-\frac{1}{p}} \Big (  \int_{(0,1) \cap (a,b)} |f(1-y)|^{p} y^{-sp} dy \Big )^{\frac{1}{p}}.
\end{align*}
Notice that we have $ f(1 - \cdot ) \in  \mathbb{S}(\mathbb{R})  $ and $ \int_{0}^{1} f(1-y) dy = \int_{0}^{1} f(y) dy = 0  $. So we can proceed for both terms simultaneously. Thanks to a transformation of the coordinates at the end of the calculations we will do now we can see that it is enough to deal with
\begin{align*}
\sup_{\substack{a,b \in \mathbb{R} \\ a<b }} \vert a - b \vert^{\frac{1}{u}-\frac{1}{p}} \Big (  \int_{(0,1) \cap (a,b)} |f(x)|^{p} x^{-sp} dx \Big )^{\frac{1}{p}} .
\end{align*}
Now let $ \eta \in C_{0}^{\infty}(\mathbb{R})  $ be a cut-off function with $ 0 \leq \eta(x) \leq 1  $ for all $x$ and $ \eta(x) = 1 $ if $ 0 \leq x \leq 1/2  $ and $ \supp \eta \subset [-\frac{1}{4},\frac{3}{4}]   $. Then we find 
\begin{align*}
& \sup_{\substack{a,b \in \mathbb{R} \\ a<b }} \vert a - b \vert^{\frac{1}{u}-\frac{1}{p}} \Big (  \int_{(0,1) \cap (a,b)} |f(x)|^{p} x^{-sp} dx \Big )^{\frac{1}{p}}  \\
& \qquad \leq \sup_{\substack{a,b \in \mathbb{R} \\ a<b }} \vert a - b \vert^{\frac{1}{u}-\frac{1}{p}} \Big (  \int_{(0,\frac{1}{2}) \cap (a,b)} |f(x) \eta(x)|^{p} x^{-sp} dx \Big )^{\frac{1}{p}} \\
& \qquad \qquad + C_{1}  \sup_{\substack{a,b \in \mathbb{R} \\ a<b }} \vert a - b \vert^{\frac{1}{u}-\frac{1}{p}} \Big (  \int_{(\frac{1}{2},1) \cap (a,b)} |f(x)|^{p} dx \Big )^{\frac{1}{p}}.
\end{align*}
Here for the first term because of $ (0,\frac{1}{2}) \subset (0 , \infty)  $ we can use the result from step 1. When we introduce the abbreviations $ J_{1}   $ and $ J_{2}   $ like it is done below we obtain
\begin{align*}
& \sup_{\substack{a,b \in \mathbb{R} \\ a<b }} \vert a - b \vert^{\frac{1}{u}-\frac{1}{p}} \Big (  \int_{(0,1) \cap (a,b)} |f(x)|^{p} x^{-sp} dx \Big )^{\frac{1}{p}}  \\
& \qquad \leq C_{1}  \sup_{\substack{a,b \in \mathbb{R} \\ a<b }} \vert a - b \vert^{\frac{1}{u}-\frac{1}{p}} \Big (  \int_{(0,1) \cap (a,b)} |f(x)|^{p} dx \Big )^{\frac{1}{p}}  + C_{2} \sup_{\substack{a,b \in \mathbb{R} \\ a<b }} \vert a - b \vert^{\frac{1}{u}-\frac{1}{p}} \\
& \qquad  \Big ( \int_{(0,\infty) \cap (a,b)} \Big ( \int_{0}^{\infty} r^{-sq} \Big ( \int_{\substack{-1 < h < 1 \\ x-rh > 0}} \vert f(x) \eta(x) - f(x-rh) \eta(x-rh)\vert dh \Big )^{q} \frac{dr}{r} \Big )^{\frac{p}{q}} dx \Big )^{\frac{1}{p}} \\
& \qquad = C_{3} ( J_{1} + J_{2}  ).
\end{align*}
If we replace $ f $ by $ f \chi_{[0,1]} $ we can split up $ J_{2} $ in the following way:
\begin{align*}
& J_{2}  \leq C_{4} \sup_{\substack{a,b \in \mathbb{R} \\ a<b }} \vert a - b \vert^{\frac{1}{u}-\frac{1}{p}} \Big ( \int_{ \substack{(0,1) \\ \cap (a,b)}} \Big ( \int_{0}^{\infty} r^{-sq} \Big ( \int_{\substack{-1 < h < 1 \\ 0 < x-rh < 1}} \vert f(x)  - f(x-rh) \vert dh \Big )^{q} \frac{dr}{r} \Big )^{\frac{p}{q}} dx \Big )^{\frac{1}{p}} \\
&  + C_{4} \sup_{\substack{a,b \in \mathbb{R} \\ a<b }} \vert a - b \vert^{\frac{1}{u}-\frac{1}{p}}  \Big ( \int_{ \substack{(1,\infty) \\ \cap (a,b)}} \Big ( \int_{0}^{\infty} r^{-sq} \Big ( \int_{\substack{-1 < h < 1 \\ 0 < x-rh < \frac{3}{4}}} \vert f(x-rh) \eta(x-rh)\vert dh \Big )^{q} \frac{dr}{r} \Big )^{\frac{p}{q}} dx \Big )^{\frac{1}{p}} \\ 
& + C_{4} \sup_{\substack{a,b \in \mathbb{R} \\ a<b }} \vert a-b \vert^{\frac{1}{u}-\frac{1}{p}}  \Big ( \int_{\substack{(0,\infty) \\ \cap (a,b)}} \Big ( \int_{0}^{\infty}  \Big ( \int_{\substack{-1 < h < 1 \\ x-rh > 0}} \vert f \chi_{[0,1]}(x) \vert  \vert  \Delta^{1}_{rh} \eta(x-rh)\vert dh \Big )^{q} \frac{dr}{r^{sq+1}} \Big )^{\frac{p}{q}} dx \Big )^{\frac{1}{p}} \\
& = C_{4} ( J_{21} + J_{22} + J_{23}  ).
\end{align*}
Now $ J_{21}  $ is what we want to have. If we use $ 0 < s < 1  $ and that $ \eta $ is smooth like in the proof of lemma 1 from chapter 5.4.1. in \cite{RS} for $ J_{23} $ we find
\begin{align*}
J_{23} \leq C_{5} \sup_{\substack{a,b \in \mathbb{R} \\ a<b }} \vert a-b \vert^{\frac{1}{u}-\frac{1}{p}}  \Big ( \int_{(0,1)  \cap (a,b)}  \vert f (x) \vert^{p} dx \Big )^{\frac{1}{p}}. 
\end{align*}
For $ J_{22} $ with $ x - rh = z  $ we obtain
\begin{align*}
J_{22} & \leq \sup_{\substack{a,b \in \mathbb{R} \\ a<b }} \vert a - b \vert^{\frac{1}{u}-\frac{1}{p}}  \Big ( \int_{(1,\infty) \cap (a,b)} \Big ( \int_{x - \frac{3}{4}}^{\infty} r^{-sq} \Big ( \int_{\substack{-1 < h < 1 \\ \frac{x}{r} - \frac{3}{4r}  < h < \frac{x}{r} }} \vert f\chi_{[0,1]}(x-rh) \vert dh \Big )^{q} \frac{dr}{r} \Big )^{\frac{p}{q}} dx \Big )^{\frac{1}{p}} \\ 
& \qquad \leq \sup_{\substack{a,b \in \mathbb{R} \\ a<b }} \vert a - b \vert^{\frac{1}{u}-\frac{1}{p}}  \Big ( \int_{(1,\infty) \cap (a,b)} \Big ( \int_{x - \frac{3}{4}}^{\infty} r^{-sq-q-1}  dr \Big )^{\frac{p}{q}} dx \Big )^{\frac{1}{p}} \Big ( \int_{0}^{1} \vert f(z) \vert dz \Big ) \\ 
& \qquad \leq C_{6} \sup_{\substack{a,b \in \mathbb{R} \\ a<b }} \vert a - b \vert^{\frac{1}{u}-\frac{1}{p}}  \Big ( \int_{(\frac{1}{4},\infty) \cap (a,b)}  x^{-(s+1)p} dx \Big )^{\frac{1}{p}} \Big ( \int_{0}^{1} \vert f(z) \vert dz \Big ). 
\end{align*}
Now because of $ 0 < s < 1  $ and $ u \geq 1  $ we find $ x^{-(s+1)} \chi_{(\frac{1}{4},\infty)}(x) \in \mathbb{L}_{u}(\mathbb{R})   $. With $  \mathbb{L}_{u}(\mathbb{R})  \hookrightarrow   \mathbb{M}^{u}_{p}(\mathbb{R}) $ we get $ x^{-(s+1)} \chi_{(\frac{1}{4},\infty)}(x) \in \mathbb{M}^{u}_{p}(\mathbb{R}) $. So H\"older's inequality yields
\begin{align*}
J_{22}  \leq C_{7} \Big ( \int_{0}^{1} \vert f(z) \vert^{p} dz \Big )^{\frac{1}{p}} \leq C_{7} \sup_{\substack{a,b \in \mathbb{R} \\ a<b }} \vert a-b \vert^{\frac{1}{u}-\frac{1}{p}}  \Big ( \int_{(0,1)  \cap (a,b)}  \vert f (z) \vert^{p} dz \Big )^{\frac{1}{p}}. 
\end{align*}
Next we use $ \int_{0}^{1} f(x) dx = 0  $. This leads to
\begin{align*}
& \sup_{\substack{a,b \in \mathbb{R} \\ a<b }} \vert a-b \vert^{\frac{1}{u}-\frac{1}{p}}  \Big ( \int_{(0,1)  \cap (a,b)}  \vert f (z) \vert^{p} dz \Big )^{\frac{1}{p}} \\
& \qquad = \sup_{\substack{a,b \in \mathbb{R} \\ a<b }} \vert a-b \vert^{\frac{1}{u}-\frac{1}{p}}  \Big ( \int_{(0,1)  \cap (a,b)} \Big \vert f (z) - \int_{0}^{1} f(y) dy \Big \vert^{p} dz \Big )^{\frac{1}{p}} \\
& \qquad \leq \sup_{\substack{a,b \in \mathbb{R} \\ a<b }} \vert a-b \vert^{\frac{1}{u}-\frac{1}{p}}  \Big ( \int_{(0,1)  \cap (a,b)} \Big ( \int_{0}^{1} \vert  f (z) -  f(y) \vert dy \Big )^{p} dz \Big )^{\frac{1}{p}} \\
& \qquad \leq C_{8} \sup_{\substack{a,b \in \mathbb{R} \\ a<b }} \vert a-b \vert^{\frac{1}{u}-\frac{1}{p}}  \Big ( \int_{(0,1)  \cap (a,b)} \Big ( \int_{1}^{2} r^{-s-\frac{1}{q}} \Big ( \int_{0}^{1} \vert  f (z) -  f(y) \vert dy \Big ) dr \Big )^{p} dz \Big )^{\frac{1}{p}}.
\end{align*}
When we use $ q \geq 1 $ and $ y = z - rh $ we find
\begin{align*}
& \sup_{\substack{a,b \in \mathbb{R} \\ a<b }} \vert a-b \vert^{\frac{1}{u}-\frac{1}{p}}  \Big ( \int_{(0,1)  \cap (a,b)}  \vert f (z) \vert^{p} dz \Big )^{\frac{1}{p}} \\
&  \leq C_{9} \sup_{\substack{a,b \in \mathbb{R} \\ a<b }} \vert a-b \vert^{\frac{1}{u}-\frac{1}{p}}  \Big ( \int_{(0,1)  \cap (a,b)} \Big ( \int_{1}^{2} r^{-sq-1} \Big ( \int_{0}^{1} \vert  f (z) -  f(y) \vert dy \Big )^{q} dr \Big )^{\frac{p}{q}} dz \Big )^{\frac{1}{p}} \\
&  \leq C_{10} \sup_{\substack{a,b \in \mathbb{R} \\ a<b }} \vert a-b \vert^{\frac{1}{u}-\frac{1}{p}}  \Big ( \int_{\substack{ (0,1)  \\ \cap  (a,b)}} \Big ( \int_{0}^{\infty} r^{-sq-1} \Big ( \int_{\substack{-1 < h < 1 \\ 0 < z-rh < 1}} \vert  f (z) -  f(z - rh) \vert dh \Big )^{q} dr \Big )^{\frac{p}{q}} dz \Big )^{\frac{1}{p}} .
\end{align*}
So this step of the proof is complete.

\textit{Step 3.}
At last we have to deal with any other interval $ I = (c,d)   $ with $ -\infty \leq c < d \leq \infty $. But then the desired inequality follows from the model cases $ I = (0, \infty)  $ and $ I = (0,1)   $ and an appropriate transformation of the coordinates. So the proof is complete.   
\end{proof}

Now we are well-prepared to prove the following result for the Triebel-Lizorkin-Morrey spaces in the case of dimension $ d = 1  $. 

\begin{prop}\label{pro_d1s>1}
Let $ 1 \leq p \leq u < \infty  $, $ 1 \leq q \leq \infty  $ and $ 1 < s < 1 + 1/u  $. Let $ d = 1$. Then there is a constant $ C > 0 $ independent of $ f \in \mathbb{E}^{s}_{u,p,q}(\mathbb{R})  $ such that we have 
\begin{equation}\label{d1s>1_eq1}
\Vert Tf  \vert \mathcal{E}^{s}_{u,p,q}(\mathbb{R})  \Vert \leq C \Vert f \vert \mathcal{E}^{s}_{u,p,q}(\mathbb{R})  \Vert
\end{equation}
for all $ f \in \mathbb{E}^{s}_{u,p,q}(\mathbb{R})   $.
\end{prop}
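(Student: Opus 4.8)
The plan is to reduce \eqref{d1s>1_eq1} first to Schwartz functions, then to the derivative of $ Tf $ via the lifting property (lemma \ref{l_deriv}), and finally to control that derivative by combining the classical Bourdaud--Meyer decomposition with the Hardy-type inequalities of lemma \ref{Hardy} and lemma \ref{Hardy_Mor}. For the first reduction, given $ f\in\mathbb{E}^{s}_{u,p,q}(\mathbb{R}) $ I would take $ f_{j}:=\psi(\cdot/j)\,(f\ast\rho_{1/j})\in\mathbb{S}(\mathbb{R}) $ with a mollifier $ \rho $ and a cut-off $ \psi\in C_{0}^{\infty}(\mathbb{R}) $ equal to $ 1 $ near the origin; the Fourier multiplier lemma \ref{Four_Mult} and the pointwise multiplier lemma \ref{l_bmD2} give $ \sup_{j}\Vert f_{j}\vert\mathcal{E}^{s}_{u,p,q}(\mathbb{R})\Vert\leq C\Vert f\vert\mathcal{E}^{s}_{u,p,q}(\mathbb{R})\Vert $, while $ f_{j}\to f $ in $ \mathbb{L}_{1}^{loc}(\mathbb{R}) $ and hence $ Tf_{j}\to Tf $ in $ \mathbb{L}_{1}^{loc}(\mathbb{R})\cap\mathcal{S}'(\mathbb{R}) $; then the Fatou property (lemma \ref{l_fatou}) transfers \eqref{d1s>1_eq1} from $ \mathbb{S}(\mathbb{R}) $ to all of $ \mathbb{E}^{s}_{u,p,q}(\mathbb{R}) $. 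So from now on $ f\in\mathbb{S}(\mathbb{R}) $.

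For such $ f $ the function $ Tf=|f| $ is Lipschitz with distributional derivative $ g:=(Tf)'=\sign(f)\cdot f' $ almost everywhere, and $ f'=0 $ a.e.\ on $ Z:=\{f=0\} $. Since $ 0<s-1<1/u $, proposition \ref{pro_0<s<1} together with $ \mathbb{E}^{s}_{u,p,q}(\mathbb{R})\hookrightarrow\mathbb{E}^{s-1}_{u,p,q}(\mathbb{R}) $ (a consequence of lemma \ref{l_deriv}) gives $ \Vert Tf\vert\mathcal{E}^{s-1}_{u,p,q}(\mathbb{R})\Vert\lesssim\Vert f\vert\mathcal{E}^{s}_{u,p,q}(\mathbb{R})\Vert $, so by lemma \ref{l_deriv} with $ m=1 $ it remains to show $ \Vert g\vert\mathcal{E}^{s-1}_{u,p,q}(\mathbb{R})\Vert\lesssim\Vert f\vert\mathcal{E}^{s}_{u,p,q}(\mathbb{R})\Vert $. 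I would compute $ \Vert g\vert\mathcal{E}^{s-1}_{u,p,q}(\mathbb{R})\Vert $ through proposition \ref{pro_E_diff} with $ v=1 $, $ N=1 $, $ a=\infty $ (admissible since $ 0=d\max(0,1/p-1,1/q-1)<s-1<1=N $), and split, with $ \varepsilon:=\sign(f) $,
\[
\Delta_{h}^{1}g(x)=\varepsilon(x+h)\,\Delta_{h}^{1}f'(x)+\big(\varepsilon(x+h)-\varepsilon(x)\big)\,f'(x).
\]
The term $ \Vert g\vert\mathcal{M}^{u}_{p}(\mathbb{R})\Vert $ and the contribution of the first summand are $ \lesssim\Vert f'\vert\mathcal{M}^{u}_{p}(\mathbb{R})\Vert+\Vert f'\vert\mathcal{E}^{s-1}_{u,p,q}(\mathbb{R})\Vert^{(1,\infty)}\sim\Vert f'\vert\mathcal{E}^{s-1}_{u,p,q}(\mathbb{R})\Vert\lesssim\Vert f\vert\mathcal{E}^{s}_{u,p,q}(\mathbb{R})\Vert $, by $ |\varepsilon|\leq 1 $, lemma \ref{E_Mor_em} and lemma \ref{l_deriv}. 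For the second summand, $ \varepsilon(x+h)\neq\varepsilon(x) $ forces a sign change of $ f $ between $ x $ and $ x+h $, hence $ \dist(x,Z)\leq|h| $; consequently $ \int_{B(0,t)}|(\varepsilon(x+h)-\varepsilon(x))f'(x)|\,dh\lesssim t\,|f'(x)|\,\chi_{\{\dist(x,Z)<t\}} $, and integrating against $ t^{-(s-1)q-1}\,dt $ (convergent at $ \infty $ because $ s>1 $) its contribution is $ \lesssim\Vert\,|f'|\,\dist(\cdot,Z)^{-(s-1)}\chi_{\{f\neq0\}}\,\vert\mathcal{M}^{u}_{p}(\mathbb{R})\Vert $ (recall $ f'=0 $ a.e.\ on $ Z $).

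It then remains to bound this weighted Morrey norm. I would write $ \{f\neq0\}=\bigcup_{j}I_{j} $ as a disjoint union of open intervals; then $ \dist(x,Z)=\dist(x,I_{j}^{c}) $ for $ x\in I_{j} $, and $ \int_{I_{j}}f'=0 $ on every bounded $ I_{j} $ because $ f $ vanishes at its endpoints. With $ G(x):=\big(\int_{0}^{\infty}r^{-(s-1)q}(\int_{-1<h<1}|f'(x)-f'(x-rh)|\,dh)^{q}\,\frac{dr}{r}\big)^{1/q} $ one has $ \Vert G\vert\mathcal{M}^{u}_{p}(\mathbb{R})\Vert\leq\Vert f'\vert\mathcal{E}^{s-1}_{u,p,q}(\mathbb{R})\Vert^{(1,\infty)}\sim\Vert f'\vert\mathcal{E}^{s-1}_{u,p,q}(\mathbb{R})\Vert\lesssim\Vert f\vert\mathcal{E}^{s}_{u,p,q}(\mathbb{R})\Vert $ by proposition \ref{pro_E_diff} and lemma \ref{l_deriv}. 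Fixing a bounded interval $ Q $, at most two of the $ I_{j} $ meeting $ Q $ fail to lie inside $ Q $ (those containing an endpoint of $ Q $, or one single $ I_{j}\supseteq Q $); call them boundary intervals. For the $ I_{j}\subseteq Q $ I would apply the scalar Hardy inequality (lemma \ref{Hardy}, with $ s $ replaced by $ s-1\in(0,1/u)\subseteq(0,1/p) $) to $ f' $ on each, dominate the difference integrand by $ G(x)^{p} $, and sum over these pairwise disjoint subsets of $ Q $ to get $ |Q|^{\frac1u-\frac1p}\big(\sum_{I_{j}\subseteq Q}\int_{I_{j}}|f'(x)|^{p}\dist(x,I_{j}^{c})^{-(s-1)p}\,dx\big)^{1/p}\lesssim|Q|^{\frac1u-\frac1p}\big(\int_{Q}G^{p}\big)^{1/p}\leq\Vert G\vert\mathcal{M}^{u}_{p}(\mathbb{R})\Vert $. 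For a boundary interval $ I_{j} $, since $ |Q|\geq|I_{j}\cap Q| $ and $ \frac1u-\frac1p\leq0 $, the corresponding term is dominated by the left-hand side of the Morrey Hardy inequality (lemma \ref{Hardy_Mor}, with $ s $ replaced by $ s-1 $) for $ I_{j} $ applied to $ f' $, hence $ \lesssim\Vert G\vert\mathcal{M}^{u}_{p}(\mathbb{R})\Vert $. Adding the (at most two) boundary terms and the inside term, and taking the supremum over $ Q $, yields $ \Vert\,|f'|\,\dist(\cdot,Z)^{-(s-1)}\chi_{\{f\neq0\}}\,\vert\mathcal{M}^{u}_{p}(\mathbb{R})\Vert\lesssim\Vert f\vert\mathcal{E}^{s}_{u,p,q}(\mathbb{R})\Vert $, which closes the estimate for $ g $ and proves \eqref{d1s>1_eq1}.

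The hardest point will be the Morrey summation in the last step: for a genuine Morrey norm (as opposed to the $ L_{p} $-norm of the classical case $ u=p $, where additivity over disjoint intervals makes it free) one cannot simply add the single-interval Hardy estimates, and separating the two boundary intervals of each $ Q $ (handled by the Morrey Hardy inequality lemma \ref{Hardy_Mor}) from the intervals contained in $ Q $ (handled by the scalar Hardy inequality lemma \ref{Hardy} together with $ L_{p} $-additivity inside $ Q $) is precisely what makes the argument work; the hypotheses $ p\leq u $ and $ s<1+1/u $ enter here, through the non-positivity of $ \frac1u-\frac1p $ and the applicability of both Hardy inequalities.
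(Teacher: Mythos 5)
Your proposal is correct and follows essentially the same route as the paper's proof: reduce to smooth $f$ via a multiplier/Fatou argument, lift via lemma \ref{l_deriv} to control $\partial^{1}|f|$ in $\mathcal{E}^{s-1}_{u,p,q}$, express that norm by differences via proposition \ref{pro_E_diff}, split according to sign changes of $f$ to arrive at the weighted Morrey norm $\Vert\,|f'|\,\dist(\cdot,Z)^{-(s-1)}\,\vert\mathcal{M}^{u}_{p}\Vert$, and handle that by combining the scalar Hardy inequality (lemma \ref{Hardy}) for the component intervals contained in a test cube with the Morrey Hardy inequality (lemma \ref{Hardy_Mor}) for the at most two boundary intervals. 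Your algebraic decomposition $\Delta^{1}_{h}g=\varepsilon(x+h)\Delta^{1}_{h}f'+(\Delta^{1}_{h}\varepsilon)f'$ and your mollification-based approximation $f_{j}=\psi(\cdot/j)(f\ast\rho_{1/j})$ are a slightly different (but equivalent) way to phrase the paper's case analysis and its Fourier-cutoff approximant $f_{j}=\varrho^{j}\mathcal{F}^{-1}[\varrho^{j}\mathcal{F}f]$, respectively.
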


\begin{proof}
To prove this result we follow the ideas from the proof of the theorem in chapter 5.4.1. in \cite{RS}, see also theorem 1 in \cite{BouMey}.

\textit{Step 1.} At first we prove \eqref{d1s>1_eq1} for real-valued $ f \in C_{0}^{\infty}(\mathbb{R}) $. We use lemma \ref{l_deriv} with $ m = 1  $. Then we find
\begin{align*}
\Vert \  |f| \ \vert \mathcal{E}^{s}_{u,p,q}(\mathbb{R})  \Vert \leq C_{1} \Vert \  |f| \ \vert \mathcal{E}^{s-1}_{u,p,q}(\mathbb{R})  \Vert + C_{1} \Vert \partial^{1}  |f| \  \vert \mathcal{E}^{s-1}_{u,p,q}(\mathbb{R})  \Vert.
\end{align*}
In general $ \partial^{1}  |f|  $ is a distributional derivative. Since $ f \in C_{0}^{\infty}(\mathbb{R})  $ we find that $ |f|  $ is a Lipschitz continuous function. So the classical derivative exists almost everywhere and coincides with the distributional one almost everywhere. Hence in our case we can also understand $ \partial^{1}  |f|  $ as a classical derivative. Let us look at $ \Vert \  |f| \ \vert \mathcal{E}^{s-1}_{u,p,q}(\mathbb{R})  \Vert $. Because of $  1 < s < 1 + 1/u < 2  $ we have $ 0 < s - 1 < 1  $. So we can apply proposition \ref{pro_0<s<1} and obtain
\begin{align*}
\Vert \  |f| \ \vert \mathcal{E}^{s-1}_{u,p,q}(\mathbb{R})  \Vert \leq C_{2} \Vert f \vert \mathcal{E}^{s-1}_{u,p,q}(\mathbb{R})  \Vert \leq C_{2} \Vert f \vert \mathcal{E}^{s}_{u,p,q}(\mathbb{R})  \Vert.
\end{align*}
Now we want to work with $  \Vert \partial^{1}  |f| \  \vert \mathcal{E}^{s-1}_{u,p,q}(\mathbb{R})  \Vert $. Because of $ p \geq 1  $ and $ q \geq 1 $ we can apply proposition \ref{pro_E_diff} with $ v = 1, a = \infty $ and $ N = 1  $. We get
\begin{align*}
&\Vert \partial^{1}  |f| \  \vert \mathcal{E}^{s-1}_{u,p,q}(\mathbb{R})  \Vert \\
& \qquad \leq C_{3}  \Vert \partial^{1}  |f| \  \vert \mathcal{M}^{u}_{p}(\mathbb{R})  \Vert  + C_{3}   \Big \Vert \Big (  \int_{0}^{\infty}  t^{-(s-1)q-q} \Big ( \int_{-t}^{t}\vert \Delta^{1}_{h} \partial^{1} |f|(x) \vert dh \Big )^{q}  \frac{dt}{t}  \Big )^{\frac{1}{q}} \Big \vert \mathcal{M}^{u}_{p}( \mathbb{R})  \Big \Vert. 
\end{align*}
At first we look at $ \Vert \partial^{1} |f| \  \vert \mathcal{M}^{u}_{p}(\mathbb{R})  \Vert  $. Because of $ f $ is real-valued we can define the sets $ \Omega_{f} = \{ x \in \mathbb{R} : f(x) \geq 0   \} $ and 
\begin{align*}
SC_{f} & =  \left\{ x \in \mathbb{R} : \exists \epsilon > 0 \ \mbox{with} \ f(y) < 0 \ \mbox{for} \ y \in (x - \epsilon , x) \ \mbox{and} \  f(y) > 0 \ \mbox{for} \ y \in (x , x + \epsilon)    \right\} \\
& \cup \; \left\{ x \in \mathbb{R} : \exists \epsilon > 0 \ \mbox{with} \ f(y) > 0 \ \mbox{for} \ y \in (x - \epsilon , x) \ \mbox{and} \  f(y) < 0 \ \mbox{for} \ y \in (x , x + \epsilon)    \right\}.   
\end{align*}
Now remember that we have $ f \in  C_{0}^{\infty}(\mathbb{R})  $. Hence the set $ SC_{f} $ has Lebesgue measure zero. Therefore if we work with the Morrey norm we can exclude the set $  SC_{f}  $. Because of this we find
\begin{align*}
 \Vert \partial^{1}|f| \ \vert \mathcal{M}^{u}_{p}(\mathbb{R})  \Vert 
&  \leq C_{4}  \sup_{\substack{a,b \in \mathbb{R} \\ a < b}} \vert a-b \vert^{\frac{1}{u}-\frac{1}{p}} \Big ( \int_{((a,b) \cap \Omega_{f}) \setminus  SC_{f} } \vert \partial^{1}f(x) \vert^{p} dx      \Big )^{\frac{1}{p}} \\
& \qquad \qquad + C_{4} \sup_{\substack{a,b \in \mathbb{R} \\ a < b}} \vert a-b \vert^{\frac{1}{u}-\frac{1}{p}} \Big ( \int_{((a,b) \cap \Omega_{f}^{c}) \setminus  SC_{f}} \vert - \partial^{1}f(x) \vert^{p} dx   \Big )^{\frac{1}{p}} \\
&  \leq C_{5} \Vert \partial^{1}f \vert \mathcal{M}^{u}_{p}(\mathbb{R})  \Vert.
\end{align*}
Now we can use proposition \ref{pro_E_diff} and lemma \ref{l_deriv}. Then we get
\begin{align*}
\Vert \partial^{1}f \vert \mathcal{M}^{u}_{p}(\mathbb{R})  \Vert \leq C_{6} \Vert \partial^{1}f \vert \mathcal{E}^{s-1}_{u,p,q}(\mathbb{R})  \Vert \leq C_{7} \Vert f \vert \mathcal{E}^{s}_{u,p,q}(\mathbb{R})  \Vert.
\end{align*}
So it remains to deal with 
\begin{align*}
\sup_{\substack{a,b \in \mathbb{R} \\ a < b}} \vert a-b \vert^{\frac{1}{u}-\frac{1}{p}} \Big ( \int_{(a,b)} \Big (  \int_{0}^{\infty}  t^{-(s-1)q-q} \Big ( \int_{-t}^{t}\vert \Delta^{1}_{h} \partial^{1}  |f| (x) \vert dh \Big )^{q}  \frac{dt}{t}  \Big )^{\frac{p}{q}} dx \Big )^{\frac{1}{p}}.
\end{align*}
Here we follow the ideas from the proof of the theorem in chapter 5.4.1. in \cite{RS}. At first for $ a < b  $ we write $ (a,b) = ( (a,b) \cap   \Omega_{f}  )  \cup ( (a,b) \cap \Omega_{f}^{c}   ) $
and for $ t \in \mathbb{R}   $
\begin{align*}
(-t,t) = ( (-t,t) \cap \left\{ h \in \mathbb{R} : x + h \in \Omega_{f}   \right\} )  \cup ( (-t,t) \cap  \left\{ h \in \mathbb{R} : x + h \not \in \Omega_{f}   \right\} ).
\end{align*}
Now we can use a version of the triangle inequality to split up the different cases. Then two different situations show up. On the one hand it is possible that we have $ x \in \Omega_{f}   $ and $ x + h \in  \Omega_{f}   $. So we find $ \vert \Delta^{1}_{h} \partial^{1} |f|(x) \vert = \vert  \partial^{1}f(x+h) - \partial^{1}f(x)   \vert = \vert \Delta^{1}_{h} \partial^{1}f(x)   \vert $. But then we have
\begin{align*}
& \Big \Vert \Big (  \int_{0}^{\infty}  t^{-(s-1)q-q} \Big ( \int_{-t}^{t}\vert \Delta^{1}_{h} \partial^{1}f(x) \vert dh \Big )^{q}  \frac{dt}{t}  \Big )^{\frac{1}{q}} \Big \vert \mathcal{M}^{u}_{p}( \mathbb{R})  \Big \Vert \\
& \qquad \qquad \qquad \qquad  \leq C_{8} \Vert \partial^{1}f \vert \mathcal{E}^{s-1}_{u,p,q}(\mathbb{R})  \Vert \leq C_{9} \Vert f \vert \mathcal{E}^{s}_{u,p,q}(\mathbb{R})  \Vert.
\end{align*}
The case $ x \in  \Omega_{f}^{c}  $ and $ x + h \not \in \Omega_{f}   $ leads to the same result. On the other hand we have the situation $ x \in \Omega_{f}  $ and $ x + h \not \in \Omega_{f}  $. Here we obtain
\begin{align*}
\vert \Delta^{1}_{h} \partial^{1} |f|(x) \vert = \vert  \partial^{1}f(x+h) + \partial^{1}f(x)   \vert \leq 2 \vert  \partial^{1}f(x) \vert  + \vert \Delta^{1}_{h} \partial^{1}f(x)   \vert.
\end{align*}
With $ \vert \Delta^{1}_{h} \partial^{1}f(x)   \vert  $ we can work like it is described before. So it remains to deal with
\begin{align*}
\sup_{\substack{a,b \in \mathbb{R} \\ a < b}} \vert a-b \vert^{\frac{1}{u}-\frac{1}{p}} \Big ( \int_{(a,b) \cap \Omega_{f}} \Big (  \int_{0}^{\infty}  t^{-(s-1)q-q} \Big ( \int_{\substack{(-t,t) \  \cap \\ \{ h \in \mathbb{R} \ : \ x + h  \not \in  \Omega_{f}  \} }}\vert \partial^{1}f(x) \vert dh \Big )^{q}  \frac{dt}{t}  \Big )^{\frac{p}{q}} dx \Big )^{\frac{1}{p}}.
\end{align*} 
Notice that the case $ x \in  \Omega_{f}^{c} $ and $ x + h  \in \Omega_{f}  $ leads to a similar situation. Next we observe 
\begin{align*}
& \sup_{\substack{a,b \in \mathbb{R} \\ a < b}} \vert a-b \vert^{\frac{1}{u}-\frac{1}{p}} \Big ( \int_{(a,b) \cap \Omega_{f}} \vert \partial^{1}f(x) \vert^{p} \Big (  \int_{0}^{\infty}  t^{-(s-1)q-q} \Big ( \int_{\substack{(-t,t) \ \cap \\ \{ h \in \mathbb{R} \ : \ x + h \not \in \Omega_{f}  \} }} 1 dh \Big )^{q}  \frac{dt}{t}  \Big )^{\frac{p}{q}} dx \Big )^{\frac{1}{p}} \\
& \qquad \leq \sup_{\substack{a,b \in \mathbb{R} \\ a < b}} \vert a-b \vert^{\frac{1}{u}-\frac{1}{p}} \Big ( \int_{(a,b) \cap \Omega_{f}} \vert \partial^{1}f(x) \vert^{p} \Big (  \int_{\dist(x,\Omega_{f}^{c})}^{\infty}  t^{-(s-1)q-q} \Big ( \int_{-t}^{t} 1 dh \Big )^{q}  \frac{dt}{t}  \Big )^{\frac{p}{q}} dx \Big )^{\frac{1}{p}} \\
& \qquad \leq C_{10} \sup_{\substack{a,b \in \mathbb{R} \\ a < b}} \vert a-b \vert^{\frac{1}{u}-\frac{1}{p}} \Big ( \int_{(a,b) \cap \Omega_{f}} \vert \partial^{1}f(x) \vert^{p} \Big (  \int_{\dist(x,\Omega_{f}^{c})}^{\infty}  t^{-(s-1)q}  \frac{dt}{t}  \Big )^{\frac{p}{q}} dx \Big )^{\frac{1}{p}} \\
& \qquad \leq C_{11} \sup_{\substack{a,b \in \mathbb{R} \\ a < b}} \vert a-b \vert^{\frac{1}{u}-\frac{1}{p}} \Big ( \int_{(a,b) \cap \Omega_{f}} \vert \partial^{1}f(x) \vert^{p} \dist(x,\Omega_{f}^{c})^{-(s-1)p}  dx \Big )^{\frac{1}{p}}.
\end{align*} 
Because of $  f \in C_{0}^{\infty}(\mathbb{R})   $ we can find disjoint open intervals $ I_{i}  $ such that $ \Omega_{f} = \bigcup_{i} \overline{I_{i}} $. For $ \vert  I_{i} \vert < \infty  $ we can write $ I_{i} = (c_{i},d_{i})   $ with $ c_{i} < d_{i} < c_{i+1} < d_{i+1}   $ and $ f(c_{i}) = f(d_{i}) = 0    $. So for $ \vert  I_{i} \vert < \infty  $ we observe $  \int_{I_{i}} \partial^{1}f(x) dx = f(d_{i}) - f(c_{i}) = 0 $. Now we get
\begin{align*}
& \sup_{\substack{a,b \in \mathbb{R} \\ a < b}} \vert a-b \vert^{\frac{1}{u}-\frac{1}{p}} \Big ( \int_{(a,b) \cap \Omega_{f}} \vert \partial^{1}f(x) \vert^{p} \dist(x,\Omega_{f}^{c})^{-(s-1)p}  dx \Big )^{\frac{1}{p}} \\
& \qquad = \sup_{\substack{a,b \in \mathbb{R} \\ a<b }} \vert a-b \vert^{\frac{1}{u}-\frac{1}{p}} \Big ( \sum_{i} \int_{(a,b) \cap \overline{I_{i}}} \vert \partial^{1}f(x) \vert^{p} \dist(x,I_{i}^{c})^{-(s-1)p}  dx \Big )^{\frac{1}{p}}.
\end{align*}
There are 3 different possible cases. First it is possible that the interval $ (a,b)  $ only intersects one single interval $ I_{1} $. Then because of $ s - 1 < 1/u  $ we can apply lemma \ref{Hardy_Mor} and obtain 
\begin{align*}
&  \sup_{\substack{a,b \in \mathbb{R} \\ a<b }} \vert a-b \vert^{\frac{1}{u}-\frac{1}{p}}  \Big ( \sum_{i} \int_{ (a,b) \cap \overline{I_{i}}} \vert \partial^{1}f(x) \vert^{p} \dist(x,I_{i}^{c})^{-(s-1)p}  dx \Big )^{\frac{1}{p}} \\
&  = \sup_{\substack{a,b \in \mathbb{R} \\ a<b }} \vert a-b \vert^{\frac{1}{u}-\frac{1}{p}}  \Big (  \int_{ (a,b) \cap \overline{I_{1}}} \vert \partial^{1}f(x) \vert^{p} \dist(x,I_{1}^{c})^{-(s-1)p}  dx \Big )^{\frac{1}{p}} \\
&  \leq C_{12} \sup_{\substack{a,b \in \mathbb{R} \\ a<b }} \vert a - b \vert^{\frac{1}{u}-\frac{1}{p}} \Big ( \int_{a}^{b} \Big ( \int_{0}^{\infty} r^{-(s-1)q} \Big ( \int_{-1 }^{1} \vert \partial^{1}f(x) - \partial^{1}f(x-rh)\vert dh \Big )^{q} \frac{dr}{r} \Big )^{\frac{p}{q}} dx \Big )^{\frac{1}{p}}.
\end{align*}
Second it is possible that the interval $ (a,b)  $ intersects two intervals $ I_{1}  $ and $ I_{2}  $. Then we get
\begin{align*}
&  \sup_{\substack{a,b \in \mathbb{R} \\ a<b }} \vert a-b \vert^{\frac{1}{u}-\frac{1}{p}}  \Big (  \sum_{i} \int_{ (a,b) \cap \overline{I_{i}}} \vert \partial^{1}f(x) \vert^{p} \dist(x,I_{i}^{c})^{-(s-1)p}  dx \Big )^{\frac{1}{p}} \\
& \qquad  \leq C_{13} \sum_{i \in \left\{ 1 , 2 \right\}} \sup_{\substack{a,b \in \mathbb{R} \\ a<b }} \vert a-b \vert^{\frac{1}{u}-\frac{1}{p}}  \Big (  \int_{ (a,b) \cap \overline{I_{i}}} \vert \partial^{1}f(x) \vert^{p} \dist(x,I_{i}^{c})^{-(s-1)p}  dx \Big )^{\frac{1}{p}} .
\end{align*}
Now again we can apply lemma \ref{Hardy_Mor} with the same result as before. The third case is that the interval $ (a,b)   $ intersects $  n \in \mathbb{N} $ intervals $  I_{i} $ with $ n \geq 3  $. But then we have the situation that $ (a,b) $ intersects the intervals $ I_{1}  $ and $ I_{n} $ and completely covers $ I_{2}, I_{3}, ... , I_{n-1}  $. So we find
\begin{align*}
&  \sup_{\substack{a,b \in \mathbb{R} \\ I_{2}, ... , I_{n-1} \subset (a,b) }} \vert a-b \vert^{\frac{1}{u}-\frac{1}{p}}  \Big ( \sum_{i} \int_{ (a,b) \cap \overline{I_{i}}} \vert \partial^{1}f(x) \vert^{p} \dist(x,I_{i}^{c})^{-(s-1)p}  dx \Big )^{\frac{1}{p}} \\
& \qquad  \leq C_{14} \sum_{i \in \left\{ 1 , n \right\}} \sup_{\substack{a,b \in \mathbb{R} \\ a<b }} \vert a-b \vert^{\frac{1}{u}-\frac{1}{p}}  \Big (  \int_{ (a,b) \cap \overline{I_{i}}} \vert \partial^{1}f(x) \vert^{p} \dist(x,I_{i}^{c})^{-(s-1)p}  dx \Big )^{\frac{1}{p}}  \\
& \qquad \qquad + C_{14}   \sup_{\substack{a,b \in \mathbb{R} \\ I_{2}, ... , I_{n-1} \subset (a,b) }} \vert a-b \vert^{\frac{1}{u}-\frac{1}{p}}   \Big ( \sum_{i  =  2}^{n-1}   \int_{  \overline{I_{i}}} \vert \partial^{1}f(x) \vert^{p} \dist(x,I_{i}^{c})^{-(s-1)p}  dx \Big )^{\frac{1}{p}}. 
\end{align*}
Now for the first part again we can use lemma \ref{Hardy_Mor}. For the second part because of $ s - 1 < 1/u \leq 1/p  $ we can apply lemma \ref{Hardy}. Using $ f(c_{2}) = \ldots = f(d_{n-1}) = 0   $ we obtain
\begin{align*}
& \sup_{\substack{a,b \in \mathbb{R} \\ I_{2}, ... , I_{n-1} \subset (a,b) }} \vert a-b \vert^{\frac{1}{u}-\frac{1}{p}}  \Big (  \sum_{i = 2}^{n - 1}   \int_{  \overline{I_{i}}} \vert \partial^{1}f(x) \vert^{p} \dist(x,I_{i}^{c})^{-(s-1)p}  dx \Big )^{\frac{1}{p}} \\
&  \leq C_{15} \sup_{\substack{a,b \in \mathbb{R} \\ a \leq c_{2} < d_{n-1} \leq b}} \vert a-b \vert^{\frac{1}{u}-\frac{1}{p}}  \Big (   \int_{c_{2}}^{d_{n-1}} \Big ( \int_{0}^{\infty} r^{-(s-1)q} \Big ( \int_{-1 }^{1} \vert  \Delta^{1}_{rh} \partial^{1}f(x-rh)\vert dh \Big )^{q} \frac{dr}{r} \Big )^{\frac{p}{q}} dx \Big )^{\frac{1}{p}} \\
&  \leq C_{15} \sup_{\substack{a,b \in \mathbb{R} \\ a < b}} \vert a-b \vert^{\frac{1}{u}-\frac{1}{p}}  \Big (   \int_{a}^{b} \Big ( \int_{0}^{\infty} r^{-(s-1)q} \Big ( \int_{-1 }^{1} \vert \partial^{1}f(x) - \partial^{1}f(x-rh)\vert dh \Big )^{q} \frac{dr}{r} \Big )^{\frac{p}{q}} dx \Big )^{\frac{1}{p}}.
\end{align*}
So if we use proposition \ref{pro_E_diff} and lemma \ref{l_deriv} we find
\begin{align*}
& \sup_{\substack{a,b \in \mathbb{R} \\ a < b}} \vert a-b \vert^{\frac{1}{u}-\frac{1}{p}} \Big ( \int_{(a,b) \cap \Omega_{f}} \vert \partial^{1}f(x) \vert^{p} \dist(x,\Omega_{f}^{c})^{-(s-1)p}  dx \Big )^{\frac{1}{p}} \\
&   \leq C_{16} \sup_{\substack{a,b \in \mathbb{R} \\ a < b}} \vert a-b \vert^{\frac{1}{u}-\frac{1}{p}}  \Big (   \int_{a}^{b} \Big ( \int_{0}^{\infty} r^{-(s-1)q} \Big ( \int_{-1 }^{1} \vert \partial^{1}f(x) - \partial^{1}f(x-rh)\vert dh \Big )^{q} \frac{dr}{r} \Big )^{\frac{p}{q}} dx \Big )^{\frac{1}{p}} \\
&  \leq C_{17} \Vert \partial^{1}f \vert \mathcal{E}^{s-1}_{u,p,q}(\mathbb{R})  \Vert \leq C_{18} \Vert f \vert \mathcal{E}^{s}_{u,p,q}(\mathbb{R})  \Vert.
\end{align*}
Step 1 of the proof is complete. 

\textit{Step 2.} Now we prove \eqref{d1s>1_eq1} for $ f \in  \mathbb{E}^{s}_{u,p,q}(\mathbb{R})  $. Let $ \varrho \in \mathbb{S}(\mathbb{R})  $ be a real even function with $ \varrho (x) = 1 $ if $ \vert x \vert \leq 1  $ and $ \varrho (x) = 0 $ if $ \vert x \vert \geq 3/2  $. For $ j \in \mathbb{N}_{0} $ and $ x \in \mathbb{R}  $ we put $ \varrho^{j}(x) = \varrho(2^{-j}x)  $. Moreover we define $ f_{j}(x) = \varrho^{j}(x) \mathcal{F}^{-1}[\varrho^{j} \mathcal{F}f](x) $, see step 1 of the proof of theorem 25.8 in \cite{Tr01}. Then $  f_{j} $ is real and because of the Paley-Wiener-Schwarz theorem we find $ f_{j} \in C_{0}^{\infty}(\mathbb{R})  $. So we also have $ f_{j} \in \mathbb{E}^{s}_{u,p,q}(\mathbb{R}) $. We observe 
\begin{align*}
\lim_{j \rightarrow \infty} f_{j} = f \qquad \mbox{and} \qquad \lim_{j \rightarrow \infty} \vert f_{j} \vert = \vert f \vert
\end{align*}
with convergence in $  \mathcal{S}'(\mathbb{R})  $. So we can apply the Fatou property, see lemma \ref{l_fatou}, and step 1 of this proof. Then we get
\begin{align*}
\Vert \  |f| \  \vert \mathcal{E}^{s}_{u,p,q}(\mathbb{R})  \Vert & \leq C_{1} \sup_{j \in \mathbb{N}_{0}} \Vert \  |f_{j}| \  \vert \mathcal{E}^{s}_{u,p,q}(\mathbb{R})  \Vert  \leq C_{2} \sup_{j \in \mathbb{N}_{0}} \Vert f_{j}  \vert \mathcal{E}^{s}_{u,p,q}(\mathbb{R})  \Vert.
\end{align*}
Next we use lemma \ref{l_bmD2}. Let $ m \in \mathbb{N}  $ be large enough. We find
\begin{align*}
\sup_{j \in \mathbb{N}_{0}} \Vert f_{j}  \vert \mathcal{E}^{s}_{u,p,q}(\mathbb{R})  \Vert  & \leq C_{3} \sup_{j \in \mathbb{N}_{0}}  \Big ( \sum_{\vert \alpha \vert \leq m}  \Vert D^{\alpha} \varrho^{j} \vert L_{\infty}(\mathbb{R}) \Vert  \Big ) \; \Vert \mathcal{F}^{-1}[\varrho^{j} \mathcal{F}f]  \vert  \mathcal{E}^{s}_{u,p,q}(\mathbb{R})   \Vert \\
& \leq C_{4} \sup_{j \in \mathbb{N}_{0}}   \Vert \mathcal{F}^{-1}[\varrho^{j} \mathcal{F}f]  \vert  \mathcal{E}^{s}_{u,p,q}(\mathbb{R})   \Vert.
\end{align*}
Now we apply lemma \ref{Four_Mult}. Let $ N \in \mathbb{N}  $ be large enough. Then we obtain
\begin{align*}
\sup_{j \in \mathbb{N}_{0}}   \Vert \mathcal{F}^{-1}[\varrho^{j} \mathcal{F}f]  \vert  \mathcal{E}^{s}_{u,p,q}(\mathbb{R})   \Vert & \leq C_{5} \sup_{j \in \mathbb{N}_{0}} \sup_{\vert \gamma  \vert \leq N } \sup_{x \in \mathbb{R}}  ( 1 + \vert x \vert^{2})^{\frac{ \vert \gamma \vert}{2}}  \vert D^{\gamma} \varrho^{j}(x)   \vert   \   \    \Vert  f  \vert  \mathcal{E}^{s}_{u,p,q}(\mathbb{R})   \Vert \\  
& \leq C_{6}    \Vert  f  \vert  \mathcal{E}^{s}_{u,p,q}(\mathbb{R})   \Vert.
\end{align*}
So the whole proof is complete.
\end{proof}

\subsection{The boundedness of the operator $T$ for $ s \geq 1 $ and $ d \in \mathbb{N}  $ }\label{sec_3.3}

In the case of the original Triebel-Lizorkin spaces $ \mathbb{F}^{s}_{p,q}(\R)   $ the step from $ d = 1  $ to $ d > 1  $ is very easy, see chapter 5.4.1. in \cite{RS}. The reason for this is that the spaces $ \mathbb{F}^{s}_{p,q}(\R)   $ have the so-called Fubini property. At the first moment one maybe could hope that also the Triebel-Lizorkin-Morrey spaces have the Fubini property. We use the following definition, see also chapter 2.5.13 in \cite{Tr83}.

\begin{defi}\label{def_fub_p}
Let $ s > 0  $, $ 1 \leq p \leq u < \infty  $ and $ 1 \leq q \leq \infty   $. Let $ d \geq 2  $. Then the space $ \mathbb{E}^{s}_{u,p,q}(\R)   $ has the Fubini property if the norm 
\begin{equation}\label{fub_m_eq}
\Vert f \vert \mathcal{E}^{s}_{u,p,q}(\R) \Vert^{F} = \sum_{j = 1}^{d} \Big \Vert \Vert f(x_{1}, \ldots , x_{j-1}, \cdot , x_{j+1}, \ldots , x_{d}) \vert  \mathcal{E}^{s}_{u,p,q}(\mathbb{R}) \Vert  \Big \vert \mathcal{M}^{u}_{p}(\mathbb{R}^{d-1}) \Big \Vert
\end{equation}
is equivalent to $ \Vert f \vert \mathcal{E}^{s}_{u,p,q}(\R) \Vert  $.
\end{defi} 

Unfortunately for $ p \not = u   $ there is the following result.

\begin{lem}\label{res_no_Fub}
Let $ s > 0  $, $ 1 \leq p < u < \infty  $, $ 1 \leq q \leq \infty   $ and $ d \geq 2  $. Let in addition 
\begin{align*}
p \leq \frac{d - 1}{d} u.
\end{align*}
Then the spaces $ \mathbb{E}^{s}_{u,p,q}(\R)  $ do not have the Fubini property. 
\end{lem}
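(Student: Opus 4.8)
The plan is to argue by contradiction: I will produce a sequence $(f_{M})_{M \in \mathbb{N}}$ in $\mathbb{E}^{s}_{u,p,q}(\mathbb{R}^{d})$ whose norm $\Vert f_{M} \vert \mathcal{E}^{s}_{u,p,q}(\mathbb{R}^{d}) \Vert$ stays bounded in $M$, while the Fubini norm $\Vert f_{M} \vert \mathcal{E}^{s}_{u,p,q}(\mathbb{R}^{d}) \Vert^{F}$ of \eqref{fub_m_eq} grows like $M^{1/u}$. Fix $\psi \in C_{0}^{\infty}(\mathbb{R})$ with $\psi \geq 0$, $\psi \equiv 1$ on $[1/4,3/4]$ and $\supp \psi \subset (0,1)$, put $\Psi(x) = \prod_{k=1}^{d} \psi(x_{k}) \in C_{0}^{\infty}(\mathbb{R}^{d})$, and define
\begin{align*}
f_{M}(x) = \sum_{i=1}^{M} \Psi(x - 2 i e_{1}),
\end{align*}
a sum of $M$ translates of $\Psi$ with pairwise disjoint supports, arranged along the $x_{1}$-axis. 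By lemma \ref{l_bp1} (iii) each $f_{M}$ lies in $\mathbb{E}^{s}_{u,p,q}(\mathbb{R}^{d})$.

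For the lower bound I keep only the summand $j = 1$ in \eqref{fub_m_eq}. For fixed $x' = (x_{2}, \ldots , x_{d})$ we have $f_{M}(\cdot , x') = c(x') \sum_{i=1}^{M} \psi(\cdot - 2 i)$ with $c(x') = \prod_{k=2}^{d} \psi(x_{k})$. Using the one-dimensional embedding $\mathbb{E}^{s}_{u,p,q}(\mathbb{R}) \hookrightarrow \mathbb{M}^{u}_{p}(\mathbb{R})$ of lemma \ref{E_Mor_em} and testing the Morrey norm on the ball $B(0,M)$, on which $\int_{-M}^{M} \vert \sum_{i} \psi(t - 2i) \vert^{p} \, dt \gtrsim M$, I get $\Vert f_{M}(\cdot , x') \vert \mathcal{E}^{s}_{u,p,q}(\mathbb{R}) \Vert \gtrsim c(x') M^{1/u - 1/p} M^{1/p} = c(x') M^{1/u}$. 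Taking the $\mathcal{M}^{u}_{p}(\mathbb{R}^{d-1})$-norm in $x'$ yields $\Vert f_{M} \vert \mathcal{E}^{s}_{u,p,q}(\mathbb{R}^{d}) \Vert^{F} \gtrsim M^{1/u} \Vert \prod_{k=2}^{d} \psi(x_{k}) \vert \mathcal{M}^{u}_{p}(\mathbb{R}^{d-1}) \Vert$, with implied constant independent of $M$.

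For the uniform upper bound write $\Psi_{k} = \mathcal{F}^{-1}[\varphi_{k} \mathcal{F} \Psi]$, so that $\mathcal{F}^{-1}[\varphi_{k} \mathcal{F} f_{M}](x) = \sum_{i=1}^{M} \Psi_{k}(x - 2 i e_{1})$. Since $\Psi \in C_{0}^{\infty}(\mathbb{R}^{d})$, integration by parts in the Fourier integral gives $\vert \Psi_{k}(y) \vert \leq C_{N,L} 2^{-kL}(1 + \vert y \vert)^{-N}$ for all $N, L$ when $k \geq 1$, and $\vert \Psi_{0}(y) \vert \leq C_{N}(1+\vert y \vert)^{-N}$. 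Setting $g_{k}(x) = \sum_{i=1}^{M} \vert \Psi_{k}(x - 2 i e_{1}) \vert$ and using $(1 + \vert x - 2 i e_{1} \vert)^{-N} \lesssim (1 + \vert x_{1} - 2i \vert)^{-N/2}(1 + \vert x' \vert)^{-N/2}$ together with the uniform convergence of $\sum_{i \in \mathbb{Z}} (1+\vert x_{1} - 2i \vert)^{-N/2}$, I obtain $g_{k}(x) \lesssim 2^{-kL}(1 + \vert x' \vert)^{-N'}$ for $k \geq 1$ and $g_{0}(x) \lesssim (1+\vert x' \vert)^{-N'}$, with $N'$ as large as desired and constants independent of $M$. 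Here the hypothesis enters: $p \leq \frac{d-1}{d} u$ is equivalent to $\frac{d}{u} \leq \frac{d-1}{p}$, and together with $p \geq 1$ (so that $(1+\vert x' \vert)^{-N'}$ is integrable over $\mathbb{R}^{d-1}$ once $N' \geq d$) this is exactly what forces the function $(1+\vert x' \vert)^{-N'}$, which is constant in $x_{1}$, into $\mathbb{M}^{u}_{p}(\mathbb{R}^{d})$: for a ball of radius $r \geq 1$ one has $\int_{B(y,r)} (1+\vert x' \vert)^{-N'p} dx \lesssim r$, hence $r^{d/u - d/p} ( \cdots )^{1/p} \lesssim r^{d/u - (d-1)/p} \leq 1$. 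Thus $\Vert g_{k} \vert \mathcal{M}^{u}_{p}(\mathbb{R}^{d}) \Vert \lesssim 2^{-kL}$ for $k \geq 1$ and $\Vert g_{0} \vert \mathcal{M}^{u}_{p}(\mathbb{R}^{d}) \Vert \lesssim 1$, uniformly in $M$. Using $q \geq 1$, so that $( \sum_{k} a_{k}^{q} )^{1/q} \leq \sum_{k} a_{k}$, together with the triangle inequality for the Morrey norm,
\begin{align*}
\Vert f_{M} \vert \mathcal{E}^{s}_{u,p,q}(\mathbb{R}^{d}) \Vert \leq \Big \Vert \sum_{k=0}^{\infty} 2^{ks} g_{k} \Big \vert \mathcal{M}^{u}_{p}(\mathbb{R}^{d}) \Big \Vert \leq \sum_{k=0}^{\infty} 2^{ks} \Vert g_{k} \vert \mathcal{M}^{u}_{p}(\mathbb{R}^{d}) \Vert \lesssim 1 + \sum_{k=1}^{\infty} 2^{-k(L-s)} \lesssim 1 ,
\end{align*}
after choosing $L > s$; this bound is independent of $M$.

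Combining the two estimates, under the Fubini property one would have $M^{1/u} \lesssim \Vert f_{M} \vert \mathcal{E}^{s}_{u,p,q}(\mathbb{R}^{d}) \Vert^{F} \lesssim \Vert f_{M} \vert \mathcal{E}^{s}_{u,p,q}(\mathbb{R}^{d}) \Vert \lesssim 1$ for every $M$, which is impossible. Hence the Fubini property fails. The main obstacle is the upper bound: making rigorous the heuristic that ``spreading unit-scale bumps along one axis does not enlarge the $d$-dimensional Morrey norm'' — i.e. checking that $(1+\vert x' \vert)^{-N'}$ belongs to $\mathbb{M}^{u}_{p}(\mathbb{R}^{d})$ precisely when $p \leq \frac{d-1}{d} u$ — and controlling the $\ell^{q}$-sum over frequencies uniformly in $M$ via the rapid decay of the $\Psi_{k}$; by contrast the lower bound is soft once lemma \ref{E_Mor_em} is invoked.
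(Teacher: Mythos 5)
Your proof is correct, and it takes a genuinely different route from the paper's. The paper uses a \emph{single} unbounded test function $g(x_1,x')=f(x')$, constant in $x_1$ and compactly supported in $x'$, then shows directly that $\Vert g \vert \mathcal{E}^{s}_{u,p,q}(\R) \Vert^{F}=\infty$ (each slice $g(\cdot,x')$ is a nonzero constant, hence fails to be in $\mathcal{M}^u_p(\mathbb{R})$) while $g\in\mathbb{E}^{s}_{u,p,q}(\R)$, checked via the difference characterization (proposition \ref{pro_E_diff}). Your construction is in effect a truncation of that function to $M$ unit bumps along the $x_1$-axis, replacing the soft ``$\infty$ vs.\ finite'' dichotomy by a quantitative blow-up: $\Vert f_M\Vert^F\gtrsim M^{1/u}$ while $\Vert f_M\Vert\lesssim 1$ uniformly in $M$. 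The hypothesis $p\le\frac{d-1}{d}u$ plays an identical role in both arguments — it is exactly what makes a function decaying in $x'$ but extended (or constant) in $x_1$ belong to $\mathcal{M}^u_p(\R)$, via the exponent $d/u-(d-1)/p\le 0$. Where you differ in method is the upper bound: you verify membership directly on the Fourier side, using Schwartz decay of the pieces $\Psi_k$ and summability of the translated tails $\sum_i(1+\vert x_1-2i\vert)^{-N/2}$, whereas the paper invokes the difference-characterization theorem. Your route is somewhat more self-contained (it avoids proposition \ref{pro_E_diff}) at the cost of carrying the parameter $M$ through the whole estimate and checking uniformity; the paper's is shorter because a single explicit function does the job. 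Both are sound.

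One small cosmetic point: in the lower bound you test the one-dimensional Morrey norm on $B(0,M)=(-M,M)$, while the bumps sit in $(2,2M+1)$; a ball centred near $M+1$ picks them all up cleanly. This changes nothing substantive since $B(0,M)$ still captures a positive fraction of the bumps.
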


\begin{proof}
To prove this result we investigate the properties of a special test function. Let $ f \in C_{0}^{\infty}(\mathbb{R}^{d-1}) $ be a function $ f : \mathbb{R}^{d-1} \rightarrow \mathbb{R}  $ that depends on $ x' = (x_{2}, x_{3}, \ldots , x_{d} )  $ and has a support in $ [0,1]^{d-1}  $. We assume $ \int_{[0,1]^{d-1}} \vert f(x') \vert^{p} dx' = 1   $. Now we define the function $ g : \mathbb{R}^{d} \rightarrow \mathbb{R}   $ by $ g(x_{1} , x') = f(x')   $. At first we prove $ \Vert g \vert \mathcal{E}^{s}_{u,p,q}(\R) \Vert^{F} = \infty   $. From lemma \ref{E_Mor_em} we learn $ \mathbb{E}^{s}_{u,p,q}(\mathbb{R}) \hookrightarrow \mathbb{M}^{u}_{p}(\mathbb{R})   $. Let $ t > 0  $. Then we find 
\begin{align*}
\Vert g \vert \mathcal{E}^{s}_{u,p,q}(\R) \Vert^{F} & \geq C_{1}  \Big \Vert \Vert g( \cdot , x') \vert  \mathcal{M}^{u}_{p}(\mathbb{R}) \Vert  \Big \vert \mathcal{M}^{u}_{p}(\mathbb{R}^{d-1}) \Big \Vert \\
& \geq C_{2}  t^{\frac{1}{u}-\frac{1}{p}}  \Big ( \int_{[0,1]^{d-1}}    \int_{0}^{t} \vert g(x_{1},x') \vert^{p} dx_{1}     dx'  \Big )^{\frac{1}{p}} \\
& = C_{2}  t^{\frac{1}{u}-\frac{1}{p}}  \Big ( \int_{0}^{t} \int_{[0,1]^{d-1}}     \vert  f(x')  \vert^{p} dx' dx_{1}       \Big )^{\frac{1}{p}} = C_{2}  t^{\frac{1}{u}}.
\end{align*}
If $ t $ tends to infinity we obtain $ \Vert g \vert \mathcal{E}^{s}_{u,p,q}(\R) \Vert^{F} = \infty   $. Now we prove that under the given conditions we have $ g \in \mathbb{E}^{s}_{u,p,q}(\R)  $. Because of proposition \ref{pro_E_diff} it is enough to show $ \Vert g \vert \mathcal{E}^{s}_{u,p,q}(\R) \Vert^{(1,1)} < \infty  $. At first we look at the Morrey norm. Since $ g $ is bounded and smooth we only have to deal with large cubes $ [0,t]^{d}   $ with $ t \geq 1 $. We observe
\begin{align*}
\sup_{t \geq 1}  t^{\frac{d}{u}-\frac{d}{p}} \Big ( \int_{0}^{t} \int_{[0,1]^{d-1}} \vert f(x')  \vert^{p} dx' dx_{1}      \Big )^{\frac{1}{p}}  =  \sup_{t \geq 1}  t^{\frac{d}{u}-\frac{d - 1}{p} } < \infty.
\end{align*} 
In the last step we used $  p \leq \frac{d - 1}{d} u $. Now we have to deal with the second part of the norm $ \Vert g \vert \mathcal{E}^{s}_{u,p,q}(\R) \Vert^{(1,1)}   $. To do so we apply the well-known formula 
\begin{equation}\label{diff_abl_c}
\vert \Delta^{N}_{h} g(x)   \vert \leq C \vert h \vert^{N} \max_{\vert \gamma \vert = N} \sup_{\vert x - y \vert \leq N \vert h \vert } \vert D^{\gamma} g(y)   \vert  .
\end{equation} 
Because of $ f \in C_{0}^{\infty}(\mathbb{R}^{d-1})  $ and $ \supp f \subset [0,1]^{d-1} $ we find
\begin{align*}
& \Big \Vert \Big (  \int_{0}^{1}  t^{-sq} \Big ( t^{-d} \int_{B(0,t)}\vert \Delta^{N}_{h}g(x) \vert dh \Big )^{q}  \frac{dt}{t}  \Big )^{\frac{1}{q}} \Big \vert \mathcal{M}^{u}_{p}( \mathbb{R}^d)  \Big \Vert \\ 
& \qquad \qquad \leq C_{3} \Big \Vert \Big (  \int_{0}^{1}  t^{-sq + Nq}      \chi_{ \{ \vert x' \vert \leq d + Nd \}}(x)  \frac{dt}{t}  \Big )^{\frac{1}{q}} \Big \vert \mathcal{M}^{u}_{p}( \mathbb{R}^d)  \Big \Vert \\ 
& \qquad \qquad \leq C_{4} \Big \Vert        \chi_{ \{ \vert x' \vert \leq d + Nd \}}(x)  \Big \vert \mathcal{M}^{u}_{p}( \mathbb{R}^d)  \Big \Vert. 
\end{align*} 
In the last step we used $ N > s  $. $  \chi_{ \{ \vert x' \vert \leq d + Nd \}}(x)   $ is a characteristic function that is zero if $ \vert x' \vert   $  is large. Since $ | \chi_{ \{ \vert x' \vert \leq d + Nd \}}(x) | \leq 1   $ for all $ x \in \R $ the supremum of the Morrey norm is realized by big cubes $ [-t,t]^{d}   $ with $ t > d + Nd   $. Because of this we obtain
\begin{align*}
\Big \Vert  \chi_{ \{ \vert x' \vert \leq d + Nd \}}(x)  \Big \vert \mathcal{M}^{u}_{p}( \mathbb{R}^d)  \Big \Vert & \leq C_{5} \sup_{t > d + Nd} t^{\frac{d}{u}-\frac{d}{p}} \Big ( \int_{-t}^{t} \int_{[-d-Nd,d+Nd]^{d-1}}  \chi_{ \{ \vert x' \vert \leq d + Nd \}}(x)  dx' dx_{1}      \Big )^{\frac{1}{p}} \\
& \leq C_{6} \sup_{t > d + Nd} t^{\frac{d}{u}-\frac{d}{p}+\frac{1}{p}} < \infty.
\end{align*} 
In the last step again we used $ p \leq \frac{d - 1}{d} u  $. The proof is complete.
\end{proof}

So it turns out that in most of the cases the Triebel-Lizorkin-Morrey spaces do not have the Fubini property. But fortunately in the case of small $ s $ for the Triebel-Lizorkin-Morrey spaces there exist so-called Morrey characterizations, see chapter 3.6.3 in \cite{Tr14}. To explain what this is we have to introduce some additional notation concerning Triebel-Lizorkin spaces on domains. Let $ \Omega \subset \mathbb{R}^{d}   $ be a bounded domain. As usual $ D(\Omega) = C_{0}^{\infty}(\Omega)  $ and $ D'(\Omega)    $ is the dual space of distributions on $ \Omega  $. Then for $  s > 0  $, $  1 \leq p < \infty  $ and $ 1 \leq q \leq \infty   $ we define for all $ f \in D'(\Omega)  $ the norm $ \Vert f \vert F^{s}_{p,q}(\Omega)  \Vert = \inf \Vert g \vert F^{s}_{p,q}( \mathbb{R}^{d} )  \Vert $, where the infimum is taken over all $ g \in \mathbb{F}^{s}_{p,q}(\mathbb{R}^{d})   $ such that we have $ f = g   $ on $  \Omega  $ in $  D'(\Omega)   $. More explanations concerning Triebel-Lizorkin spaces on domains can be found in \cite{Tr06}, see chapter 1.11. Now for $ j \in \mathbb{Z}  $ and $ m \in \mathbb{Z}^{d}   $ we put $ Q_{j,m} = 2^{-j}m + 2^{-j}(0,1)^d $. For $ c > 1 $ by $ c Q_{j,m}   $ we denote a cube concentric with $  Q_{j,m} $ that has side-length $ c 2^{-j}  $. Using this notation we can formulate the following result.

\begin{prop}\label{E_MorChar}
Let $ 1 \leq p \leq u < \infty  $ and $ 1 \leq q \leq \infty  $. Let $ 0 < s < \min  ( 1/p , d/u )$. Let $ f \in \mathcal{S}'(\mathbb{R}^{d})  $. Then $ f \in \mathbb{E}^{s}_{u,p,q}(\R)    $ if and only if 
\begin{equation}\label{MorChar_eq1}
\Vert f \vert \mathcal{E}^{s}_{u,p,q}(\R) \Vert^{MC} =  \sup_{j \in \mathbb{Z}, m \in \mathbb{Z}^{d}} 2^{j(\frac{d}{p} - \frac{d}{u})} \Vert f \vert F^{s}_{p,q}(2 Q_{j,m}) \Vert
\end{equation}
is finite. The norms $  \Vert f \vert \mathcal{E}^{s}_{u,p,q}(\R) \Vert   $ and $ \Vert f \vert \mathcal{E}^{s}_{u,p,q}(\R) \Vert^{MC}   $ are equivalent.
\end{prop}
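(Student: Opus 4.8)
The plan is to prove the two inequalities $\Vert f \vert \mathcal{E}^{s}_{u,p,q}(\R) \Vert^{MC} \lesssim \Vert f \vert \mathcal{E}^{s}_{u,p,q}(\R) \Vert$ and $\Vert f \vert \mathcal{E}^{s}_{u,p,q}(\R) \Vert \lesssim \Vert f \vert \mathcal{E}^{s}_{u,p,q}(\R) \Vert^{MC}$ separately. Since $0 < s < 1/p \le 1$ we have $0 < s < 1$, so proposition \ref{pro_E_diff} applies with $N = 1$, $v = 1$ and any admissible value of $a$, and, in parallel, every local space $F^{s}_{p,q}(2Q_{j,m})$ admits the intrinsic description by first differences together with a bounded linear extension operator $F^{s}_{p,q}(2Q_{j,m}) \to \mathbb{F}^{s}_{p,q}(\R)$, with constants uniform in $j,m$ because all cubes $2Q_{j,m}$ are dilates and translates of a fixed one. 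Two elementary observations will be used throughout: first, $2^{j(d/p - d/u)} = c_{d}\,|2Q_{j,m}|^{1/u - 1/p}$, so the weight in \eqref{MorChar_eq1} is exactly a Morrey volume factor; second, a covering argument shows that the supremum of a quantity $|B|^{1/u-1/p}(\cdots)$ taken over all balls $B$, over all cubes $2Q_{j,m}$, or over all fattened cubes $cQ_{j,m}$ ($c\ge 1$ fixed), gives equivalent results.

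For $\Vert f \vert \mathcal{E}^{s}_{u,p,q}(\R) \Vert^{MC} \lesssim \Vert f \vert \mathcal{E}^{s}_{u,p,q}(\R) \Vert$, fix $f \in \mathbb{E}^{s}_{u,p,q}(\R)$ and a cube $2Q_{j,m}$. The extension operator gives
\[ \Vert f \vert F^{s}_{p,q}(2Q_{j,m}) \Vert \lesssim \Vert f \vert L_{p}(2Q_{j,m}) \Vert + \Big( \int_{2Q_{j,m}} \Big( \int_{0}^{\infty} t^{-sq} \Big( t^{-d} \int_{\substack{|h| < t \\ x+h \in 2Q_{j,m}}} |\Delta^{1}_{h}f(x)|\, dh \Big)^{q} \frac{dt}{t} \Big)^{p/q} dx \Big)^{1/p}. \]
Multiplying by $2^{j(d/p-d/u)} = c_{d}|2Q_{j,m}|^{1/u-1/p}$, the first summand is at most $c\,\Vert f \vert \mathcal{M}^{u}_{p}(\R) \Vert$, hence $\lesssim \Vert f \vert \mathcal{E}^{s}_{u,p,q}(\R) \Vert$ by lemma \ref{E_Mor_em}. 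In the second summand drop the constraint $x+h \in 2Q_{j,m}$ (this only enlarges the inner integral); what remains is $c_{d}|2Q_{j,m}|^{1/u-1/p}$ times the $L_{p}(2Q_{j,m})$-norm of the function $x \mapsto \big(\int_{0}^{\infty} t^{-sq}(t^{-d}\int_{B(0,t)}|\Delta^{1}_{h}f(x)|dh)^{q}\frac{dt}{t}\big)^{1/q}$, so at most its Morrey norm, which is $\lesssim \Vert f \vert \mathcal{E}^{s}_{u,p,q}(\R) \Vert$ by proposition \ref{pro_E_diff} (with $a=\infty$). Taking the supremum over $j,m$ finishes this direction; it uses only $0<s<1$.

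For the converse, assume $\Vert f \vert \mathcal{E}^{s}_{u,p,q}(\R) \Vert^{MC} < \infty$. From $F^{s}_{p,q}(\Omega) \hookrightarrow L_{p}(\Omega)$ (valid for $s>0$) we get $f \in \mathbb{L}_{p}^{loc}(\R)$, so proposition \ref{pro_E_diff} is applicable, and the same embedding together with the covering argument already yields $\Vert f \vert \mathcal{M}^{u}_{p}(\R) \Vert \lesssim \Vert f \vert \mathcal{E}^{s}_{u,p,q}(\R) \Vert^{MC}$. It remains to bound the difference part of $\Vert f \vert \mathcal{E}^{s}_{u,p,q}(\R) \Vert^{(1,1)}$, i.e. $\sup_{B}|B|^{1/u-1/p}\Vert D_{s}f \vert L_{p}(B) \Vert$ with $D_{s}f(x) = \big(\int_{0}^{1}t^{-sq}(t^{-d}\int_{B(0,t)}|\Delta^{1}_{h}f(x)|dh)^{q}\frac{dt}{t}\big)^{1/q}$ (use $a=1$ in proposition \ref{pro_E_diff}). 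Fix a ball $B = B(y,r)$ and split the $t$-integral at $t=\min(r,1)$. For $t\le\min(r,1)$ and $x\in B(y,r)$ one has $x+h\in B(y,2r)$ whenever $|h|<t$, so, after fattening $B(y,2r)$ to a dyadic cube $cQ_{j^{*},m^{*}}$ with $2^{-j^{*}}\sim\min(r,1)$, this portion is dominated by the intrinsic seminorm of $f$ on that cube, hence, after multiplication by $|B|^{1/u-1/p}\sim 2^{j^{*}(d/p-d/u)}$, by $\Vert f \vert F^{s}_{p,q}(c Q_{j^{*},m^{*}}) \Vert$ up to the $MC$-weight, which is $\lesssim\Vert f \vert \mathcal{E}^{s}_{u,p,q}(\R) \Vert^{MC}$.

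The hard part is the remaining portion $r\le t\le 1$ (nonempty only for $r<1$), where the differences $\Delta^{1}_{h}f(x)$ involve shifts $|h|$ up to $\sim 1\gg r$, so the estimate cannot be localised to scale $r$. Here I would decompose the $t$-integral dyadically into scales $2^{-\nu}$, $0\le\nu\lesssim\log_{2}(1/r)$, and, after a telescoping argument, bound the $L_{1}$-oscillation of $f$ at scale $2^{-\nu}$ around $x\in B(y,r)$ by the Sobolev embedding $F^{s}_{p,q}(\Omega)\hookrightarrow L_{p_{*}}(\Omega)$, $1/p_{*}=1/p-s/d$ --- legitimate exactly because $0<s<1/p\le d/p$ --- applied on the scale-$2^{-\nu}$ dyadic cube near $y$; combined with $\Vert f \vert F^{s}_{p,q}(cQ_{\nu,\cdot}) \Vert \le c\,2^{-\nu(d/p-d/u)}\Vert f \vert \mathcal{E}^{s}_{u,p,q}(\R) \Vert^{MC}$ this produces a pointwise bound $\lesssim 2^{\nu(d/u-s)}\Vert f \vert \mathcal{E}^{s}_{u,p,q}(\R) \Vert^{MC}$ for the corresponding local means. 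Feeding this into the dyadic sum, the $2^{\nu s}$-weights are swallowed and one is left with a geometric sum $\sum_{\nu\lesssim\log_{2}(1/r)}2^{\nu dq/u}\sim r^{-dq/u}$ --- this is precisely where $s<d/u$ enters --- so the portion is $\lesssim r^{-d/u}\Vert f \vert \mathcal{E}^{s}_{u,p,q}(\R) \Vert^{MC}$ pointwise, and the factor $r^{-d/u}$ is cancelled by $|B(y,r)|^{1/u-1/p}\cdot|B(y,r)|^{1/p}=r^{d/u}$. Taking the supremum over $B$ and combining with the $\mathcal{M}^{u}_{p}$-estimate gives $f\in\mathbb{E}^{s}_{u,p,q}(\R)$ with the required norm bound, and in particular $f\in\mathbb{L}_{1}^{loc}(\R)\cap\mathcal{S}'(\R)$. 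The main obstacle is this large-shift portion; it is essentially the content of Triebel's Morrey characterisation in chapter 3.6.3 of \cite{Tr14}, which one may alternatively invoke directly, the passage to real-valued functions being immediate.
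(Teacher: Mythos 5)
The paper's own ``proof'' of this proposition is a pure citation: the statement is assembled from theorem 3.64 and theorem 3.38 in \cite{Tr14} together with corollary 3.3 in \cite{ysy}. You instead attempt a self-contained argument, which is considerably more ambitious; the forward half is solid, but the reverse half has a real gap precisely where the cited theorem does its work.

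The estimate $\Vert f \vert \mathcal{E}^{s}_{u,p,q}(\R) \Vert^{MC} \lesssim \Vert f \vert \mathcal{E}^{s}_{u,p,q}(\R) \Vert$ goes through as you describe: combine the intrinsic first-difference characterisation of $F^{s}_{p,q}(2Q_{j,m})$ (uniform in $j,m$ by dilation and translation invariance), drop the constraint $x+h\in 2Q_{j,m}$, and use lemma~\ref{E_Mor_em} and proposition~\ref{pro_E_diff}. This only uses $0<s<1$.

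For the converse, the use of proposition~\ref{pro_E_diff} with $a=1$, the bound on the $\mathcal{M}^{u}_{p}$-part, and the treatment of the small-shift portion $t\le r$ on a cube of side $\sim r$ are all fine. The problem is the large-shift portion $r\le t\le 1$ (for $r<1$). You claim a \emph{pointwise} bound $\lesssim 2^{\nu(d/u-s)}\Vert f\Vert^{MC}$ for the ``corresponding local means'' at scale $t\sim 2^{-\nu}$ via the Sobolev embedding $F^{s}_{p,q}\hookrightarrow L_{p_{*}}$. That Sobolev gain is available for $\fint_{B(x,t)}|f|\lesssim t^{s-d/u}\Vert f\Vert^{MC}$, and that term does give a harmless geometric sum. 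But $\Delta^{1}_{h}f(x)=f(x+h)-f(x)$ also contains $f(x)$ itself, and there is no pointwise $t$-independent Sobolev gain for a fixed $x\in B(y,r)$: the $|f(x)|$ contribution to $\big(\int_{r}^{1}t^{-sq}|f(x)|^{q}\frac{dt}{t}\big)^{1/q}$ is $\sim r^{-s}|f(x)|$, and $r^{-s}\cdot |B(y,r)|^{1/u-1/p}\Vert f\Vert_{L_{p}(B(y,r))}$ is $\sim r^{-s}\Vert f\Vert^{MC}$, which diverges as $r\downarrow 0$. So the crude split $|\Delta^{1}_{h}f(x)|\le|f(x+h)|+|f(x)|$ fails, and the cancellation must come from the ``telescoping argument'' that you mention but do not supply --- one needs to exchange $f(x)$ against a suitable local average (say $f_{B(y,2r)}$) and then invoke a Poincar\'e--Sobolev estimate on scale-$r$ cubes, uniformly with the Morrey weight. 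That is precisely the nontrivial content of the theorem 3.64 in \cite{Tr14} you propose as a fall-back, and the fall-back coincides with what the paper actually does. As written, the proposal therefore does not give an independent proof; its genuinely new part stops short of the crucial estimate.

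One small additional slip: when covering $B(y,2r)$ you write $2^{-j^{*}}\sim\min(r,1)$; for $r>1$ (where the large-shift range is empty) this should read $2^{-j^{*}}\sim r$, otherwise the cube does not cover the ball and the Morrey weights do not match.
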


\begin{proof}
This result is a combination of theorem 3.64. from \cite{Tr14} with theorem 3.38. from \cite{Tr14} and corollary 3.3. from \cite{ysy}. One may also consult theorem 2.29. in \cite{Tr13}.
\end{proof} 

Now we are well-prepared to prove the following result concerning the boundedness of the operator $ T $.  

\begin{prop}\label{d>1_E_MR}
Let $ 1 \leq p < u < \infty  $, $ 1 \leq q \leq \infty   $ and $ d \in \mathbb{N}  $. Let $ 1/p - 1/u > 1 - 1/d   $ and 
\begin{align*}
1 < s < \min \Big ( 1 + \frac{1}{p} ,  1 + \frac{d}{u} \Big ). 
\end{align*}
Then there is a constant $ C > 0 $ independent of $ f \in \mathbb{E}^{s}_{u,p,q}(\mathbb{R}^{d})  $ such that we have 
\begin{equation}\label{d>1_E_eq1}
\Vert Tf  \vert \mathcal{E}^{s}_{u,p,q}(\mathbb{R}^{d})  \Vert \leq C \Vert f \vert \mathcal{E}^{s}_{u,p,q}(\mathbb{R}^{d})  \Vert
\end{equation}
for all $ f \in \mathbb{E}^{s}_{u,p,q}(\mathbb{R}^{d})   $.
\end{prop}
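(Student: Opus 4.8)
The plan is to reduce the $d$-dimensional estimate to the one-dimensional case established in Proposition \ref{pro_d1s>1}, using the Morrey characterization from Proposition \ref{E_MorChar} as a bridge. The key point is that, unlike the classical Triebel-Lizorkin spaces, the spaces $\mathbb{E}^{s}_{u,p,q}(\mathbb{R}^{d})$ do not have the Fubini property when $p\neq u$ (Lemma \ref{res_no_Fub}), so the naive slice-by-slice argument from \cite{RS} is unavailable; instead we will exploit that for $s-1<\min(1/p,d/u)$ the space $\mathbb{E}^{s-1}_{u,p,q}(\mathbb{R}^{d})$ is characterized by local $F^{s-1}_{p,q}$-norms on dyadic cubes, and on each fixed cube the classical Fubini property for $\mathbb{F}^{s-1}_{p,q}$ \emph{is} available.

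\textbf{Step 1: reduction via the derivative lemma.} First I would restrict to real-valued $f\in C_{0}^{\infty}(\mathbb{R}^{d})$, the general case following by the usual $f_{j}=\varrho^{j}\mathcal{F}^{-1}[\varrho^{j}\mathcal{F}f]$ approximation together with the Fatou property (Lemma \ref{l_fatou}), Lemma \ref{l_bmD2} and the Fourier multiplier theorem (Lemma \ref{Four_Mult}), exactly as in Step 2 of the proof of Proposition \ref{pro_d1s>1}. For smooth compactly supported $f$, apply Lemma \ref{l_deriv} with $m=1$ to get
\begin{align*}
\Vert\,|f|\,\vert\mathcal{E}^{s}_{u,p,q}(\mathbb{R}^{d})\Vert \leq C_{1}\Vert\,|f|\,\vert\mathcal{E}^{s-1}_{u,p,q}(\mathbb{R}^{d})\Vert + C_{1}\sum_{j=1}^{d}\Vert\partial^{1}_{j}|f|\,\vert\mathcal{E}^{s-1}_{u,p,q}(\mathbb{R}^{d})\Vert.
\end{align*}
Since $1<s<1+1/u\leq 2$, we have $0<s-1<1$, so the first term is controlled by $\Vert f\vert\mathcal{E}^{s}_{u,p,q}(\mathbb{R}^{d})\Vert$ via Proposition \ref{pro_0<s<1} and the trivial embedding $\mathbb{E}^{s}_{u,p,q}\hookrightarrow\mathbb{E}^{s-1}_{u,p,q}$. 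The main work is the $d$ derivative terms; fix $j$ and write $\partial:=\partial_{j}^{1}$.

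\textbf{Step 2: localize via the Morrey characterization and slice.} Here the assumption $1/p-1/u>1-1/d$ enters: it is exactly equivalent to $s-1<d/u$ being compatible with $s-1<1/p$ for $s$ below $\min(1+1/p,1+d/u)$, so that $0<s-1<\min(1/p,d/u)$ and Proposition \ref{E_MorChar} applies to $\mathbb{E}^{s-1}_{u,p,q}(\mathbb{R}^{d})$. Thus
\begin{align*}
\Vert\partial|f|\,\vert\mathcal{E}^{s-1}_{u,p,q}(\mathbb{R}^{d})\Vert \sim \sup_{k\in\mathbb{Z},m\in\mathbb{Z}^{d}} 2^{k(\frac{d}{p}-\frac{d}{u})}\Vert\partial|f|\,\vert F^{s-1}_{p,q}(2Q_{k,m})\Vert,
\end{align*}
and similarly for $\partial f$ in place of $\partial|f|$. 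On each fixed cube $2Q_{k,m}$ the space $F^{s-1}_{p,q}$ has the Fubini property, so $\Vert\partial|f|\,\vert F^{s-1}_{p,q}(2Q_{k,m})\Vert$ can be estimated by integrating the one-dimensional $F^{s-1}_{p,q}(\mathbb{R})$-norm of $x_{j}\mapsto\partial|f|(x_{1},\ldots,x_{j},\ldots,x_{d})$ over the remaining $(d-1)$ variables in $L_{p}$. Along each such line, $x_{j}\mapsto f(\ldots)$ is a one-variable smooth compactly supported function, so the pointwise sign-splitting argument of Proposition \ref{pro_d1s>1} (the decomposition of the line into maximal intervals where $f$ has constant sign, together with Lemma \ref{Hardy} and Lemma \ref{Hardy_Mor} — or rather their one-dimensional $L_{p}$-versions on subintervals) gives a bound of the one-dimensional $F^{s-1}_{p,q}$-norm of $\partial|f|$ by that of $\partial f$, uniformly in the line. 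Reassembling, one obtains $\Vert\partial|f|\,\vert F^{s-1}_{p,q}(2Q_{k,m})\Vert\lesssim\Vert\partial f\,\vert F^{s-1}_{p,q}(cQ_{k,m})\Vert$ for a slightly larger cube, hence after taking the weighted supremum $\Vert\partial|f|\,\vert\mathcal{E}^{s-1}_{u,p,q}(\mathbb{R}^{d})\Vert\lesssim\Vert\partial f\,\vert\mathcal{E}^{s-1}_{u,p,q}(\mathbb{R}^{d})\Vert$, which by Lemma \ref{l_deriv} is $\lesssim\Vert f\vert\mathcal{E}^{s}_{u,p,q}(\mathbb{R}^{d})\Vert$. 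Summing over $j=1,\ldots,d$ finishes the estimate for smooth $f$.

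\textbf{Main obstacle.} The hard part will be Step 2: transferring the one-dimensional constant-sign decomposition argument of Proposition \ref{pro_d1s>1} into the local $F^{s-1}_{p,q}(2Q_{k,m})$ framework uniformly over all dyadic cubes, and in particular handling the boundary effects near $\partial(2Q_{k,m})$ — a line may enter and leave the cube, and the restriction/extension defining $\Vert\cdot\vert F^{s-1}_{p,q}(2Q_{k,m})\Vert$ must be reconciled with the sign structure of $f$ on the whole line. One must be careful that the larger concentric cube $cQ_{k,m}$ produced on the right-hand side is controlled by finitely many (bounded overlap) neighbouring dyadic cubes, so the weighted supremum is not inflated. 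The condition $1/p-1/u>1-1/d$ is precisely what guarantees the Morrey characterization is applicable at the shifted smoothness $s-1$, and this is why it cannot be dropped with the present method.
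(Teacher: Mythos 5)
Your plan reduces to the correct starting point (derivative lemma plus Morrey characterization at smoothness $s-1$), but from there it departs from what actually works, and the gap you flag in your ``Main obstacle'' is not a repairable technicality --- it is the place where the argument fails. Two concrete problems:

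\emph{(a) The boundary terms along the slices cannot be handled by the Hardy inequality.} Along a fixed line the constant-sign intervals of $f$ that meet the boundary of the (enlarged) cube $cQ_{k,m}$ need not have $f=0$ at the boundary endpoint, so the mean-zero hypothesis $\int_I \partial f\,dx = f(d_i)-f(c_i) = 0$ in Lemma~\ref{Hardy} fails for those intervals and the sign-splitting argument of Proposition~\ref{pro_d1s>1} does not close. Worse, the target inequality $\Vert\partial|f|\,\vert F^{s-1}_{p,q}(2Q_{k,m})\Vert\lesssim\Vert\partial f\,\vert F^{s-1}_{p,q}(cQ_{k,m})\Vert$ that you want to extract per cube is not even true on $\mathbb{R}$ without the lower-order term $\Vert f\,\vert F^{s-1}_{p,q}\Vert$ on the right (take $f(x)=x$ near a cube straddling $0$: $\partial f\equiv 1$ is smooth while $\partial|f|=\mathrm{sign}$ is not), so the reassembly step is not correct as stated. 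Also note that ``the Fubini property of $F^{s-1}_{p,q}$ on a cube $2Q_{k,m}$'' is not a statement you can quote; Fubini is a property of $F^{s-1}_{p,q}(\mathbb{R}^d)$, and combining it with the infimum over extensions that defines the domain norm is itself nontrivial.

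\emph{(b) You have misidentified the role of $1/p-1/u>1-1/d$.} The inequality $s-1<\min(1/p,d/u)$, which is what Proposition~\ref{E_MorChar} needs, follows \emph{automatically} from $s<\min(1+1/p,1+d/u)$; no extra hypothesis is needed there. In the paper's proof the extra condition enters elsewhere: after using the Morrey characterization, one picks out the dyadic cubes on which $f$ changes sign and, for cubes of intermediate scale $0<k\le R$, covers a small neighbourhood of the zero set $Z(f)$ by roughly $R_{l^*}\,2^{(d-1)(R_{l^*}-k)}$ cubes of a much finer scale $R_{l^*}$ (using the analytic-function zero-set Lemma~\ref{lem_ZOAF}); the resulting factor $2^{(k-R_{l^*})(d/p-d/u-d+1)}$ is bounded only when $d/p-d/u-d+1>0$, i.e.\ $1/p-1/u>1-1/d$. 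For cubes with no sign change $|f|=\pm f$ and there is nothing to do; for small cubes $j\ge R$ the paper uses a local homogeneity argument together with the bandlimited reduction (so that $f$ is analytic and derivative norms are quantitatively controlled by $R$); for large cubes $j\le 0$ it uses Triebel's inequality (3.307). In particular, the approximation step should reduce to analytic, bandlimited $f$, not merely to $C^\infty_0$ functions, because the zero-set counting of Lemma~\ref{lem_ZOAF} requires analyticity. The paper also never slices line by line inside each cube: where the classical result is invoked it is the full $d$-dimensional Theorem~\ref{Hist_Res} applied at the \emph{original} smoothness $s$ to an extension $f_{j,m}$ of $f|_{2Q_{j,m}}$, precisely to sidestep the boundary problem you ran into.

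So the proposal does not give a proof. The missing idea is the sign-change/zero-set decomposition of dyadic cubes combined with the $(d-1)$-dimensional covering of $Z(f)$ for a bandlimited (hence analytic) approximant, which is exactly where the hypothesis $1/p-1/u>1-1/d$ is actually spent.
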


\begin{proof}
\textit{Step 1. Preparations and Fatou property.}

This step is similar to step 2 of the proof from proposition \ref{pro_d1s>1}. Let $   f \in \mathbb{E}^{s}_{u,p,q}(\mathbb{R}^{d})  $ and $ \psi \in C^{\infty}_{0}(\R)   $ be a real function with $ \psi(x) = 1    $ for $ |x| \leq 1  $ and $ \psi(x) = 0   $ for $ |x| > 2  $. For $ j \in \mathbb{N}  $ we define $ f_{j} =  \mathcal{F}^{-1}[ \psi(2^{-j} \cdot ) (\mathcal{F}  f) ]$. Then we have $ f_{j} \in  \mathbb{E}^{s}_{u,p,q}(\mathbb{R}^{d})  $ and $ \supp \mathcal{F}  f_{j} \subset B(0,2^{j+1})  $. So because of the Paley-Wiener-Schwarz theorem $ f_{j}  $ is a $ C^{\infty} -  $ function. Moreover it is not difficult to see that for all $ \alpha \in \mathbb{N}_{0}^{d}   $ we have $ D^{\alpha} f_{j} \in L_{\infty}(\R)   $ and $  D^{\alpha} f_{j} \in \mathbb{E}^{s}_{u,p,q}(\mathbb{R}^{d})  $.  We have 
\begin{align*}
\lim_{j \rightarrow \infty} f_{j} = f \qquad \mbox{and} \qquad \lim_{j \rightarrow \infty} |f_{j}| = |f| 
\end{align*} 
with convergence in $ \mathcal{S}'(\mathbb{R}^{d})   $. Let us assume that we know \eqref{d>1_E_eq1} for all functions $ f_{j}  $. Then we can apply the Fatou property, see lemma \ref{l_fatou}, and find
\begin{align*}
\Vert \  |f| \  \vert \mathcal{E}^{s}_{u,p,q}(\mathbb{R}^d)  \Vert & \leq C_{1} \sup_{j \in \mathbb{N}} \Vert \  |f_{j}| \  \vert \mathcal{E}^{s}_{u,p,q}(\mathbb{R}^d)  \Vert \leq C_{2} \sup_{j \in \mathbb{N}} \Vert f_{j}  \vert \mathcal{E}^{s}_{u,p,q}(\mathbb{R}^d)  \Vert \leq C_{3} \Vert f  \vert \mathcal{E}^{s}_{u,p,q}(\mathbb{R}^d)  \Vert .
\end{align*}
In the last step we used lemma \ref{Four_Mult}. Therefore it is enough to prove \eqref{d>1_E_eq1} for analytic functions $ f \in \mathbb{E}^{s}_{u,p,q}(\mathbb{R}^{d})   $ that fulfill $ \supp \mathcal{F}  f \subset B(0,2^{R})   $ for some $ R \in \mathbb{N}   $. The same trick is used in step 1 of the proof of theorem 25.8 in \cite{Tr01}.

\textit{Step 2. Pick out cubes without zeros.}

In what follows $ f \in \mathbb{E}^{s}_{u,p,q}(\mathbb{R}^{d})   $ is an analytic function with $ \supp \mathcal{F}  f \subset B(0,2^{R})   $ for some $ R \in \mathbb{N}   $. At first we use lemma \ref{l_deriv}. Then we find
\begin{align*}
\Vert \  |f| \ \vert \mathcal{E}^{s}_{u,p,q}(\mathbb{R}^{d})  \Vert \leq C_{1} \Vert \  |f| \ \vert \mathcal{E}^{s-1}_{u,p,q}(\mathbb{R}^{d})  \Vert + C_{1} \sum_{i = 1}^{d} \Vert \partial^{1}_{i}  |f| \  \vert \mathcal{E}^{s-1}_{u,p,q}(\mathbb{R}^{d})  \Vert.
\end{align*}
For the first term because of $ 0 < s - 1 < 1   $ we can apply proposition \ref{pro_0<s<1}. When we use lemma \ref{l_deriv} again we get 
\begin{equation}\label{s=1_change}
\Vert \  |f| \ \vert \mathcal{E}^{s-1}_{u,p,q}(\mathbb{R}^{d})  \Vert \leq C_{2} \Vert f \vert \mathcal{E}^{s-1}_{u,p,q}(\mathbb{R}^{d})  \Vert \leq C_{2} \Vert f \vert \mathcal{E}^{s}_{u,p,q}(\mathbb{R}^{d})  \Vert.
\end{equation}
So in what follows we have to deal with $  \Vert \partial^{1}_{i} |f| \   \vert \mathcal{E}^{s-1}_{u,p,q}(\mathbb{R}^{d})  \Vert $ with $ i \in \{ 1, 2, \ldots , d  \}  $. Because of $ 0 < s - 1 < \min ( 1/p , d/u )   $ we can apply proposition \ref{E_MorChar}. Then we obtain
\begin{align*}
\Vert \partial^{1}_{i} |f| \   \vert \mathcal{E}^{s-1}_{u,p,q}(\mathbb{R}^{d})  \Vert \leq C_{3}  \sup_{j \in \mathbb{Z}, m \in \mathbb{Z}^{d}} 2^{j(\frac{d}{p} - \frac{d}{u})} \Vert \partial^{1}_{i} |f| \   \vert F^{s-1}_{p,q}(2 Q_{j,m}) \Vert.
\end{align*}
Now by $ \mathcal{Q}_{SC}(f)   $ we denote the set of all cubes of the form $ 2 Q_{j,m}  $ that have the following property. There exist $ y_{1}, y_{2} \in  2 Q_{j,m}  $ such that we have $ f(y_{1}) < 0 < f(y_{2})   $. That means $ f $ has a sign change in $ 2 Q_{j,m}  $. Using this notation we can write
\begin{align*}
\Vert \partial^{1}_{i} |f| \   \vert \mathcal{E}^{s-1}_{u,p,q}(\mathbb{R}^{d})  \Vert & \leq C_{3}  \sup_{\substack{j \in \mathbb{Z}, m \in \mathbb{Z}^{d} \\ 2 Q_{j,m} \: \in \:  \mathcal{Q}_{SC}(f)  }} 2^{j(\frac{d}{p} - \frac{d}{u})} \Vert \partial^{1}_{i} |f| \   \vert F^{s-1}_{p,q}(2 Q_{j,m}) \Vert \\
& \qquad \qquad \qquad + C_{3} \sup_{\substack{j \in \mathbb{Z}, m \in \mathbb{Z}^{d} \\ 2 Q_{j,m} \: \not \in \:  \mathcal{Q}_{SC}(f)  }} 2^{j(\frac{d}{p} - \frac{d}{u})} \Vert \partial^{1}_{i} |f| \   \vert F^{s-1}_{p,q}(2 Q_{j,m}) \Vert.
\end{align*}
In each cube that fulfills $ 2 Q_{j,m} \: \not \in \:  \mathcal{Q}_{SC}(f)  $ the function $ f $ is either positive everywhere or negative everywhere. Therefore for $ 2 Q_{j,m} \: \not \in \:  \mathcal{Q}_{SC}(f)  $ it does not matter whether we write $ \Vert \partial^{1}_{i} |f| \   \vert F^{s-1}_{p,q}(2 Q_{j,m}) \Vert  $ or $  \Vert \partial^{1}_{i} f   \vert F^{s-1}_{p,q}(2 Q_{j,m}) \Vert  $. Hence a combination of proposition \ref{E_MorChar} and lemma \ref{l_deriv} yields
\begin{align*}
\sup_{\substack{j \in \mathbb{Z}, m \in \mathbb{Z}^{d} \\ 2 Q_{j,m} \: \not \in \:  \mathcal{Q}_{SC}(f)  }} 2^{j(\frac{d}{p} - \frac{d}{u})} \Vert \partial^{1}_{i} |f| \   \vert F^{s-1}_{p,q}(2 Q_{j,m}) \Vert & \leq C_{4} \sup_{j \in \mathbb{Z}, m \in \mathbb{Z}^{d} } 2^{j(\frac{d}{p} - \frac{d}{u})} \Vert \partial^{1}_{i} f    \vert F^{s-1}_{p,q}(2 Q_{j,m}) \Vert \\
& \leq C_{5} \Vert \partial^{1}_{i} f   \vert \mathcal{E}^{s-1}_{u,p,q}(\mathbb{R}^{d})  \Vert  \\
& \leq C_{6} \Vert  f   \vert \mathcal{E}^{s}_{u,p,q}(\mathbb{R}^{d})  \Vert .
\end{align*}
So hereinafter we only have to deal with cubes that fulfill $  2 Q_{j,m} \: \in \:  \mathcal{Q}_{SC}(f)  $. That means we have to investigate the term
\begin{equation}\label{proof_d>1_SC}
\sup_{\substack{j \in \mathbb{Z}, m \in \mathbb{Z}^{d} \\ 2 Q_{j,m} \: \in \:  \mathcal{Q}_{SC}(f)  }} 2^{j(\frac{d}{p} - \frac{d}{u})} \Vert \partial^{1}_{i} |f| \   \vert F^{s-1}_{p,q}(2 Q_{j,m}) \Vert.
\end{equation}

\textit{Step 3. Pick out cubes of middle size.}

To continue we split up the term \eqref{proof_d>1_SC} in the following way.
\begin{align*}
& \sup_{\substack{j \in \mathbb{Z}, m \in \mathbb{Z}^{d} \\ 2 Q_{j,m} \: \in \:  \mathcal{Q}_{SC}(f)  }} 2^{j(\frac{d}{p} - \frac{d}{u})} \Vert \partial^{1}_{i} |f| \   \vert F^{s-1}_{p,q}(2 Q_{j,m}) \Vert \\
& \qquad \qquad \leq \sup_{\substack{j \geq R , m \in \mathbb{Z}^{d} \\ 2 Q_{j,m} \: \in \:  \mathcal{Q}_{SC}(f)  }} \ldots  + \sup_{\substack{ 0 <  j \leq R   , m \in \mathbb{Z}^{d} \\ 2 Q_{j,m} \: \in \:  \mathcal{Q}_{SC}(f)  }} \ldots  + \sup_{\substack{ j \in \mathbb{Z} \setminus \mathbb{N} , m \in \mathbb{Z}^{d} \\ 2 Q_{j,m} \: \in \:  \mathcal{Q}_{SC}(f)  }} \ldots \  .
\end{align*}
Whereas the first and the last term are like we want we should have a closer look at the second one. Therefore let $ k \in \mathbb{N} $ be a natural number with $  0 <  k \leq R  $ and let $  m \in \mathbb{Z}^{d} $. We investigate 
\begin{align*}
2^{k(\frac{d}{p} - \frac{d}{u})} \Vert \partial^{1}_{i} |f| \   \vert F^{s-1}_{p,q}(2 Q_{k,m}) \Vert.
\end{align*}
Therefore we define the set $ Z(f) = \{ x \in \R  : f(x) = 0  \} $. In addition for each $ l \in \mathbb{N}  $ we define numbers $ R_{l} $ by $ R_{l} = R + l   $. Then of course for all $ l \in \mathbb{N}  $ we have $ R_{l} > R   $ and we observe $ \lim_{l \rightarrow \infty}  R_{l} = \infty    $. Using this numbers for all $ l \in \mathbb{N}  $ we define sets
\begin{equation}\label{d>1_FAfZ}
ZF_{l}(f) = \{ x \in \R  : \dist(x,Z(f))  \leq \frac{1}{100   d} 2^{-R_{l}}   \}.
\end{equation}
Then of course we have $ 2 Q_{k,m} = [ 2 Q_{k,m} \cap ZF_{l}(f) ] \cup  [ 2 Q_{k,m} \cap ZF_{l}(f)^{c} ]      $.
Moreover since $ f $ is real valued we can define the sets
\begin{equation}\label{d>1_f+}
F^{+}(f) = \{ x \in \R : f(x) > 0   \} \qquad \mbox{and} \qquad F^{-}(f) = \{ x \in \R : f(x) < 0   \}.
\end{equation}
Now for all $ x \in ZF_{l}(f)^{c}   $ we can observe $ f(x) \not = 0   $. So it is possible to write $ ZF_{l}(f)^{c} =   [ ZF_{l}(f)^{c} \cap F^{+}(f) ] \cup  [ ZF_{l}(f)^{c} \cap F^{-}(f) ]      $. Consequently for all $ l \in \mathbb{N}  $ we obtain the disjoint decomposition
\begin{align*}
2 Q_{k,m} & = [ 2 Q_{k,m} \cap ZF_{l}(f) ] \cup  [ 2 Q_{k,m} \cap [ ZF_{l}(f)^{c} \cap F^{+}(f) ] ] \cup [ 2 Q_{k,m} \cap [ ZF_{l}(f)^{c} \cap F^{-}(f) ] ] \\
& = A_{1}^{l} \cup A_{2}^{l} \cup A_{3}^{l}.
\end{align*} 
(Notice that it is not really necessary to work with a disjoint decomposition here. Therefore it is also possible to deal with open sets that are overlapping a bit.) Now let $ l^{*} \in \mathbb{N}  $ be a fixed large natural number that will be specified later. Then we find
\begin{align*}
\Vert \partial^{1}_{i} |f| \   \vert F^{s-1}_{p,q}(2 Q_{k,m}) \Vert \leq \Vert \partial^{1}_{i} |f| \   \vert F^{s-1}_{p,q}(A_{1}^{l^{*}}) \Vert + \Vert \partial^{1}_{i} |f| \   \vert F^{s-1}_{p,q}(A_{2}^{l^{*}}) \Vert  + \Vert \partial^{1}_{i} |f| \   \vert F^{s-1}_{p,q}(A_{3}^{l^{*}}) \Vert .
\end{align*} 
Notice that for all $ x \in A_{2}^{l^{*}}  $ we have $ f(x) > 0   $. Therefore since $ A_{2}^{l^{*}} \subset 2 Q_{k,m}  $ we can write
\begin{align*}
\Vert \partial^{1}_{i} |f| \   \vert F^{s-1}_{p,q}(A_{2}^{l^{*}}) \Vert = \Vert \partial^{1}_{i} f   \vert F^{s-1}_{p,q}(A_{2}^{l^{*}}) \Vert \leq \Vert \partial^{1}_{i} f   \vert F^{s-1}_{p,q}(2 Q_{k,m}) \Vert.
\end{align*}
On the other hand for $ x \in A_{3}^{l^{*}}   $ we observe $ f(x) < 0   $. So because of $ A_{3}^{l^{*}} \subset 2 Q_{k,m}  $ we find
\begin{align*}
\Vert \partial^{1}_{i} |f| \   \vert F^{s-1}_{p,q}(A_{3}^{l^{*}}) \Vert = \Vert \partial^{1}_{i} f   \vert F^{s-1}_{p,q}(A_{3}^{l^{*}}) \Vert \leq \Vert \partial^{1}_{i} f   \vert F^{s-1}_{p,q}(2 Q_{k,m}) \Vert.
\end{align*}
Next because of $ s - 1 < \min ( 1/p , d/u )   $ we can apply proposition \ref{E_MorChar}. Consequently when we use lemma \ref{l_deriv} for all $ 0 < k \leq R   $ we get
\begin{align*}
& 2^{k(\frac{d}{p} - \frac{d}{u})} \Vert \partial^{1}_{i} |f| \   \vert F^{s-1}_{p,q}(2 Q_{k,m}) \Vert \\
& \qquad \leq 2^{k(\frac{d}{p} - \frac{d}{u})} \Vert \partial^{1}_{i} |f| \   \vert F^{s-1}_{p,q}(A_{1}^{l^{*}}) \Vert + C_{1} 2^{k(\frac{d}{p} - \frac{d}{u})} \Vert \partial^{1}_{i} f   \vert F^{s-1}_{p,q}(2 Q_{k,m}) \Vert \\
& \qquad \leq 2^{k(\frac{d}{p} - \frac{d}{u})} \Vert \partial^{1}_{i} |f| \   \vert F^{s-1}_{p,q}(A_{1}^{l^{*}}) \Vert + C_{2}  \Vert \partial^{1}_{i} f   \vert \mathcal{E}^{s-1}_{u,p,q}(\R) \Vert \\
& \qquad \leq 2^{k(\frac{d}{p} - \frac{d}{u})} \Vert \partial^{1}_{i} |f| \   \vert F^{s-1}_{p,q}(A_{1}^{l^{*}}) \Vert + C_{2}  \Vert f   \vert \mathcal{E}^{s}_{u,p,q}(\R) \Vert .
\end{align*}
Next we observe
\begin{align*}
\lim_{l \rightarrow \infty} 2 Q_{k,m} \cap ZF_{l}(f) =   2 Q_{k,m} \cap Z(f)
\end{align*} 
as sets. We need some knowledge concerning the zero set $ Z(f)  $ of real analytic functions $ f : \mathbb{R}^{d} \rightarrow \mathbb{R}   $ with $  f \not = 0  $. For this reason we collected everything we need in the appendix at the end of this paper, see lemma \ref{lem_ZOAF}. Let $ \sigma = \sigma(p,u,d) > 0  $ be a small number that will be specified later. Then from (iv) and (v) in lemma \ref{lem_ZOAF} in the appendix we learn the following. For each real analytic $ f $ with $ f \not = 0  $ and each cube $  2 Q_{k,m}  $ we find a maybe very large $ l^{*} = l^{*}(f,R,p,u,d) \in \mathbb{N}   $ such that $ 2^{R_{l^{*}}-k}   $ is much larger than $  R_{l^{*}}  $ and  $ R_{l^{*}} \  2^{ \sigma (k- R_{l^{*}} )} \leq 1  $ and such that the set $ 2 Q_{k,m} \cap ZF_{l^{*}}(f)   $ can be covered by $c(d) R_{l^{*}}  2^{(d-1)(R_{l^{*}}-k)} $ cubes of the form $ 2 Q_{R_{l^{*}},n}   $ with appropriate $ n \in  \mathbb{Z}^{d}  $. Here $ c(d)  $ only depends on $d$.  We used $ \lim_{x \rightarrow \infty}  x   2^{ \sigma (k- x )} = 0   $. Notice that under the given assumptions we always can choose $ l^{*} = l^{*}(f,R,p,u,d) $ independent of $ k $ and $ m $, see (v) in lemma \ref{lem_ZOAF}. Moreover we always can ensure that $  l^{*} < \infty  $, see (iv) in lemma \ref{lem_ZOAF} and its proof. Using such a natural number $  l^{*} $ and the associated covering we obtain
\begin{align*}
& 2^{k(\frac{d}{p} - \frac{d}{u})} \Vert \partial^{1}_{i} |f| \   \vert F^{s-1}_{p,q}(2 Q_{k,m} \cap ZF_{l^{*}}(f)) \Vert  \\
& \qquad \qquad \leq 2^{k(\frac{d}{p} - \frac{d}{u})} \sum_{n} \Vert \partial^{1}_{i} |f| \   \vert F^{s-1}_{p,q}(2 Q_{R_{l^{*}},n}) \Vert \\
& \qquad \qquad \leq C_{3} 2^{k(\frac{d}{p} - \frac{d}{u})}  R_{l^{*}} \  2^{(d-1)(R_{l^{*}}-k)} 2^{- R_{l^{*}} (\frac{d}{p} - \frac{d}{u})}  \sup_{n } 2^{R_{l^{*}}  (\frac{d}{p} - \frac{d}{u})}  \Vert \partial^{1}_{i} |f| \   \vert F^{s-1}_{p,q}(2 Q_{R_{l^{*}},n}) \Vert \\
& \qquad \qquad \leq C_{3} R_{l^{*}} \ 2^{(k- R_{l^{*}} )(\frac{d}{p} - \frac{d}{u} - d + 1)}  \sup_{\substack{j \geq R , m \in \mathbb{Z}^{d} \\ 2 Q_{j,m} \: \in \:  \mathcal{Q}_{SC}(f)  }} 2^{j(\frac{d}{p} - \frac{d}{u})}  \Vert \partial^{1}_{i} |f| \   \vert F^{s-1}_{p,q}(2 Q_{j,m}) \Vert . 
\end{align*}
In the last step we used $ R_{l^{*}} = R +  l^{*} > R  $. Now we apply the assumption $ 1/p - 1/u > 1 - 1/d   $. Because of $ d/p - d/u - d + 1  > 0    $ there exists a $ \sigma > 0    $ such that $  d/p - d/u - d + 1 - \sigma  > 0     $. Then since $ k \leq R < R_{l^{*}}  $ we get
\begin{align*}
R_{l^{*}} \ 2^{(k- R_{l^{*}} )(\frac{d}{p} - \frac{d}{u} - d + 1)} = R_{l^{*}} \  2^{ \sigma (k- R_{l^{*}} )}  2^{(k- R_{l^{*}} )(\frac{d}{p} - \frac{d}{u} - d + 1 - \sigma )} \leq 1 .
\end{align*}
Therefore we obtain
\begin{align*}
2^{k(\frac{d}{p} - \frac{d}{u})} \Vert \partial^{1}_{i} |f| \   \vert F^{s-1}_{p,q}(2 Q_{k,m} \cap ZF_{l^{*}}(f)) \Vert \leq C_{3}   \sup_{\substack{j \geq R , m \in \mathbb{Z}^{d} \\ 2 Q_{j,m} \: \in \:  \mathcal{Q}_{SC}(f)  }} 2^{j(\frac{d}{p} - \frac{d}{u})}  \Vert \partial^{1}_{i} |f| \   \vert F^{s-1}_{p,q}(2 Q_{j,m}) \Vert . 
\end{align*} 
Notice that the right-hand side is independent of $ l^{*}  $. So we do not have to deal with this parameter in what follows. All in all we find
\begin{align*}
& \sup_{\substack{j \in \mathbb{Z}, m \in \mathbb{Z}^{d} \\ 2 Q_{j,m} \: \in \:  \mathcal{Q}_{SC}(f)  }} 2^{j(\frac{d}{p} - \frac{d}{u})} \Vert \partial^{1}_{i} |f| \   \vert F^{s-1}_{p,q}(2 Q_{j,m}) \Vert \\
& \qquad \qquad \leq C_{4} \sup_{\substack{j \geq R , m \in \mathbb{Z}^{d} \\ 2 Q_{j,m} \: \in \:  \mathcal{Q}_{SC}(f)  }} \ldots  + C_{4} \sup_{\substack{ j \in \mathbb{Z} \setminus \mathbb{N} , m \in \mathbb{Z}^{d} \\ 2 Q_{j,m} \: \in \:  \mathcal{Q}_{SC}(f)  }} \ldots \ + C_{4} \Vert f   \vert \mathcal{E}^{s}_{u,p,q}(\R) \Vert   .
\end{align*}
When we define the set $ \mathbb{Z}(R) = \mathbb{Z} \setminus \mathbb{N} \  \cup \ \{ j \in \mathbb{N} : j \geq R   \} $ that means in what follows we only have to deal with the term 
\begin{equation}\label{split_of_2}
\sup_{\substack{ j \in \mathbb{Z}(R) , m \in \mathbb{Z}^{d} \\ 2 Q_{j,m} \: \in \:  \mathcal{Q}_{SC}(f)  }} 2^{j(\frac{d}{p} - \frac{d}{u})} \Vert \partial^{1}_{i} |f| \   \vert F^{s-1}_{p,q}(2 Q_{j,m}) \Vert .
\end{equation}

\textit{Step 4. Apply the result for the original Triebel-Lizorkin spaces.}

Now for all $ j \in \mathbb{Z}  $ and all $ m \in \mathbb{Z}^{d}  $ there exists a function $   f_{j,m} \in \mathbb{F}^{s}_{p,q}(\mathbb{R}^{d})  $ with $ f_{j,m}(x) = f(x)   $ for all $ x \in 2 Q_{j,m} $ such that 
\begin{align*}
 \Vert   f_{j,m}    \vert F^{s}_{p,q}(\R)   \Vert \leq 2 \Vert f \vert F^{s}_{p,q}(2 Q_{j,m}) \Vert.
\end{align*} 
Because of the definition of the norm $ \Vert \cdot \vert F^{s-1}_{p,q}(2 Q_{j,m}) \Vert  $ we can write
\begin{align*}
 \Vert \partial^{1}_{i} |f| \   \vert F^{s-1}_{p,q}(2 Q_{j,m}) \Vert & =  \Vert \partial^{1}_{i} |  f_{j,m} | \   \vert F^{s-1}_{p,q}(2 Q_{j,m}) \Vert  \leq    \Vert \partial^{1}_{i} |  f_{j,m} | \   \vert F^{s-1}_{p,q}(\R) \Vert.
\end{align*}
Next we can use a version of lemma \ref{l_deriv} for the original Triebel-Lizorkin spaces. Moreover because of $ 1 \leq p < \infty  $, $ 1 < s < 1 + 1/p   $ and the fact that $ f_{j,m} \in \mathbb{F}^{s}_{p,q}(\mathbb{R}^{d})   $ is real-valued we can apply the well-known theorem \ref{Hist_Res}. This leads to
\begin{align*}
\Vert \partial^{1}_{i} |f| \   \vert F^{s-1}_{p,q}(2 Q_{j,m}) \Vert & \leq C_{1}  \Vert  \  | f_{j,m}  |  \    \vert F^{s}_{p,q}(\R)   \Vert  \leq C_{2}   \Vert    f_{j,m}     \vert F^{s}_{p,q}(\R)   \Vert  \leq C_{3}  \Vert f \vert F^{s}_{p,q}(2 Q_{j,m}) \Vert.
\end{align*}
When we use this inequality for \eqref{split_of_2} we find
\begin{align*}
\sup_{\substack{ j \in \mathbb{Z}(R) , m \in \mathbb{Z}^{d} \\ 2 Q_{j,m} \: \in \:  \mathcal{Q}_{SC}(f)  }} 2^{j(\frac{d}{p} - \frac{d}{u})} \Vert \partial^{1}_{i} |f| \   \vert F^{s-1}_{p,q}(2 Q_{j,m}) \Vert \leq C_{4} \sup_{\substack{ j \in \mathbb{Z}(R) , m \in \mathbb{Z}^{d} \\ 2 Q_{j,m} \: \in \:  \mathcal{Q}_{SC}(f)  }} 2^{j(\frac{d}{p} - \frac{d}{u})} \Vert f \vert F^{s}_{p,q}(2 Q_{j,m}) \Vert.
\end{align*}
Hereafter we have to distinguish between small cubes and large cubes. Therefore we use the definition of the set $  \mathbb{Z}(R)  $ to write
\begin{align*}
& \sup_{\substack{ j \in \mathbb{Z}(R) , m \in \mathbb{Z}^{d} \\ 2 Q_{j,m} \: \in \:  \mathcal{Q}_{SC}(f)  }} 2^{j(\frac{d}{p} - \frac{d}{u})} \Vert f \vert F^{s}_{p,q}(2 Q_{j,m}) \Vert \\
& \qquad \leq \sup_{\substack{j \geq R , m \in \mathbb{Z}^{d} \\ 2 Q_{j,m} \: \in \:  \mathcal{Q}_{SC}(f)  }} 2^{j(\frac{d}{p} - \frac{d}{u})} \Vert f \vert F^{s}_{p,q}(2 Q_{j,m})   \Vert + \sup_{\substack{j \in \mathbb{Z} \setminus \mathbb{N} , \; m \in \mathbb{Z}^{d} \\ 2 Q_{j,m} \: \in \:  \mathcal{Q}_{SC}(f)  }} 2^{j(\frac{d}{p} - \frac{d}{u})} \Vert f \vert F^{s}_{p,q}(2 Q_{j,m}) \Vert.
\end{align*}
To continue the proof we have to deal with both terms separately. 

\textit{Step 5. Complete the proof for cubes of large and middle size.}

Here we have to deal with
\begin{align*}
\sup_{\substack{j \in \mathbb{Z} \setminus \mathbb{N} , \; m \in \mathbb{Z}^{d} \\ 2 Q_{j,m} \: \in \:  \mathcal{Q}_{SC}(f)  }} 2^{j(\frac{d}{p} - \frac{d}{u})} \Vert f \vert F^{s}_{p,q}(2 Q_{j,m}) \Vert.
\end{align*}
For that purpose we use formula (3.307) in \cite{Tr14}, see also proposition 4.21. in \cite{Tr08} and its proof. There we learn that for $ j \in \mathbb{Z} \setminus \mathbb{N}   $ and $  m \in \mathbb{Z}^{d}   $ there exists a general constant $ C_{1} $ independent of $ j $ and $ m $ such that
\begin{equation}\label{3.307}
\Vert f \vert F^{s}_{p,q}(2 Q_{j,m}) \Vert  \leq C_{1}  \Vert f \vert F^{s-1}_{p,q}(2 Q_{j,m}) \Vert + C_{1} \sum_{k = 1}^{d}   \Vert \partial^{1}_{k} f \vert F^{s-1}_{p,q}(2 Q_{j,m}) \Vert.
\end{equation}
Therefore we obtain
\begin{align*}
& \sup_{\substack{j \in \mathbb{Z} \setminus \mathbb{N}, \; m \in \mathbb{Z}^{d} \\ 2 Q_{j,m} \: \in \:  \mathcal{Q}_{SC}(f)  }}  2^{j(\frac{d}{p} - \frac{d}{u})}  \Vert f \vert F^{s}_{p,q}(2 Q_{j,m}) \Vert \\
& \qquad \leq C_{1} \sup_{j \in \mathbb{Z}, m \in \mathbb{Z}^{d} }  2^{j(\frac{d}{p} - \frac{d}{u})}  \Vert f \vert F^{s-1}_{p,q}(2 Q_{j,m}) \Vert + C_{1} \sum_{k = 1}^{d} \sup_{j \in \mathbb{Z}, m \in \mathbb{Z}^{d} }  2^{j(\frac{d}{p} - \frac{d}{u})}  \Vert \partial^{1}_{k} f \vert F^{s-1}_{p,q}(2 Q_{j,m}) \Vert.
\end{align*}
Now because of $  s - 1 < \min ( 1/p , d/u )   $ we can apply proposition \ref{E_MorChar} again. When we use lemma \ref{l_deriv} we find 
\begin{align*}
\sup_{\substack{j \in \mathbb{Z} \setminus \mathbb{N} , \; m \in \mathbb{Z}^{d} \\ 2 Q_{j,m} \: \in \:  \mathcal{Q}_{SC}(f)  }}  2^{j(\frac{d}{p} - \frac{d}{u})}  \Vert f \vert F^{s}_{p,q}(2 Q_{j,m}) \Vert & \leq C_{2}   \Vert f \vert \mathcal{E}^{s-1}_{u,p,q}(\mathbb{R}^{d})  \Vert + C_{2} \sum_{k = 1}^{d}   \Vert \partial^{1}_{k} f \vert \mathcal{E}^{s-1}_{u,p,q}(\mathbb{R}^{d})  \Vert \\
& \leq C_{3} \Vert f \vert \mathcal{E}^{s}_{u,p,q}(\mathbb{R}^{d})  \Vert. 
\end{align*}
So this step of the proof is complete. Notice that formula \eqref{3.307} also holds for $ j \in \mathbb{N}   $. But then the constant $ C_{1}  $ depends on $ j $ and tends to infinity if $ j $ tends to infinity. Whereas this is not a problem for small $ j \in \mathbb{N}  $ for the case of large $ j \in \mathbb{N}  $ we have go another way. 

\textit{Step 6. The case of small cubes.}

\textit{Substep 6.1. Construct auxiliary functions. }

Here we have to investigate the term 
\begin{equation}\label{pr_d>1_C7}
\sup_{\substack{j \geq R, m \in \mathbb{Z}^{d} \\ 2 Q_{j,m} \: \in \:  \mathcal{Q}_{SC}(f)  }}  2^{j(\frac{d}{p} - \frac{d}{u})}  \Vert f \vert F^{s}_{p,q}(2 Q_{j,m}) \Vert .
\end{equation}
Let $ j \in \mathbb{N}_{0} \cup \left\{ -1 \right\}  $ and $ c \geq 1  $. Since \eqref{pr_d>1_C7} is invariant under translation in what follows we can work with cubes denoted by $ c Q_{j}  $ that have side-length $ c \cdot 2^{-j}  $ and the center at the origin. Let $ h \in C^{\infty}_{0}(\R)  $ be a smooth function with $ h(x) = 1    $ for all $ x \in 2  Q_{0}  $ and $  h(x) = 0    $ for all $  x \not \in  8  Q_{0}  $ that fulfills $  \Vert h \vert C^{2}(\R) \Vert \leq C_{1}   $ for some constant $  C_{1} > 0  $. For $ j \in \mathbb{N}_{0}   $ we define
\begin{equation}\label{aux_fun}
h_{j}(x) = h(2^j x) \qquad \qquad \mbox{and} \qquad \qquad g_{j}(x) = f(x) \cdot h_{j}(x).
\end{equation}
In what follows we will investigate the properties of the functions $ g_{j} $.

\begin{itemize}
\item[(i)] For all $ x \in 2 Q_{j}  $ we have $  g_{j}(x) = f(x)  $. Moreover we find $ \supp g_{j} \subset 8 Q_{j}  $.

\item[(ii)] The functions $ g_{j} $ are bounded. More precisely we find
\begin{align*}
\Vert g_{j} \vert  L_{\infty}(\R) \Vert \leq \sup_{x \in 8 Q_{j}} | f(x)  | \  | h_{j}(x) | \leq  C_{1} \Vert f \vert  L_{\infty}(8 Q_{j}) \Vert.
\end{align*}

\item[(iii)] The functions $ g_{j}  $ are smooth. For the first derivatives with $ | \alpha | = 1   $ we find
\begin{align*}
\Vert D^{\alpha} g_{j} \vert  L_{\infty}(\R) \Vert & \leq \sup_{x \in 8 Q_{j}} \Big ( | D^{\alpha} f(x) | \  | h_{j}(x) | +  | D^{\alpha} h_{j}(x)|  \  | f(x)    | \Big ) \\
& \leq   C_{1} \Vert f  \vert C^{1}(8 Q_{j}) \Vert   + C_{1}  2^{j}   \  \sup_{x \in 8 Q_{j}}  | f(x)    | .
\end{align*}
In view of \eqref{pr_d>1_C7} we can assume $  2 Q_{j} \in \:  \mathcal{Q}_{SC}(f)  $. Therefore there exists a $ z \in 2 Q_{j}  $ such that $ f(z) = 0   $. Because of $  f   $ is smooth for each $ y \in 8 Q_{j}   $ by the mean value theorem we find that there exists a constant $  C_{2}  $ that is independent of $ j $ and $ f $ such that
\begin{equation}\label{step4_(iii)}
\vert f(y) \vert \leq  C_{2} \Vert f  \vert C^{1}(8 Q_{j}) \Vert \; \vert y - z   \vert \leq  C_{2} \Vert f  \vert C^{1}(8 Q_{j}) \Vert   2 d \cdot 2^{-j}. 
\end{equation}
So we can conclude $ \Vert D^{\alpha} g_{j} \vert  L_{\infty}(\R) \Vert   \leq   C_{3} \Vert f  \vert C^{1}(8 Q_{j}) \Vert $.

\item[(iv)] For the second derivatives with $ | \alpha | = 1   $ and    $ | \beta | = 1   $ when we use $ 2 Q_{j} \in \:  \mathcal{Q}_{SC}(f)   $ again we observe
\begin{align*}
& \Vert D^{\alpha} D^{\beta} g_{j} \vert  L_{\infty}(\R) \Vert \\
&  \qquad   \qquad \leq   C_{1} \Vert f  \vert C^{2}(8 Q_{j}) \Vert  +  C_{4} 2^{j} \Vert f  \vert C^{1}(8 Q_{j}) \Vert    + \sup_{x \in 8 Q_{j}}  | D^{\alpha} D^{\beta}  h_{j}(x)|  \  | f(x)    | \\
&  \qquad   \qquad \leq   C_{1} \Vert f  \vert C^{2}(8 Q_{j}) \Vert  +  C_{4} 2^{j} \Vert f  \vert C^{1}(8 Q_{j}) \Vert    +   C_{5} 2^{2j} 2^{-j}    \Vert f  \vert C^{1}(8 Q_{j}) \Vert   \\
&  \qquad   \qquad \leq   C_{6} \max(\Vert f  \vert C^{2}(8 Q_{j}) \Vert  , 2^{j} \Vert f  \vert C^{1}(8 Q_{j}) \Vert ).
\end{align*}

\end{itemize}

In what follows it will turn out that the properties (i) - (iv) of the functions $ g_{j} $ are exactly what we need to continue our proof.

\textit{Substep 6.2. Use the smoothness properties of $f$ and $g_{j}$. }

Now we use the functions $ g_{j} $ we have constructed before to deal with the term \eqref{pr_d>1_C7}. We want to apply remark 2.12. from \cite{Tr08}. From there we learn the following. Let $ U_{\lambda} = \{ x \in \R : | x_{r} |  < \lambda \}   $ with $ 0 < \lambda \leq 1   $. Then for $ s > 0$, $ 1 \leq p < \infty $ and $ 1 \leq q \leq \infty  $ we find $ \Vert  f ( \lambda \cdot ) \vert  F^{s}_{p,q}(\R)  \Vert \sim \lambda^{s - d/p} \Vert  f  \vert  F^{s}_{p,q}(\R)  \Vert $ for $ f \in F^{s}_{p,q}(\R)  $ with  $  \supp f \subset U_{\lambda} $. This is the so called local homogeneity property. When we use it in our case because of $ \supp  g_{j} \subset 8 Q_{j}   $ and the definition of the norm $  \Vert  \cdot \vert F^{s}_{p,q}(8 Q_{j}) \Vert  $ we get
\begin{align*}
 2^{j(\frac{d}{p} - \frac{d}{u})}  \Vert f \vert F^{s}_{p,q}(2 Q_{j}) \Vert & \leq 2^{j(\frac{d}{p} - \frac{d}{u})}  \Vert  g_{j} \vert F^{s}_{p,q}(8 Q_{j}) \Vert  \leq C_{1} 2^{j(s - \frac{d}{u})}   \Vert  g_{j}(2^{-j+2} \cdot ) \vert F^{s}_{p,q}( Q_{-1}) \Vert. 
\end{align*}
Because of $ p \geq 1 $, $ q \geq 1 $ and $ s > 0 $ it is possible to describe the space $ \mathbb{F}^{s}_{p,q}( Q_{-1})  $ in terms of differences, see theorem 1.118. in \cite{Tr06}. Then we obtain
\begin{align*}
& \Vert g_{j}(2^{-j+2} \cdot ) \vert F^{s}_{p,q}(Q_{-1}) \Vert \\
& \leq C_{2} \Vert g_{j}(2^{-j+2} \cdot ) \vert L_{p}(Q_{-1}) \Vert + C_{2} \Big \Vert \Big ( \int_{0}^{1} t^{-sq} \Big ( t^{-d} \int_{ V^{2}} \vert \Delta^{2}_{h} g_{j}(2^{-j+2} \cdot ) \vert dh   \Big )^{q}  \frac{dt}{t} \Big )^{\frac{1}{q}} \Big \vert L_{p}(Q_{-1}) \Big \Vert
\end{align*}
with $ V^{2} = V^{2}(x,t) = \left\{  h \in \R   : \vert h \vert < t \  \mbox{and} \ x + \tau h \in Q_{-1} \ \mbox{for} \ 0 \leq \tau \leq 2    \right\}  $. Notice that the constant $ C_{2}  $ is independent of $  j $. Now on the one hand thanks to (ii) from our construction of $g_{j}$ and a transformation of the coordinates we find
\begin{align*}
2^{j(s - \frac{d}{u})}  \Vert g_{j}(2^{-j+2} \cdot ) \vert L_{p}(Q_{-1}) \Vert & \leq C_{3} 2^{j(\frac{d}{p} - \frac{d}{u})} 2^{js} \Vert g_{j} \vert L_{p}(8 Q_{j}) \Vert  \leq C_{4} 2^{ - j \frac{d}{u}} 2^{js} \Vert f \vert  L_{\infty}(8 Q_{j}) \Vert.
\end{align*}
Like before, see formula \eqref{step4_(iii)}, for all $ y \in 8 Q_{j} $ we observe $ \vert f(y) \vert \leq  C_{5} \Vert f  \vert C^{1}(8 Q_{j}) \Vert   2 d \cdot 2^{-j}$. Hence we conclude
\begin{align*}
2^{j(s - \frac{d}{u})}  \Vert g_{j}(2^{-j+2} \cdot ) \vert L_{p}(Q_{-1}) \Vert & \leq C_{6} \Vert f  \vert C^{1}(8 Q_{j}) \Vert 2^{ - j \frac{d}{u}} 2^{js} 2^{-j}  = C_{6} \Vert f  \vert C^{1}(8 Q_{j}) \Vert  2^{j(s- 1- \frac{d}{u} )}.
\end{align*}
Now we have to deal with the term that contains differences.  When we use the well-known formula \eqref{diff_abl_c} in combination with (iv) from our construction of $g_{j}$ thanks to some transformations of the coordinates we obtain
\begin{align*}
& 2^{j(\frac{d}{p} - \frac{d}{u})} 2^{-j(\frac{d}{p} - s)} \Big \Vert \Big ( \int_{0}^{1} t^{-sq} \Big ( t^{-d} \int_{ V^{2}(x,t)} \vert ( \Delta^{2}_{h} g_{j}(2^{-j+2} \cdot ))(x) \vert dh   \Big )^{q}  \frac{dt}{t} \Big )^{\frac{1}{q}} \Big \vert L_{p}(Q_{-1}) \Big \Vert \\
& \qquad \leq C_{7} 2^{j(\frac{d}{p} - \frac{d}{u})}  \Big \Vert \Big ( \int_{0}^{c 2^{-j}} t^{-sq} \Big ( t^{-d} \int_{ |h| < t} \vert (\Delta^{2}_{h} g_{j})(x) \vert dh   \Big )^{q}  \frac{dt}{t} \Big )^{\frac{1}{q}} \Big \vert L_{p}(8 Q_{j}) \Big \Vert \\
& \qquad \leq C_{8}  \max(\Vert f  \vert C^{2}(8 Q_{j}) \Vert  , 2^{j} \Vert f  \vert C^{1}(8 Q_{j}) \Vert ) \;   2^{j(\frac{d}{p} - \frac{d}{u})}   \Big \Vert \Big ( \int_{0}^{c 2^{-j}} t^{-sq+2q}   \frac{dt}{t} \Big )^{\frac{1}{q}} \Big \vert L_{p}(8 Q_{j}) \Big \Vert \\
& \qquad \leq C_{9}   \max(\Vert f  \vert C^{2}(8 Q_{j}) \Vert  , 2^{j} \Vert f  \vert C^{1}(8 Q_{j}) \Vert ) \;  2^{j(\frac{d}{p} - \frac{d}{u})}  2^{-j(2-s)}  2^{-j \frac{d}{p}}  \\
& \qquad = C_{10}   \max( 2^{-j} \Vert f  \vert C^{2}(8 Q_{j}) \Vert  ,  \Vert f  \vert C^{1}(8 Q_{j}) \Vert ) \;   2^{j(s- 1- \frac{d}{u} )}. 
\end{align*}
Up to know we proved that for all $ j \in \mathbb{N}, m \in \mathbb{Z}^d   $ and all cubes $ 2 Q_{j,m} \: \in \:  \mathcal{Q}_{SC}(f)   $ we have
\begin{equation}\label{pr_d>1_j0}
 2^{j(\frac{d}{p} - \frac{d}{u})}  \Vert f \vert F^{s}_{p,q}(2 Q_{j,m}) \Vert \leq C_{11}    \max( 2^{-j} \Vert f  \vert C^{2}(\R) \Vert  ,  \Vert f  \vert C^{1}(\R) \Vert ) \;   2^{j(s- 1- \frac{d}{u} )} .
\end{equation}
Notice that we have $ s < 1 + d/u   $ and so $ s - 1 - d/u < 0   $. So for large $ j \in \mathbb{N}  $ that tend to $ \infty $ the right hand side tends to zero. In what follows we will make this more precise.

\textit{Substep 6.3. Use that $ \mathcal{F}f  $ has compact support.  }

Now we have to deal with the terms $ \Vert f  \vert C^{1}(\R) \Vert  $ and $ \Vert f  \vert C^{2}(\R) \Vert  $. Let us start with the first one. Let $ | \alpha | \leq 1   $ and $(\varphi_k)_{k\in \N_0 }$ be a smooth dyadic decomposition of the unity. Because of $   f \in \mathbb{E}^{s}_{u,p,q}(\mathbb{R}^{d})  $ is smooth we find
\begin{align*}
\Vert  D^{\alpha}  f  \vert  L_{\infty}(\R) \Vert & = \Big \Vert \sum_{k = 0}^{\infty}  \mathcal{F}^{-1}[\varphi_{k} \mathcal{F} ( D^{\alpha}  f) ]  \Big \vert    L_{\infty}(\R) \Big \Vert  \leq \sum_{k = 0}^{\infty}   \Vert   \mathcal{F}^{-1}[\varphi_{k} \mathcal{F} ( D^{\alpha}  f) ]   \vert    L_{\infty}(\R)  \Vert. 
\end{align*}
Now since $ \supp \varphi_{k} \mathcal{F} ( D^{\alpha}  f) \subset B(0,2^{k+1})   $ we can use formula (7) from the proof of corollary 2.3 in \cite{SawTan}, see also proposition 3.7 in \cite{SawTan}. Then we obtain 
\begin{align*}
\Vert  D^{\alpha}  f  \vert  L_{\infty}(\R) \Vert & \leq C_{1} \sum_{k = 0}^{\infty} 2^{k \frac{d}{u}}   \Vert   \mathcal{F}^{-1}[\varphi_{k} \mathcal{F} ( D^{\alpha}  f) ]   \vert    \mathcal{M}^{u}_{p}(\R)  \Vert.
\end{align*}
Recall that we have $ \supp \mathcal{F} f \subset B(0,2^{R})   $ for some $ R \in \mathbb{N}   $. By standard arguments we find that this property carries over to $  \mathcal{F} ( D^{\alpha}  f)  $. Because of this for $ \sigma > 0  $ we also can write
\begin{align*}
\Vert  D^{\alpha}  f  \vert  L_{\infty}(\R) \Vert & \leq C_{1} \sum_{k = 0}^{R + 1} 2^{k (\frac{d}{u}-s+1)} 2^{k \sigma} 2^{k(s-1-\sigma)}   \Vert   \mathcal{F}^{-1}[\varphi_{k} \mathcal{F} ( D^{\alpha}  f) ]   \vert    \mathcal{M}^{u}_{p}(\R)  \Vert \\
& \leq C_{2} 2^{R (\frac{d}{u}-s+1)} 2^{R \sigma} \sum_{k = 0}^{\infty} 2^{k(s-1-\sigma)}   \Vert   \mathcal{F}^{-1}[\varphi_{k} \mathcal{F} ( D^{\alpha}  f) ]   \vert    \mathcal{M}^{u}_{p}(\R)  \Vert \\
& \leq C_{3} 2^{R (\frac{d}{u}-s+1)} 2^{R \sigma} \Vert D^{\alpha}  f \vert \mathcal{N}^{s-1-\sigma}_{u,p,1}(\R)  \Vert \\
& \leq C_{4} 2^{R (\frac{d}{u}-s+1)} 2^{R \sigma} \Vert D^{\alpha}  f \vert \mathcal{E}^{s-1}_{u,p,q}(\R)  \Vert .
\end{align*}
In the last steps we used the definition of the Besov-Morrey spaces and proposition 3.6 from \cite{SawTan}. Let us put $ \sigma = 1/R   $. Then since $   | \alpha | \leq 1   $ with lemma \ref{l_deriv} we get
\begin{equation}\label{subs_4.3.a=1}
\Vert  D^{\alpha}  f  \vert  L_{\infty}(\R) \Vert \leq  C_{5} 2^{R (\frac{d}{u}-s+1)}  \Vert   f \vert \mathcal{E}^{s}_{u,p,q}(\R)  \Vert .
\end{equation}
Next we have to deal with $ \Vert f  \vert C^{2}(\R) \Vert   $. Therefore we have to investigate $ \Vert  D^{\beta}  f  \vert  L_{\infty}(\R) \Vert    $ with $ \vert \beta \vert = 2   $. Since $ f $ is smooth this can be done in the same way as before. One obtains
\begin{equation}\label{subs_4.3.a=2}
\Vert  D^{\beta}  f  \vert  L_{\infty}(\R) \Vert \leq  C_{6} 2^{R (\frac{d}{u}-s+1)} 2^{R}  \Vert   f \vert \mathcal{E}^{s}_{u,p,q}(\R)  \Vert .
\end{equation}
If we combine this with formula \eqref{pr_d>1_j0} and \eqref{subs_4.3.a=1} for all $ j \in \mathbb{N}, m \in \mathbb{Z}^d   $ and all cubes $ 2 Q_{j,m} \: \in \:  \mathcal{Q}_{SC}(f)   $ we find
\begin{align*}
2^{j(\frac{d}{p} - \frac{d}{u})}  \Vert f \vert F^{s}_{p,q}(2 Q_{j,m}) \Vert & \leq C_{7}    \max( 2^{-j} \Vert f  \vert C^{2}(\R) \Vert  ,  \Vert f  \vert C^{1}(\R) \Vert ) \;   2^{j(s- 1- \frac{d}{u} )} \\ 
& \leq C_{8}    \max( 2^{-j+R}   ,  1 ) 2^{R (\frac{d}{u}-s+1)}  2^{j(s- 1- \frac{d}{u} )} \Vert   f \vert \mathcal{E}^{s}_{u,p,q}(\R)  \Vert  .
\end{align*}
Now because of $ s - 1 - d/u < 0   $ in view of \eqref{pr_d>1_C7}  we obtain 
\begin{align*}
& \sup_{\substack{j \geq R , m \in \mathbb{Z}^{d} \\ 2 Q_{j,m} \: \in \:  \mathcal{Q}_{SC}(f)  }} 2^{j(\frac{d}{p} - \frac{d}{u})} \Vert f  \vert F^{s}_{p,q}(2 Q_{j,m}) \Vert \\
& \qquad  \leq C_{9} \sup_{j \geq R , m \in \mathbb{Z}^{d}}  \max( 2^{-j+R}   ,  1 ) 2^{R (\frac{d}{u}-s+1)}  2^{j(s- 1- \frac{d}{u} )} \Vert   f \vert \mathcal{E}^{s}_{u,p,q}(\R)  \Vert \\
& \qquad  \leq C_{10}  \Vert   f \vert \mathcal{E}^{s}_{u,p,q}(\R)  \Vert.
\end{align*} 
So this step and the whole proof are complete.
\end{proof}

\begin{rem}\label{rem_d>1_excon}
In the formulation of proposition \ref{d>1_E_MR} we can find two different types of conditions. On the one hand we assume $ s < \min  ( 1 + 1/p ,  1 + d/u  )$. Later we will see that this condition is necessary, see proposition \ref{d>1_Nec_MR}. On the other hand there is the restriction $ 1/p  -  1/u  > 1 -  1/d $. It is possible that this condition is of technical nature only and can be avoided by using another method for the proof. Notice that this assumption can be left away when we in addition assume $ \supp \mathcal{F}  f \subset B(0,2^{R})   $ with fixed $ 0 < R < \infty   $ in the formulation of proposition \ref{d>1_E_MR}.
\end{rem}

Under some additional conditions it is possible to prove a version of proposition \ref{d>1_E_MR} for the special case $ s = 1  $. Especially for $ 1 < p \leq u < \infty   $, $ q = 2  $ and $ s = 1  $ this is important. In this case the Triebel-Lizorkin-Morrey spaces coincide with the so-called Sobolev-Morrey spaces, see definition 9 and theorem 3.1. in \cite{Si_sur1}. We can prove the following result. 

\begin{prop}\label{s=1_E_MR}
Let $ 1 < p < u < \infty  $, $ 1 \leq q < \infty $ and $ s = 1 $. Let $ 1/p - 1/u > 1 - 1/d  $. Then there is a constant $ C > 0 $ independent of $ f \in \mathbb{E}^{1}_{u,p,q}(\mathbb{R}^{d})  $ such that we have 
\begin{equation}\label{d>1_E_eq2}
\Vert Tf  \vert \mathcal{E}^{1}_{u,p,q}(\mathbb{R}^{d})  \Vert \leq C \Vert f \vert \mathcal{E}^{1}_{u,p,q}(\mathbb{R}^{d})  \Vert
\end{equation}
for all $ f \in \mathbb{E}^{1}_{u,p,q}(\mathbb{R}^{d})   $.
\end{prop}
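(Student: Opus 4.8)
The plan is to adapt the proof of Proposition \ref{d>1_E_MR} to the borderline value $s=1$. As there, one first reduces, via the Fatou property (Lemma \ref{l_fatou}) and the Fourier multiplier estimate (Lemma \ref{Four_Mult}), to the case of a real analytic $f\in\mathbb{E}^1_{u,p,q}(\R)$ with $\supp\mathcal{F}f\subset B(0,2^R)$ for some $R\in\mathbb{N}$: putting $f_j=\mathcal{F}^{-1}[\psi(2^{-j}\cdot)\mathcal{F}f]$ for a cutoff $\psi\in C_0^\infty$ equal to $1$ near the origin, one has $f_j\to f$ and $|f_j|\to|f|$ in $\mathcal{S}'(\R)$, so it suffices to prove \eqref{d>1_E_eq2} for the $f_j$ with a constant independent of $j$. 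The step that must change is the very first reduction in the proof of Proposition \ref{d>1_E_MR}: one cannot apply Lemma \ref{l_deriv} and then the Morrey characterization (Proposition \ref{E_MorChar}) to the first derivatives $\partial_i|f|$, since that would require $0<s-1<\min(1/p,d/u)$, an empty range at $s=1$ (the hypothesis $1/p-1/u>1-1/d$ forces $d/u<1<d/p$, hence $\min(1/p,d/u)=d/u<1$). Real interpolation does not help either: although $T$ is Lipschitz from $\mathbb{M}^u_p(\R)$ to itself and, by Proposition \ref{d>1_E_MR}, bounded on $\mathbb{E}^{s_1}_{u,p,q}(\R)$ for $s_1\in(1,\min(1+\frac1p,1+\frac du))$, so that (using $|\,|f|-|g|\,|\le|f-g|$ pointwise) it is bounded on $(\mathbb{M}^u_p,\mathbb{E}^{s_1}_{u,p,q})_{1/s_1,q}$, Lemma \ref{re_inter} identifies this space with the Besov--Morrey space $\mathbb{N}^1_{u,p,q}(\R)$, not with $\mathbb{E}^1_{u,p,q}(\R)$.

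Instead I would work with $|f|$ directly through the difference characterization of Proposition \ref{pro_E_diff} with $N=2$, $v=1$ and $a=1$, which is admissible because $d\max(0,1/p-1,1/q-1)=0<1<2$. This reduces \eqref{d>1_E_eq2} to the bound $\Vert f\vert\mathcal{M}^u_p(\R)\Vert\le\Vert f\vert\mathcal{E}^1_{u,p,q}(\R)\Vert$ (immediate from Lemma \ref{E_Mor_em}) together with an estimate by $C\Vert f\vert\mathcal{E}^1_{u,p,q}(\R)\Vert$ for the second--difference term
\[
\Big\Vert\Big(\int_0^1 t^{-q}\Big(t^{-d}\int_{B(0,t)}|\Delta^2_h|f|(x)|\,dh\Big)^{q}\frac{dt}{t}\Big)^{\frac1q}\Big\vert\mathcal{M}^u_p(\R)\Big\Vert .
\]
The decisive elementary fact is that $|\Delta^2_h|f|(x)|=|\Delta^2_h f(x)|$ whenever $f$ keeps a fixed sign on the segment joining $x$ and $x+2h$ (there $|f|=\pm f$ on that segment), while in general
\[
|\Delta^2_h|f|(x)|\ \le\ |\Delta^2_h f(x)|\ +\ C\,\chi_{S}(x,h)\,|h|\sup_{|x-y|\le 3|h|}|\nabla f(y)| ,
\]
where $S$ is the set of pairs $(x,h)$ for which the segment $[x,x+2h]$ meets the zero set $Z(f)=\{f=0\}$; on $S$ the smoothness of $f$ gives $\min_{j\in\{0,1,2\}}|f(x+jh)|\le C|h|\sup_{|x-y|\le 3|h|}|\nabla f(y)|$, which produces the defect. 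By Proposition \ref{pro_E_diff} applied to $f$, the contribution of $|\Delta^2_h f(x)|$ is bounded by $C\Vert f\vert\mathcal{E}^1_{u,p,q}(\R)\Vert$, so matters are reduced to the ``defect near $Z(f)$''; and since $(x,h)\in S$ with $|h|<t$ forces $\dist(x,Z(f))<2t$, at scale $t$ this defect integrand lives in a $t$--neighbourhood of $Z(f)$.

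The remaining estimate would be carried out as in Steps 3--6 of the proof of Proposition \ref{d>1_E_MR}. Real analyticity of $f$ together with $\supp\mathcal{F}f\subset B(0,2^R)$ permits, via parts (iv)--(v) of Lemma \ref{lem_ZOAF}, a covering of the $2^{-j}$--neighbourhood of $Z(f)$ inside a cube of side $\sim 2^{-k}$ by at most $c(d)R'\,2^{(d-1)(j-k)}$ cubes of side $\sim 2^{-j}$, with $R'$ large but depending only on $f,R,p,u,d$; the compact Fourier support gives $\Vert\nabla f\vert L_\infty(\R)\Vert\le C\,2^{R\frac du}\Vert f\vert\mathcal{E}^1_{u,p,q}(\R)\Vert$ as in Substep 6.3 there; and on the regions where $f$ keeps a fixed sign one uses $|f|=\pm f$ together with the classical $s=1$ estimates for $\mathbb{F}^1_{p,q}$ (this is where the assumptions $p>1$ and $q<\infty$, matching Theorem \ref{Hist_Res}, are relevant). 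Summing the Morrey norm of the localized defect over dyadic scales then produces a geometric series whose ratio is controlled by $2^{\frac dp-\frac du-d+1}$, and the hypothesis $1/p-1/u>1-1/d$, i.e.\ $d/p-d/u-d+1>0$, makes it converge — after absorbing the polynomial factor $R'$ by a small loss $\sigma>0$, exactly as in Proposition \ref{d>1_E_MR} — with sum $\le C\Vert f\vert\mathcal{E}^1_{u,p,q}(\R)\Vert$. I expect this zero--set defect analysis to be the main obstacle. Finally, the special case $q=2$ admits a much shorter proof: $\mathbb{E}^1_{u,p,2}(\R)$ is the Sobolev--Morrey space, so $\Vert g\vert\mathcal{E}^1_{u,p,2}(\R)\Vert\sim\Vert g\vert\mathcal{M}^u_p(\R)\Vert+\sum_{j=1}^d\Vert\partial_j g\vert\mathcal{M}^u_p(\R)\Vert$, and since $\partial_j|f|=\sign(f)\,\partial_j f$ almost everywhere one has $|\partial_j|f||\le|\partial_j f|$ pointwise, whence \eqref{d>1_E_eq2}.
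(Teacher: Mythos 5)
Your proposal diverges from the paper at the very point where you claim the paper's argument breaks down, and that claim is incorrect. You assert that the reduction via Lemma \ref{l_deriv} followed by the Morrey characterization (Proposition \ref{E_MorChar}) at smoothness $s-1$ is unavailable at $s=1$ because the stated hypothesis $0<s-1<\min(1/p,d/u)$ is empty. But the Morrey characterization does extend to smoothness $0$ under the additional restrictions $p\neq 1$ and $q\neq\infty$ (Theorem 3.64 in \cite{Tr14}), and this is precisely why the proposition carries the hypotheses $1<p$ and $q<\infty$. The paper's proof therefore runs exactly as in Proposition \ref{d>1_E_MR}, applying the extended Morrey characterization at $s-1=0$, using Theorem \ref{Hist_Res} for $s=1$, $p>1$, and replacing the one use of Proposition \ref{pro_0<s<1} at level $s-1$ by the chain $\mathcal{E}^{1}\hookrightarrow\mathcal{E}^{1/2}\hookrightarrow\mathcal{E}^{0}$ together with Proposition \ref{pro_0<s<1} at level $1/2$. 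Nothing else needs to change.

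The alternative you sketch — estimating $|\Delta^2_h|f|(x)|$ directly through Proposition \ref{pro_E_diff} and a ``defect near $Z(f)$'' — is a genuinely different route, not a variant of Steps 3--6 of Proposition \ref{d>1_E_MR}: those steps are formulated around $\Vert\partial^1_i|f|\,|\,F^{s-1}_{p,q}(2Q_{j,m})\Vert$ localised to single cubes and are glued by the Morrey characterization, whereas your defect term is a weighted supremum of $|\nabla f|$ spread over a $t$-neighbourhood of $Z(f)$ at each scale $t$, whose Morrey norm requires a covering argument that you have not carried out. You say yourself that you ``expect this zero--set defect analysis to be the main obstacle''; that is indeed the gap. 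In particular, it is not clear that the covering counts from Lemma \ref{lem_ZOAF}, which are calibrated for cubes $2Q_{j,m}$ in the Morrey-characterization setting, interact well with the continuous parameter $t$ in the second-difference norm without a substantial new argument. Your $q=2$ observation (Sobolev--Morrey norm plus $|\partial_j|f||\le|\partial_j f|$ a.e.) is correct and gives a short proof in that single case, but does not settle the proposition.
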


\begin{proof}
This result can be proved in the same way as proposition \ref{d>1_E_MR}. All arguments that are used there also hold in the situation of proposition \ref{s=1_E_MR}. The reason for this is that our main tool theorem \ref{Hist_Res} also is valid for $ s = 1 $ when we assume $ 1 < p < \infty   $. When we follow the strategy described in the proof of proposition \ref{d>1_E_MR} because of $ s - 1 = 0   $ we sometimes have to work with smoothness zero. Let us explain why this is not a problem. At first we mention that thanks to the Fatou property we can work with smooth $  C^{\infty} -  $ functions. Therefore we do not have to deal with singular distributions. Next we should notice that lemma \ref{l_deriv} also holds for smoothness zero, see corollary 3.4 in \cite{SawTan} or theorem 2.15 in \cite{TangXu}. Our tool proposition \ref{E_MorChar} is valid for $ s - 1 = 0   $ as well, see theorem 3.64 in \cite{Tr14}. Here the restrictions $ p \not = 1   $ and $ q \not = \infty    $ are required. Finally we mention that because of $ \mathcal{E}^{1}_{u,p,q}(\R) \hookrightarrow  \mathcal{E}^{1/2}_{u,p,q}(\R) \hookrightarrow \mathcal{E}^{0}_{u,p,q}(\R)    $ and proposition \ref{pro_0<s<1} instead of formula \eqref{s=1_change} we can write 
\begin{align*}
\Vert \  |f| \ \vert \mathcal{E}^{0}_{u,p,q}(\mathbb{R}^{d})  \Vert \leq  C_{1}   \Vert \  |f| \ \vert \mathcal{E}^{1/2}_{u,p,q}(\mathbb{R}^{d})  \Vert \leq  C_{2}   \Vert f \vert \mathcal{E}^{1/2}_{u,p,q}(\mathbb{R}^{d})  \Vert \leq   C_{3}   \Vert f \vert \mathcal{E}^{1}_{u,p,q}(\mathbb{R}^{d})  \Vert  .
\end{align*}
This simple observation completes the proof.
\end{proof}

\begin{rem}\label{rem_s1_p1}
Notice that proposition \ref{s=1_E_MR} does not cover the special case $ p = 1 $ for $ s = 1 $. On the other hand for the original Sobolev spaces with $ u = p = 1  $, $ q = 2  $ and $  s = 1  $ Marcus and Mizel proved in \cite{MaMiz} that $ T $ is a continuous operator, see theorem 1 in \cite{MaMiz}.
\end{rem}

\subsection{The boundedness of the operator $ T $ and Besov-Morrey spaces}\label{sec_T_BMS}

In this section we will study the boundedness properties of the operator $ T $ in the context of Besov-Morrey spaces. For that purpose our main tool will be real interpolation. There is the following result.

\begin{prop}\label{d>1_N_MR}
Let $ 1 \leq p < u < \infty  $ and $ 1 \leq q \leq \infty   $. Assume $ 1/p - 1/u > 1 - 1/d   $ in the case of $ d > 1  $. Let 
\begin{align*}
0 < s < \min \Big ( 1 + \frac{1}{p} ,  1 + \frac{d}{u} \Big ). 
\end{align*}
Then there is a constant $ C > 0 $ independent of $ f \in \mathbb{N}^{s}_{u,p,q}(\mathbb{R}^{d})  $ such that we have 
\begin{equation}\label{d>1_N_eq1}
\Vert Tf  \vert \mathcal{N}^{s}_{u,p,q}(\mathbb{R}^{d})  \Vert \leq C \Vert f \vert \mathcal{N}^{s}_{u,p,q}(\mathbb{R}^{d})  \Vert
\end{equation}
for all $ f \in \mathbb{N}^{s}_{u,p,q}(\mathbb{R}^{d})   $.
\end{prop}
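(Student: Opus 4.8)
The plan is to derive Proposition \ref{d>1_N_MR} from the corresponding result for Triebel-Lizorkin-Morrey spaces, Proposition \ref{d>1_E_MR} (together with its companion Proposition \ref{s=1_E_MR} for $s=1$ and Proposition \ref{pro_0<s<1} for $0<s<1$), by real interpolation, using Lemma \ref{re_inter}, which identifies $\mathbb{N}^{\theta s_1}_{u,p,q}(\R) = (\mathbb{M}^u_p(\R), \mathbb{E}^{s_1}_{u,p,q_1}(\R))_{\theta,q}$. The operator $T$ is not linear, so one cannot apply the interpolation functor directly; instead I would invoke the standard Lipschitz-interpolation principle: if a (possibly nonlinear) map $T$ is bounded on $\mathbb{M}^u_p(\R)$ (which holds since $\big||f(x)|-|g(x)|\big|\le|f(x)-g(x)|$ gives $\Vert Tf - Tg \vert \mathcal{M}^u_p \Vert \le \Vert f-g \vert \mathcal{M}^u_p\Vert$, and $\Vert Tf\vert\mathcal{M}^u_p\Vert = \Vert f\vert\mathcal{M}^u_p\Vert$) and bounded on $\mathbb{E}^{s_1}_{u,p,q_1}(\R)$, then $T$ is bounded on the real interpolation space $(\mathbb{M}^u_p, \mathbb{E}^{s_1}_{u,p,q_1})_{\theta,q}$. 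This is essentially the observation used by Bourdaud--Meyer and by Runst--Sickel for the classical spaces; the key point is that the $K$-functional argument only needs subadditivity-type control of $T$ on each endpoint, not linearity.

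Concretely, I would proceed as follows. Given $s$ with $0 < s < \min(1+1/p, 1+d/u)$, I first dispose of the case $0 < s < 1$ by citing Proposition \ref{pro_0<s<1} directly. For $1 \le s < \min(1+1/p, 1+d/u)$, I pick an exponent $s_1$ with $s < s_1 < \min(1+1/p, 1+d/u)$, still satisfying $s_1 > 1$, so that (under the hypothesis $1/p - 1/u > 1 - 1/d$ when $d>1$, or trivially when $d=1$ via Proposition \ref{pro_d1s>1}) Proposition \ref{d>1_E_MR} applies and gives that $T$ is bounded on $\mathbb{E}^{s_1}_{u,p,q_1}(\R)$ for some convenient fine index $q_1$ (say $q_1 = q$, or $q_1$ arbitrary). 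Then choose $\theta = s/s_1 \in (0,1)$, so that $\theta s_1 = s$, and Lemma \ref{re_inter} identifies $\mathbb{N}^{s}_{u,p,q}(\R) = (\mathbb{M}^u_p(\R), \mathbb{E}^{s_1}_{u,p,q_1}(\R))_{\theta,q}$ with equivalent norms. Applying the Lipschitz/nonlinear interpolation lemma to the pair of endpoint estimates for $T$ yields \eqref{d>1_N_eq1}. One small subtlety: in the range $d = 1$ and $s \ge 1$ one uses Proposition \ref{pro_d1s>1} rather than Proposition \ref{d>1_E_MR}; also when $s = 1$ and $p > 1$ one may alternatively invoke Proposition \ref{s=1_E_MR}, but it is cleaner just to always take $s_1 > s$ strictly so that the strict inequality $1 < s_1$ is available whenever $s \ge 1$, and then Proposition \ref{d>1_E_MR} (or \ref{pro_d1s>1}) covers all needed cases without touching the borderline $s_1 = 1$.

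The main obstacle — really the only genuine point requiring care — is justifying the nonlinear interpolation step, since $T$ is not linear and the abstract real interpolation theorem for bounded linear operators does not apply verbatim. The remedy is to verify the hypotheses of the classical nonlinear interpolation theorem (in the spirit of Peetre, or Tartar, or as formulated in chapter 5 of \cite{RS}): $T$ maps $\mathbb{M}^u_p + \mathbb{E}^{s_1}_{u,p,q_1}$ into $\mathbb{M}^u_p$, is bounded $\mathbb{M}^u_p \to \mathbb{M}^u_p$, is bounded $\mathbb{E}^{s_1}_{u,p,q_1} \to \mathbb{E}^{s_1}_{u,p,q_1}$, and satisfies the pointwise estimate $\big||f| - |g|\big| \le |f - g|$ which controls the $K$-functional: for any decomposition $f = f_0 + f_1$ one writes $Tf = Tf - T f_1 + T f_1$ and estimates $\Vert Tf - Tf_1 \vert \mathbb{M}^u_p\Vert \le \Vert f_0 \vert \mathbb{M}^u_p\Vert$ while $\Vert Tf_1 \vert \mathbb{E}^{s_1}_{u,p,q_1}\Vert \lesssim \Vert f_1 \vert \mathbb{E}^{s_1}_{u,p,q_1}\Vert$, hence $K(t, Tf; \mathbb{M}^u_p, \mathbb{E}^{s_1}_{u,p,q_1}) \lesssim K(t, f; \mathbb{M}^u_p, \mathbb{E}^{s_1}_{u,p,q_1})$, and taking the $(\theta,q)$-norm finishes the argument. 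The remaining steps — choosing $s_1$, checking that the hypotheses of the cited endpoint propositions are met under exactly the stated assumption $1/p - 1/u > 1 - 1/d$, and the observation that $T$ preserves the real-valued Morrey space with constant $1$ — are all routine.
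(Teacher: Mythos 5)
Your proposal is correct and follows essentially the same route as the paper: the paper's own proof of Proposition \ref{d>1_N_MR} also proceeds by Peetre-type Lipschitz interpolation between $A_0 = \mathbb{M}^u_p(\R)$ and $A_1 = \mathbb{E}^{s_1}_{u,p,1}(\R)$ (with $s_1 \neq 1$ to invoke Propositions \ref{pro_0<s<1}, \ref{pro_d1s>1} and \ref{d>1_E_MR}), combined with Lemma \ref{re_inter} and the $K$-functional argument you sketch. The only cosmetic differences are that the paper fixes $q_1 = 1$ (irrelevant since Lemma \ref{re_inter} holds for any $q_1$) and handles the whole range $0 < s < \min(1+1/p, 1+d/u)$ uniformly through the interpolation step rather than splitting off $0 < s < 1$ first.
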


\begin{proof}
For the proof we use the corresponding result for the Triebel-Lizorkin-Morrey spaces. Moreover we apply a result concerning the real interpolation of Lipschitz continuous operators that goes back to Peetre, see \cite{Pe1}. One may also consult proposition 1 in chapter 2.5.4 in \cite{RS}. Here for us it is convenient to follow the explanations given in step 5 of the proof from theorem 25.8 in \cite{Tr01}. We use the same notation as there. We put $ Tf = \vert f \vert   $ and $ A_{0} =  \mathbb{M}^{u}_{p}(\mathbb{R}^{d}) $ and $ A_{1} = \mathbb{E}^{s_{1}}_{u,p,1}(\mathbb{R}^{d})   $. From lemma \ref{E_Mor_em} we learn that we have $ \mathbb{E}^{s_{1}}_{u,p,1}(\mathbb{R}^{d}) \hookrightarrow \mathbb{M}^{u}_{p}(\mathbb{R}^{d})   $ for $ 1 \leq p \leq u < \infty $ and $ s_{1} > 0  $. Moreover from the propositions \ref{pro_0<s<1},  \ref{pro_d1s>1} and \ref{d>1_E_MR} we learn that for $ 1 \leq p < u < \infty  $ and $ 0 < s_{1} < \min ( 1 + 1/p , 1 + d/u ) \leq 2  $ with $ s_{1} \not = 1  $ we have $ \Vert Tf  \vert \mathcal{E}^{s_{1}}_{u,p,1}(\mathbb{R}^{d})  \Vert \leq C \Vert f \vert \mathcal{E}^{s_{1}}_{u,p,1}(\mathbb{R}^{d})  \Vert $ for all $ f \in \mathbb{E}^{s_{1}}_{u,p,1}(\mathbb{R}^{d}) $. Here in the case of $ d > 1  $ the assumption $ 1/p - 1/u > 1 - 1/d   $ is needed. Furthermore because of the triangle inequality we have 
\begin{align*}
\Vert |f| - |g|  \vert \mathcal{M}^{u}_{p}(\mathbb{R}^{d}) \Vert \leq \Vert f - g  \vert \mathcal{M}^{u}_{p}(\mathbb{R}^{d}) \Vert
\end{align*}
for all $ f,g \in \mathbb{M}^{u}_{p}(\mathbb{R}^{d})  $. Then like it is described in step 5 of the proof from theorem 25.8 in \cite{Tr01} for all $ 0 < \theta < 1   $ and $ 1 \leq q \leq \infty   $ we have
\begin{align*}
\Vert  Tf  \vert  ( \mathbb{M}^{u}_{p}(\mathbb{R}^{d}) ,  \mathbb{E}^{ s_{1}}_{u,p,1}(\mathbb{R}^{d})   )_{\theta , q}  \Vert \leq C \Vert  f  \vert  ( \mathbb{M}^{u}_{p}(\mathbb{R}^{d}) ,  \mathbb{E}^{ s_{1}}_{u,p,1}(\mathbb{R}^{d})   )_{\theta , q} \Vert
\end{align*}
for all $ f \in  ( \mathbb{M}^{u}_{p}(\mathbb{R}^{d}) ,  \mathbb{E}^{ s_{1}}_{u,p,1}(\mathbb{R}^{d})   )_{\theta , q}  $. Now we apply lemma \ref{re_inter} which tells us that we have
\begin{align*}
\mathbb{N}^{\theta s_{1}}_{u,p,q}(\mathbb{R}^{d}) = \Big ( \mathbb{M}^{u}_{p}(\mathbb{R}^{d}) ,  \mathbb{E}^{ s_{1}}_{u,p,1}(\mathbb{R}^{d})  \Big )_{\theta , q}
\end{align*}
with $ 0 < \theta < 1  $ and $ 1 \leq q \leq \infty  $. Because of $ 0 < s_{1} < \min ( 1 + 1/p , 1 + d/u ) $ and $ s_{1} \not = 1  $ we have $ 0 < \theta s_{1} <  \min ( 1 + 1/p , 1 + d/u )  $. So the proof is complete.
\end{proof}

\begin{rem}\label{rem_hist_m}
For the original Besov spaces there exist several different methods to prove results like proposition \ref{d>1_N_MR}. In \cite{BouMey} and in chapter 5.4.1 in \cite{RS} a Hardy-type inequality in combination with real interpolation was applied. In \cite{Os1} some tools from approximation theory for linear splines are used to prove a result for the spaces $ \mathbb{B}^s_{p,q}(\mathbb{R})  $. A third method using atoms can be found in \cite{Tr01}, see theorem 25.8. 

\end{rem}

\subsection{The boundedness of $ T^{+} $ and $ T $ : Necessary conditions}\label{subsec_Nec}

When you look at the propositions \ref{pro_d1s>1}, \ref{d>1_E_MR} and \ref{d>1_N_MR} you always will find the condition $ s < \min  ( 1 + 1/p , 1 + d/u  )$. In this section we will investigate whether this condition is also necessary. For that purpose we will deal with some special test functions. Let us start with the Triebel-Lizorkin-Morrey spaces.

\begin{prop}\label{d>1_Nec_MR}
Let $ 1 \leq p \leq u < \infty  $ and $ 1 \leq q \leq \infty  $. Let either $ s \geq 1 + 1/p $ or $ s > 1 + d/u $. Then there exists a function $ f \in  \mathbb{E}^{s}_{u,p,q}(\mathbb{R}^{d}) $ such that $ Tf \not \in \mathbb{E}^{s}_{u,p,q}(\mathbb{R}^{d})  $ and $ T^{+}f \not \in \mathbb{E}^{s}_{u,p,q}(\mathbb{R}^{d})  $. 
\end{prop}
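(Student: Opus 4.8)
The plan is to exhibit an explicit counterexample. The strategy mirrors the classical construction for Besov and Triebel-Lizorkin spaces (see chapter 5.4.1 in \cite{RS} and chapter 25 in \cite{Tr01}): one builds a function $f$ that is smooth away from a hyperplane, behaves locally like $\mathrm{sign}(x_1)\cdot|x_1|^{\sigma}\cdot\psi(x)$ (with $\psi$ a smooth cut-off), so that $f$ itself lies in $\mathbb{E}^{s}_{u,p,q}(\mathbb{R}^d)$ for the given $s$, but $|f|$ behaves like $|x_1|^\sigma\psi(x)$ near $x_1=0$ and loses exactly the regularity that is missing once $s\geq 1+1/p$ or $s>1+d/u$. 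First I would reduce to dimension one for the part of the argument where that is convenient: for the case $s\geq 1+1/p$ one may take $f$ depending only on $x_1$ on the support of a cut-off and invoke the embedding/lifting lemmas (Lemma \ref{l_deriv}) together with the known one-dimensional obstruction, while the case $s>1+d/u$ genuinely uses the Morrey exponent and requires a $d$-dimensional function whose singular set is a hyperplane, so that the Morrey norm "sees" a $(d-1)$-dimensional set.

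The key steps, in order, are: (1) Fix $\sigma$ so that $f(x)=\mathrm{sign}(x_1)\,|x_1|^{\sigma}\,\psi(x)$ is just barely in $\mathbb{E}^{s}_{u,p,q}$; using the difference characterisation (Proposition \ref{pro_E_diff}) with $v=1$ and a single difference, one checks that $\mathrm{sign}(x_1)|x_1|^\sigma$ is $C^\infty$ off the hyperplane and its $N$-th differences decay like $t^\sigma$, giving membership in $\mathbb{E}^{s}_{u,p,q}$ precisely when $\sigma>s-1-1/p$ resp. $\sigma>s-1-d/u$ are \emph{not} forced, i.e. when $s<1+1/p$ and $s\leq 1+d/u$; at the borderline cases $s=1+1/p$ or $s>1+d/u$ one finds a value of $\sigma$ for which $f\in\mathbb{E}^{s}_{u,p,q}$ but $|f|\notin\mathbb{E}^{s}_{u,p,q}$. (2) Verify $f\in\mathbb{E}^{s}_{u,p,q}(\mathbb{R}^d)$: decompose the difference integral into the region away from $\{x_1=0\}$ (where $f$ is smooth and the standard estimate \eqref{diff_abl_c} applies) and a boundary layer of width $\sim t$; the boundary layer contributes a term whose Morrey norm one estimates exactly as in the proof of Lemma \ref{res_no_Fub}, using that a slab of thickness $t$ intersected with a large cube has the right measure. (3) Show $|f|=|x_1|^\sigma\psi(x)\notin\mathbb{E}^{s}_{u,p,q}(\mathbb{R}^d)$: here the even function $|x_1|^\sigma$ has a genuine corner, and using the lower bound in the difference characterisation (again Proposition \ref{pro_E_diff}, now estimating from below on the set where $\Delta^N_h|f|$ is comparable to $t^\sigma$ and bounded away from $0$), the relevant integral $\int_0^a t^{-sq}(t^{\sigma-?})^q\,dt/t$ diverges for the chosen $\sigma$. (4) Finally transfer the statement from $T$ to $T^{+}$ via $2T^{+}f=f+Tf$: since $f\in\mathbb{E}^{s}_{u,p,q}$, if $T^{+}f$ were in $\mathbb{E}^{s}_{u,p,q}$ then so would be $Tf=2T^{+}f-f$, a contradiction; hence both $Tf$ and $T^{+}f$ fail to lie in the space.

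I expect the main obstacle to be Step (2)–(3), i.e. the sharp bookkeeping of the exponents so that $f$ lies \emph{in} the space while $|f|$ lies \emph{outside}, for both of the two regimes $s\geq 1+1/p$ and $s>1+d/u$ simultaneously. The subtlety is that the two regimes are controlled by different exponents ($1/p$ versus $d/u$), so the exponent $\sigma$ of the model singularity and the width of the boundary layer must be chosen to match the \emph{binding} constraint; in the Morrey setting the admissible $\sigma$ is governed by $d/u$ through the scaling of the Morrey norm on slabs, exactly the mechanism already seen in Lemma \ref{res_no_Fub}. A secondary technical point is that at the endpoint $s=1+1/p$ one may need $q\ne\infty$-type care or a logarithmic refinement of the model function (e.g. inserting a factor $|\log|x_1||^{-\beta}$) to make $f$ land exactly on the space while $|f|$ just misses it; if a clean power works, so much the better, otherwise the log-refinement is the standard fix. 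Everything else — the reduction $T^{+}\leftrightarrow T$, the use of \eqref{diff_abl_c}, and the Morrey slab estimate — is routine given the lemmas already in the paper.
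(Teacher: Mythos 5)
Your core idea is right and mirrors the paper's: $T$ creates a corner along the zero set of $f$, and the difference characterisation (Proposition~\ref{pro_E_diff}) is used to give a lower bound on the norm of $|f|$ in which both thresholds $1+1/p$ and $1+d/u$ appear. But you are working substantially harder than the paper does, and the extra machinery you anticipate is unnecessary. The paper simply takes $\sigma=1$ in your notation: $f\in C_0^\infty(\mathbb{R}^d)$ with $f(x)=x_1$ for $|x|$ small and $f\equiv 0$ for $|x|$ large. Then $f$ is smooth and compactly supported, hence trivially in $\mathbb{E}^s_{u,p,q}(\mathbb{R}^d)$ for every $s$, so the whole first half of your program (calibrating $\sigma$ so that $f$ ``just barely'' lands in the space, and the log-refinement at the endpoint) disappears. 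On a cube near the origin one has $|f(x)|=x_1$ and, for $h$ with $h_1\le -x_1$, $|f(x+\ell h)|=-x_1-\ell h_1$ and $\max(f(x+\ell h),0)=0$, so the identities $\sum_\ell(-1)^\ell\binom{N}{\ell}=0$ and $\sum_\ell(-1)^\ell\binom{N}{\ell}\ell=0$ give exactly $\bigl|\Delta^N_h |f|(x)\bigr|=2x_1$ and $\bigl|\Delta^N_h T^{+}f(x)\bigr|=x_1$ for $N\ge 2$. A single lower-bound computation over dyadic cubes $2^{-j}[0,1)^d$ then simultaneously tests both regimes: the inner integral $\int_0^{2^{-j}}x_1^{\,p-sp}\,dx_1$ diverges exactly when $s\ge 1+1/p$ (so the endpoint is handled automatically, for every $q$, with no logarithmic fix), and when it converges the remaining factor is $2^{-j(d/u+1-s)}$, whose supremum over $j\ge 0$ is infinite exactly when $s>1+d/u$. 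So one test function covers both cases uniformly, and no case split or exponent calibration is needed. Your step (4), passing from $T$ to $T^{+}$ via $2T^{+}f=f+Tf$, is correct and a clean alternative; the paper instead computes both differences at once, which is equivalent. In short: the right mechanism, but you should notice that the simplest instance ($f$ linear across the hyperplane) already does everything, which is what makes the paper's proof short.
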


\begin{proof}
Let $ (x_{1} , x_{2} , \ldots , x_{d}) = x \in \R  $. Then we define a real-valued function $  f \in C_{0}^{\infty}(\R)  $ such that 
\begin{equation}\label{Ex_Fu_d>1_E1}
f(x) = x_{1} \quad \mbox{for} \quad |x| < 10 d (s+2) \quad \mbox{and}  \quad  f(x) = 0 \quad \mbox{for} \quad |x| > 11 d (s+2). 
\end{equation} 
Because of lemma \ref{l_bp1} we have $ f \in  \mathbb{E}^{s}_{u,p,q}(\mathbb{R}^{d})  $. In what follows we want to prove $ Tf \not \in \mathbb{E}^{s}_{u,p,q}(\mathbb{R}^{d})   $ and $ T^{+}f \not \in \mathbb{E}^{s}_{u,p,q}(\mathbb{R}^{d})   $ simultaneously. For that purpose we write $ T^{*}  $ when we mean either $ T $ or $ T^{+} $. Because of $ p \geq 1   $ and $ q \geq 1  $ we can apply proposition \ref{pro_E_diff} with $ v = 1    $, $ a = \infty   $ and $ N \in \mathbb{N}   $ with $ s < N < s + 2   $. Notice that we always have $ N \geq 2  $. We write $ (h_{1} , h_{2} , \ldots , h_{d}) = h \in \R  $. Then we find
\begin{align*}
& \Vert    T^{*}f    \vert   \mathcal{E}^{s}_{u,p,q}(\mathbb{R}^{d})   \Vert \\
& \qquad \geq C_{1} \Big \Vert \Big (  \int_{0}^{\infty}  t^{-sq} \Big ( t^{-d} \int_{B(0,t)}\vert \Delta^{N}_{h}T^{*}f(x) \vert dh \Big )^{q}  \frac{dt}{t}  \Big )^{\frac{1}{q}} \Big \vert \mathcal{M}^{u}_{p}( \mathbb{R}^d)  \Big \Vert \\
& \qquad \geq C_{2} \sup_{\substack{P \;  \mbox{\tiny{dyadic cube}}  \\ P \subset [0,1]^{d}  }} \vert P \vert^{\frac{1}{u}-\frac{1}{p}} \Big ( \int_{P} \Big (  \int_{\frac{3}{2} x_{1}}^{2 x_{1}}  t^{-sq} \Big ( t^{-d} \int_{\substack{|h| \leq t \\ h_{1} \leq -x_{1}}}\vert \Delta^{N}_{h}T^{*}f(x) \vert dh \Big )^{q}   \frac{dt}{t}  \Big )^{\frac{p}{q}} dx \Big )^{\frac{1}{p}}.
\end{align*}
Let $ x \in  [0,1]^{d}$. Then we have $  |f(x)| = x_{1}  $ and $ \max(f(x),0) = x_{1}  $. Moreover let $ 3/2 \; x_{1} < t < 2  x_{1}   $ and $ |h| \leq t   $ with $  h_{1} \leq -x_{1} $. Then for $ l \in \left\{ 1, 2, \ldots, N  \right\}   $ we observe $ |f(x+lh)| = - x_{1} - l h_{1} $ and $ \max(f(x+lh),0) = 0  $. Recall that for $ N \geq 2  $ there are the elementary formulas 
\begin{align*}
\sum_{l = 0}^{N} (-1)^{l} { N \choose l } = 0 \qquad \mbox{and} \qquad \sum_{l = 0}^{N} (-1)^{l} { N \choose l } l = 0.
\end{align*}
Hence for the operators $ T $ and $ T^{+}  $ in the case $ N \geq 2  $ we get
\begin{align*}
\vert \Delta^{N}_{h }|f|(x) \vert  = 2 x_{1} \qquad \mbox{and} \qquad \vert \Delta^{N}_{h }T^{+}f(x) \vert  =  x_{1}.
\end{align*}
So we have almost the same outcome for $ T $ and $ T^{+} $. Therefore for both cases we can proceed in the same way now. For $3/2 \; x_{1} < t < 2  x_{1}   $ we have $  t^{-d} \geq C_{3} x_{1}^{-d}  $. Moreover there exists a constant $ C_{d} $ that depends on $ d $ such that $ \int_{|h| \leq t, \  h_{1} \leq -x_{1}} 1 dh \geq C_{d} x_{1}^{d}  $. So we obtain
\begin{align*}
\Vert T^{*}f    \vert   \mathcal{E}^{s}_{u,p,q}(\mathbb{R}^{d})   \Vert & \geq C_{4} \sup_{\substack{P \;  \mbox{\tiny{dyadic cube}}  \\ P \subset [0,1]^{d}  }} \vert P \vert^{\frac{1}{u}-\frac{1}{p}} \Big ( \int_{P} x_{1}^{p} \Big (  \int_{\frac{3}{2} x_{1}}^{2 x_{1}}  t^{-sq-1}   dt  \Big )^{\frac{p}{q}} dx \Big )^{\frac{1}{p}} \\
&  \geq C_{5} \sup_{\substack{P \;  \mbox{\tiny{dyadic cube}}  \\ P \subset [0,1]^{d}  }} \vert P \vert^{\frac{1}{u}-\frac{1}{p}} \Big ( \int_{P} x_{1}^{p - sp}  dx \Big )^{\frac{1}{p}}.
\end{align*}
In what follows we are only interested in dyadic cubes that look like $ 2^{-j}[0,1)^{d} $ with $ j \in \mathbb{N}_{0}  $. Then we can apply Fubini's theorem and get
\begin{align*}
& \Vert T^{*}f    \vert   \mathcal{E}^{s}_{u,p,q}(\mathbb{R}^{d})   \Vert \\
& \qquad \geq C_{6} \sup_{j \in \mathbb{N}_{0} } 2^{-jd( \frac{1}{u}-\frac{1}{p})} \Big ( \int_{0}^{2^{-j}} x_{1}^{p-sp}  dx_{1}  \  \int_{0}^{2^{-j}} \cdots  \int_{0}^{2^{-j}} 1 dx_{2} \cdots dx_{d}  \Big )^{\frac{1}{p}} \\
& \qquad = C_{6} \sup_{j \in \mathbb{N}_{0} } 2^{-jd( \frac{1}{u}-\frac{1}{p})} 2^{-j(d-1) \frac{1}{p}} \Big ( \int_{0}^{2^{-j}} x_{1}^{p-sp}  dx_{1}    \Big )^{\frac{1}{p}}.
\end{align*}
If we are in the case $ s \geq 1 + 1/p   $ the integral is infinite and the proof is complete. So in what follows we assume $ s < 1 + 1/p   $ but $   s > 1 + d/u  $. Then we find
\begin{align*}
\Vert T^{*}f    \vert   \mathcal{E}^{s}_{u,p,q}(\mathbb{R}^{d})   \Vert   &  \geq C_{7} \sup_{j \in \mathbb{N}_{0} } 2^{-jd( \frac{1}{u}-\frac{1}{p})} 2^{-j(d-1) \frac{1}{p}}  2^{-j(1-s+\frac{1}{p})}   = C_{7} \sup_{j \in \mathbb{N}_{0} } 2^{-j (  \frac{d}{u} - s + 1  )} .
\end{align*}
But because of $  d/u - s + 1  < 0 $ we obtain $ \Vert T^{*}f    \vert   \mathcal{E}^{s}_{u,p,q}(\mathbb{R}^{d})   \Vert = \infty   $. The proof is complete.
\end{proof}

Now we turn our attention to the Besov-Morrey spaces. Here also for the critical border $ s = \min ( 1 + 1/p , 1 + d/u )    $ we obtain an almost complete result.

\begin{prop}\label{d>1_Nec_MR_N}
Let $ 1 \leq p \leq u < \infty  $ and $ 1 \leq q \leq \infty  $. Moreover we are in one of the following situations. 
\begin{itemize}
\item[(i)] We have $  s > \min ( 1 + \frac{1}{p} , 1 + \frac{d}{u} )   $.

\item[(ii)] We have $  s = \min ( 1 + \frac{1}{p} , 1 + \frac{d}{u} )  $ and $ q \not = \infty   $.

\item[(iii)] We have $ d = 1  $ with $ s = 1 + \frac{1}{u}  $ and $ q = \infty  $.

\end{itemize}
Then there exists a function $ f \in  \mathbb{N}^{s}_{u,p,q}(\mathbb{R}^{d}) $ such that $ Tf \not \in \mathbb{N}^{s}_{u,p,q}(\mathbb{R}^{d})  $ and $ T^{+}f \not \in \mathbb{N}^{s}_{u,p,q}(\mathbb{R}^{d})  $. 
\end{prop}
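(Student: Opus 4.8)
The plan for parts (i) and (ii) is to re-use the test function from the proof of Proposition~\ref{d>1_Nec_MR}: a real-valued $f\in C_{0}^{\infty}(\mathbb{R}^{d})$ with $f(x)=x_{1}$ for $|x|<10d(s+2)$ and $f(x)=0$ for $|x|>11d(s+2)$, so that $f\in\mathbb{N}^{s}_{u,p,q}(\mathbb{R}^{d})$ by Lemma~\ref{l_bp1}. Writing $T^{*}$ for either $T$ or $T^{+}$, one shows $T^{*}f\notin\mathbb{N}^{s}_{u,p,q}(\mathbb{R}^{d})$ via the difference characterisation Proposition~\ref{pro_N_diff} with $v=1$, $a=1$ and $N\in\mathbb{N}$ chosen so that $s<N<s+2$ (since $s>1$ in cases (i) and (ii), we have $N\ge 2$). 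Let $x\in[0,1]^{d}$ with $x_{1}$ small compared with $t$, and let $h$ range over $\{h\in B(0,t):h_{1}<-x_{1}\}$, a set of measure $\gtrsim t^{d}$; for such $x,h$ every $x+\ell h$ with $0\le\ell\le N$ stays in the region where $f$ is linear, $f(x+\ell h)=x_{1}+\ell h_{1}$, and $x_{1}+\ell h_{1}<0$ for $\ell\ge 1$. The binomial identities $\sum_{\ell=0}^{N}(-1)^{\ell}\binom{N}{\ell}=0$ and $\sum_{\ell=0}^{N}(-1)^{\ell}\binom{N}{\ell}\ell=0$ then give $|\Delta^{N}_{h}Tf(x)|=2x_{1}$ and $|\Delta^{N}_{h}T^{+}f(x)|=x_{1}$, hence $\int_{B(0,t)}|\Delta^{N}_{h}T^{*}f(x)|\,dh\gtrsim x_{1}t^{d}$ for all such $x$.

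The next step is to bound the inner Morrey norm from below. Testing $\Vert\,\cdot\,\vert\mathcal{M}^{u}_{p}(\mathbb{R}^{d})\Vert$ against a cube $Q\subset[0,1]^{d}$ of side length comparable to $t$, respectively of side length $1$, and integrating $x_{1}^{p}$ over $Q$ with the Morrey weight $|Q|^{1/u-1/p}$, one obtains in either case $\big\Vert\int_{B(0,t)}|\Delta^{N}_{h}T^{*}f|\,dh\,\big\vert\mathcal{M}^{u}_{p}(\mathbb{R}^{d})\big\Vert\gtrsim t^{\,d+m}$ with $m:=\min(1+1/p,\,1+d/u)$, the appropriate choice of cube depending on which of the two terms realises $m$ (large cubes when $1+1/p\le 1+d/u$, small cubes otherwise). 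Inserting this into the $t$-integral of Proposition~\ref{pro_N_diff} yields
\begin{align*}
\Vert T^{*}f\vert\mathcal{N}^{s}_{u,p,q}(\mathbb{R}^{d})\Vert\;\gtrsim\;\Big(\int_{0}^{1}t^{(m-s)q-1}\,dt\Big)^{\frac{1}{q}},
\end{align*}
which is $+\infty$ whenever $s\ge m$ and $q<\infty$, and also $+\infty$ when $s>m$ and $q=\infty$ (then the integrand is replaced by the supremand $t^{m-s}$, which tends to $\infty$ as $t\to 0$). This settles case (i) for all $q$ and case (ii) for $q\neq\infty$, and $f$ simultaneously serves as the required example for $T$ and for $T^{+}$.

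For part (iii) a single linear corner is not enough: at $s=1+1/u$ with $q=\infty$ the quantity above is only bounded, not divergent. Instead one constructs an infinite superposition $f=\sum_{k}f_{k}$ of smooth bumps $f_{k}$, each vanishing to first order at one point (so that $T^{*}f_{k}$ has a genuine corner) and each of amplitude and width $\sim 2^{-k}$, placed at carefully chosen positions; the bumps are calibrated so that every dyadic block of $f$ contributes only $O(1)$ to $\Vert f\vert\mathcal{N}^{1+1/u}_{u,p,\infty}(\mathbb{R})\Vert$, whence $f\in\mathbb{N}^{1+1/u}_{u,p,\infty}(\mathbb{R})$, while the corners at scales $2^{-k}$ with $k\le j$ all feed, through the slowly decaying Fourier tail of a corner, into the frequency-$2^{j}$ block of $T^{*}f$, making $\Vert\mathcal{F}^{-1}[\varphi_{j}\mathcal{F}T^{*}f]\vert\mathcal{M}^{u}_{p}(\mathbb{R})\Vert\gg 2^{-j(1+1/u)}$ along a sequence $j\to\infty$ and hence $\Vert T^{*}f\vert\mathcal{N}^{1+1/u}_{u,p,\infty}(\mathbb{R})\Vert=\infty$. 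When $p=u$ this is the classical endpoint example for $B^{1+1/p}_{p,\infty}(\mathbb{R})$ with the bumps accumulating at a point; when $p<u$ one must additionally spread the corners over grids so that the Morrey norm, rather than the $L_{p}$-norm, governs the relevant blocks, and the joint tuning of amplitude, number and spacing of the bumps at each scale is delicate, the two verifications (membership of $f$ and failure of $T^{*}f$) being carried out by direct estimates of the Littlewood--Paley blocks using Definition~\ref{def_tlm} and the lifting Lemma~\ref{l_deriv}. This $p<u$ sub-case of (iii) is the main obstacle.
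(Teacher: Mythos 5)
Your argument for (i) and (ii) is correct and is essentially a consolidation of what the paper does. Where the paper treats $s > 1 + d/u$ and $s = 1 + d/u$ directly via Proposition~\ref{pro_N_diff}, handles $s > 1 + 1/p$ by embedding $\mathbb{N}^{s}_{u,p,q} \hookrightarrow \mathbb{E}^{s-\epsilon}_{u,p,p}$ and invoking Proposition~\ref{d>1_Nec_MR}, and then returns to the difference characterisation with a fixed large ball for $s = 1 + 1/p$, $q < \infty$, you obtain both critical borders in one stroke by picking the test cube of side $\sim t$ (when $m = 1 + d/u$) or of side $\sim 1$ (when $m = 1 + 1/p$). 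The resulting bound $\gtrsim t^{d+m}$ and the divergence of $\int_0^1 t^{(m-s)q-1}\,dt$ are correct, so these two cases go through.

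Case (iii), however, is left unresolved, and your diagnosis of where the difficulty lies is off target. You correctly guess that a superposition of dyadically scaled corners of amplitude and width $\sim 2^{-j}$ is needed, but then assert that for $p<u$ the bumps must be redistributed over grids and ``delicately'' retuned to match the Morrey scaling, flag this as ``the main obstacle'', and stop. The point you are missing is that no retuning is necessary for membership: by \eqref{Lu_Mo_Em},
\begin{align*}
\mathbb{B}^{1+1/u}_{u,\infty}(\mathbb{R}) = \mathbb{N}^{1+1/u}_{u,u,\infty}(\mathbb{R}) \hookrightarrow \mathbb{N}^{1+1/u}_{u,p,\infty}(\mathbb{R}) \qquad \text{for all } 1 \le p \le u,
\end{align*}
so the classical Bourdaud--Meyer example $g(x) = \sum_{j\ge 0} 2^{-j}\varphi(2^j x)$ from \cite{RS} (with $\varphi$ odd, Schwartz, band-limited, and $\varphi(x)=x$ for $|x|\le 1$), which lies in $\mathbb{B}^{1+1/u}_{u,\infty}(\mathbb{R})$, automatically lies in $\mathbb{N}^{1+1/u}_{u,p,\infty}(\mathbb{R})$. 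What genuinely needs re-doing for $p<u$ is the non-membership half, since the target space is strictly larger: one must show $T^{*}g \notin \mathbb{N}^{1+1/u}_{u,p,\infty}(\mathbb{R})$ afresh. The paper does this directly by applying Proposition~\ref{pro_N_diff} with $v=\infty$, $N=2$, exploiting oddness to get $\Delta^2_{-x}|g|(x) = 2|g(x)|$, testing the Morrey norm on $(0,t)$, applying H\"older to reduce matters to $\sup_{t\downarrow 0} t^{-2}\int_0^t |g|\,dx$, and then using that the first $\approx \log_2(1/t)$ terms of the sum all equal $x$ on $(0,t)$, which forces a $\log(1/t)$ blow-up. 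That concrete estimate is the substance of (iii), and your proposal does not contain it.
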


\begin{proof}
\textit{Step 1.} At first we look at the cases $ s > 1 + d/u  $ and $ s = 1 + d/u   $ with $ q \not = \infty   $. Here we work with the same function $ f \in C_{0}^{\infty}(\R)   $ as in the proof of proposition \ref{d>1_Nec_MR}, see formula \eqref{Ex_Fu_d>1_E1}. We proceed like there and apply proposition \ref{pro_N_diff}  with $ v = 1   $, $a = \infty   $ and $ N \geq 2   $. Then we find $  Tf \not \in  \mathbb{N}^{s}_{u,p,q}(\mathbb{R}^{d})  $ and $ T^{+}f \not \in  \mathbb{N}^{s}_{u,p,q}(\mathbb{R}^{d})   $. Notice that for $ q \not = \infty  $ we also obtain a result for $ s = 1 + d/u   $. The reason for this is that in the norm that can be found in proposition \ref{pro_N_diff} the integral concerning $ t $ is outside of the Morrey norm. 

\textit{Step 2.} Now we look at the case $ s > 1 + 1/p  $. We work with the same function $ f \in C_{0}^{\infty}(\R)   $ as in the proof of proposition \ref{d>1_Nec_MR}. Then because of lemma \ref{l_bp1} we have $ f \in  \mathbb{N}^{s}_{u,p,q}(\mathbb{R}^{d})  $. Let $ \epsilon > 0  $ such that $ s > s - \epsilon >   1 + 1/p   $. We have $ \mathbb{N}^{s}_{u,p,q}(\mathbb{R}^{d}) \hookrightarrow \mathbb{N}^{s - \epsilon}_{u,p,p}(\mathbb{R}^{d}) \hookrightarrow \mathbb{E}^{s - \epsilon}_{u,p,p}(\mathbb{R}^{d})$. Now we can apply proposition \ref{d>1_Nec_MR} and its proof. So we obtain $ Tf \not \in  \mathbb{E}^{s - \epsilon}_{u,p,p}(\mathbb{R}^{d})   $ and $ T^{+}f \not \in  \mathbb{E}^{s - \epsilon}_{u,p,p}(\mathbb{R}^{d})   $. Consequently we get $ Tf \not \in  \mathbb{N}^{s}_{u,p,q}(\mathbb{R}^{d})  $ and $ T^{+}f \not \in  \mathbb{N}^{s}_{u,p,q}(\mathbb{R}^{d})  $. 
 
\textit{Step 3.} Next we have to deal with $ s = 1 + 1/p $ and $ 0 < q < \infty  $. Again we work with the function $ f \in C_{0}^{\infty}(\R)  $ from the proof of proposition \ref{d>1_Nec_MR}, see formula \eqref{Ex_Fu_d>1_E1}. Of course we have $ f \in  \mathbb{N}^{1 + 1/p}_{u,p,q}(\mathbb{R}^{d}) $. In what follows we will prove $ Tf \not \in \mathbb{N}^{1 + 1/p}_{u,p,q}(\mathbb{R}^{d})  $ and $ T^{+}f \not \in \mathbb{N}^{1 + 1/p}_{u,p,q}(\mathbb{R}^{d})  $ simultaneously. For that purpose we write $ T^{*}  $ when we mean either $ T $ or $ T^{+} $. We use proposition \ref{pro_N_diff} with $ a = \infty  $, $v = 1   $ and $ N > 1 + 1/p  $. For the supremum in the Morrey norm we choose the smallest ball $ B^{*}  $ with $ [0,1]^{d} \subset B^{*}   $. We write $ x' = (x_{2}, x_{3}, \ldots , x_{d} ) \in \mathbb{R}^{d - 1}  $. Then we find
\begin{align*}
\Vert T^{*}f \vert \mathcal{N}^{1 + \frac{1}{p}}_{u,p,q}(\mathbb{R}^{d}) \Vert & \geq C_{1}   \Big ( \int_{0}^{\infty} t^{-sq-dq} \Big \Vert  \Big ( \int_{B(0,t)}\vert \Delta^{N}_{h}T^{*}f(x) \vert dh \Big ) \Big \vert  \mathcal{M}^{u}_{p}( \mathbb{R}^d) \Big \Vert^{q}  \frac{dt}{t} \Big )^{\frac{1}{q}} \\ 
& \geq C_{2}   \Big ( \int_{0}^{\infty} t^{-sq-dq}  \Big ( \int_{[0,1]^{d}}  \Big ( \int_{B(0,t)}\vert \Delta^{N}_{h}T^{*}f(x) \vert dh \Big )^{p} dx \Big )^{\frac{q}{p}}  \frac{dt}{t} \Big )^{\frac{1}{q}} \\ 
& \geq C_{2}   \Big ( \int_{0}^{1} t^{-sq-dq}  \Big ( \int_{0}^{\frac{t}{2}} \int_{[0,1]^{d-1}}  \Big ( \int_{\substack{ |h| \leq t  \\ h_{1} \leq - \frac{t}{2}  }}\vert \Delta^{N}_{h}T^{*}f(x) \vert dh \Big )^{p} dx' \ dx_{1} \Big )^{\frac{q}{p}}  \frac{dt}{t} \Big )^{\frac{1}{q}}.
\end{align*}
Now for $ t \in [0,1]  $ and $ x \in [0,\frac{t}{2}] \times [0,1]^{d - 1}    $ we have $ |f(x)| = x_{1}  $ and $ \max(f(x),0) = x_{1}  $. Moreover for $ |h| \leq t   $ with $ h_{1} \leq - t/2   $ and $ l \in \left\{ 1, 2, \ldots , N  \right\}   $ we have $ |f(x+lh)|= -x_{1} - l h_{1}   $ and $ \max(f(x+lh),0) = 0  $. Because of this like in the proof of proposition \ref{d>1_Nec_MR} for $ N \geq 2  $ we find
\begin{align*}
\vert \Delta^{N}_{h }|f|(x) \vert  = 2 x_{1} \qquad \mbox{and} \qquad \vert \Delta^{N}_{h }T^{+}f(x) \vert  =  x_{1}.
\end{align*}
So we have almost the same outcome for $ T $ and $ T^{+} $. Therefore for both cases we can proceed in the same way now. We obtain
\begin{align*}
\Vert T^{*}f \vert \mathcal{N}^{1 + \frac{1}{p}}_{u,p,q}(\mathbb{R}^{d}) \Vert & \geq C_{3}   \Big ( \int_{0}^{1} t^{-sq-dq}  \Big ( \int_{0}^{\frac{t}{2}} x_{1}^{p} \int_{[0,1]^{d-1}}  \Big ( \int_{\substack{ |h| \leq t  \\ h_{1} \leq - \frac{t}{2}  }} 1 dh \Big )^{p} dx' \ dx_{1} \Big )^{\frac{q}{p}}  \frac{dt}{t} \Big )^{\frac{1}{q}} \\
& \geq C_{4}   \Big ( \int_{0}^{1} t^{-sq}  \Big ( \int_{0}^{\frac{t}{2}} x_{1}^{p} \int_{[0,1]^{d-1}} 1  dx' \ dx_{1} \Big )^{\frac{q}{p}}  \frac{dt}{t} \Big )^{\frac{1}{q}} \\
& \geq C_{5}   \Big ( \int_{0}^{1} t^{-sq+q+\frac{q}{p}-1}  dt \Big )^{\frac{1}{q}} = \infty.
\end{align*}
In the last step we used $ s = 1 + 1/p  $ and $0 < q < \infty   $. Hence this step of the proof is complete.

\textit{Step 4.}
Now we look at the case $ d = 1  $ with $ s = 1 + 1/u $ and $ q = \infty   $. We will work with a function that can be found in lemma 2 in chapter 5.4.1 in \cite{RS}, see also the proposition in \cite{BouMey}. Let $ \varphi \in \mathbb{S}(\mathbb{R})  $ be a real-valued and odd function with $ \supp \mathcal{F}\varphi \subset [-1,1]   $ and $ \varphi(x) = x   $ for $ -1 \leq x \leq 1   $. We define
\begin{equation}\label{s_bord_EXF}
g(x) = \sum_{j = 0}^{\infty} 2^{-j} \varphi(2^{j}x).
\end{equation}
From lemma 2 in chapter 5.4.1 in \cite{RS} we learn that we have $ g \in \mathbb{B}^{1 + 1/u}_{u,\infty}(\mathbb{R})  $. Now because of $ \mathbb{B}^{1 + 1/u}_{u,\infty}(\mathbb{R}) = \mathbb{N}^{1 + 1/u}_{u,u,\infty}(\mathbb{R}) \hookrightarrow \mathbb{N}^{1 + 1/u}_{u,p,\infty}(\mathbb{R}) $, see formula \eqref{Lu_Mo_Em}, we also find $ g \in \mathbb{N}^{1 + 1/u}_{u,p,\infty}(\mathbb{R})   $. In what follows we will prove that we have $ Tg \not \in \mathbb{N}^{1 + 1/u}_{u,p,\infty}(\mathbb{R})   $ and $ T^{+}g \not \in \mathbb{N}^{1 + 1/u}_{u,p,\infty}(\mathbb{R})   $ simultaneously. For that purpose as before we write $ T^{*}  $ when we mean either $ T $ or $ T^{+} $. We use proposition \ref{pro_N_diff} with $ a = \infty  $ and $ v = \infty  $. This is possible because of $1/p \leq 1 < 1 + 1/u  $. Because of $ 1 + 1/u < 2  $ we can put $ N = 2  $. Then we find
\begin{align*}
\Vert T^{*}g  \vert   \mathcal{N}^{1 + \frac{1}{u}}_{u,p,\infty}(\mathbb{R})   \Vert & \geq C_{1}    \sup_{0 \leq t < \infty} t^{- 1 - \frac{1}{u} } \Big \Vert  \sup_{|h| \leq t}\vert \Delta^{2}_{h}T^{*}g(x) \vert  \Big \vert  \mathcal{M}^{u}_{p}( \mathbb{R}) \Big \Vert \\
& \geq C_{1} \sup_{0 \leq t < \infty} t^{- 1 - \frac{1}{u} } \sup_{\substack{a<b \\ |a| \leq t  ,|b| \leq t }} \vert a - b \vert^{\frac{1}{u}-\frac{1}{p}} \Big ( \int_{a}^{b}  \vert  \Delta^{2}_{-x}T^{*}g(x)   \vert^{p} dx  \Big )^{\frac{1}{p}}.  
\end{align*} 
From the proof of lemma 2 in chapter 5.4.1 in \cite{RS} we know that we have $ g(0) = 0  $ and $ \vert g(x) \vert =  \vert g(-x) \vert $. Because of $ g $ is odd we find
\begin{align*}
\Delta^{2}_{-x}|g|(x)  =  2 |g(x)| \qquad \mbox{and} \qquad \Delta^{2}_{-x}T^{+}g(x) = |g(x)|.
\end{align*}
So we have almost the same outcome for $ T $ and $ T^{+} $. Then in both cases we obtain 
\begin{align*}
\Vert T^{*}g  \vert   \mathcal{N}^{1 + \frac{1}{u}}_{u,p,\infty}(\mathbb{R})   \Vert & \geq C_{2} \sup_{0 \leq t < \infty} t^{- 1 - \frac{1}{u} } \sup_{\substack{a<b \\ |a| \leq t  ,|b| \leq t }} \vert a - b \vert^{\frac{1}{u}-\frac{1}{p}} \Big ( \int_{a}^{b}  \vert  g(x)   \vert^{p} dx  \Big )^{\frac{1}{p}} \\ 
& \geq C_{2} \sup_{0 \leq t < \infty} t^{- 1 - \frac{1}{u} }   t^{\frac{1}{u}-\frac{1}{p}} \Big ( \int_{0}^{t}  \vert  g(x)   \vert^{p} dx  \Big )^{\frac{1}{p}} \\  
& \geq C_{3} \sup_{0 < t < 1} t^{- 2 }   \int_{0}^{t} \Big \vert \sum_{j = 0}^{\infty} 2^{-j} \varphi(2^{j}x)  \Big  \vert dx. 
\end{align*} 
In the last step we used the H\"older inequality. Now for $0 < t < 1  $ let $ L(t) \in \mathbb{N}_{0}  $ be the biggest natural number such that $ L(t) < \frac{\ln(\frac{1}{t})}{\ln(2)}   $. Then for $ j \in \mathbb{N}_{0}  $ with $ j \leq L(t)   $ and $ x \leq t  $ we have $  2^{j} x \leq 1   $. Now because of the definition of the function $ \varphi  $ we obtain
\begin{align*}
\Vert T^{*}g  \vert   \mathcal{N}^{1 + \frac{1}{u}}_{u,p,\infty}(\mathbb{R})   \Vert & \geq C_{3} \sup_{0 < t < 1} t^{- 2 } \int_{0}^{t} \Big \vert \sum_{j = 0}^{L(t) - 1} x + \sum_{j = L(t) }^{\infty} 2^{-j} \varphi(2^{j}x)   \Big  \vert dx   \\
& \geq C_{4} \sup_{0 < t < 1} t^{- 2 } \Big ( L(t) \frac{ t^{2}}{2}  - K t   \sum_{j = L(t) }^{\infty} 2^{-j}  \Big ) .
\end{align*} 
In the last step we used that $ \varphi \in \mathbb{S}(\mathbb{R})  $ is bounded by a constant $ K < \infty  $. Now we calculate
$ \sum_{j = L(t) }^{\infty} 2^{-j} \leq C_{5} 2^{- L(t)} \leq C_{6} t    $. Hence we get
\begin{align*}
\Vert T^{*}g  \vert   \mathcal{N}^{1 + \frac{1}{u}}_{u,p,\infty}(\mathbb{R})   \Vert   \geq C_{7} \sup_{0 < t < 1} t^{- 2 } \Big ( L(t)  \frac{ t^{2}}{2}  - K t^{2}   \Big )  \geq C_{8} \lim_{t \downarrow 0} \Big  ( \frac{1}{2} \ln \Big (\frac{1}{t} \Big )   - K  \Big )  = \infty.
\end{align*} 
So the proof is complete.
\end{proof} 

\textbf{Proof of theorem \ref{MR_N_d>1} and theorem \ref{MR_E_d>1}.}
To prove the main theorems \ref{MR_N_d>1} and \ref{MR_E_d>1} we just have to combine the propositions \ref{pro_0<s<1}, \ref{d>1_E_MR}, \ref{s=1_E_MR}, \ref{d>1_N_MR}, \ref{d>1_Nec_MR} and \ref{d>1_Nec_MR_N}.

\section{Further properties and outstanding issues}\label{sec_5}

In this paper we investigated the boundedness and the acting property of the operators $ T^{+}  $ and $ T  $ in the context of the spaces $ \mathbb{A}^{s}_{u,p,q}(\mathbb{R}^{d})  $. Both things are at least partly understood now. But one may also ask whether the operators $ T^{+}  $ and $  T $ are continuous or even Lipschitz continuous on $ \mathbb{A}^{s}_{u,p,q}(\R)  $. In general for $ s > 0 $ Lipschitz continuity can not be expected. For the special case $ p = u  $ we refer to \cite{Tr01}, see theorem 25.14. Most likely it is possible to use the ideas from there also for $ p < u  $. On the other hand to prove a satisfactory result concerning continuity seems to be a difficult problem. In the following list we will collect some open problems concerning the operators $ T^{+} $ and $ T $.

\begin{itemize}
\item[(i)] The first question concerns the mapping properties of the operator $ T^{+}  $ in the context of the Triebel-Lizorkin-Morrey spaces on the critical border $ s = 1 + d/u  $ with $ u > dp  $, see theorem \ref{MR_E_d>1}. Do we have $ T^{+} (\mathbb{E}^{1 + d/u}_{u,p,q}(\mathbb{R}^{d}) ) \subset \mathbb{E}^{1 + d/u}_{u,p,q}(\mathbb{R}^{d})  $ ? 

\item[(ii)] The next query is related to the mapping properties of  $ T^{+}    $ in the case $ d > 1   $ and $ 1 \leq s < \min  ( 1 + 1/p ,  1 + d/u  )    $. Is it possible to omit the assumption $  1/p - 1/u > 1 - 1/d   $ you can find in the main results theorem \ref{MR_N_d>1} and theorem \ref{MR_E_d>1} ?

\item[(iii)] The spaces $ \mathbb{N}^{s}_{u,p,q}(\mathbb{R}^{d})  $ and $ \mathbb{E}^{s}_{u,p,q}(\mathbb{R}^{d})  $ are also well-defined for $ 0 < p \leq u < \infty   $ and $ 0 < q \leq \infty   $. One may discuss the mapping properties of $ T^{+}  $ and $  T $ in this more general setting. Some results concerning the special case $ p = u  $ can be found in theorem 25.8 in \cite{Tr01}.

\item[(iv)] The next issue concerns the continuity of $ T^{+} $ and $ T $. Under which conditions on the parameters $ s,p,u,q $ and $ d $ the operator $ T^{+} : \mathbb{A}^{s}_{u,p,q}(\mathbb{R}^{d}) \rightarrow \mathbb{A}^{s}_{u,p,q}(\mathbb{R}^{d})  $ is continuous? Notice that for the special case $ p = u  $ and $ 0 < s \leq 1  $ some positive results are already known, see \cite{MaMiz}, \cite{MuNa} and theorem 3 in chapter 5.5.2 in \cite{RS}. 

\end{itemize}

\section{Appendix: The zero set of real analytic functions}

In what follows we will collect some facts concerning the zero set of a real analytic function. They are used in the proof of proposition \ref{d>1_E_MR}. 

\begin{lem}\label{lem_ZOAF}
Let $ f : \mathbb{R}^d \rightarrow \mathbb{R}   $ be an analytic function. Then we know the following.
\begin{itemize}
\item[(i)] Let $ f \not = 0  $. Then the set $ Z(f) = \{ x \in \R  : f(x) = 0  \}  $ is the union of countably many compact sets $ K_{j}  $ with $ \lambda_{d - 1} (K_{j}) < \infty  $ for all $ j \in \mathbb{N} $. Here with $ \lambda_{d - 1}   $ we denote the $ (d-1) -$dimensional Lebesgue measure. Moreover the Hausdorff dimension of $ Z(f)  $ does not exceed $ d - 1  $. 
\item[(ii)] Let $ d = 1  $. Let $ (x_{n})_{n \in \mathbb{N}} \subset \mathbb{R}  $ be a sequence with $ \lim_{n \rightarrow \infty} x_{n} = x    $ that fulfills $  x_{n} \not = x   $ and $ f( x_{n}) = 0   $ for all $ n  \in \mathbb{N}   $. Then we obtain $ f = 0  $.

\item[(iii)] Let $ f \not = 0  $. Suppose that $ f(0',x_{d})   $ has a zero of multiplicity $ m \in \mathbb{N}  $ at $ x_{d} = 0   $. Then there exist open intervals $ I_{1}, I_{2}, \ldots , I_{d} \subset \mathbb{R}   $ centered at $0$ such that $  f(x', \cdot )   $ has for each $ x' \in \mathbb{R}^{d-1}  $ with $ x' \in I_{1} \times I_{2} \times \ldots \times I_{d-1}   $ not more than $ m $ zeros in $ I_{d} $ counted according to their multiplicities. Moreover the multiplicity $ m  $ is always finite (after a suitable rotation of $f$ maybe).

\item[(iv)] Let $ R \in \mathbb{N}   $, $ v \in  \mathbb{Z}^{d}   $ and $ k \in \mathbb{N}  $ with $ 0 <  k \leq R    $. Let $ Q_{k,v}   $ be a dyadic cube and $ f \not = 0   $. Then there exist \\ 
$ \bullet   $ a natural number $ n(f,k,v) \in \mathbb{N}   $ that depends on $ f, k, v $, \\
$ \bullet   $ a constant $ c(d)  $ that only depends on $ d $, \\
$  \bullet     $ a number  $ r \in \mathbb{N}   $ with $ r $ much larger than $ R $ and $ 2^{r-k}   $ much larger than $  c(d)   n(f,k,v)  $  \\ 
such that the set $  Z(f) \cap  2 Q_{k,v}   $ can be covered by $ c(d) \  n(f,k,v) \ 2^{(d-1)(r-k)} $ $d-$dimensional cubes with side-length $ 2^{-r}   $.
 
\item[(v)] Assume we are in the same situation as described in (iv). Then the number $ n(f,k,v) = n(f) \in \mathbb{N}  $ can be chosen independent of $ k $ and $ v $.
\end{itemize}
\end{lem}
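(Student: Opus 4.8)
The plan is to establish (ii) and (iii) by classical one-variable and Weierstrass-type arguments and then to read off (i), (iv) and (v) from the resulting local description of $Z(f)$. For (ii): if $x_n\to x$ with $x_n\ne x$ and $f(x_n)=0$, then dividing the convergent Taylor expansion of $f$ at $x$ successively by powers of $(\,\cdot-x)$ forces every Taylor coefficient to vanish, so $f$ vanishes to infinite order at $x$; the set of points where this happens is open and closed in $\mathbb{R}$ and nonempty, hence equals $\mathbb{R}$, i.e. $f\equiv0$. For (iii): if $f\ne0$ there is a unit vector $\omega$ with $t\mapsto f(t\omega)$ not identically zero near the origin---otherwise all $\partial^{\alpha}f(0)$ would vanish, forcing $f\equiv0$ near $0$ and, by analyticity, $f\equiv0$---and a rotation carrying $e_d$ to $\omega$ makes $f(0',\cdot)$ vanish to a finite order $m\ge1$ at $x_d=0$, which is the ``moreover'' part. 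The Weierstrass preparation theorem then gives, near the origin, $f=u\cdot W$ with $u$ analytic, $u(0)\ne0$, and $W(x',x_d)=x_d^{m}+a_{m-1}(x')x_d^{m-1}+\dots+a_0(x')$ a Weierstrass polynomial ($a_j$ analytic, $a_j(0')=0$); on a small box $I_1\times\dots\times I_d$ on which $u\ne0$, the zeros of $f(x',\cdot)$ in $I_d$ coincide with those of the monic degree-$m$ polynomial $W(x',\cdot)$ and are therefore at most $m$ in number counted with multiplicities.

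For (i): combining (iii) with the local structure theory of real-analytic sets, $Z(f)$ is, near each of its points, a finite union of $C^{\omega}$ submanifolds of dimension $\le d-1$, and such submanifolds have Hausdorff dimension $d-1$ and locally finite $(d-1)$-dimensional measure. Writing $Z(f)=\bigcup_{n\ge1}\bigl(Z(f)\cap\overline{B(0,n)}\bigr)$ exhibits $Z(f)$ as a countable union of compacta, and covering each by finitely many such local charts yields $\lambda_{d-1}\bigl(Z(f)\cap\overline{B(0,n)}\bigr)<\infty$ and $\dim_{H}Z(f)\le d-1$.

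For (iv) and (v): fix $k$ and $v$ with $0<k\le R$. The geometric fact to prove is that on the compact cube $\overline{2Q_{k,v}}$ the set $Z(f)$ is contained in $n=n(f,k,v)$ graphs of functions $\psi$ that are Lipschitz with constant $\le L(d)$ over $(d-1)$-dimensional coordinate cubes of side $\le 2\cdot2^{-k}$. The top-dimensional part $\{f=0,\ \nabla f\ne0\}=\bigcup_{j}\{f=0,\ |\partial_j f|\ge|\nabla f|/\sqrt d\}$ is a union of such graphs (there the implicit function theorem gives $|\nabla\psi|\le\sqrt d$), the number of sheets over a base point being the number of zeros of a one-variable analytic function, which is finite and, by (iii) and compactness, locally uniformly bounded; the remaining, lower-dimensional, strata of $Z(f)$ are dealt with in the same way via the structure theory, and all these bounded counts are absorbed into $n(f,k,v)$. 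Granting this, one subdivides the base of each graph into at most $(2\cdot2^{r-k})^{d-1}$ cubes of side $2^{-r}$; over each the graph oscillates by at most $L(d)\sqrt{d-1}\,2^{-r}$, so a stack of at most $L(d)\sqrt{d-1}+1$ cubes of side $2^{-r}$ covers that piece, and multiplying by the numbers of base cubes and of graphs covers $Z(f)\cap2Q_{k,v}$ by $\le c(d)\,n(f,k,v)\,2^{(d-1)(r-k)}$ cubes of side $2^{-r}$; this is legitimate once $2^{-r}$ lies below the Lebesgue number of the chosen cover, which is exactly the requirement that $r$ be much larger than $R$ and $2^{r-k}$ much larger than $c(d)\,n(f,k,v)$. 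For (v): in the setting in which the lemma is applied (the proof of Proposition~\ref{d>1_E_MR}) $f$ is band-limited, $\supp\mathcal{F}f\subset B(0,2^{R})$, so its restriction to any line is the restriction of an entire function of exponential type $\le 2^{R}$ and hence has $\lesssim 2^{R}$ zeros on any segment of length $\le\sqrt d$, uniformly over translates; therefore all of the sheet counts above are bounded independently of $k$ and $v$, and $n(f):=\sup_{0<k\le R,\ v}n(f,k,v)<\infty$ works.

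The main obstacle is the geometric heart of (iv): showing that $Z(f)$ over a fixed compact cube splits into boundedly many uniformly Lipschitz graphs---equivalently, that it has finite $(d-1)$-dimensional Minkowski content with the stated dependence on $k$. The bare Weierstrass description only gives graphs that are H\"older of exponent $1/m$ near root collisions, and a H\"older-$1/m$ graph requires on the order of $2^{(d-1)(r-k)}2^{r(1-1/m)}$ cubes of side $2^{-r}$; the spurious factor $2^{r(1-1/m)}$ grows with $r$ and would ruin the bound. Removing it forces the gradient-based stratification above (or, equivalently, an appeal to the local conic/Lipschitz structure of real-analytic sets), and the bookkeeping of how many Lipschitz sheets occur---together with the uniformity in $k$ and $v$ needed for (v)---is where the real work lies.
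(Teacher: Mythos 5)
Parts (i)--(iii) are fine: your one-variable uniqueness argument, the rotation-plus-Weierstrass argument, and the countable-union-of-compacta argument are all standard and coincide in substance with the facts the paper imports from Rudin and Mityagin. The heart of the lemma is (iv), and there your proposal does contain a genuine gap, one that you yourself flag. You reduce (iv) to the claim that $Z(f)\cap\overline{2Q_{k,v}}$ is contained in $n(f,k,v)$ graphs that are \emph{uniformly} Lipschitz with a dimensional constant, you correctly diagnose why the bare Weierstrass description (H\"older-$1/m$ behaviour near root collisions) is not enough for the Minkowski-content count, and you propose to repair this via the gradient stratification $\{f=0\}=\bigcup_j\{f=0,\ |\partial_j f|\geq |\nabla f|/\sqrt d\}$ plus recursion into the singular locus. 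But you never actually establish that the number of such Lipschitz sheets is finite on the cube, nor do you handle the singular stratum; as you put it yourself, ``the bookkeeping of how many Lipschitz sheets occur \dots is where the real work lies.'' Without that bound, $n(f,k,v)$ is not produced, so (iv) is not proved. The paper fills exactly this gap with a different device: a greedy algorithm that repeatedly selects a zero of maximal multiplicity, uses (iii) to localize to a cube on which the sheet count is bounded by that multiplicity, covers the resulting $(d-1)$-manifolds via their Minkowski dimension, and removes the cube; the bulk of the paper's proof is then a compactness-and-monotonicity-of-multiplicity argument showing the iteration terminates after finitely many steps $w$, whence $n(f,k,v)=m_1 w$.

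For (v) you go a genuinely different way from the paper. The paper reduces to $k=1$ and declares that one may take a $v^*$ maximizing $n(f,1,v)$ over $v\in\mathbb{Z}^d$, asserting ``we already explained $n(f,v^*)<\infty$''; but what was explained in (iv) is only that each individual $n(f,1,v)$ is finite, not that the supremum over $v$ is attained or even finite (for, say, $f(x)=\sin(x_1^2)$ the zero count on unit cubes grows with $|v_1|$). You instead observe that in the one place the lemma is used, $\mathrm{supp}\,\mathcal{F}f\subset B(0,2^R)$, so restrictions of $f$ to lines are entire of exponential type $\lesssim 2^R$ and hence have a translation-invariant bound on zeros per unit segment, from which uniformity in $k$ and $v$ follows. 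This is a careful and, I think, sounder justification than the paper's brief remark, though strictly speaking it proves (v) under a band-limitedness hypothesis that the lemma statement does not impose rather than for an arbitrary nonzero analytic $f$.
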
 

\begin{proof}
\textit{Proof of (i) - (iii).} Fact (i) can be deduced from theorem 14.4.9. in \cite{Rud}. For the result concerning the Hausdorff dimension we also refer to \cite{mit}. A definition of the Hausdorff dimension can be found in definition 14.4.1. in \cite{Rud}, and also in \cite{Fal} and \cite{Mat95}. Fact (ii) is a classical result from complex analysis.
Fact (iii) can be derived from lemma 14.1.2.(i) in \cite{Rud}. The result concerning the multiplicity also can be found in the proof of claim 2 in \cite{mit}.
 
\textit{Proof of (iv).}
To prove (iv) let us start with the case $ d = 1  $. From (i) and (ii) we learn that $  Z(f) \cap  2 Q_{k,v}   $ consists of a finite number $ n_{0}(f,k,v) \in \mathbb{N}   $ of isolated points in $ \mathbb{R}  $. Therefore for each large $ r \in \mathbb{N}  $ the set $  Z(f) \cap  2 Q_{k,v}   $ can be covered by $ n_{0}(f,k,v)   $ intervals with side-length $ 2^{-r}   $.

Now we look at the case $ d > 1  $. Let $ z_{1} \in Z(f) \cap \overline{2 Q_{k,v}}   $ such that the multiplicity $ m_{1} \in \mathbb{N}  $ of the zero is as big as possible. From fact (iii) we know $ m_{1} < \infty  $ (maybe after a rotation of $f$). If there exist two or more zeros with the same maximal multiplicity we choose that one that allows us to find the biggest cube $ Q_{1}  $ appropriate to the description given now. From (iii) we learn that there exist open intervals $ I_{1}, I_{2}, \ldots , I_{d} \subset \mathbb{R}   $ with $ | I_{i} | = 2^{-r_{1}}  $ for all $i$ and for some $ r_{1} \geq k  $ such that $ z_{1}  $ is in the center of $ Q_{1} =  I_{1} \times I_{2} \times \ldots \times I_{d}    $ and such that $  f(z', \cdot )   $ has for each $ z' \in \mathbb{R}^{d-1}  $ with $ z' \in I_{1} \times I_{2} \times \ldots \times I_{d-1} = Q_{1}'   $ not more than $ m_{1} $ zeros in $ I_{d} $ counted according to their multiplicities. In view of (i) that means the set $ Z(f) \cap Q_{1}   $ consists of not more than $ m_{1} $ manifolds of dimension $ d - 1 $ that meet at $ z_{1} $ and maybe also somewhere else. Choose the cube $ Q_{1} \subset \mathbb{R}^{d}  $ as large as possible. That means for each $ \epsilon > 0  $ there exists a set $ Z_{1 , \epsilon} \subset (1 + \epsilon) Q_{1}'  $ with $ \lambda_{d - 1}(Z_{1 , \epsilon}) > 0  $ such that in $ Z_{1 , \epsilon}  \times (1 + \epsilon) I_{d}  $ the function $  f(z', \cdot )   $ has more than $ m_{1} $ zeros for some $ z' \in Z_{1 , \epsilon}   $. We find that the set $ ( Z(f) \cap  2 Q_{k,v} ) \cap Q_{1}  $ can be covered by not more than $ c(d) m_{1} 2^{(d-1)(t_{1} - r_{1})}  $ cubes of dimension $ d $ with side-length $ 2^{-t_{1}}  $ for some $ t_{1} \in \mathbb{N}  $ with $ t_{1} $ much larger than $ r_{1}   $ such that $ c(d) m_{1} $ is much smaller than $ 2^{t_{1} - r_{1}}  $. To see this we can argue as follows. 

At first assume $ z_{1}  $ has multiplicity one. Then from the Implicit Function Theorem it follows that the set $ Z(f) \cap Q_{1}   $ is the graph of a $ C^{\infty} -  $ function of $ d-1  $ variables, see also remark 14.1.4. in \cite{Rud}. Therefore the set $ Z(f) \cap Q_{1}   $ can be interpreted as a smooth manifold of dimension $ d-1  $, see for example chapter 16.1 in \cite{Tr97}. Consequently we also find that $ Z(f) \cap Q_{1}   $ is a so-called $ (d-1) - $ set, see definition 3.1 in \cite{Tr97} and remark 16.3 in \cite{Tr97}. Hence $ Z(f) \cap Q_{1}   $ has Minkowski dimension (box counting dimension) of $ d - 1  $, see remark 3.5 in \cite{Tr97}. For details concerning the Minkowski dimension we refer to \cite{Mat95}, pages 76-81, and to \cite{Fal}.  From the definition of the Minkowski dimension it follows that $ Z(f) \cap Q_{1}   $ can be covered by $ c(d) 2^{(d-1)(t_{1} - r_{1})}   $ cubes of side-length $ 2^{-t_{1}}   $ when $ t_{1} $ is large enough.  For multiplicity $  m_{1} > 1 $ we have to decompose the set $ Z(f) \cap Q_{1}   $ in not more than $  m_{1} $ smooth manifolds that we cover separately. See also theorem 14.1.3. in \cite{Rud}.  

Now take $  z_{2} \in Z(f) \cap \overline{ 2 Q_{k,v} } $ such that $ z_{2} \not \in Q_{1}    $ and with multiplicity $ m_{2} \in \mathbb{N}  $ as big as possible. Of course we have $ m_{2} \leq m_{1}   $. We proceed exactly as before and obtain a cube $ Q_{2}  $ and numbers $ m_{2}, r_{2}, t_{2}    $ as well as a covering of $ ( Z(f) \cap  2 Q_{k,v} ) \cap Q_{2}   $. We choose the cube $ Q_{2} \subset \mathbb{R}^{d} $ as large as possible (in the sense described before) but in such a way that $ ( Q_{1} \cap Z(f)   ) \cap Q_{2} = \emptyset    $. This process will be continued till the whole set $ Z(f) \cap  2 Q_{k,v}   $ is covered. If there are two or more zeros with the same multiplicity $ m_{i}  $ we always continue with that one that allows us to find the biggest cube $ Q_{i}  $.  At the end we obtain a sequence of cubes $ \{ Q_{i} \}_{i}  $ and sequences $ \{ m_{i} \}_{i}, \{ r_{i} \}_{i}, \{ t_{i} \}_{i}   $. Below in picture Fig. 1 we tried to illustrate one step of the algorithm we just described for $ d = 2  $. Notice that the iteration ends after a finite number of $ w \in \mathbb{N}  $ of steps. 

To see this we can argue as follows. Assume $ w = \infty  $. Then since $ 2 Q_{k,v}  $ is bounded we find a subsequence $  \{ Q_{i_{l}} \}_{l}   $ of cubes such that $ \lim_{l \rightarrow \infty }  | Q_{i_{l}} | = 0    $. Because of the definition of the algorithm and the fact that $ Z(f)  $ consists of countably many compact sets, see (i), that implies the following. There exist two sequences of sets $  \{ A_{l} \}_{l}   $ and $  \{ B_{l} \}_{l}  $ with

$ \bullet $ $  A_{l} = ( Z(f) \cap \overline{  Q_{i_{l}} } ) $ and $  B_{l}  = ( Z(f) \cap ( \overline{ 2 Q_{i_{l}} \setminus  Q_{i_{l}} } )  ) $ for all $ l \in \mathbb{N}  $; 

$ \bullet $ $ \lambda_{d - 1}( A_{l}  ) > 0   $ and $  B_{l} \not =   \emptyset   $ for all $ l \in \mathbb{N}  $;

$ \bullet $ for each $ A_{l}  $ there is a generating zero $ z_{i_{l}}  $ with multiplicity $ m_{i_{l}}  $;

$ \bullet $ we know $\infty > m_{i_{1}} \geq \ldots \geq m_{i_{l}} \geq m_{i_{l + 1}} \geq \ldots   $ for all $ l \in \mathbb{N}   $;

$ \bullet $ the set $  A_{l} \cup B_{l}   $ is not connected for all large $ l $ due to the definition of the algorithm;

$ \bullet $ $  \lim_{l \rightarrow \infty} \dist(A_{l} , B_{l}) = 0    $. 

But in the limiting case $ l \rightarrow \infty   $ the last 3 points and (iii) generate a contradiction. An increase of multiplicity in a late step of the algorithm (forced by the last point and (iii)) is forbidden and an infinite multiplicity does not exist. So our assumption must be wrong and we find $ w < \infty   $. We tried to illustrate this argument in picture Fig. 2 below for $ d = 2 $.

Now we have to unify the size of the very small cubes we use for the covering.  Therefore because of $ \max _{1 \leq i \leq w} m_{i} = m_{1} $ we put $ n(f,k,v) = m_{1} w    $. Moreover we choose  $ t^{*} \geq \max_{1 \leq i \leq w} t_{i} $ such that $ 2^{t^{*} - k}   $ is much bigger than $ c(d) m_{1} w $ . So due to $ \min_{1 \leq i \leq w} r_{i} \geq k   $ we can cover the set $ Z(f) \cap  2 Q_{k,v}  $ with $ c(d) m_{1} w 2^{(d-1)(t^{*} - k)}      $ cubes with side-length $ 2^{- t^{*}} $. 

\textit{Proof of (v).} To see this at first recall that we have $ 0 < k \leq R   $. Because of this it is enough to work with $ k = 1  $ in the proof of (iv) to identify a possible number $ n(f,k,v)   $. To prove the independence from $ v \in \mathbb{Z}^{d}  $ we choose $ v^{*} \in \mathbb{Z}^{d}  $ such that $ n(f,v^{*})   $ is maximal. We already explained $  n(f,v^{*})  < \infty  $. But of course the number $ n(f,v^{*})  $ works for each $ v \in \mathbb{Z}^{d}  $. So $ n(f,v) = n(f)   $ only depends on $f$.
\end{proof}

\begin{figure}[h]

\begin{minipage}[b]{0.1\textwidth}

\includegraphics{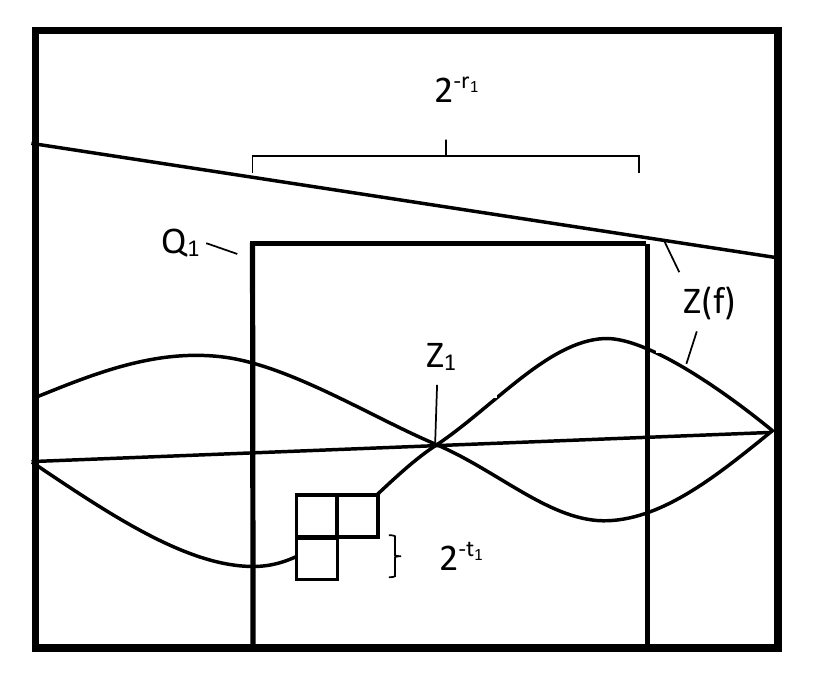}%

\end{minipage}\hfill
\begin{minipage}[b]{0.4\textwidth}
\textbf{Fig. 1. One step of the algorithm.} In this picture we try to illustrate a typical situation for $ d = 2  $. The notation is the same as in the proof of (iv). $ Z(f) $ is the zero set of a real analytic function. $ z_{1} $ is a zero of maximal multiplicity that allows to find a cube $ Q_{1}  $ that is as large as possible apposite to the algorithm. The small cubes with side-length $ 2^{-t_{1}}  $ deliver a covering for $ Z(f) \cap Q_{1} $.

\vspace{1,3 cm}

\end{minipage}

\end{figure}

\begin{figure}[h]

\begin{minipage}[b]{0.1\textwidth}

\includegraphics{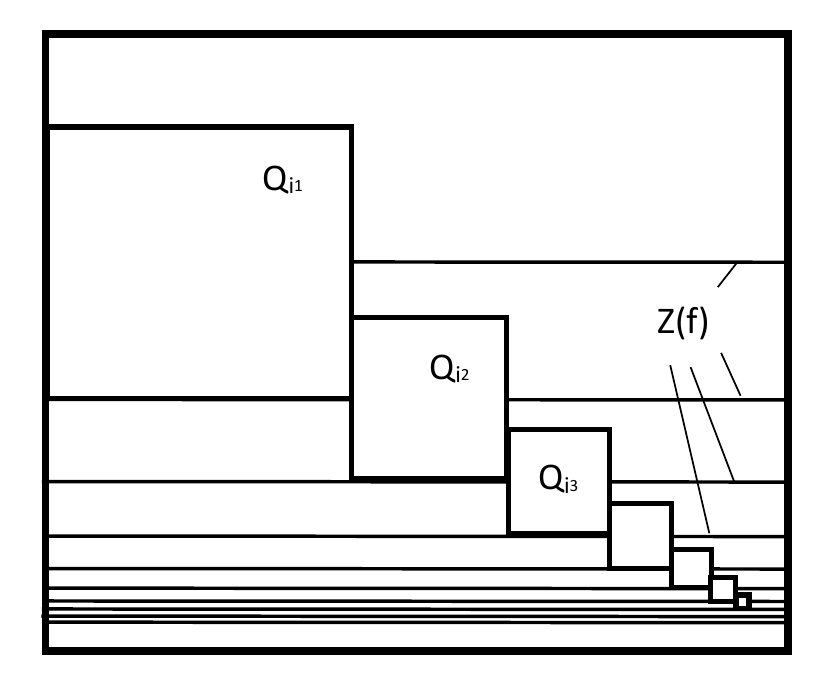}%

\end{minipage}\hfill
\begin{minipage}[b]{0.4\textwidth}
\textbf{Fig. 2. The reason for $ w < \infty   $.} In this picture we try to illustrate why the algorithm must break after a finite number of steps in the case $ d = 2  $. Assume the number of steps is $ w = \infty  $. That means the algorithm produces a subsequence of cubes $ \{ Q_{i_{l}} \}_{l}  $ that become smaller and smaller. In the limiting case this implies the existence of $ (d -1) -  $ dimensional manifolds $  A_{\infty} \subset Z(f)   $ and $  B_{\infty} \subset Z(f)   $ that are not connected but have a distance of 0. That either contradicts (iii) or results in $ f = 0 $.     

\vspace{0,1 cm}

\end{minipage}

\end{figure}

% Then for each $ m $ we obtain a number $ n(f,1,m) = m_{1}(m) w(m)    $ where $ m_{1}(m)  $ is the maximal multiplicity of a zero in $ 2 Q_{1,m}   $ and $ w(m)   $ is the number of iteration steps. We already know that $ m_{1}(m)  $ must be finite. Moreover we explained that we have $ w(m) < \infty   $ for each $m$. Else we would have $  f = 0  $. So we can put $ n(f) = \sup_{m \in \mathbb{Z}^{d}} m_{1}(m) w(m) \in \mathbb{N}    $. 

\vspace{1 cm}

\textbf{Acknowledgements }

The author is funded by a Landesgraduiertenstipendium which is a scholarship from the Friedrich-Schiller university and the Free State of Thuringia. The author would like to thank his supervisor professor Winfried Sickel for his tips and hints.

%&&&&&&&&&&&&&&&&&&&&&&&&&&&&&&&&&&&&&&&&&&&&&&&&
%&&&&&&&&&&&&&&&&&&&&&&&&&&&&&&&&&&&&&&&&&&&&&&&&&

\end{document}